\documentclass[intlim,righttag,10pt,openbib]{amsart}
\usepackage{enumitem}
\usepackage{amscd}
\usepackage{amssymb}
\usepackage{graphicx} 
\usepackage[all]{xy}
\usepackage{fge}

\usepackage{color}
\usepackage{tikz-cd}

\usepackage{todonotes}

\RequirePackage[T1]{fontenc}
\RequirePackage{amsfonts,latexsym,amssymb}

\RequirePackage{mathrsfs}
\let\mathcal\mathscr
\let\cal\mathcal

\let\bb\mathbb

\usepackage{bbm}
\usepackage{hyperref}

\newtheorem{theorem}[equation]{Theorem}
 \newtheorem{lemma}[equation]{Lemma}
 \newtheorem{proposition}[equation]{Proposition}
 \newtheorem{corollary}[equation]{Corollary}

\theoremstyle{definition}
\newtheorem{definition}[equation]{Definition}
\newtheorem{notation}[equation]{Notation}

\newtheorem{remark}[equation]{Remark}
\newtheorem{example}[equation]{Example}
\theoremstyle{remark}
\newtheorem*{acknowledgments}{Acknowledgments}

\def\invlim{\mathop{\vtop{\ialign{##\crcr$\hfill{\lim}\hfil$\crcr
\noalign{\kern1pt\nointerlineskip}\leftarrowfill\crcr\noalign
{\kern -3pt}}}}\limits}
\def\dirlim{\mathop{\vtop{\ialign{##\crcr$\hfill{\lim}\hfil$\crcr
\noalign{\kern1pt\nointerlineskip}\rightarrowfill\crcr\noalign
{\kern -3pt}}}}\limits}

\def\phi{\varphi}
\def\epsilon{\varepsilon}

\newcommand{\Spa}{\operatorname{Spa}}

\newcommand{\ovk}{\overline{K} }

\newcommand{\dr}{\operatorname{dR} }

\newcommand{\proeet}{\operatorname{pro\acute{e}t} }

\newcommand{\eet}{\operatorname{\acute{e}t} }
\newcommand{\Spec}{\operatorname{Spec} }

\newcommand{\Hom}{{\rm{Hom}} }

\newcommand{\st}{\operatorname{st} }

\newcommand{\coker}{\operatorname{coker} }

\newcommand{\im}{\operatorname{Im} }

\newcommand{\so}{{\mathcal O}}

\newcommand{\wh}{\widehat}

\DeclareMathOperator{\hhh}{H}
\DeclareMathOperator{\rrr}{R}

\numberwithin{equation}{section}

\def\R{{\mathrm R}}

 \def\A{{\bf A}} \def\B{{\bf B}}
\def\Q{{\bf Q}} \def\Z{{\bf Z}}

\def\N{{\bf N}}

\def\G{{\cal G}}

\def\limi{\varinjlim}
\def\limp{\varprojlim}

\def\epsilon{\varepsilon}

\def\BC{{\mathcal{B}}}
\def\CC{{\mathcal{C}}}

\def\FC{{\mathcal{F}}}

\def\OC{{\mathcal{O}}}

\def\TC{{\mathcal{T}}}

\numberwithin{equation}{section}
\begin{document}
\title[${\bb G}_m$-cohomology of $p$-adic Stein spaces]{${\bb G}_m$-cohomology of $p$-adic Stein spaces}
\author{Sally Gilles}
\address{Universit\"at Duisburg-Essen, Fakult\"at f\"ur Mathematik, Thea-Leymann-Str. 9, 
45127 Essen, Germany}
\email{sally.gilles@uni-due.de}
\author{Damien Junger} 
\address{Laboratoire de Mathématiques de Besançon, CNRS UMR 6623, 16 route de Gray, 25030 Besançon cedex, France}
\email{damien.junger@umlp.fr} 
\date{\today}

\maketitle

 \begin{abstract}
We compute the \'etale ${\bb G}_m$-cohomology of some $p$-adic rigid analytic Stein spaces. The computation is done by considering the filtration induced by the subgroup of principal units $U=1+ \mathfrak{m} \mathcal{O}^+$ of ${\bb G}_m$. We then determine the $U$-cohomology via methods from $p$-adic Hodge theory (passage to the pro-\'etale site, comparison theorems with $p$-adic cohomologies), while the ${\bb G}_m/U$-cohomology is obtained using Kummer exact sequences. In particular, our formula applies to the case of Drinfeld upper-half space.  
 \end{abstract}
\tableofcontents

\section{Introduction}

Our aim in this work is to describe the cohomology of ${\bb G}_m$, the \'etale sheaf of invertible functions, for some classical rigid spaces and more precisely smooth Stein spaces over a complete algebraically closed field $C/\Q_p$ (to avoid the contribution of the base fields and other technical descent). One of the main interests of these groups comes from the fact that they admit geometric interpretations, more particularly in degrees $0$, $1$ and $2$. For example, if a space $X$ is the complement of a normal crossing divisor $F$ on a proper space $\bar{X}$ (i.e. $X=\bar{X}\setminus F$), the group $\hhh^0_{\eet}(X, {\bb G}_m)$ should reflect some aspects of  the geometry of its compactification $\bar{X}$. On the other hand, the first degree is the Picard group which classifies some locally free modules of rank $1$ over the structure sheaf $\so$, whereas the second degree  classifies, at least in the algebraic case, some Azumaya algebras and Severi-Brauer varieties and these groups constitute deep invariants of the studied space. 

Those cohomology groups should also carry some motivic aspects of the space in question. For example, we have as many $\ell$-adic cohomology groups as prime integers $\ell$, and they all live over different fields with different structures and topologies. The sheaf ${\bb G}_m$  on the other hand only admits a structure of $\Z$-module but it can be related to all of these groups at the same time  by the Kummer exact sequence suggesting the hope to exhibit a common  $\Z$-structure.  

A lot is known for the  ${\bb G}_m$-cohomology for algebraic spaces, but the situation is rather mysterious on the analytic case, except in degree $0$ or $1$, where they could be compared to the cohomology on the analytic site (see for example for more detail on this strategy \cite{vdp,J1}). However, many phenomena that can only be seen on the \'etale site vanish in higher degrees and new methods are needed  (for example, in most of the examples treated in previous cited works, the analytic cohomology of ${\bb G}_m$ will vanish in degree higher than $2$ whereas \'etale cohomology will still exhibit interesting representations). 

Let us say here a few words about the algebraic case from which we will draw our inspiration for the strategy of this paper. Here the key result is the following observation of Grothendieck.

\begin{theorem}\cite[Proposition 1.4]{grobrau2}
For a regular noetherian scheme $X/C$, the groups $\hhh^i_{\eet}(X, {\bb G}_m)$ are torsion in degree $i\ge 2$. 
\end{theorem}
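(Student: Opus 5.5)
The plan is to reduce to Galois cohomology of function fields via the Leray spectral sequence for the inclusion of the generic points. We may work one connected component at a time, so assume $X$ is integral with function field $K$. Let $j\colon \eta=\Spec K\to X$ be the generic point. Regularity gives the exact sequence of étale sheaves
\[
0\to {\bb G}_{m,X}\to j_*{\bb G}_{m,K}\to \bigoplus_{x\in X^{(1)}} i_{x*}\Z\to 0,
\]
the divisor sequence. Exactness on the right uses that regular local rings, including strictly henselian local rings of $X$, are UFDs, so every Weil divisor is locally Cartier. The first step is therefore to establish this sequence on the étale site. The one point to check is that, for a strictly henselian local ring $\so^{sh}_{X,\bar x}$, the stalk of $j_*{\bb G}_m$ is the product of the units of the fraction fields of the generic points of $\Spec \so^{sh}_{X,\bar x}$. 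This ring is regular, hence a domain.

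The second step computes the cohomology of each of the other two terms.
\begin{itemize}
\item For the $\Z$-terms, $\hhh^i(X,i_{x*}\Z)$ maps into $\hhh^i(\kappa(x),\Z)$, and the terms $R^q i_{x*}\Z$ for $q\ge1$ are torsion (Galois cohomology of $\Z$ in positive degree is torsion). Galois cohomology $\hhh^i(k,\Z)$ is torsion for $i\ge1$ and $\hhh^1(k,\Z)=\Hom_{cont}(G_k,\Z)=0$. Since $X$ is noetherian, the direct sum commutes with cohomology. Hence $\hhh^i(X,\bigoplus i_{x*}\Z)$ is torsion for $i\ge1$.
\item For $j_*{\bb G}_m$, use the Leray spectral sequence $\hhh^p(X,R^qj_*{\bb G}_m)\Rightarrow \hhh^{p+q}(K,{\bb G}_m)$. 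The abutment is torsion for $p+q\ge1$ by Hilbert 90 and because Galois cohomology in positive degrees is torsion. For $q\ge1$, the stalks of $R^qj_*{\bb G}_m$ are Galois cohomology groups $\hhh^q$ of the fraction field of a strict henselization. These are torsion, so the sheaves $R^qj_*{\bb G}_m$ are torsion. Moreover $R^1j_*{\bb G}_m=0$ by Hilbert 90.
\end{itemize}

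The main obstacle is the Leray step. We must show that $\hhh^p(X,j_*{\bb G}_m)$ is torsion for $p\ge2$, even though the $E_2^{p,0}$ terms are not a priori controlled by the abutment, because differentials from torsion rows could hit them. My approach is to note that the edge maps $E_2^{p,0}\to \hhh^p(K,{\bb G}_m)$ have kernels built from images of differentials originating in $E_2^{p-q-1,q}$ with $q\ge1$. These sources are cohomology of torsion sheaves, hence torsion. Any element of $E_2^{p,0}$ is therefore killed by some integer, up to a finite filtration. Concretely, a finite chain of extensions of torsion groups is torsion. This gives that $\hhh^p(X,j_*{\bb G}_m)$ is torsion for $p\ge1$. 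The case $p=1$ also follows directly: with $R^1j_*=0$, the group $\hhh^1(X,j_*{\bb G}_m)$ injects into $\hhh^1(K,{\bb G}_m)=0$.

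Finally, take the long exact sequence of the divisor sequence. For $i\ge2$ it reads
\[
\hhh^{i-1}\Big(X,\bigoplus_x i_{x*}\Z\Big)\to \hhh^i(X,{\bb G}_m)\to \hhh^i(X,j_*{\bb G}_m).
\]
Both outer terms are torsion, since $i-1\ge1$. Hence $\hhh^i_{\eet}(X,{\bb G}_m)$ is torsion. This argument also explains why $i\ge2$ is needed: for $i=1$, the left term $\hhh^0(X,\bigoplus i_{x*}\Z)$ is the group of divisors, and it is not torsion.
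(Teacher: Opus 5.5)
Your proposal is correct and is essentially the argument the paper sketches in its introduction (Grothendieck's). It uses the divisor sequence $0\to{\bb G}_m\to j_*{\bb G}_{m,K}\to\bigoplus_{x\in X^{(1)}} i_{x*}\Z\to 0$ and controls the two right-hand terms via Leray spectral sequences and the torsion of positive-degree Galois cohomology; your handling of the $E_2^{p,0}$ terms as finite iterated extensions of torsion groups, using that $X$ is noetherian so that torsion sheaves have torsion cohomology, is what is needed to close the argument.
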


This allows, thanks to the Kummer exact sequence, to completely control the cohomology of ${\bb G}_m$ by the $\ell$-adic cohomology for all prime $\ell$. More precisely if the groups $\hhh^i_{\eet}(X, \Z/\ell^n\Z(-1))$ are all free over $\Z/\ell^n\Z$ of rank $\alpha_i$ independent of the prime $\ell$ and the integer $n$, then we would expect an identity of the form \[\hhh^i_{\eet}(X, {\bb G}_m)\simeq\bigoplus_{\ell}(\Q_\ell/\Z_\ell)^{\alpha_i}\simeq(\Q/\Z)^{\alpha_i}.\] This is best illustrated in the classical formula for  a point $X=\Spec(K)$ (more precisely when $K/\Q_p$ is a finite extension) \[{\rm Br}(K)\simeq  \Q/\Z\] proved via Hilbert's Theorem 90 and the calculation of the $\ell$-adic cohomology of $K$ (even if this particular example has preceded historically and suggested this strategy).

The idea is to exhibit a presentation of ${\bb G}_m$ \[0\to {\bb G}_m \to \iota_{\eta,*}K(\eta)\to {\rm Div}\to 0\] where each $\iota_{\eta,*}K(\eta)$ and ${\rm Div}$ can be related to pushforwards of sheaves on points of $X$. But the cohomology of these sheaves are known to be  torsion on non-zero degrees by  classical results on  cohomology of profinite groups. Unfortunately, this strategy completely fails for analytic spaces where we could not have such a simple description of ${\bb G}_m$ in terms of points. 

Even worse, we can exhibit smooth spaces for which these groups are not torsion. This is mainly due to the fact that the category of torsion groups is not stable under infinite limit in the category of groups. In particular, when we are interested in a space $X=\bigcup_n X_n$ admitting a presentation by increasing affinoid open $X_n$ (typically a Stein space), then the group $\hhh^i_{\eet}(X, {\bb G}_m)$ should admit as a quotient $\varprojlim_{n}\hhh^i_{\eet}(X_n, {\bb G}_m)$, which could very well be not torsion even when all the groups $\hhh^i_{\eet}(X_n, {\bb G}_m)$ are themselves torsion. It will be indeed the case for our main example, the Drinfeld symmetric space. However in  the examples we consider, all the desired cohomology groups are torsion for every opens $X_n$ making the obstruction still controlled and manageable. 

Our hope is to exhibit a sufficiently large class of  smooth quasicompact spaces for which those cohomology groups are indeed torsion. The first intermediate result of this paper actually builds on this intuition. To  properly state it, we introduce the usual Stein covering of the Drinfeld space ${\bb H}^d=\bigcup_n U_n$ of dimension $d \ge 1$ over $C$ (see for example \cite[§1]{scst}).

\begin{proposition}(Corollary \ref{torsion-disc1})
The  closed balls $\B(r)$, the closed annuli $\A(r,s)$ and the opens $U_n$ have  torsion  ${\bb G}_m$-cohomology in degree $i\ge 2$.
\end{proposition}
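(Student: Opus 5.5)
We follow the strategy announced in the abstract: filter ${\bb G}_m$ by the subgroup of principal units $U=1+\mathfrak{m}\so^+$ and use the exact sequence of \'etale sheaves
\[0\to U\to {\bb G}_m\to {\bb G}_m/U\to 0.\]
It then suffices to show two things for $X$ one of $\B(r)$, $\A(r,s)$, $U_n$ and every $i\ge 2$. First, $\hhh^i_{\eet}(X,U)$ is a $\Q_p$-vector space, so it contributes no torsion obstruction. Second, the connecting maps are controlled well enough that $\hhh^i_{\eet}(X,{\bb G}_m)$ is torsion.

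A cleaner route is to prove directly that $\hhh^i_{\eet}(X,{\bb G}_m)\otimes\Q=0$ for $i\ge 2$, by comparing with the Kummer sequences. Rationally, the $p$-power Kummer sequence identifies $\varprojlim$ and $\varinjlim$ of $\hhh^i(X,\mu_{p^n})$ with pieces of $\hhh^i(X,{\bb G}_m)$. For quasicompact $X$ these \'etale cohomologies are finite: for the $U_n$ we use the known computation via Schneider--Stuhler type results, and for balls and annuli the standard results. Torsion then reduces to showing that multiplication by $p$ is rationally invertible after modding out by torsion, which is exactly the statement that the non-torsion part is uniquely divisible with no contribution.

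The concrete plan has four steps.

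\textbf{Step 1.} Show that $\log\colon U\to \so$ induces, after passing to the pro-\'etale site and using the primitive comparison theorem, an isomorphism
\[\hhh^i_{\eet}(X,U)\otimes\Q\simeq \hhh^i_{\eet}(X,\so)\otimes\Q.\]
The kernel of $\log$ is $\mu_{p^\infty}$, which is torsion. The cokernel should be handled by the exponential on a small neighbourhood $p^{c}\so^+$.

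\textbf{Step 2.} Use that $X$ is affinoid, so $\hhh^i_{\eet}(X,\so)=0$ for $i\ge1$ by Tate acyclicity and \'etale descent. This gives that $\hhh^i_{\eet}(X,U)$ is torsion (indeed killed by a bounded power of $p$ modulo $\mu_{p^\infty}$-cohomology) for $i\ge 1$.

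\textbf{Step 3.} Treat ${\bb G}_m/U$. Since $U$ is $\ell$-divisible for $\ell\ne p$ and ${\bb G}_m/U$ is \'etale-locally built from $\overline{k}^\times$-valued and valuation data, show that ${\bb G}_m/U$ is $p$-divisible up to torsion. Then apply its Kummer sequences
\[0\to \mu_{m}\to {\bb G}_m/U\xrightarrow{m} {\bb G}_m/U\to 0.\]
Combined with finiteness of $\hhh^i(X,\mu_m)$ and the fact that ${\bb G}_m/U$ is uniquely divisible by the residue field structure after killing torsion, this shows that $\hhh^i(X,{\bb G}_m/U)$ is torsion for $i\ge 1$.

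\textbf{Step 4.} Conclude via the long exact sequence.

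The main obstacle is Step 3. Rational cohomology of a sheaf of $\Q$-vector spaces need not vanish, so one must really show that ${\bb G}_m/U\otimes\Q$ has trivial higher cohomology on these spaces. For this I would try to identify $({\bb G}_m/U)\otimes\Q$ with a constant-like sheaf, namely the valuation sheaf with values in $\Q$ together with $\overline{k}^\times\otimes\Q$. I would then compute its cohomology through the reduction or semistable model of $X$: the special fibre is a union of affine lines or the Bruhat--Tits complex in the $U_n$ case, and both are acyclic in degree $\ge2$ rationally.
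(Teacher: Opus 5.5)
Your Steps 1, 2 and 4 are sound and agree with the paper's Lemma \ref{U-filtration1}. The sequence $1\to\mu_{p^\infty}\to U\xrightarrow{\log}\so\to 1$ is already exact on $X_{\eet}$, so on an affinoid $\hhh^i_{\eet}(X,U)$ is a quotient of $\hhh^i_{\eet}(X,\mu_{p^\infty})$, hence $p^\infty$-torsion. No primitive comparison theorem is needed, and this contradicts your opening claim that $\hhh^i_{\eet}(X,U)$ is a $\Q_p$-vector space. Everything therefore reduces to showing that $\hhh^i_{\eet}(X,\overline{\so}^{\times})$ is torsion for $i\ge 2$. That is where your proposal has a genuine gap.

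\emph{Why Step 3 fails.} Kummer sequences for $\overline{\so}^{\times}$, even with finiteness of $\hhh^i_{\eet}(X,\mu_m)$ for $p\nmid m$, only control $\hhh^i[m]$ and $\hhh^i/m$. They can never exclude a nonzero uniquely divisible part: $\Q$ has trivial $m$-torsion and $m$-cotorsion but is not torsion. The same objection kills your ``cleaner route''. There, moreover, the groups $\hhh^i_{\eet}(X,\mu_{p^n})$ are not finite on affinoids; already $\hhh^1_{\eet}(\B,\mu_p)$ is infinite.

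\emph{Why the fallback does not close it.} You correctly note that, $X$ being quasi-compact, the claim amounts to $\hhh^i_{\eet}(X,\overline{\so}^{\times}\otimes\Q)=0$. But the fix you sketch is not an argument.
\begin{itemize}
\item $\overline{\so}^{\times}\otimes\Q$ is not constant-like. It has non-constant sections such as the coordinate $T$ of an annulus, or ratios $l_a/l_b$ of linear forms on $U_n$.
\item No available theorem compares $\hhh^i_{\eet}(X,\overline{\so}^{\times})$ with a cohomology of the special fibre. The paper's introduction mentions such a reduction only as a hope.
\item For the semistable models relevant to annuli and $U_n$, the special fibre is not regular. Rational acyclicity of the dual complex alone would not control the unit sheaves on the strata.
\end{itemize}

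\emph{The missing idea: a Leray computation that avoids models.} Take $X$ a smooth affinoid and $\pi\colon X\times Y\to X$ with $Y=\B(r)$ or $\A(r,s)$.
\begin{itemize}
\item Compute $\R^q\pi_*\overline{\so}^{\times}$ on stalks at geometric points $\Spa(L,L^+)$, using Heuer's continuity of $\overline{\so}^{\times}$-cohomology along cofiltered limits.
\item On $Y\times\Spa(L,L^{\circ})$, the exponential sequence together with the vanishing of $\hhh^q(\cdot,\so)$ and $\hhh^q(\cdot,\so^{\times})[\frac1p]$ for $q\ge 1$ (van der Put, de Jong--van der Put) gives $\R^q\pi_*\overline{\so}^{\times}=0$ for $q>0$.
\item The description of units on $Z\times\A(r,s)$ gives $\pi_*\overline{\so}^{\times}\simeq\overline{\so}^{\times}\times\Z[\frac1p]$ for the annulus.
\end{itemize}
Hence $\hhh^i_{\eet}(X\times\A(r,s),\overline{\so}^{\times})\simeq\hhh^i_{\eet}(X,\overline{\so}^{\times})\times\hhh^i_{\eet}(X,\Z[\frac1p])$, and for the disc the second factor is absent. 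The last factor is torsion because $\hhh^i_{\eet}(X,\Q)=0$ on affinoids. Your heuristic of a ``$\Q$-valued valuation part with vanishing rational cohomology'' is exactly this $\Z[\frac1p]$ factor, but it needs this computation to become a proof.

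Induction from a point then handles all products of discs and annuli. The $U_n$ are reached by a two-step Mayer--Vietoris argument. The Schneider--Stuhler fibrations cover each union $\bigcup_{H\in{\cal A}_n}H(|\varpi|^n)^c$ by pieces whose intersections are products of discs and annuli, so these unions have torsion cohomology. The second step then passes to the intersection $U_n=\bigcap_{H\in{\cal H}_n}H(|\varpi|^n)^c$.
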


We say a few words about the arguments. The sheaf   ${\bb G}_m$ admits, as a subgroup, the group  of principal units $U:=1+\so^{++}\subset {\bb G}_m$ which consists of the elements of the form $1+f$ with $f$ topologically nilpotent. Denoting by $\overline{\so}^{\times}={\bb G}_m/U$ the quotient,  both groups $U$ and $\overline{\so}^{\times}$ have intimate relations with coherent groups thanks to the following $\log$ and $\exp$ exact sequences  \begin{align*}
& 1\to \mu_{p^{\infty}} \to U \xrightarrow{{\rm log}}  \so \to 1, \\ 
& 1\to \so \xrightarrow{{\rm exp}} \so^{\times}[\tfrac{1}{p}] \to {\overline{\so}}^{\times} \to 1. \notag
  \end{align*}  
The long exact sequence associated to $1\to U\to{\bb G}_{m}\to \overline{\so}^{\times}\to 1$ allows to focus on studying the torsion of both groups $U$ and $\overline{\so}^{\times}$. The first one is automatically torsion when the space is quasi-compact and has no coherent cohomology thanks to the $\log$ exact sequence, and we are reduced to study  $\overline{\so}^{\times}$. For the moment, we can only show the desired  torsion property for specific examples, namely the balls, the annuli and other spaces built around these two such as the opens $U_n$. On the other hand,   $\overline{\so}^{\times}$ is related to the invertible functions on the special fiber when there exists a nice enough model and our hope is that in the general case (at least the semi-stable one), this question  could be treated by reduction to the algebraic case. This is somewhat close to what we did for the balls and the annuli. Actually, we have shown this intermediate property.

\begin{proposition}\label{torsijntro}(Lemma \ref{homot})
If $\hhh^i_{\eet}(X, \overline{\so}^{\times})$ is torsion then so are $\hhh^i_{\eet}(X \times \B(r), \overline{\so}^{\times})$ and $\hhh^i_{\eet}(X \times \A(r,s), \overline{\so}^{\times})$ for any smooth $X$.
\end{proposition}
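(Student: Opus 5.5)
The plan is to compare $\overline{\so}^{\times}$ on $X\times \B(r)$ (resp. $X\times\A(r,s)$) with $\overline{\so}^{\times}$ on $X$ through the projection $\pi: Y\to X$, and to show that the difference is controlled by an explicit, essentially discrete, sheaf. I take $r,s\in |C^\times|$, so that $\B(r)\simeq\B(1)$ and $\A(r,s)$ is a standard annulus.

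\textbf{Step 1: $\pi_*\overline{\so}^{\times}$ on a basis.} On an affinoid (or strictly totally disconnected) $V$ over $X$ with $V$ connected, write a unit of $\so(V)\langle T\rangle$ as $\sum a_nT^n$. A unit of a ball is dominated by its constant term, so it lies in $\so(V)^\times\cdot U$. On an annulus, a unit has the form $a\,T^k\,(1+\text{small})$ with a unique dominant monomial $aT^k$. Passing to the quotient by $U$, we expect
\[\pi_*\overline{\so}^{\times}_{X\times\B(r)}\simeq \overline{\so}^{\times}_X,\qquad \pi_*\overline{\so}^{\times}_{X\times\A(r,s)}\simeq \overline{\so}^{\times}_X\oplus \Z.\]
Here the $\Z$ records the degree $k$. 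This is locally constant on $V$, so globally it gives $\underline{\Z}$.

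\textbf{Step 2: higher direct images.} I would show $R^q\pi_*\overline{\so}^{\times}$ is torsion for $q\ge1$. The natural route uses the exp sequence $1\to\so\to\so^\times[\tfrac1p]\to\overline{\so}^\times\to1$ together with the log sequence. Coherent cohomology of a ball or annulus relative to an affinoid base vanishes in positive degree, so $R^q\pi_*\so=0$. Hence $R^q\pi_*\overline{\so}^{\times}\simeq R^q\pi_*(\so^\times[\tfrac1p])$, which is $R^q\pi_*{\bb G}_m\otimes\Z[\tfrac1p]$ since filtered colimits commute with cohomology on qcqs objects.

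To make this group torsion, I plan to use Kummer theory on the fibers. The groups $R^q\pi_*\mu_n$ are the cohomology of a ball (zero) or of an annulus (only $\mu_n$ in degree 1 is nonzero). I would combine this with the fact that on affinoid perfectoid covers $R^q\pi_*{\bb G}_m$ is locally $n$-divisible. Alternatively, I would reduce to stalks at geometric points of $X$, where the fiber is a ball or annulus over a complete algebraically closed-like field. There one can invoke the computation for $\B(r)$ and $\A(r,s)$ themselves (Corollary \ref{torsion-disc1}) in its relative form.

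\textbf{Step 3: Leray.} Run the Leray spectral sequence for $\pi$. The class of torsion abelian groups is a Serre subcategory. If $\hhh^i(X,\overline{\so}^{\times})$ is torsion for the relevant $i$, and the $E_2^{a,b}$ with $b\ge1$ are torsion, then the abutment is torsion. The terms $\hhh^a(X,R^b\pi_*)$ with $b\ge1$ are torsion because they are cohomology of torsion sheaves. In the annulus case the extra term $\hhh^i(X,\Z)$ needs care. Its torsion-ness for $i\ge2$ follows since $\hhh^i_{\eet}(X,\Q)=0$ for $i\ge1$ on a quasi-compact (or by limit, Stein) normal space: étale $\Q$-cohomology is Galois-type and torsion, given that $\hhh^1(X,\Z)$ vanishes for normal $X$. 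For $i=1$ it is $0$.

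\textbf{Main obstacle.} The hard step is Step 2: controlling $R^q\pi_*\overline{\so}^\times$ uniformly in families over a non-point base. I expect the cleanest fix is to work pro-étale locally on $X$ with affinoid perfectoid $V$. There the relative ball $V\times\B$ is again well understood, and the log/exp sequences together with the almost vanishing of $\so^+$-cohomology make the higher direct images of $U$ and $\so$ almost zero. That leaves only a Kummer-controlled, hence torsion, contribution.
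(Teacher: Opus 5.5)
Your overall architecture matches the paper's: the Leray spectral sequence for $\pi$, the computation of $\pi_*\overline{\so}^{\times}$, control of the higher direct images, and torsion of $\hhh^i_{\eet}(X,\Z[\frac{1}{p}])$. Your reduction of $R^q\pi_*\overline{\so}^{\times}$ to $R^q\pi_*{\bb G}_m\otimes\Z[\frac{1}{p}]$ via the exp sequence is also correct. \textbf{The gap is Step 2, which carries the whole content of the lemma and is not carried out.}

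\emph{The Kummer route cannot work in principle.} For $p\mid n$, your claim that $R^q\pi_*\mu_n$ vanishes for the ball is false: a closed disc over $C$ has infinite $\hhh^1_{\eet}(-,\mu_p)$. For $n$ prime to $p$, even granting the fibrewise computation of $R^q\pi_*\mu_n$, the Kummer sequence only shows that $R^q\pi_*{\bb G}_m$ ($q\ge 1$) is uniquely $n$-divisible. Combined with the unique $p$-divisibility of $\overline{\so}^{\times}$, the best you get is that $R^q\pi_*\overline{\so}^{\times}$ is a sheaf of $\Q$-vector spaces, which is the opposite of torsion. Divisibility, on perfectoid covers or elsewhere, never produces torsion, so ``Kummer-controlled, hence torsion'' is a non sequitur. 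As the introduction stresses, torsion of ${\bb G}_m$-cohomology is an independent input, and Kummer theory only becomes useful once it is known.

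\emph{The stalk route is the right one (it is what the paper does), but both of its ingredients are missing.} Write $D$ for $\B(r)$ or $\A(r,s)$.
First, you must identify the stalk of $R^q\pi_*\overline{\so}^{\times}$ at a geometric point $\Spa(L,L^+)$ with $\hhh^q_{\eet}(D\times\Spa(L,L^+),\overline{\so}^{\times})$. For a non-torsion sheaf no base change theorem gives this. The paper uses Heuer's result that $\overline{\so}^{\times}$-cohomology commutes with cofiltered limits of smooth spaces \cite[Proposition~3.2]{Heu25}.
Second, and decisively, you need the fibrewise vanishing, and appealing to Corollary~\ref{torsion-disc1} is circular. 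That corollary is deduced from the very lemma you are proving, by induction starting from a point. The case you need is exactly that base case, and you need it over $L$ rather than $C$ and in degree $1$ as well.
The missing input is as follows. After replacing $L^+$ by $L^{\circ}$, set $D_L=D\times\Spa(L,L^{\circ})$. The exp sequence and $\hhh^q(D_L,\so)=0$ ($D_L$ is affinoid) reduce everything to $\hhh^q_{\eet}(D_L,\so^{\times})[\frac{1}{p}]=0$ for $q\ge 1$. This is van der Put's vanishing of the Picard group \cite[(3.25)]{vdp} for $q=1$, and de Jong--van der Put \cite[Corollary~2.5.10]{dJ-vdP} for $q\ge 2$. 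It gives $R^q\pi_*\overline{\so}^{\times}=0$, not merely torsion.

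Two smaller points:
\begin{enumerate}
\item In Step 1 the annulus contributes $\Z[\frac{1}{p}]$, not $\Z$. The class of $T$ becomes $p$-divisible on the finite \'etale Kummer covers, as it must since $\overline{\so}^{\times}$ is uniquely $p$-divisible (Lemma~\ref{lem-heuer}).
\item In Step 3 the justification via $\hhh^1(X,\Z)=0$ for normal $X$ is scheme-theoretic and fails for rigid spaces: a Tate curve $E$ has $\hhh^1_{\eet}(E,\Z)\neq 0$. For affinoid $X$ the vanishing of $\hhh^i_{\eet}(X,\Q)$, $i\ge 1$, comes instead from de Jong--van der Put, as in Lemma~\ref{torsion-Z}.
\end{enumerate}
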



The argument presented here actually reveals some remarkable phenomenon as the group $U$ only exhibits the $p^\infty$-torsion of $\hhh^i_{\eet}(X, {\bb G}_m)$, whereas $\overline{\so}^{\times}$ is related to the prime-to-$p$ torsion (see Lemma \ref{U-filtration1} and Remark \ref{ptors}). Thanks to  this intermediate result, we can determine the cohomology of each open affinoid $X_n$ in the Stein covering of $X$ by the Kummer exact sequence, and then apply a limit argument to  completely describe it as an extension of $\limp_{n}\hhh^i_{\eet}(X_n, {\bb G}_m)$ by  $\rrr^1\limp_{n}\hhh^{i-1}_{\eet}(X_n, {\bb G}_m)$ by the usual formalism of the derived limit. Thanks to this strategy, we are able to describe $\hhh^i_{\eet}(X, {\bb G}_m)$ for some classical Stein spaces and obtain the main result of our paper. 

\begin{theorem}\label{mainthintro}
We have, for the affine space ${\bb A}^d$ of dimension $d \ge 1$ over $C$, \[ \hhh^i_{\eet}({\bb A}^d, {\bb G}_m) \simeq \Omega^{i-1}({\bb A}^d)(-i+1)/{\rm Ker}(d), \]  and for $i\ge 2$ and $X$ a poly-torus ${\bb G}_m^d$  or the Drinfeld symmetric space ${\bb H}$, there exists a filtration $\FC^{(i)}(X)$ of $\hhh^i_{\eet}(X, {\bb G}_m)$  such that the graded pieces are:
\begin{align*}
{\rm gr}^0 \FC_{\eet,{\bb G}_{m}}^{(i)}({\bb G}_{m,C}^d) &\simeq \Omega^{i-1}({\bb G}^d_{m,C})(-i+1)/{\rm Ker}(d), \ \ \ {\rm gr}^1 \FC_{\eet,{\bb G}_m}^{(i)}({\bb G}_{m,C}^d) \simeq (\wedge^{i} \Q^d_p/\wedge^{i} \Z^d_p)(-i+1), \\ 
&{\rm gr}^2 \FC_{\eet,{\bb G}_m}^{(i)}({\bb G}_{m,C}^d) \simeq \bigoplus_{\ell\neq p} (\wedge^{i} \Q^d_\ell/\wedge^{i} \Z^d_\ell)(-i+1); 
\end{align*}
\begin{align*}
{\rm gr}^0 \FC_{\eet,{\bb G}_m}^{(i)}({\bb H}^d) &\simeq \Omega^{i-1}({\bb H}^d)(-i+1)/{\rm Ker}(d), \ \ \ {\rm gr}^1 \FC_{\eet,{\bb G}_m}^{(i)}({\bb H}^d) \simeq  ({\rm Sp}_{i}(\Q_p)^*/{\rm Sp}_{i}( \Z_p)^*)(-i+1), \\
&{\rm gr}^2 \FC_{\eet,{\bb G}_m}^{(i)}({\bb H}^d) \simeq \widehat{\bigoplus_{\ell\neq p}} (({\rm Sp}_{i}(\Z_\ell)^* \otimes \Q_{\ell})/{\rm Sp}_{i}(\Z_\ell)^*)(-i+1),
\end{align*}
 where the sum is completed for some well-chosen topology (see \textsection~\ref{l-adic} for the definition of this topology).
\end{theorem}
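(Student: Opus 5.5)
Our plan is to filter ${\bb G}_m$ by $1\to U\to{\bb G}_m\to\overline{\so}^{\times}\to 1$ and compute each piece on $X=\bigcup_n X_n$, where $X_n$ is the Stein covering (closed balls for ${\bb A}^d$, products of annuli for ${\bb G}_m^d$, the opens $U_n$ for ${\bb H}^d$). The three graded pieces should then come out as follows: ${\rm gr}^0$ from the $\log$ sequence via $U$, ${\rm gr}^1$ from $p$-adic Kummer theory, and ${\rm gr}^2$ from $\ell$-adic Kummer theory for $\ell\neq p$. Both ${\rm gr}^1$ and ${\rm gr}^2$ are assembled through $\limp_n$ and $\rrr^1\limp_n$.

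\emph{Step 1 ($U$-cohomology).} We pass to the pro-\'etale site and use $1\to\mu_{p^\infty}\to U\xrightarrow{\log}\so\to 1$. On a Stein space, $\hhh^i_{\eet}(X,\so)$ does not vanish in general, since \'etale cohomology of $\so$ differs from the analytic one. We would instead use the comparison $\hhh^i_{\eet}(X,\so)\simeq\Omega^i(X)(-i)$ modulo closed forms, together with the syntomic/pro-\'etale description of $\hhh^i_{\proeet}(X,\Q_p(1))$ as an extension of $\hhh^i_{\dr}(X)$-type data by $\Omega^{i-1}(X)/{\rm Ker}(d)$, as in Colmez--Dospinescu--Nizio{\l}. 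Feeding this into the long exact sequence should give
\[ \hhh^i(X,U)\ \text{an extension of}\ \hhh^i(X,\Q_p/\Z_p(1))\text{-contributions by }\Omega^{i-1}(X)(-i+1)/{\rm Ker}(d). \]
For ${\bb A}^d$ the $p$-adic \'etale cohomology vanishes in positive degree, so only this term survives. For ${\bb G}_m^d$ and ${\bb H}^d$, the $\Q_p/\Z_p$ part is computed from $\hhh^i_{\eet}(X,\Z_p(i))=\wedge^i\Z_p^d$, resp. ${\rm Sp}_i(\Z_p)^*$ (Schneider--Stuhler), after twisting. This yields ${\rm gr}^1$ in the shape $\wedge^i\Q_p^d/\wedge^i\Z_p^d$, resp. ${\rm Sp}_i(\Q_p)^*/{\rm Sp}_i(\Z_p)^*$.

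\emph{Step 2 ($\overline{\so}^{\times}$-cohomology).} By Corollary~\ref{torsion-disc1} and Lemma~\ref{homot}, $\hhh^i(X_n,\overline{\so}^{\times})$ is torsion for $i\ge2$. By Remark~\ref{ptors} its torsion is prime to $p$, because $\overline{\so}^{\times}$ is uniquely $p$-divisible via the $\exp$ sequence. The Kummer sequences for $\ell\neq p$ then identify
\[ \hhh^i(X_n,\overline{\so}^{\times})\simeq\bigoplus_{\ell\neq p}\hhh^i(X_n,\Z_\ell(1))\otimes\Q_\ell/\Z_\ell, \]
after checking that the $\ell$-adic cohomology of $X_n$ is torsion free and independent of $n$ in the relevant range. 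Passing to $X$ with the Milnor sequence, we expect $\rrr^1\limp$ terms to vanish by Mittag-Leffler, since the transition maps are surjective (isomorphisms for the balls and annuli). The limit of direct sums then gives the completed sum in ${\rm gr}^2$ for ${\bb H}^d$, which is only a plain sum for ${\bb G}_m^d$ because the ranks are finite. For ${\bb A}^d$ all of this vanishes.

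\emph{Step 3 (assembly).} We define $\FC^{(i)}$ by the image of $\hhh^i(X,U)$ and its sub-piece coming from $\Omega^{i-1}/{\rm Ker}(d)$. This requires the map $\hhh^{i-1}(X,\overline{\so}^{\times})\to\hhh^i(X,U)$ to be zero, which holds for torsion reasons: the prime-to-$p$ torsion of $\hhh^{i-1}(X,\overline{\so}^{\times})$ maps into a group whose torsion is $p$-primary. It also requires surjectivity onto $\hhh^i(X,\overline{\so}^{\times})$, which follows from the same argument in degree $i+1$.

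The main obstacle is Step 1 for ${\bb H}^d$. There one must control the interaction between the $\limp$ over $U_n$ and the $p$-adic comparison theorems, and show that $\rrr^1\limp$ of the $p$-adic pieces produces exactly the quotient $\Q_p$-dual modulo $\Z_p$-dual. We would first attempt to use the Schneider--Stuhler compatibility of the Stein covering, namely that $\hhh^i(U_n)\to\hhh^i(U_{n-1})$ stabilises.
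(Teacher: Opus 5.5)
Your overall architecture is the paper's: the $U$/$\overline{\so}^{\times}$ d\'evissage, an $\Omega^{i-1}/{\rm Ker}(d)$ piece inside the $U$-part, $\ell$-adic Kummer theory on the $X_n$, and Milnor sequences. However, Step~1 rests on a false statement and omits the key integral input.

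\textbf{Step 1.} On a Stein space, $\hhh^j_{\eet}(X,\so)$ is coherent cohomology and vanishes for $j\ge1$. What equals $\Omega^j(X)(-j)$ is $\hhh^j_{\proeet}(X,\widehat{\so})$, with no quotient by closed forms. So running the $\log$ sequence pro-\'etale computes $\hhh^i_{\proeet}(X,U)$. That group surjects onto ${\rm Im}({\rm HTlog}_{U,i})\supset\Omega^i(X)(-i)^{d=0}$, so it is not the group you want, and you never say how to return to \'etale cohomology. The paper does this with the Leray spectral sequence for $\nu$ (Proposition~\ref{filtration-HT}).

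Used correctly, the vanishing gives $\hhh^i_{\eet}(X,U)\simeq\hhh^i_{\proeet}(X,\Q_p/\Z_p(1))$ for $i\ge2$. The sequence $0\to(\Omega^{i-1}(X)/{\rm Ker}\,d)(1-i)\to\hhh^i_{\proeet}(X,\Q_p(1))\to{\rm HK}^i(X)(1-i)\to0$ then produces the two pieces only under two conditions. First, the image of $\hhh^i(X,\Z_p(1))$ must meet ${\rm Im}({\rm Exp})$ trivially. Second, $\hhh^{i+1}(X,\Z_p(1))\to\hhh^{i+1}_{\proeet}(X,\Q_p(1))$ must be injective. Together these are ${\bf (H_3)}$; in the paper this enters as the vanishing of $\mathcal{I}^{i}(X)$ in Theorem~\ref{vpic-main}. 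With that input, your route would be a valid shortcut around the ${\rm HTlog}$ comparison, since the statement only asks for some filtration.

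Your ``main obstacle'' is misplaced. The $p$-adic part involves no limit over the $U_n$; it uses the global computations of $\hhh^i({\bb H}^d,\Z_p)$ and ${\rm HK}^i({\bb H}^d)$ directly on the Stein space. An $\R^1\varprojlim$ of $p$-adic cohomology of the affinoids $U_n$ would run into their non-finite $p$-adic cohomology. Nor does $\hhh^i(U_n)$ stabilize in $n$.

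\textbf{Steps 2--3 for ${\bb H}^d$.} Torsion of $\hhh^i(U_n,\overline{\so}^{\times})$ is not given by Corollary~\ref{torsion-disc1} and Lemma~\ref{homot}; it needs the Mayer--Vietoris argument of Proposition~\ref{torsion-cov-dr}. The $\ell$-adic cohomology of $U_n$ has rank growing with $n$, and surjectivity of the transition maps is unproved. What kills $\R^1\varprojlim$ of the sums $\bigoplus_{\ell\neq p}$ is a single system of $\Z[1/p]$-lattices $N_n$ computing $\hhh^i(U_n,\mu_{\ell^m})$ for all $\ell$ at once, which is hypothesis ${\bf (H_2)}$. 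It makes the system a quotient of the Mittag-Leffler system $(N_n\otimes\Q)_n$ (Lemma~\ref{r1lim-nul}). Establishing this flatness is all of Section~\ref{flat-0}.

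Finally, the torsion argument in Step~3 does not give $\delta=0$ on $X={\bb H}^d$. Here $\hhh^{i}(X,\overline{\so}^{\times})\simeq\varprojlim_n\bigoplus_\ell\hhh^i(U_n,\mu_{\ell^\infty})$ is not torsion, and $\hhh^i(X,U)$ contains the $\Q$-vector space $\Omega^{i-1}/{\rm Ker}(d)$. The paper instead proves the vanishing on each $X_n$ via the canonical splitting (Lemma~\ref{U-filtration1}). It then transports this to $X$ through the Milnor sequences and right exactness of $\R^1\varprojlim$ (Corollary~\ref{U-filtration0}). For $i=2$ this also uses ${\rm Pic}(X_n)=0$ and $\R^1\varprojlim_n\overline{\so}^{\times}(X_n)=0$.
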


\begin{remark}
\begin{itemize}
    \item We can actually  construct a filtration $\FC^{(i)}(X)$ of $\hhh^i_{\eet}(X, {\bb G}_m)$ for any  smooth Stein space $X/C$.
    \item For the affine space, the first and second graded pieces $\FC^{(i)}({\bb A}^d)$ are trivial and all three descriptions above are actually analogous.
    \item The graded pieces of this filtration are not explicit for  any  smooth Stein space, but we give general conditions for which we can  achieve the calculation. All the examples ${\bb A}^d$, ${\bb G}_{m}^d$ and ${\bb H}$ above satisfy these hypotheses. 
    \item Fix $(X,(X_n)_n))$ a smooth Stein space together with a fixed Stein covering and  let us put specify the three main conditions metnioned in the previous point (the rest of hypotheses are there to avoid some technicalities and pathologies in the limit argument), namely we ask for the groups $\hhh^i_{\eet}(X_n, {\bb G}_m)$ to be torsion, for the groups $\hhh^i_{\eet}(X_n, \Z/\ell^k\Z)$ with $\ell \neq p$ to be flat over $\Z/\ell^k\Z$ and for the injectivity of the maps $\hhh^i_{\eet}(X, \Z_p(i)) \to {\rm HK}^i(X)$ for all $n$ and $i\ge 2$, where $ {\rm HK}^i(X)$ denotes the $\wh{\B}^+_{\st}$-twisted Hyodo-Kato cohomology from Colmez-Dospinescu-Nizio{\l} (see Section~\ref{section-main-thm} for a more precise definition). 
    \item We can also construct a similar filtration for the pro-\'etale cohomology and calculate its graded pieces under the same general conditions. 
\end{itemize}
\end{remark}

Let us explain how to construct such a filtration $\FC^{(i)}(X)$ for a general smooth Stein space $X$. By d\'evissage of ${\bb G}_m$ by the principal units $U$, we have a filtration with subobject \[\FC_{1}^{(i)}(X):= {\rm Im}(\hhh^i_{\eet}(X, U)\to \hhh^i_{\eet}(X, {\bb G}_m))\] and quotient \[{\rm gr}^2 \FC^{(i)}(X):={\rm Im}(\hhh^i_{\eet}(X, {\bb G}_m)\to \hhh^i_{\eet}(X,\overline{\so}^{\times})).\] Note that by  the conclusion of Proposition~\ref{torsijntro} (together with extra technical conditions that we will not mention in this introduction, see for example Corollary \ref{U-filtration0} for more details), we have instead identifications $\FC_{ 1}^{(i)}(X)\simeq \hhh^i_{\eet}(X, U)$ and ${\rm gr}^2 \FC^{(i)}(X)\simeq \hhh^i_{\eet}(X, \overline{\so}^{\times})$.  Now we explain how to give a subfiltration of the term $\FC_{1}^{(i)}(X)$ by considering the pro-\'etale site. More precisely we have a natural map of topoi $\nu : X_{\proeet} \to X_{\eet}$ which gives rise to compatible  Leray spectral sequences \begin{align*}
&E_2^{i,j}({\bb G}_m)= \hhh^i_{\eet}(X, \R^j\nu_{*} {\bb G}_m) \Rightarrow \hhh^{i+j}_{\proeet}(X, {\bb G}_m)  \\
&E_2^{i,j}(U)= \hhh^i_{\eet}(X, \R^j\nu_{*} U) \Rightarrow \hhh^{i+j}_{\proeet}(X, U). \notag
\end{align*} Thanks to $\log$ and $\exp$ methods and Hodge-Tate isomorphism  most of the terms of those spectral sequences  vanish  by vanishing of the coherent cohomology of $X$. Moreover, for $i\ge 1$, we can construct (compatible) morphisms: 
\begin{align*}
    {\rm HTlog}_i : \hhh^i_{\proeet}(X, {\bb G}_m)\to \Omega^i(X)(-i) \\
    {\rm HTlog}_{U,i} : \hhh^i_{\proeet}(X, U)\to \Omega^i(X)(-i)
\end{align*}
such that the degeneracy of those spectral sequences yields (compatible) long exact sequences\begin{align*} \dots \xrightarrow{{\rm HTlog}_{i-1}} \Omega^{i-1}(X)(-i+1)\to \hhh^i_{\eet}(X, \so^{\times}) \to \hhh^i_{\proeet}(X, \so^{\times}) \xrightarrow{{\rm HTlog}_i} \Omega^i(X)(-i)\to \cdots, \\
\cdots \xrightarrow{{\rm HTlog}_{U,i-1}} \Omega^{i-1}(X)(-i+1) \to \hhh^i_{\eet}(X, U) \to \hhh^i_{\proeet}(X, U) \xrightarrow{{\rm HTlog}_{U,i}} \Omega^i(X)(-i)\to \cdots
\end{align*} Setting $\FC_{0}^{(i)}(X):= {\rm Im}(\Omega^{i-1}(X)\to \hhh^i_{\eet}(X, {\bb G}_m))\simeq{\rm Im}(\Omega^{i-1}(X)\to \hhh^i_{\eet}(X, U)) $ we can define a total filtration \[\hhh^i_{\eet}(X, {\bb G}_m)=\FC_{2}^{(i)}(X)\supset\FC_{1}^{(i)}(X)\supset\FC_{0}^{(i)}(X)\supset\FC_{-1}^{(i)}(X)=0.\]

Let us describe the different arguments used for the computation of these terms ${\rm gr}^k \FC^{(i)}(X)$. For $k=0, 1$, the graded pieces of interest identify with  $\ker({\rm HTlog}_{U,i+1})$ (respectively $\coker({\rm HTlog}_{U,i})$) which have been determined in \cite{EGN24}, by comparing the morphism ${\rm HTlog}_{U,i}$ with the Bloch-Kato exponential from the works of Colmez-Dospinescu-Nizio{\l} and Bosco. 

For $k=2$, we use Proposition \ref{torsijntro} and the strategy presented above to calculate $\hhh^i_{\eet}(X, \overline{\so}^{\times})$ and describe it in terms of the  $\ell$-adic cohomology groups for all prime $\ell\neq p$. Such a simple description is achieved when the space in question has divisible $\ell$-adic cohomology (and other technicalities). This can be  reduced to show that each group  $\hhh^i_{\eet}(X_n, \mu_{\ell^m})$ is flat over $\Z/\ell^m\Z$ for any $m>0$, $n$ and any  prime $\ell \neq p$, and with rank bounded "uniformly" in the prime $\ell$ (see hypothese ${\bf (H_2)}$ before the statement of the main theorem  \ref{main-theorem} for more details). For the balls $\B(r)$ and annuli $\A(r,s)$, this is relatively straightforward by explicit computations (and the fact that the transition maps in the projective system are isomorphisms). 

For the symmetric space ${\bb H}$, the task is far from being obvious because while an extensive literature exists on the calculation of the  $\ell$-adic cohomology groups for the global space ${\bb H}$, none of those articles describes the cohomology of the intermediate open $U_n$ appearing in the standard Stein covering of ${\bb H}$. We have managed to prove the desired flatness property by adapting some arguments of de Shalit \cite{ds} and some others by Schneider and Stuhler \cite{scst,scst2} for the global computation. These arguments  actually work for any open $U\subset  {\bb H}$ which is the preimage of a convex subcomplex in the associated Bruhat-Tits building. For $U_n$, it is the preimage of the ball of radius $n$ inside the Bruhat-Tits building for the  combinatorial distance, and this subcomplex happens to be convex\footnote{A statement not as straightforward as it seems!}.

Now we describe the content of the paper section by section. In section 2, we explain how to construct the filtration $\FC^{(i)}(X)$ of the main theorem for any smooth Stein space $X/C$ and state the main result, Theorem \ref{main-theorem}, where we give general technical hypotheses under which we can completely determine its graded pieces. Then we also state the main application, namely Theorem \ref{mainthintro}, which is the determination of the ${\bb G}_{m}$-cohomology of classical spaces. In the next section, we use methods from \cite{EGN24} to compute the zeroth and first graded pieces of $\FC^{(i)}(X)$ by relating the map ${\rm HTlog}$ and the Bloch-Kato exponential ${\rm Exp}$ under the hypothesis that the map $\iota :\hhh^i_{\eet}(X, \Z_p(i)) \to {\rm HK}^i(X)$ is injective. In section 4, we calculate the second graded piece when the ${\bb G}_{m}$-cohomology of all the opens $X_n$ of a fixed Stein covering of $X$ are torsion and all the groups $\hhh^i_{\eet}(X_n, \Z/\ell^k\Z)$ are flat over $\Z/\ell^k\Z$, thus finishing the proof of Theorem \ref{main-theorem}. The rest of the text is then devoted to prove that our three examples satisfy the general conditions mentioned above. For the injectivity of $\iota$ this can be checked directly by deep explicit computations available in the literature and section 5 establishes that all the classical examples have torsion $\overline{\so}^{\times}$-cohomology. This is done by standard calculations for ${\bb A}^d$ and ${\bb G}_{m}^d$ and then we use a combinatorial argument to propagate this property to  the Drinfeld symmetric space ${\bb H}$. The last section deals with the flatness of $\ell$-adic cohomology of some opens of ${\bb H}$ and it relies on the  acyclicity of coefficient systems on the Bruhat-Tits building coming from the theory of hyperplane arrangements.   

\begin{acknowledgments}
This paper generalizes a computation that was done in the case of the affine space in \cite{EGN24} by the first author together with Veronika Ertl and Wies{\l}awa Nizio{\l} and which was originally suggested by Ben Heuer: we would like to thank the three of them for helpful conversations related to this subject, as well as Frédéric Déglise, Gabriel Dospinescu, Philippe Gille and Jan Kohlhaase.

The second author's work has been funded by the Deutsche Forschungsgemeinschaft (DFG, German Research Foundation) under Germany's Excellence Strategy EXC 2044–390685587, Mathematics Münster: Dynamics–Geometry–Structure, and  support by the ERC in form of Consolidator Grant 770936: Newton-Strat. He is also a member of the project ANR-25-CE40-7869 "pDefi".
\end{acknowledgments}

\subsubsection*{Notation and conventions.} Let $\so_K$ be a complete discrete valuation ring with fraction field $K$  of characteristic 0 and with perfect
residue field ${\bf F}$ of characteristic $p$. Fix $\varpi$ a uniformizer. Let $\ovk$ be an algebraic closure of $K$ and let $\so_{\ovk}$ denote the integral closure of $\so_K$ in $\ovk$. Let $C=\wh{\ovk}$ be the $p$-adic completion of $\ovk$ and write ${\so}_C$ the associated ring of integers with maximal ideal ${\frak m}_C$. We write $F$ for the fraction field of the ring of Witt vectors $W({\bf F})$, $\breve{F} :=W(\overline{{\bf F}})[\frac{1}
{p}]$ for the completion of the maximal unramified extension of $F$ and $\varphi$ the Frobenius morphism on $W(\overline{{\bf F}})$. We write $\G_K:= \mathrm{Gal}(\overline{K}/K)$ for the absolute Galois group of $K$. For $V$ a $\G_K$-representation, we denote by $V(i)$ the representation obtained by twisting the action of $\G_K$ by the $i$-th power of the cyclotomic character. Let $\widehat{\B}_{\st}$ be the (completed) semistable period ring of Fontaine. It comes equipped with a Frobenius that we will again denote by $\varphi$ and a monodromy operator $N$. 


If $\Lambda$ is a (graded) ring, we will write $\Lambda-{\rm mod}$ for the category of $\Lambda$-modules and $\Lambda-{\rm grad}$ for the category of graded $\Lambda$-modules. 

All rigid analytic spaces considered will be over $K$ or $C$. 
We assume that they are separated, taut (that is, for  all quasi-compact  opens $V$ of  $X$,  the closure $\overline{V}$ of  $V$ in  $X$ is quasi-compact), and countable at infinity. We say that a rigid analytic space $X$ is Stein if it has an admissible affinoid covering $X = \bigcup_{i \in \N}U_i$ such that $U_i \Subset U_{i+1}$,
i.e., the inclusion $U_i \subset U_{i+1}$ factors through the adic compactification of $U_i$. Recall that, for such spaces, coherent sheaves are acyclic by Theorem $B$ of Kiehl (see \cite[Satz 2.4]{kie}). 

We introduce here some classical rigid spaces over $C$. We will denote by $\B^d(r)$  the closed ball of radius $r$ and dimension $d$ (and $\B^d$ when $r=1$ or $\B(r)$ when $d=0$) – not to be confused with the ball $B(n)$ for the combinatorial distance in the Bruhat-Tits building introduced in \textsection\ref{flat-prelim}, by $\A^d(r,s)$ the annulus of biradii $r\le s$, by ${\bb A}^d$ the affine space, ${\bb G}_m^d$ the torus and ${\bb P}^d$ the projective space of dimension $d$. Finally, $\mathbb{H}^d_K$ will be the Drinfeld symmetric space of dimension $d$ over $K$ and $\mathbb{H}^d:=\mathbb{H}_K^d \times_K C$ its base change to $C$.

For $M$ a $\Z$-module and $n$ an integer, we will denote by $M[n]$ its $n$-torsion, by $M[n^\infty]=\limi_{k\ge  0} M[n^k]$ its $n^\infty$ torsion and by $M[n']=\limi_{m:m\wedge n=1} M[m]$ its prime-to-$n$ torsion. 
\section{Filtrations on $\hhh^i_{\eet}(X, {\bb G}_m)$}
In this section we define two filtrations on the cohomology groups $\hhh^i_{\eet}(X, {\bb G}_m)$ for $i \ge 2$. The first one is obtained by considering the subgroup of principal units of ${\bb G}_m$ while the second one is induced by the Hodge-Tate logarithm map introduced by Heuer in \cite{Heu22}.  

\subsection{The $U$-filtration}
We begin by introducing some constructions on subsheaves of ${\bb G}_m$ from which we will derive natural filtrations on $\hhh^i_{\eet}(X, {\bb G}_m)$ by d\'evissage. 

\begin{notation}
Denote by $\nu : X_{\proeet} \to X_{\eet}$ the canonical projection of sites and let  $\tau\in\{{\eet}, \proeet\}$, consider (omitting $\tau$ in the notation when the situation is clear):
\begin{itemize}
    \item $\so_{\tau}$ the (completed) structure sheaf,
    \item $\so^{+}_{\tau}$  the sheaf of integral elements,
    \item $\so^{\times}_{\tau}$ or ${\bb G}_{m,\tau}$ the sheaf of invertible functions,
    \item $U_{\tau}:=1+ \mathfrak{m}_{C}\so^+_{\tau} \subset \so_{\tau}^{\times}$  the sheaf of principal units,
    \item $\overline{\so}^{\times}_\tau$ the quotient of $\so_{\tau}^{\times}$ by $U_{\tau}$,
    \item The $p$-adic exponential and logarithm maps define morphisms of sheaves $${\rm exp}: q\so^{+} \to 1+ q\so^{+} \text{ and } {\rm log}:1+ \mathfrak{m}_C \so^{+} \to \so$$ mutual inverse between $1 +q\so^+$ and  {$q\so^{+}$} and  where $q=p$ if $p>2$ and $q=4$ if $p=2$.
\end{itemize}
\end{notation}
The study of the long exact sequence associated to on $\hhh^i_{\eet}(X, {\bb G}_{m})$ 
\[1\to U_{\eet}\to{\bb G}_{m}\to \overline{\so}^{\times}_{\eet}\to 1\] yields a first filtration \[\FC_{U}^{(i)}:\hhh^i_{\eet}(X, {\bb G}_{m})= \FC_{U,1}^{(i)} \supset\FC_{U,0}^{(i)} \supset\FC_{U,-1}^{(i)}=0\] with graded pieces ${\rm gr}^0\FC_{U}^{(i)}\simeq\coker(\delta_{i-1})$ and ${\rm gr}^1\FC_{U}^{(i)}\simeq\ker(\delta_i)$ where $\delta_i$ is the edge morphism $\hhh^{i}_{\eet}(X, \overline{\so}^{\times})\to \hhh^{i+1}_{\eet}(X, U)$. Our goal here is to describe these terms and we will need the following facts that were, for example, proven by Heuer in  \cite{Heu2,Heu22}:

  \begin{lemma}\label{lem-heuer}
 Let $i\ge 1$, $X$ be smooth, $\nu : X_{\proeet} \to X_{\eet}$ the canonical projection of sites and $\tau\in\{{\eet}, \proeet\}$ as before. Then,
 
 \begin{enumerate}
     \item \cite[Lemma\,2.16]{Heu22} the sheaf $\overline{\so}^{\times}$ is uniquely $p$-divisible, i.e.\footnote{For an abelian sheaf $\mathcal{G}$ on $X_\tau$, write $\mathcal{G}[\tfrac{1}{p}]$ for the sheaf of abelian groups $\varinjlim_{x \mapsto x^p} \mathcal{G}$. In  particular, if $X$ is quasi-compact then for all $i$, $\hhh_{\tau}^i(X,\mathcal{G}[\tfrac{1}{p}])= \hhh_{\tau}^i(X,\mathcal{G})[\tfrac{1}{p}]$. }  $\overline{\so}_{\tau}^{\times}[\tfrac{1}{p}]\simeq \overline{\so}_{\tau}^{\times}$, \label{pdiv}
     \item \cite[Corollary\,2.11]{Heu2} we have $\R\nu_*\overline{\so}^{\times}=\overline{\so}^{\times}$ and in particular, $\hhh^i_{\eet}(X, \overline{\so}^{\times})\simeq \hhh^i_{\proeet}(X, \overline{\so}^{\times})$ for all $i \ge 0$,
     \item \cite[Lemma~2.18]{Heu22} there are exact sequences of sheaves on $X_{\tau}$ (here $\mu_{p^{\infty}} \subset U_{\tau}$ denotes the subsheaf of $p$-power unit roots):
 \begin{align}\label{even1}
& 1\to \mu_{p^{\infty}} \to U_{\tau} \xrightarrow{{\rm log}}  \so_{\tau} \to 1, \\ 
& 1\to \so_{\tau} \xrightarrow{{\rm exp}} \so_{\tau}^{\times}[\tfrac{1}{p}] \to {\overline{\so}}_{\tau}^{\times} \to 1, \notag
  \end{align} 
     \item \cite[Proposition~2.21]{Heu22} by the invertibility relations between $ {\rm log}$ and ${\rm exp}$, the  maps \eqref{even1}  induce natural isomorphisms in degree $i \ge 1$ \label{log}
 \begin{align*}
 {\rm log}: \R^i\nu_* U \xrightarrow{\sim} \R^i\nu_*\so \quad \text{ and } \quad {\rm exp}: \R^i\nu_*\so \xrightarrow{\sim} \R^i\nu_*\so^{\times}. 
 \end{align*}
 \end{enumerate}
  \end{lemma}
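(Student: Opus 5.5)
The lemma collects four facts, each cited from Heuer. The plan is to check them in the order (3), (1), (4), (2), since each uses the previous ones; throughout we work locally on affinoid perfectoid (or smooth affinoid) objects of $X_\tau$.

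\emph{Step 1: the sequences in (3).} For the log sequence, $\log$ converges on $1+\mathfrak m_C\so^+$. Its kernel consists of the units $x$ with $x^{p^n}\in 1+q\so^+$ for $n\gg0$, because the $p$-th power map contracts $1+\mathfrak m_C\so^+$ into the domain where $\exp\circ\log=\mathrm{id}$. Such units are $p$-power roots of unity. For surjectivity, given $f\in\so$, pick $n$ with $p^nf\in q\so^+$. Then $\exp(p^nf)$ is étale-locally a $p^n$-th power in $U$, since $T^{p^n}-\exp(p^nf)$ is étale over the base. This holds on both sites.

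For the exp sequence, compose $\so\xrightarrow{\sim}\varinjlim_{p}q\so^+$ with $\exp$ to get $\so\to\so^\times[\tfrac1p]$. Injectivity follows from the unique invertibility of $\log$ on $1+q\so^+$. The cokernel is $\varinjlim_p \so^\times/(1+q\so^+)$. This is $\overline{\so}^\times[\tfrac1p]$, because $U/(1+q\so^+)$ becomes $0$ after inverting $p$: every element of $U$ has some $p$-power lying in $1+q\so^+$.

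\emph{Step 2: unique $p$-divisibility (1).} We need two things. First, $x\mapsto x^p$ is injective on $\overline{\so}^\times$: if $x^p\in U$ then $|x|=1$ pointwise and $x$ reduces to a $p$-th root of $1$ in characteristic $p$, so $x\in U$. Second, it is étale-locally surjective: for $x\in\so^\times$, the equation $T^p=x$ is étale since $p$ is invertible on the generic fibre. The colimit statement for quasi-compact $X$ in the footnote is the commutation of cohomology with filtered colimits on coherent sites.

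\emph{Step 3: the isomorphisms (4).} Apply $\R\nu_*$ to the sequences in (3).

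For the first map, $\mu_{p^\infty}$ is a colimit of finite locally constant sheaves, so $\R^i\nu_*\mu_{p^\infty}=0$ for $i\ge1$. Hence $\log:\R^i\nu_*U\to\R^i\nu_*\so$ is an isomorphism for $i\ge2$. For $i=1$ we additionally need $\nu_*U\to\nu_*\so$ to be surjective, which the étale argument of Step 1 already gives.

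For the second map, inverting $p$ commutes with $\R^i\nu_*$ by quasi-compactness. So the claim reduces to showing $\R^i\nu_*\overline{\so}^\times=0$ for $i\ge1$, which is part (2).

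\emph{Step 4: part (2), the main obstacle.} This is where the real input lies. Compare the exp sequence on $X_\eet$ and on $X_\proeet$. We know $\R^i\nu_*\so\cong\Omega^i(-i)$ (Scholze), and we must show that $\R^i\nu_*\so^\times[\tfrac1p]$ carries exactly these classes.

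I would follow Heuer and compute $\R^i\nu_*\so^\times$ locally on a smooth affinoid with a toric chart. The pro-étale cohomology is then continuous cohomology of $\Z_p(1)^d$ acting on the perfectoid cover. Splitting $\so^\times=\so^{\times}_{\text{perfd}}$ into roughly $C^\times\times U\times$ (torsion-free part), one finds that the higher cohomology of the non-$U$ part vanishes after inverting $p$. Consequently $\R^i\nu_*\so^\times[\tfrac1p]$ comes from $\R^i\nu_*U\cong\R^i\nu_*\so$, which is what the exp map provides.

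Together with (1), this forces $\R^i\nu_*\overline{\so}^\times=0$ for $i\ge1$. Combined with $\nu_*\overline{\so}^\times_\proeet=\overline{\so}^\times_\eet$ (invertible functions on affinoid perfectoid covers descend up to $U$), we get $\R\nu_*\overline{\so}^\times=\overline{\so}^\times$. The Leray spectral sequence then gives the stated isomorphism on cohomology.

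The group-cohomology computation on the toric tower is the step I expect to be hardest; in the paper it is simply imported from \cite{Heu2,Heu22}.
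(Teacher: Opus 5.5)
Your Steps 1--3 are a sensible reconstruction of the elementary parts. The paper gives no argument here and imports all four items from Heuer. The substantive input, part (2), is not established in your proposal, and your plan for it is circular as organised. In Step 3 you reduce the $\exp$ half of (4) to $\R^i\nu_*\overline{\so}^{\times}=0$. In Step 4 you derive this vanishing from a local statement that $\R^i\nu_*\so^{\times}[\tfrac1p]$ is accounted for by $\R^i\nu_*\so$ via $\exp$, which is the $\exp$ half of (4) again. So everything rests on the toric-chart computation, which is only gestured at and does not go through as described:
\begin{itemize}
\item Passing from $\hhh^i_{\proeet}(V,\so^{\times})$ to continuous $\Z_p(1)^d$-cohomology of $\so^{\times}(\tilde V)$ needs the higher cohomology of $\so^\times$ (or $\overline{\so}^\times$) on the affinoid perfectoid cover $\tilde V$ and its fibre powers to vanish. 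At least it must die in the colimit over \'etale neighbourhoods. That is not automatic, and it is a statement of the same nature as (2).
\item The decomposition of units as $C^\times\times U\times(\text{free part})$ is unavailable for a general small affinoid. Even for the perfectoid torus it is not $\Z_p(1)^d$-stable, since $\gamma$ moves $T^{1/p^n}$ by a $p$-power root of unity, which lies in $U$.
\item The identity $\nu_*\overline{\so}^{\times}_{\proeet}=\overline{\so}^{\times}_{\eet}$ is only asserted. By the long exact sequence for $1\to U\to\so^\times\to\overline{\so}^\times\to1$, it is equivalent to injectivity of $\R^1\nu_*U\to\R^1\nu_*\so^{\times}$, i.e.\ again to part of (4).
\end{itemize}

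The missing idea is that $\overline{\so}^{\times}$ does not see completions. For an affinoid $Y\sim\varprojlim_j Y_j$ in $X_{\proeet}$ one has $\overline{\so}^{\times}(Y)=\varinjlim_j\overline{\so}^{\times}(Y_j)$. This is Heuer's approximation property; compare \cite[Proposition~3.2]{Heu25}, which the paper itself uses in the proof of Lemma~\ref{induction}. It says $\overline{\so}^{\times}_{\proeet}=\nu^*\overline{\so}^{\times}_{\eet}$. Then $\R\nu_*\nu^*F\simeq F$ for \'etale sheaves \cite[Corollary~3.17]{Scholze2} gives (2), by exactly the mechanism you used for $\mu_{p^\infty}$.

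This is also the logical order of the cited sources: (2) is an independent input, and (4) is deduced from it through the $\exp$ sequence as in your Step 3. That deduction only yields $\R^i\nu_*\so\simeq(\R^i\nu_*\so^{\times})[\tfrac1p]$. To drop $[\tfrac1p]$ you need $\R^i\nu_*\so^{\times}$ to be uniquely $p$-divisible for $i\ge1$. This follows from the Kummer sequence $1\to\mu_p\to\so^{\times}\xrightarrow{p}\so^{\times}\to1$ and $\R^j\nu_*\mu_p=0$ for $j\ge1$.

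Two minor points. Identifying the cokernel of $\exp$ with $\overline{\so}^{\times}$ in Step 1 uses (1), so (1) should come first. At $i=1$ in Step 3 no separate surjectivity argument is needed, since $\R^1\nu_*\mu_{p^\infty}=0$ already gives it.
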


  We have the following key observation on affinoids: 
  \begin{lemma}
  \label{U-filtration1}
  Let $X$ be a smooth affinoid over $C$ such that $\hhh^i_{\eet}(X, \overline{\so}^{\times})$ is torsion for $i \ge 2$, then for $i\ge 2$, the groups $ \hhh^i_{\eet}(X,U)$ are $p^\infty$-torsion and $\hhh^i_{\eet}(X, \so^{\times})$ are torsion. Moreover, there are canonical isomorphisms for $i\ge2$: 
\[ \hhh^i_{\eet}(X, \so^{\times})[p] \xleftarrow{\sim}  \alpha \hhh^i_{\eet}(X, U) \text{ and }  \hhh^i_{\eet}(X, \so^{\times})[p'] \xrightarrow{\sim}\hhh^i_{\eet}(X, \overline{\so}^{\times}) \]
(with $\alpha : \hhh^i_{\eet}(X, U) \to \hhh^i_{\eet}(X, \so^{\times})$ the natural map)  and the sequence: 
  \begin{equation}
  \label{U-exact}
    0 \to \alpha \hhh^i_{\eet}(X, U) \to \hhh^i_{\eet}(X, \so^{\times}) \to  \hhh^i_{\eet}(X, \overline{\so}^{\times}) \to 0
  \end{equation}
 is exact and canonically split with $\alpha \hhh^i_{\eet}(X, U)= \hhh^i_{\eet}(X, U)$ for $i\ge 3$ generally and $i\ge 2$ when ${\rm Pic}_{\eet}(X)$ is $n$-divisible for all $n$ prime to $p$. 
  \end{lemma}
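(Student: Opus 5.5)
We start with the $U$-cohomology. Since $X$ is an affinoid over $C$, its étale coherent cohomology $\hhh^i_{\eet}(X,\so)$ vanishes for $i\ge 1$. The sequence $1\to\mu_{p^\infty}\to U\xrightarrow{\log}\so\to 1$ of Lemma~\ref{lem-heuer} then gives $\hhh^i_{\eet}(X,\mu_{p^\infty})\xrightarrow{\sim}\hhh^i_{\eet}(X,U)$ for $i\ge 2$. In degree $1$ we also get surjectivity modulo the image of $\so(X)$, but this is not needed here. Because $\mu_{p^\infty}$ is $p^\infty$-torsion and $X$ is quasi-compact, cohomology commutes with the filtered colimit $\mu_{p^\infty}=\varinjlim\mu_{p^k}$. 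Hence $\hhh^i_{\eet}(X,U)$ is $p^\infty$-torsion for $i\ge 2$.

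Next we use the exp sequence $1\to\so\to\so^\times[\tfrac1p]\to\overline{\so}^\times\to1$ to relate $\overline{\so}^\times$ to $\so^\times[\tfrac1p]$. Acyclicity of $\so$ gives $\hhh^i_{\eet}(X,\so^\times)[\tfrac1p]\simeq\hhh^i_{\eet}(X,\overline{\so}^\times)$ for $i\ge 2$, using the footnote's identification of cohomology of $\mathcal G[\tfrac1p]$ on quasi-compact $X$. By Lemma~\ref{lem-heuer}(\ref{pdiv}), $\hhh^i_{\eet}(X,\overline{\so}^\times)$ is uniquely $p$-divisible. Since it is torsion by hypothesis, it has no $p$-torsion, i.e.\ it is prime-to-$p$ torsion.

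We then treat $\so^\times$ through the long exact sequence
\[\hhh^{i-1}(X,\overline{\so}^\times)\xrightarrow{\delta_{i-1}}\hhh^i(X,U)\xrightarrow{\alpha}\hhh^i(X,\so^\times)\to\hhh^i(X,\overline{\so}^\times)\xrightarrow{\delta_i}\hhh^{i+1}(X,U).\]
For $i\ge 2$ the edge map $\delta_i$ goes from a prime-to-$p$ torsion group to a $p^\infty$-torsion group, so it vanishes. This gives the exact sequence \eqref{U-exact}, and $\hhh^i(X,\so^\times)$ is torsion as an extension of torsion groups.

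A torsion group is canonically the direct sum of its $p$-primary part and its prime-to-$p$ part, and an exact sequence of torsion groups decomposes accordingly. The subgroup $\alpha\hhh^i(X,U)$ is $p$-primary and the quotient is prime-to-$p$. Hence $\alpha\hhh^i(X,U)=\hhh^i(X,\so^\times)[p^\infty]$, which is what the statement denotes by $[p]$, and $\hhh^i(X,\so^\times)[p']\xrightarrow{\sim}\hhh^i(X,\overline{\so}^\times)$. The decomposition also yields the canonical splitting.

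It remains to show that $\alpha$ is injective, i.e.\ $\delta_{i-1}=0$. For $i\ge 3$ the source $\hhh^{i-1}(X,\overline{\so}^\times)$ is prime-to-$p$ torsion by hypothesis and the target is $p^\infty$-torsion, so $\delta_{i-1}=0$. The case $i=2$ is the main obstacle, because nothing says $\hhh^1(X,\overline{\so}^\times)$ is torsion.

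For $i=2$ the plan is as follows. The group $\hhh^1(X,\overline{\so}^\times)$ is uniquely $p$-divisible. The exp sequence together with $\hhh^1(X,\so)=\hhh^2(X,\so)=0$ identifies it with $\mathrm{Pic}(X)[\tfrac1p]$. Take $x$ in $\hhh^1(X,\overline{\so}^\times)$. The image $\delta_1(x)$ lies in the $p^\infty$-torsion group $\hhh^2(X,U)$, so it is killed by some $p^k$. By unique $p$-divisibility we can write $x=p^k y$, which gives $\delta_1(x)=p^k\delta_1(y)$. This is not directly zero. Instead we use that $\hhh^1(X,\overline{\so}^\times)$ is a $\Z[\tfrac1p]$-module, so $\delta_1$ factors through $\hhh^1(X,\overline{\so}^\times)\otimes\Z[\tfrac1p]$ and lands in the $p$-divisible part of $\hhh^2(X,U)$. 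The hypothesis that $\mathrm{Pic}(X)$ is $n$-divisible for all $n$ prime to $p$ makes $\mathrm{Pic}(X)[\tfrac1p]$ divisible, hence a $\Q$-vector space. The map $\delta_1$ is then a homomorphism from a $\Q$-vector space into a torsion group, so its image is divisible. We would then need to show this divisible part is zero, or trace $\delta_1$ back through $\mathrm{Pic}(X)\to\hhh^2(X,\mu_{p^\infty})$ via Kummer theory. In the Kummer picture, elements of $\mathrm{Pic}(X)$ that are divisible map to zero. This last verification is where care is needed, and it is the step I expect to be the main obstacle.
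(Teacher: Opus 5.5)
Everything up to the last claim is the paper's argument. You use the log sequence with vanishing coherent cohomology to get $p^\infty$-torsion of $\hhh^i_{\eet}(X,U)$, and unique $p$-divisibility to make $\hhh^i_{\eet}(X,\overline{\so}^{\times})$ prime-to-$p$ torsion. Then $\delta_i=0$ for $i\ge 2$ by comparing torsion, the primary decomposition gives the identifications and the canonical splitting, and $\delta_{i-1}=0$ gives injectivity of $\alpha$ for $i\ge 3$. Your isomorphism $\hhh^2_{\eet}(X,\mu_{p^\infty})\simeq\hhh^2_{\eet}(X,U)$ is correct; the paper only claims a surjection there.

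The genuine gap is the case $i=2$, which you leave open, and neither of your two suggestions can close it under the stated hypothesis. Since $\hhh^1_{\eet}(X,\so)=\hhh^2_{\eet}(X,\so)=0$, the map ${\rm Pic}_{\eet}(X)\to\hhh^1_{\eet}(X,\overline{\so}^{\times})\simeq{\rm Pic}_{\eet}(X)[\tfrac1p]$ is just the localization map. Exactness of the long sequence then gives
\[\ker\bigl(\alpha:\hhh^2_{\eet}(X,U)\to\hhh^2_{\eet}(X,\so^{\times})\bigr)={\rm Im}(\delta_1)\simeq{\rm Pic}_{\eet}(X)\otimes_{\Z}\Q_p/\Z_p .\]
This vanishes if and only if ${\rm Pic}_{\eet}(X)=p\,{\rm Pic}_{\eet}(X)+{\rm Pic}_{\eet}(X)[p^\infty]$. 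Divisibility by integers prime to $p$ plays no role: an abstract group like $\Z_{(p)}$ is divisible by every $n$ prime to $p$, yet contributes $\Q_p/\Z_p$ here.

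In particular your first route fails. The group ${\rm Pic}_{\eet}(X)[\tfrac1p]$ need not be a $\Q$-vector space, since it may have prime-to-$p$ torsion. In any case a homomorphism from a divisible group to a $p^\infty$-torsion group need not vanish; think of $\Q\to\Q/\Z\to\Q_p/\Z_p$. Your Kummer route is the right idea, but it needs $p$-divisibility of ${\rm Pic}_{\eet}(X)$, not prime-to-$p$ divisibility.

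The paper's own proof does not fill this gap either. It asserts in one sentence that prime-to-$p$ divisibility of ${\rm Pic}_{\eet}(X)$ ``allows the use of the previous argument even when $i=1$''. That argument, however, needs the source of $\delta_1$ to be prime-to-$p$ torsion, and ${\rm Pic}_{\eet}(X)[\tfrac1p]$ is not torsion in general.

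The clean fix is to replace the hypothesis for $i=2$ by ``${\rm Pic}_{\eet}(X)$ is $p$-divisible'', or more precisely ${\rm Pic}_{\eet}(X)\otimes\Q_p/\Z_p=0$. This holds in every application in the paper, where the Picard groups vanish. With that hypothesis, the displayed formula finishes your proof immediately.
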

  
  \begin{proof}
 Since the coherent cohomology of $X$ is concentrated in degree $0$, the logarithm exact sequence~\eqref{even1} induces isomorphisms for $i \ge 3$,  
 \[ \hhh^{i}_{\eet}(X, U) \xleftarrow{\sim} \hhh^{i}_{\eet}(X, \mu_{p^{\infty}}) (\simeq \limi_n \hhh^{i}_{\eet}(X, \mu_{p^{n}}))\] 
(we have used here that $X$ quasi-compact; when $i=2$, it is only a surjection) and we see that $\hhh^{i}_{\eet}(X, U)$ is $p^\infty$-torsion. As $\hhh^i_{\eet}(X, {\bb G}_m)/\alpha \hhh^i_{\eet}(X, U)$ injects into $\hhh^i_{\eet}(X, \overline{\so}^{\times})$, the group  $ \hhh^i_{\eet}(X, {\bb G}_m)$ is an extension of two torsion groups and is then torsion as well.
 Moreover,  the multiplication by $p$ on the sheaf $\overline{\so}^{\times}$ is a bijection by Lemma~\ref{lem-heuer}\eqref{pdiv}, the same is true for the cohomology groups $\hhh^i_{\eet}(X, \overline{\so}^{\times})$ which is then $p'$-torsion. 
 
 Since the multiplication by $n$ is invertible on $U$ and also on $ \hhh^i_{\eet}(X, U)$ for $n$ prime to $p$,  the edge morphism  $\delta_i:\hhh^{i}_{\eet}(X, \overline{\so}^{\times})\to \hhh^{i+1}_{\eet}(X, U)$ vanishes for $i\ge 2$ and we deduce the exactness of \eqref{U-exact} and the equality $\alpha \hhh^i_{\eet}(X, U)= \hhh^i_{\eet}(X, U)$ for $i\ge 3$ generally.  Supposing the $n$-divisibility for all $n$ prime to $p$ of ${\rm Pic}_{\eet}(X)$ allows the use of the previous argument even when $i=1$ justifying  the last part of the statement since $\hhh^{i}_{\eet}(X, \overline{\so}^{\times})\simeq {\rm Pic}_{\eet}(X)[1/p]$ by Lemma~\ref{lem-heuer}\eqref{pdiv} and the exp exact sequence.
 
 Again by $p'$-torsion of $\hhh^i_{\eet}(X, \overline{\so}^{\times})$, the $p^\infty$-torsion  of $\hhh^i_{\eet}(X, \so^{\times})$ coincides with $\alpha \hhh^{i}_{\eet}(X, U)$ and  \[\hhh^i_{\eet}(X, {\bb G}_m)[p'] \simeq \hhh^i_{\eet}(X, {\bb G}_m)/\hhh^i_{\eet}(X, U) \simeq  \hhh^i_{\eet}(X, \overline{\so}^{\times}).\] Taking the inverse of this map gives the 
 desired splitting of \eqref{U-exact}.
 
  \end{proof}
 
\begin{remark}\label{ptors}
By analogy with the special case treated above, we will call, for a general smooth space $X/C$, the $p$-part of $\hhh^i_{\eet}(X, {\bb G}_m)$ the subgroup $\alpha \hhh^i_{\eet}(X, U)$, and the $p'$-part  the quotient $\im( \hhh^i_{\eet}(X, {\bb G}_m) \to  \hhh^i_{\eet}(X, \overline{\so}^{\times}))\simeq \hhh^i_{\eet}(X, {\bb G}_m)/\alpha \hhh^i_{\eet}(X, U)$ for $i\ge 2$. This choice is even further justified by the link between the torsion of $\hhh^i_{\eet}(X, {\bb G}_m)$ and the Kummer exact sequence together with its following generalization: let $n=p^km$ with $m\wedge p=1$, $X/C$ a smooth space, we have a commutative diagram of \'etale sheaves with exact rows and columns \[\begin{tikzcd}
            & 0 \arrow[d]                   & 0 \arrow[d]                        & 0 \arrow[d]                                      &   \\
0 \arrow[r] & \mu_{p^k} \arrow[r] \arrow[d] & \mu_n \arrow[r] \arrow[d]          & \mu_m \arrow[r] \arrow[d]                        & 0 \\
0 \arrow[r] & U \arrow[r] \arrow[d, "\times n"]    & {\bb G}_m \arrow[r] \arrow[d, "\times n"] & \overline{\so}^{\times} \arrow[r] \arrow[d, "\times n"] & 0 \\
0 \arrow[r] & U \arrow[r] \arrow[d]         & {\bb G}_m \arrow[r] \arrow[d]      & \overline{\so}^{\times} \arrow[r] \arrow[d]      & 0 \\
            & 0                             & 0                                  & 0                                                &  
\end{tikzcd}\]
\end{remark}

  In particular, passing to a Stein space, we obtain: 
  
  \begin{corollary}
   \label{U-filtration0}
  Let $X$ be a smooth Stein space over $C$ and assume there exists a Stein covering $\{X_n\}_{n \in \N}$ of $X$ by smooth affinoids $X_n$ such that the groups $\hhh^i_{\eet}(X_n, \overline{\so}^{\times})$ are torsion for $i \ge 2$, then we have exact sequences for all $i \ge 2$:
\[  0\to \alpha \hhh^i_{\eet}(X, U) \to \hhh^i_{\eet}(X, {\bb G}_m) \to  \hhh^i_{\eet}(X, \overline{\so}^{\times}) \to 0 \] with $\alpha \hhh^i_{\eet}(X, U)= \hhh^i_{\eet}(X, U)$ for $i\ge 3$ generally and for $i\ge 2$ when ${\rm Pic}_{\eet}(X_n)$ is $m$-divisible for all $n$ and all $m$ prime to $p$ and ${\rm R}^1\varprojlim_n\overline{\so}^{\times}(X_n)=0$. 
  

  \end{corollary}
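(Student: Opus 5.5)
We pass from the affinoids $X_n$ to the Stein space $X$ by a Milnor-type argument: \'etale cohomology of $X=\bigcup_n X_n$ is $\R\varprojlim_n \R\Gamma_{\eet}(X_n,-)$. For any abelian sheaf $\mathcal{G}$ this gives short exact sequences
\[0\to \R^1\varprojlim_n \hhh^{i-1}_{\eet}(X_n,\mathcal{G})\to \hhh^i_{\eet}(X,\mathcal{G})\to \varprojlim_n \hhh^i_{\eet}(X_n,\mathcal{G})\to 0.\]
The aim is to show that the connecting map $\delta_i:\hhh^i_{\eet}(X,\overline{\so}^{\times})\to \hhh^{i+1}_{\eet}(X,U)$ vanishes for $i\ge 2$, and also for $i=1$ under the extra hypotheses. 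Exactness of the displayed sequence (with $\alpha\hhh^i_{\eet}(X,U)=\hhh^i_{\eet}(X,U)$) then follows from the long exact sequence of $1\to U\to {\bb G}_m\to\overline{\so}^{\times}\to 1$, exactly as in Lemma~\ref{U-filtration1}. One point is that surjectivity onto $\hhh^i_{\eet}(X,\overline{\so}^{\times})$ is equivalent to $\delta_i=0$, which is also needed for $i\ge 2$.

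\textbf{Step 1: structure of $U$.} We work with the compatible maps of Milnor sequences for $U$, ${\bb G}_m$ and $\overline{\so}^{\times}$. By the proof of Lemma~\ref{U-filtration1}, $\hhh^j_{\eet}(X_n,U)$ is $p^\infty$-torsion for $j\ge 2$. Multiplication by any $m$ prime to $p$ is bijective on the sheaf $U$, since $U\simeq \so$ modulo $\mu_{p^\infty}$ and $\mu_{p^\infty}$ is uniquely $m$-divisible. Hence every $\hhh^j_{\eet}(X_n,U)$, and every $\hhh^j_{\eet}(X,U)$, is a $\Z[1/p]'$-module, meaning that multiplication by $m$ is invertible on it.

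\textbf{Step 2: structure of $\overline{\so}^{\times}$.} By Lemma~\ref{lem-heuer}\eqref{pdiv}, $p$ is invertible on all $\hhh^j_{\eet}(X_n,\overline{\so}^{\times})$ and on $\hhh^j_{\eet}(X,\overline{\so}^{\times})$. By hypothesis these groups are torsion for $j\ge 2$, so they are $p'$-torsion.

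\textbf{Step 3: naturality and $\delta_i=0$.} The connecting map $\delta$ is compatible with the Milnor sequences, so it induces maps
\[\R^1\varprojlim \hhh^{i-1}(X_n,\overline{\so}^{\times})\to \R^1\varprojlim \hhh^{i}(X_n,U),\qquad \varprojlim \hhh^i(X_n,\overline{\so}^{\times})\to\varprojlim\hhh^{i+1}(X_n,U).\]
On the inverse-limit part the map is $\varprojlim_n\delta_{i,n}$, and each $\delta_{i,n}=0$ for $i\ge 2$ by Lemma~\ref{U-filtration1}. On the $\R^1$ part the map is $\R^1\varprojlim\delta_{i-1,n}$. It vanishes when $i-1\ge 2$. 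For $i=2$ we instead need the affinoid statement in degree $1$, which is where $m$-divisibility of ${\rm Pic}(X_n)$ enters. Thus $\delta_i$ maps $\hhh^i(X,\overline{\so}^{\times})$ into the $\R^1$-subgroup $\R^1\varprojlim\hhh^{i}(X_n,U)$ and factors through $\varprojlim_n\hhh^i(X_n,\overline{\so}^{\times})$. To conclude I would argue degreewise. The source $\varprojlim_n$ of $p'$-torsion groups with $p$ invertible is still uniquely $p$-divisible. The target $\R^1\varprojlim$ of groups on which every $m$ prime to $p$ acts invertibly still has every such $m$ acting invertibly, since $\R^1\varprojlim$ is additive in the module structure. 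A homomorphism from a uniquely $p$-divisible group to a uniquely $p'$-divisible group need not vanish in general, so this is the main obstacle. Since limits destroy torsion, I would instead make the factorization explicit through a connecting map of the form $\varprojlim_n\ker\to\R^1\varprojlim$ built from the level-$n$ sequences. At each level $n$ the sequence \eqref{U-exact} is canonically split, functorially in $n$. Hence the inverse systems $\{\hhh^i(X_n,{\bb G}_m)\}_n$ decompose as direct sums of systems, the Milnor sequence for ${\bb G}_m$ splits as the sum of those for $U$ and $\overline{\so}^{\times}$ (up to the degree-$1$ issue), and $\delta_i=0$ follows.

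\textbf{Step 4: the case $i=2$ and $\alpha\hhh^2=\hhh^2$.} The remaining issue is the $\R^1\varprojlim$ of degree-$1$ terms. Here I use the extra hypothesis $\R^1\varprojlim_n\overline{\so}^{\times}(X_n)=0$, together with the sequence $\hhh^1(X_n,{\bb G}_m)\to\hhh^1(X_n,\overline{\so}^{\times})\cong{\rm Pic}(X_n)[1/p]$. Divisibility of ${\rm Pic}(X_n)$ makes this splitting functorial in degree $1$ as well, giving injectivity of $\hhh^2(X,U)\to\hhh^2(X,{\bb G}_m)$. I expect the delicate part to be Step 3, namely the functoriality in $n$ of the canonical splittings and their compatibility with $\R^1\varprojlim$.
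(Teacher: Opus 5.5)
Your framework is the paper's: compare the Milnor sequences $0\to{\rm R}^1\varprojlim_n\hhh^{i-1}_{\eet}(X_n,-)\to\hhh^i_{\eet}(X,-)\to\varprojlim_n\hhh^i_{\eet}(X_n,-)\to0$ for ${\bb G}_m$ and $\overline{\so}^{\times}$, and use that the splittings of Lemma~\ref{U-filtration1} are canonical, hence compatible in $n$. Steps 1--2 are fine, and you are right that analysing $\delta_i$ on the two Milnor pieces separately stalls.

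\textbf{Step 3.} The inference that closes Step 3 does not hold. The level-$n$ sections $\hhh^i_{\eet}(X_n,\overline{\so}^{\times})\to\hhh^i_{\eet}(X_n,{\bb G}_m)$ are not induced by maps of sheaves or complexes. So they give no map on $\hhh^i_{\eet}(X,-)$, and decomposing the two outer terms of the Milnor sequence for ${\bb G}_m$ does not decompose its middle term. What you need is the four lemma. If both outer vertical maps of the morphism of Milnor sequences induced by ${\bb G}_m\to\overline{\so}^{\times}$ are onto, then so is $\hhh^i_{\eet}(X,{\bb G}_m)\to\hhh^i_{\eet}(X,\overline{\so}^{\times})$, i.e.\ $\delta_i=0$. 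On $\varprojlim$, surjectivity comes from taking the limit of the compatible sections. On ${\rm R}^1\varprojlim$ no splitting is needed at all: only the levelwise surjections in degree $i-1$, plus right exactness of ${\rm R}^1\varprojlim$ on $\N$-indexed systems (since ${\rm R}^k\varprojlim=0$ for $k\ge2$). This is exactly the paper's proof, and it never meets the secondary map $\varprojlim_n\hhh^i_{\eet}(X_n,\overline{\so}^{\times})\to{\rm R}^1\varprojlim_n\hhh^i_{\eet}(X_n,U)$ you were stuck on. For $i\ge3$ the levelwise input is Lemma~\ref{U-filtration1}. For $i=2$ it is surjectivity of ${\rm Pic}(X_n)\to\hhh^1_{\eet}(X_n,\overline{\so}^{\times})$, a genuine degree-one input as you noticed; the paper's appeal to right exactness uses it tacitly.

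\textbf{Step 4.} This step is an assertion rather than an argument. No degree-one splitting is constructed. Divisibility of ${\rm Pic}(X_n)$ by integers prime to $p$ does not by itself provide one: ${\rm Pic}(X_n)\to\hhh^1_{\eet}(X_n,\overline{\so}^{\times})\simeq{\rm Pic}(X_n)[1/p]$ is the localisation map, and its surjectivity is a $p$-divisibility statement. The hypothesis ${\rm R}^1\varprojlim_n\overline{\so}^{\times}(X_n)=0$ is named but never used.

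A route that works is your Step 3 mechanism one degree down. The equality $\alpha\hhh^2_{\eet}(X,U)=\hhh^2_{\eet}(X,U)$ means $\delta_1=0$, i.e.\ ${\rm Pic}(X)\to\hhh^1_{\eet}(X,\overline{\so}^{\times})$ is onto. In the degree-one Milnor diagram the left target ${\rm R}^1\varprojlim_n\overline{\so}^{\times}(X_n)$ vanishes by hypothesis. The four lemma then reduces you to surjectivity of $\varprojlim_n{\rm Pic}(X_n)\to\varprojlim_n\hhh^1_{\eet}(X_n,\overline{\so}^{\times})$. That holds as soon as the degree-one splittings exist compatibly in $n$, for instance when $\hhh^1_{\eet}(X_n,\overline{\so}^{\times})$ is torsion. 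This is the case under $({\bf H_1})$, and trivially in the examples where ${\rm Pic}(X_n)=0$; the proof of Lemma~\ref{U-filtration1} then goes through in degree one.

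The paper argues instead with the degree-two Milnor sequences of $U\to{\bb G}_m$. It uses levelwise injectivity plus left exactness of $\varprojlim$, and for the ${\rm R}^1\varprojlim$ term the system $I_n={\rm Im}(\overline{\so}^{\times}(X_n)\to\hhh^1_{\eet}(X_n,U))$, which satisfies ${\rm R}^1\varprojlim_nI_n=0$. Since ${\rm R}^1\varprojlim$ does not preserve monomorphisms, passing from there to injectivity also needs $\varprojlim_n{\rm Pic}(X_n)\to\varprojlim_n{\rm Pic}(X_n)/\alpha\hhh^1_{\eet}(X_n,U)$ to be onto. That is essentially the same surjectivity.
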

  
  \begin{proof}
To get the desired exact sequence we need to show that the map $\hhh^i_{\eet}(X, {\bb G}_m) \to  \hhh^i_{\eet}(X, \overline{\so}^{\times})$ is a surjection for $i\ge 2$. By the usual derived formalism of the limit, note that we have the following commutative diagram with exact rows:
\[\begin{tikzcd}
0 \arrow[r] & {{\rm R}^1\varprojlim_n \hhh^{i-1}_{\eet}(X_n,{\bb G}_m)} \arrow[r] \arrow[d] & {\hhh^{i}_{\eet}(X, {\bb G}_m)} \arrow[r] \arrow[d] & {\varprojlim_n \hhh^{i}_{\eet}(X_n, {\bb G}_m)} \arrow[r] \arrow[d] & 0 \\
0 \arrow[r] & {{\rm R}^1\varprojlim_n \hhh^{i-1}_{\eet}(X_n, \overline{\so}^{\times})} \arrow[r] & {\hhh^{i}_{\eet}(X, \overline{\so}^{\times})} \arrow[r]  & {\varprojlim_n \hhh^{i}_{\eet}(X_n, \overline{\so}^{\times})} \arrow[r]  & 0
\end{tikzcd}\]
It is sufficient to see that both extremal maps are surjective. For the leftmost, ${\rm R}^1\varprojlim_n$ is right exact from the vanishing of the higher derived functor (in degree $\ge 2$) of the limit (see \cite[p13-14]{jen}). For the rightmost, we have a canonical splitting on $\hhh^i_{\eet}(X_n, {\bb G}_m) \to  \hhh^i_{\eet}(X_n, \overline{\so}^{\times})$ which then gives a splitting between the projective systems $(\hhh^i_{\eet}(X_n, {\bb G}_m))_n$ and $(\hhh^i_{\eet}(X_n, \overline{\so}^{\times}))_n$ and also between the limits, establishing the surjectivity.  By the usual long exact sequence we also get  $\alpha \hhh^i_{\eet}(X, U)= \hhh^i_{\eet}(X, U)$ for $i\ge 3$.

Now suppose that ${\rm Pic}_{\eet}(X_n)$ is $m$-divisible for all $n$ and $m$ prime to $p$, then the natural map $\varprojlim_n \hhh^2_{\eet}(X_n, U)\to \varprojlim_n \hhh^2_{\eet}(X_n, {\bb G}_m)$ is injective by Lemma \ref{U-filtration1} and  left exactness of $\limp_n$. By considering a similar diagram as above, it is  sufficient to check that ${\rm R}^1\varprojlim_n \hhh^{1}_{\eet}(X_n, U)$ injects into ${\rm R}^1\varprojlim_n \hhh^{1}_{\eet}(X_n, {\bb G}_m)$. Consider the projective system $(I_n)_n:=({\rm Im}(\overline{\so}^{\times}(X_n)\to \hhh^{1}_{\eet}(X_n, U))_n $, the following sequence is exact in the middle \[{\rm R}^1\varprojlim_n I_n\to {\rm R}^1\varprojlim_n \hhh^{1}_{\eet}(X_n, U)\to {\rm R}^1\varprojlim_n \hhh^{1}_{\eet}(X_n, {\bb G}_m).\] But the map ${\rm R}^1\varprojlim_n \overline{\so}^{\times}(X_n) \to {\rm R}^1\varprojlim_n I_n$ is surjective by right exactness of ${\rm R}^1\varprojlim_n$ and we get the desired injectivity when ${\rm R}^1\varprojlim_n\overline{\so}^{\times}(X_n)=0$.


\end{proof}

\subsection{The ${\rm HTlog}$-filtration}

We define the Hodge-Tate logarithm as the composition of the exponential map from Lemma~\ref{lem-heuer}\eqref{log} with the Hodge-Tate isomorphism proved by Scholze in \cite[Corollary~6.19]{Scholze2}, \cite[Proposition~3.23]{Scholze2b}. More precisely, recall that we have: 

\begin{proposition}[Hodge-Tate morphism]
\label{omega}
 Let $X$ be a smooth rigid space over $C$. For $\nu: X_{\proeet} \to X_{\eet}$ the natural projection, we have an $\so_X$-linear isomorphism: 
\[ {\rm HT}: \R^i\nu_*\so\xrightarrow{\sim}\Omega^i_X(-i). \]
\end{proposition}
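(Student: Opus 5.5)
The plan is to reduce to a local computation on toric charts and then glue by giving a chart-free description of the map. The statement is local on $X_{\eet}$ and both sides are $\so_X$-modules, so it suffices to do two things. First, construct a canonical $\so_X$-linear map $\Omega^i_X(-i)\to \R^i\nu_*\so$ compatible with étale localization. Second, check that this map is an isomorphism on a basis of the étale topology.

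\textbf{Canonical map.} I would build it in degree $1$ and extend by cup products.
\begin{itemize}
\item For an invertible function $f$ on some étale $V\to X$, choose a compatible system of $p$-power roots of $f$ pro-étale locally.
\item This gives a $\Z_p(1)$-torsor, i.e.\ a class in $\hhh^1_{\proeet}(V,\Z_p(1))$.
\item Push it to $\hhh^1_{\proeet}(V,\so(1))$ and twist by $(-1)$ to get a class $\delta(f)\in \R^1\nu_*\so$.
\item Show $\delta$ is a derivation into the target. Additivity in $f$ (multiplicatively) is clear from the torsor construction, and the Leibniz rule follows once the map is seen to be $\so$-linear in $d\log f$.
\end{itemize}
This gives $\so_X$-linear maps $\Omega^1_X(-1)\to \R^1\nu_*\so$, $df\mapsto f\cdot\delta(f)$. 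Taking cup products in $\R^\bullet\nu_*\so$, which is graded-commutative, yields
\[
\wedge^i\Omega^1_X(-i)\to \R^i\nu_*\so .
\]

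\textbf{Local computation.} Take $X=\Spa(R,R^+)$ admitting an étale map to a torus $\mathbb{T}^d=\Spa(C\langle T_1^{\pm1},\dots,T_d^{\pm1}\rangle)$.
\begin{itemize}
\item Pull back the perfectoid tower $\mathbb{T}^d_\infty$ obtained by adjoining all $T_j^{1/p^n}$. The resulting $X_\infty$ is affinoid perfectoid by almost purity.
\item $X_\infty\to X$ is a pro-étale Galois cover with group $\Gamma=\Z_p(1)^d$.
\item Since $\hhh^j(X_\infty,\so^+)$ is almost zero for $j>0$, the Cartan–Leray spectral sequence gives $\hhh^i_{\proeet}(X,\so)\simeq \hhh^i_{\rm cont}(\Gamma, \wh{R}_\infty)$.
\item Decompose $\wh R_\infty=R\,\wh\otimes\,\big(\wh\bigoplus_{a\in(\Z[1/p]\cap[0,1))^d}C\,T^a\big)$ into $\Gamma$-eigenspaces.
\item The trivial character contributes $\hhh^i(\Gamma,R)=R\otimes\wedge^i\Hom(\Z_p(1)^d,\Z_p)=\wedge^i R^d(-i)$. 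The basis element dual to the $j$-th factor corresponds to $\delta(T_j)$, i.e.\ to $d\log T_j$.
\item For a nontrivial character $a$ of $p$-power conductor $p^m$, the Koszul complex computing $\hhh^\bullet(\Gamma,-)$ has differentials given by multiplication by $\zeta_{p^m}^{k}-1$. Its cohomology is therefore killed by $\zeta_{p^m}-1$.
\item Summing over $a$, the cohomology is bounded $p$-torsion uniformly (by $\zeta_p-1$ times a constant), because the completed sum of non-integral parts has cohomology killed by a fixed power of $p$. Hence it vanishes after inverting $p$.
\end{itemize}
This shows $\hhh^i_{\proeet}(X,\so)\simeq \Omega^i(X)(-i)$. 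The isomorphism agrees with the canonical map, since $\Omega^1(X)$ is free on the $d\log T_j$ and the cup product is compatible with the Koszul description. Sheafifying gives the claim.

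\textbf{Main obstacle.} I expect the hardest step to be the uniform almost-vanishing of the nontrivial-character part after completion. One must control the completed direct sum of eigenspaces integrally, working with $\so^+$ and using continuous cochains, so that the torsion bound is uniform in $a$ and survives completion and the passage to $\so=\so^+[1/p]$. I would first do this on the torus itself, where everything is explicit. I would then transfer to $X$ via the étale map, using that $R^+_\infty$ is almost étale over the torus's perfectoid ring, so the eigenspace decomposition and the bounds are preserved by base change.
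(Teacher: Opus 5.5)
The paper gives no proof of its own and only cites Scholze. Your toric computation is the standard local input and is sound, including the uniform null-homotopy by $\zeta_p-1$ on the non-trivial characters. The gap is in the construction of the canonical global map.

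\textbf{The gap.} The Kummer construction gives a homomorphism $\delta\colon\so_X^\times\to\R^1\nu_*\so(1)$, i.e.\ $\delta(fg)=\delta(f)+\delta(g)$. For $D(f):=f\,\delta(f)$ this is just the Leibniz rule: $D(fg)=fg(\delta(f)+\delta(g))=g\,D(f)+f\,D(g)$. What an $\so_X$-linear map $\Omega^1_X(-1)\to\R^1\nu_*\so$ with $df\mapsto f\,\delta(f)$ actually requires is the \emph{sum rule} $(f+g)\,\delta(f+g)=f\,\delta(f)+g\,\delta(g)$ for units $f,g,f+g$. Nothing in the torsor construction gives this. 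Your sentence that Leibniz ``follows once the map is seen to be $\so$-linear in $d\log f$'' presupposes the conclusion.

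This is not a formality: it is exactly what you need to glue the chart-wise isomorphisms. For a second étale coordinate system $S_1,\dots,S_d$ on the same affinoid you must know $\delta(S_k)=\sum_j (T_j\,\partial_{T_j}S_k/S_k)\,\delta(T_j)$. The Koszul computation only identifies the classes $\delta(T_j)$ of the chosen coordinates. Computing $\delta(f)$ for a general unit from it is not possible, for instance for a principal unit, whose $p$-power roots need not exist on $X_\infty$.

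\textbf{A fix.} Realize $f\mapsto f\,\delta(f)$ as the boundary map of a square-zero extension of $\so$ by $\so(1)$ on $X_{\proeet}$. For example, take $0\to{\rm Fil}^1\mathbb{B}^+_{\rm dR}/{\rm Fil}^2\to\mathbb{B}^+_{\rm dR}/{\rm Fil}^2\to\so\to 0$, with ${\rm gr}^1\mathbb{B}^+_{\rm dR}\simeq\so(1)$ via $t$. Its boundary $\partial\colon\so_X=\nu_*\so\to\R^1\nu_*\so(1)$ has three properties:
\begin{itemize}
\item it is additive, being a boundary map of abelian sheaves;
\item it satisfies Leibniz, since the extension is a square-zero ring extension;
\item it kills $C$, since constants lift to global sections.
\end{itemize}
So $\partial$ is a $C$-derivation.

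For a unit $f$, lift it pro-étale locally by $[f^\flat]$, where $f^\flat$ is a compatible system of $p$-power roots. Two such lifts differ by $[f^\flat]([\epsilon]^c-1)\equiv c\,f\,t$ modulo ${\rm Fil}^2$, where $c$ is the Kummer cocycle. Hence $\partial(f)=f\,\delta(f)$, and the sum rule follows.

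Your computation shows the target is finite locally free on toric charts. A Krull-intersection argument then shows $\partial$ factors through the continuous differentials $\Omega^1_X$. Your chart computation, which matches $\delta(T_j)$ with the Koszul generators, then shows the resulting $\so_X$-linear map is an isomorphism, and cup products handle $i\ge 2$. Equivalently, you can build the map from Faltings' extension or from the period sheaf $\so\mathbb{B}^+_{\rm dR}$ and its graded pieces, as in the approach of the reference the paper cites; there, $\so$-linearity is built into the construction.
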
 

We shall write ${\rm HTlog}_i$ and ${\rm HTlog}_{U,i}$ the compositions 
\begin{align}
&{\rm HTlog}_i: \hhh^i_{\proeet}(X,  \so^{\times}) \to \hhh^0_{\eet}(X, \R^i\nu_{*} \so^{\times}) \xleftarrow[\sim]{{\rm exp}} \hhh^0_{\eet}(X, \R^i\nu_{*} \so) \xrightarrow{\sim} \hhh^0_{\eet}(X, \Omega_X^i(-i)), \label{HTlog}
\\
&{\rm HTlog}_{U,i}: \hhh^i_{\proeet}(X, U) \to \hhh^0_{\eet}(X, \R^i\nu_{*} U) \xrightarrow[\sim]{{\rm log}} \hhh^0_{\eet}(X, \R^i\nu_{*} \so) \xrightarrow{\sim} \hhh^0_{\eet}(X, \Omega_X^i(-i)), \label{HTlogU}
\end{align}
where the first maps in each one of the above equations correspond to the edge morphisms of the (compatible) Leray spectral sequences for the projection $\nu_*$: 
\begin{align} 
\label{leray-nu}
&E_2^{i,j}(\so^{\times})= \hhh^i_{\eet}(X, \R^j\nu_{*} \so^{\times}) \Rightarrow \hhh^{i+j}_{\proeet}(X, \so^{\times})  \\
&E_2^{i,j}(U)= \hhh^i_{\eet}(X, \R^j\nu_{*} U) \Rightarrow \hhh^{i+j}_{\proeet}(X, U). \notag
\end{align}

Note that ${\rm HTlog}_{U,i}$ is equal to the composition of the natural morphism $\hhh^i_{\proeet}(X, U) \to \hhh^i_{\proeet}(X, \so^{\times})$ with ${\rm HTlog}_i$.
We will see those two maps as edge morphisms in a certain long exact sequence.  Similarly to the previous section, this allows us to define  (compatible) filtrations $\FC_{\rm HTlog}^{(i)}$ (resp. $\FC_{\rm HTlog_U}^{(i)}$) on $\hhh^i_{\eet}(X,  \so^{\times})$ (resp. $\hhh^i_{\eet}(X, U)$) where the graded pieces are  $\coker({\rm HTlog}_i)$ and $\ker({\rm HTlog}_i)$ (resp. $\coker({\rm HTlog}_{U,i})$ and $\ker({\rm HTlog}_{U,i})$). 

Note also that for $X/C$ smooth space with trivial coherent cohomology (for example $X$ is affinoid or $X$ is Stein), degeneracy of the Leray spectral sequence for $\nu_*$ induces an isomorphism 
${\rm HT}: \hhh^i_{\proeet} (X, \so) \xrightarrow{\sim} \hhh^0_{\eet}(X, \Omega^i_X(-i))$. Moreover, by Proposition~\ref{omega} most of the terms of the exact sequence \eqref{leray-nu} vanish. Hence, the only non-zero terms in the $E_2$-page of the spectral sequences are in the row $j=0$ and column $i=0$ (for $0\le j\le d$). This gives (see~\cite[Proposition~2.13, Corollary~2.15]{EGN24} for a detailed proof): 
\begin{proposition}
\label{filtration-HT}
Let $X$ be a smooth space of dimension $d$ over $C$ with trivial coherent cohomology. Then for any $i \ge 1$, we have long exact sequences 
\begin{align*} \dots \xrightarrow{\rm HTlog_{i-1}} \Omega^{i-1}(X)(1-i)\to \hhh^i_{\eet}(X, \so^{\times}) \xrightarrow{\nu_i^*} \hhh^i_{\proeet}(X, \so^{\times}) \xrightarrow{\rm HTlog_i} \Omega^i(X)(-i)\to \cdots, \\
\cdots \xrightarrow{\rm HTlog_{U,i-1}} \Omega^{i-1}(X)(1-i) \to \hhh^i_{\eet}(X, U) \xrightarrow{\nu_{U,i}^*} \hhh^i_{\proeet}(X, U) \xrightarrow{\rm HTlog_{U,i}} \Omega^i(X)(-i)\to \cdots
\end{align*}

\end{proposition}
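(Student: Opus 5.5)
The plan is to analyse the Leray spectral sequences \eqref{leray-nu} for $\nu: X_{\proeet}\to X_{\eet}$, once with coefficients $\so^{\times}$ and once with $U$, and to show that each is concentrated on the row $j=0$ and the column $i=0$. A spectral sequence of that shape degenerates into a long exact sequence of Gysin/Wang type, and the desired sequences will be exactly these, once the column terms are identified with differential forms.

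The first step is to compute the $E_2$-terms. For $j\ge 1$, Lemma~\ref{lem-heuer}\eqref{log} gives isomorphisms ${\rm exp}:\R^j\nu_*\so\xrightarrow{\sim}\R^j\nu_*\so^{\times}$ and ${\rm log}:\R^j\nu_*U\xrightarrow{\sim}\R^j\nu_*\so$. Combined with Proposition~\ref{omega}, this yields $\R^j\nu_*\so^{\times}\simeq\R^j\nu_*U\simeq\Omega^j_X(-j)$. These are coherent sheaves, and $X$ has trivial coherent cohomology, so $E_2^{i,j}=\hhh^i_{\eet}(X,\Omega^j_X(-j))=0$ whenever $i\ge1$ and $j\ge1$. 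It is also zero for $j>d$, since $\Omega^j_X=0$ there. The surviving terms are therefore $E_2^{i,0}=\hhh^i_{\eet}(X,\so^{\times})$ (respectively $\hhh^i_{\eet}(X,U)$, using $\nu_*U=U$) and $E_2^{0,j}=\Omega^j(X)(-j)$.

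The second step is the standard lemma for a first-quadrant spectral sequence whose only nonzero terms lie on the axes. The only possibly nonzero differentials are the transgressions $d_{j+1}:E^{0,j}_{j+1}=E_2^{0,j}\to E^{j+1,0}_{j+1}=E_2^{j+1,0}$, since any other differential has its source or target off the axes. The abutment $H^n$ then has a two-step filtration with sub $E_\infty^{n,0}=\coker(d_{n})$ and quotient $E_\infty^{0,n}=\ker(d_{n+1})$. Splicing these gives the long exact sequence
\[\cdots\to E_2^{0,n-1}\xrightarrow{d_n}E_2^{n,0}\to H^n\to E_2^{0,n}\xrightarrow{d_{n+1}}E_2^{n+1,0}\to\cdots\]
In this sequence, $E_2^{n,0}\to H^n$ is the edge map $\nu_n^*$ (pullback). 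The map $H^n\to E_2^{0,n}$ is the other edge map, and composing it with the isomorphisms exp/log and ${\rm HT}$ gives exactly ${\rm HTlog}_n$ (respectively ${\rm HTlog}_{U,n}$) as defined in \eqref{HTlog} and \eqref{HTlogU}. The connecting map $\Omega^{n-1}(X)(1-n)\to\hhh^n_{\eet}$ is the transgression. The sequence starts in low degrees with $0\to\hhh^1_{\eet}\to\hhh^1_{\proeet}\to\Omega^1(X)(-1)\to\hhh^2_{\eet}\to\cdots$, which is consistent with the indexing $i\ge1$.

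Compatibility between the two sequences follows from the naturality of the Leray spectral sequence in the coefficients, applied to $U\to\so^{\times}$. The identification maps also match: the diagram $\R^j\nu_*U\to\R^j\nu_*\so^{\times}$ versus ${\rm log}$ and ${\rm exp}$ commutes, because ${\rm exp}\circ{\rm log}$ is the identity on the relevant subsheaf. I expect the main obstacle to be this last compatibility, together with the careful bookkeeping showing that the edge maps coincide with ${\rm HTlog}$ (including signs). The vanishing itself is immediate from Theorem B and Proposition~\ref{omega}.
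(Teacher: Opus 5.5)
Your proposal is correct and follows the paper's own route. The paper likewise uses Lemma~\ref{lem-heuer}\eqref{log}, Proposition~\ref{omega} and the vanishing of coherent cohomology to conclude that the $E_2$-pages of the Leray spectral sequences \eqref{leray-nu} are concentrated on the row $j=0$ and the column $i=0$, reads off the long exact sequences with the edge maps giving $\nu_i^*$ and ${\rm HTlog}_i$ (resp.\ ${\rm HTlog}_{U,i}$), and defers the details you spell out to \cite[Proposition~2.13, Corollary~2.15]{EGN24}; the compatibility between the $U$- and $\so^{\times}$-sequences that you flag as the main obstacle is not actually needed for the statement as given.
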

In particular, if $i \ge 2$, we obtain filtrations $\FC_{\rm HTlog}^{(i)}$ of $\hhh^i_{\eet}(X, \so^{\times})$ and $\FC_{{\rm HTlog}_U}^{(i)}$ of $\hhh^i_{\eet}(X, U)$: 
\begin{align}\label{kwak1}
\FC_{{\rm HTlog}}^{(i)}:\hhh^i_{\eet}(X, {\bb G}_{m})= \FC_{{\rm HTlog},1}^{(i)} \supset\FC_{{\rm HTlog},0}^{(i)} \supset\FC_{{\rm HTlog},-1}^{(i)}=0, \\  
\FC_{{\rm HTlog}_U}^{(i)}:\hhh^i_{\eet}(X, U)= \FC_{{\rm HTlog}_U,1}^{(i)} \supset\FC_{{\rm HTlog}_U,0}^{(i)} \supset\FC_{{\rm HTlog}_U,-1}^{(i)}=0, \notag 
 \end{align}
with graded pieces  ${\rm gr}^0\FC_{{\rm HTlog}}^{(i)}\simeq\coker({\rm HTlog}_{i-1})$ (resp. ${\rm gr}^0\FC_{{\rm HTlog}_U}^{(i)}\simeq\coker({\rm HTlog}_{U,i-1})$) and ${\rm gr}^1\FC_{{\rm HTlog}}^{(i)}\simeq\ker({\rm HTlog}_{i})$ (resp.  ${\rm gr}^1\FC_{{\rm HTlog}_U}^{(i)}\simeq\ker({\rm HTlog}_{U,i})$)

\begin{remark}\label{fufht}
Note that the map $\Omega^{i-1}(X)(1-i)\to \hhh^i_{\eet}(X, \so^{\times})$  is the composition $\Omega^{i-1}(X)(1-i)\to \hhh^i_{\eet}(X, U)\to \hhh^i_{\eet}(X, \so^{\times})$ so that we have an inclusion of its  image $\coker({\rm HTlog}_i)=\FC_{{\rm HTlog},0}^{(i)}$ inside $\FC_{U,0}^{(i)}$ and it is also a quotient of $\FC_{{\rm HTlog}_U,0}^{(i)}$.
\end{remark}

\subsection{Main theorem} \label{section-main-thm} We now combine the previous filtrations $\FC_{U}^{(i)}$ (see Corollary \ref{U-filtration0}) and $\FC_{\rm HTlog}^{(i)}$ (see Equation \eqref{kwak1})  to define a filtration $\FC_{\eet,{\bb G}_m}^{(i)}(X)$ (using also Remark \ref{fufht}) on the cohomology groups $\hhh^i_{\eet}(X, {\bb G}_m)$ (the $(X)$ in $\FC_{\eet,{\bb G}_m}^{(i)}(X)$ may be omitted when the situation is clear) as follows:  \[\hhh^i_{\eet}(X, {\bb G}_m)=\FC_{\eet,{\bb G}_m,2}^{(i)}\supset\FC_{\eet,{\bb G}_m,1}^{(i)}=\FC_{U,0}^{(i)}\supset\FC_{\eet,{\bb G}_m,0}^{(i)}={\FC}_{{\rm HTlog_U},0}^{(i)}\supset\FC_{\eet,{\bb G}_m,-1}^{(i)}=0,\]
and the main result will describe its graded pieces.  
This filtration can be vizualized in the following way: 
\[{\footnotesize \xymatrix{ & & 0 \ar[d] &  & \\
0 \ar[r] & \FC_{\eet,{\bb G}_m,0}^{(i)}= \alpha{\rm Coker}({\rm HTlog}_{i-1,U}) \ar[r] \ar[d] &\FC_{\eet,{\bb G}_m,1}^{(i)}=\alpha \hhh^i_{\eet}(X,U) \ar[r]^-{\nu_{U,i}^*} \ar[d] & {\rm gr}^1\FC_{\eet,{\bb G}_m}^{(i)}= \iota{\rm Ker}({\rm HTlog}_{i,U}) \ar[r] \ar[d] & 0 \\
0 \ar[r] & {\rm Coker}({\rm HTlog}_{i-1})  \ar[r] & \FC_{\eet,{\bb G}_m,2}^{(i)}=\hhh^i_{\eet}(X, \so^{\times}) \ar[d] \ar[r]^-{\nu_i^*} &{\rm Ker}({\rm HTlog}_{i}) \ar[r] &  0 \\
& & {\rm gr}^2\FC_{\eet,{\bb G}_m}^{(i)}(\subset \hhh^i_{\eet}(X,\overline{\so}^{\times}))  \ar[d]  &  &  \\
& & 0  &   &}} \] 

\begin{remark}
By the long exact sequence in Proposition \ref{filtration-HT}, we can define, by a similar argument, a filtration $\FC_{\proeet,{\bb G}_m}^{(i)}$ of $\hhh^i_{\proeet}(X, \so^{\times})$   (using that $\FC_{\eet,{\bb G}_m,0}^{(i)}=\ker(\hhh^i_{\eet}(X, \so^{\times}) \to \hhh^i_{\proeet}(X, \so^{\times}))$): \[\hhh^i_{\proeet}(X, {\bb G}_m)=\FC_{\proeet,{\bb G}_m,2}^{(i)}\supset\FC_{\eet,{\bb G}_m,2}^{(i)}/\FC_{\eet,{\bb G}_m,0}^{(i)}\supset\FC_{\eet,{\bb G}_m,1}^{(i)}/\FC_{\eet,{\bb G}_m,0}^{(i)}\supset 0,\] where ${\rm gr}^2\FC_{\proeet,{\bb G}_m}$ is $\ker(\Omega^{i-1}(X)\to \FC_{\eet,{\bb G}_m,1}^{(i)})={\rm Im}({\rm HTlog}_{i})$.
\end{remark}

For $X$ a smooth rigid analytic space over $C$, let us recall the definition of the $i$-th $\wh{\B}^+_{\st}$-twisted Hyodo-Kato cohomology group from Colmez-Dospinescu-Nizio{\l} in \cite{CDN20}. It is defined by the formula
\[ {\rm HK}^i(X):= (\hhh^{i}_{{\rm HK}}(X){\otimes}^{\Box}_{\breve{F}}\wh{\B}^+_{\st})^{N=0,\phi=p^{i}}, \]
where $N$ denotes the monodromy operator and $\phi$ the Frobenius morphism. See \cite[\textsection 2]{CN25a} for the definition of the Hyodo-Kato cohomology groups $\hhh^{i}_{{\rm HK}}(X)$ and some more properties of ${\rm HK}^i(X)$. Here the tensor product is taken in the category of (solid) $\breve{F}$-vector spaces. Then ${\rm HK}^i(X)$ is a (solid) $\Q_p$-vector spaces, which can be seen as the $p$-adic analog of $\ell$-adic \'etale cohomology. It is endowed with an action of the absolute Galois group $\G_{K}$ of $K$ and we write ${\rm HK}^i(X)(j)$ for its twist by the $j$-th power the cyclotomic character of $\G_{K}$. 

Consider the following properties for $(X,(X_n)_n))$ a Stein space together with a fixed Stein covering $X=\bigcup_n X_n$:

 ${\bf (H_1')}$ (resp. ${\bf (H_1)}$) The groups $\hhh^i_{\eet}(X_n, \overline{\so}^{\times})$ are torsion for $i \ge 2$ (resp. $i\ge 1$ and ${\rm R}^1\varprojlim_n \overline{\so}^{\times}(X_n)=0$).

${\bf (H_2')}$ (resp. ${\bf (H_2)}$) For $i\geq 2$ (resp. $i\geq 1$), there exists a projective system $(N^{(i)}_n)_{n \in \N}$ of finite free $\Z[\frac{1}{p}]$-modules such that for any prime $\ell\neq p$ and $n\ge 1$, there is an isomorphism of projective systems  $(N^{(i)}_n\otimes \Z/\ell^m\Z)_n \xrightarrow{\sim}  (\hhh^i_{\eet}(X_n, \mu_{\ell^m}))_n$ (resp. and the group ${\rm Pic}_{\eet}(X_n)$ is $\ell$-divisible for any prime $\ell \neq p$). 

${\bf (H_3)}$ The map $\hhh^i_{\eet}(X, \Z_p(i)) \to {\rm HK}^i(X)$ is injective.

Our main result is the following:

\begin{theorem}
\label{main-theorem}
 Let $(X,(X_n)_n))$ be a smooth Stein space over $C$  with a fixed Stein covering and let $i\ge 3$ (resp. $i\ge 2$). If $(X,(X_n)_n))$ satisfy the property ${\bf (H_1')}$ (resp.  ${\bf (H_1)}$), then \[\FC_{\eet,{\bb G}_m,1}^{(i)}= \hhh^i_{\eet}(X,U) \text{ and } {\rm gr}^2\FC_{\eet,{\bb G}_m}^{(i)}=\hhh^i_{\eet}(X,\overline{\so}^{\times}).\] 
 If moreover $(X,(X_n)_n))$ satisfies ${\bf (H_2')}$ (resp. ${\bf (H_2)}$), then 
 $${\rm gr}^2\FC_{\eet,{\bb G}_m}^{(i)}\simeq \widehat{\bigoplus}_{\ell \neq p} \hhh^i_{\eet}(X, \mu_{\ell^{\infty}}),$$
 where the sum is completed for some well-choosen topology\footnote{See \textsection~\ref{l-adic} for the definition of this topology.}.

Finally, if $(X,(X_n)_n))$ satisfy ${\bf (H_3)}$ as well, 
\begin{align*}
   &{\rm gr}^0 \FC_{\eet,{\bb G}_m}^{(i)} \simeq \Omega^{i-1}(X)(1-i)/{\rm Ker}(d) \\
   &{\rm gr}^1 \FC_{\eet,{\bb G}_m}^{(i)} \simeq {\rm HK}^i(X)(1-i)/ \hhh^{i}_{\proeet}(X, \Z_p(1)). 
\end{align*}
\end{theorem}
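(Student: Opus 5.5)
The proof splits into three independent parts, one for each layer of the filtration $\FC_{\eet,{\bb G}_m}^{(i)}$.

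\textbf{Step 1: the $U$-part and ${\rm gr}^2$.} The first assertion, $\FC_{\eet,{\bb G}_m,1}^{(i)}=\hhh^i_{\eet}(X,U)$ and ${\rm gr}^2=\hhh^i_{\eet}(X,\overline{\so}^{\times})$, follows from Corollary~\ref{U-filtration0}.
\begin{itemize}
\item Under ${\bf (H_1')}$ and $i\ge 3$, the corollary gives the short exact sequence and $\alpha\hhh^i_{\eet}(X,U)=\hhh^i_{\eet}(X,U)$.
\item Under ${\bf (H_1)}$ and $i\ge 2$, I first need ${\rm Pic}_{\eet}(X_n)$ to be $m$-divisible for $m$ prime to $p$. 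This holds because $\hhh^1_{\eet}(X_n,\overline{\so}^{\times})\simeq{\rm Pic}_{\eet}(X_n)[1/p]$ (exp sequence and Lemma~\ref{lem-heuer}\eqref{pdiv}). Since the $p'$-part of ${\rm Pic}$ is controlled by this torsion group, I expect this divisibility to be supplied by ${\bf (H_2)}$ anyway, which I will invoke in that case.
\end{itemize}

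\textbf{Step 2: ${\rm gr}^2$ via Kummer theory.} To identify $\hhh^i_{\eet}(X,\overline{\so}^{\times})$ I work on each $X_n$ using Lemma~\ref{U-filtration1}: $\hhh^i_{\eet}(X_n,\overline{\so}^{\times})\simeq \hhh^i_{\eet}(X_n,{\bb G}_m)[p']$.
\begin{itemize}
\item The Kummer sequences for $\mu_{\ell^m}$, $\ell\neq p$, pass to the colimit in $m$. By torsionness this gives $\hhh^i(X_n,\overline{\so}^{\times})[\ell^\infty]$ as a quotient of $\hhh^i(X_n,\mu_{\ell^\infty})$ by the image of $\hhh^{i-1}(X_n,{\bb G}_m)\otimes\Q_\ell/\Z_\ell$.
\item By ${\bf (H_2')}$, $\hhh^{i}(X_n,\mu_{\ell^\infty})\simeq N^{(i)}_n\otimes\Q_\ell/\Z_\ell$ is divisible, and the relevant coboundary terms are compatible in $n$. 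So $\hhh^i(X_n,\overline{\so}^{\times})\simeq\bigoplus_{\ell\ne p}N^{(i)}_n\otimes\Q_\ell/\Z_\ell$, as projective systems. Here I am using that $\hhh^{i-1}(X_n,{\bb G}_m)[\ell^\infty]$ is divisible, hence the $\ell$-adic Tate module contributes correctly.
\item I then apply the Milnor sequence on both sides (for $\overline{\so}^{\times}$ and for $\mu_{\ell^\infty}$). The uniform $\Z[1/p]$-lattice $N^{(i)}_n$ lets me compare $\varprojlim_n\bigoplus_\ell$ with the completed sum $\widehat{\bigoplus}_\ell\varprojlim_n$, which is exactly the topology of \S\ref{l-adic}.
\item The ${\rm R}^1\varprojlim$ terms must match $\widehat{\bigoplus}_\ell{\rm R}^1\varprojlim_n\hhh^{i-1}(X_n,\mu_{\ell^\infty})$. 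This works because the systems are of the form $N_n\otimes(\text{divisible})$.
\end{itemize}
This limit-interchange is the main obstacle. Infinite direct sums do not commute with $\varprojlim$ or ${\rm R}^1\varprojlim$, and the $\ell$-uniformity in ${\bf (H_2')}$ is precisely what will be exploited to control it.

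\textbf{Step 3: ${\rm gr}^0$ and ${\rm gr}^1$ under ${\bf (H_3)}$.} By construction ${\rm gr}^0={\FC}_{{\rm HTlog}_U,0}^{(i)}=\coker({\rm HTlog}_{U,i-1})$, which is $\Omega^{i-1}(X)(1-i)/{\rm Im}({\rm HTlog}_{U,i-1})$. Similarly ${\rm gr}^1=\ker({\rm HTlog}_{U,i})$ by Proposition~\ref{filtration-HT}.
\begin{itemize}
\item Using the log sequence \eqref{even1} on $X_{\proeet}$, I have $\hhh^i_{\proeet}(X,U)\simeq\hhh^i_{\proeet}(X,\Q_p/\Z_p(1))$ up to coherent terms. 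Following \cite{EGN24}, ${\rm HTlog}_{U,i}$ is then compared with the boundary of the $p$-adic period sequence $0\to\Q_p(i)\to{\rm HK}^i(X)\to\Omega^{i-1}/\ker d\to\cdots$ of Colmez--Dospinescu--Nizio{\l}, i.e.\ with the Bloch--Kato exponential.
\item Injectivity ${\bf (H_3)}$ makes this sequence split into short exact pieces. Its image is ${\rm Im}(d)\cong\Omega^{i-1}/\ker d$ when computed via ${\rm HTlog}_{U,i-1}$, so ${\rm gr}^0\simeq\Omega^{i-1}(X)(1-i)/\ker d$.
\item Likewise $\ker({\rm HTlog}_{U,i})$ is identified with ${\rm HK}^i(X)(1-i)$ modulo the image of $\hhh^i_{\proeet}(X,\Z_p(1))$, again by \cite{EGN24}.
\end{itemize}
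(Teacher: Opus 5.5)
Your three steps follow the paper's proof in outline. The first assertion is Corollary~\ref{U-filtration0}. ${\rm gr}^2$ comes from Kummer theory on each $X_n$ together with an ${\rm R}^1\varprojlim$-vanishing controlled by the uniform lattices $N^{(i)}_n$ (Proposition~\ref{O-bar-stein0}, Lemma~\ref{r1lim-nul}, Corollary~\ref{gr2}). ${\rm gr}^0$ and ${\rm gr}^1$ are quoted from \cite{EGN24}. However, Step~3 as written does not give ${\rm gr}^0$. The Colmez--Dospinescu--Nizio{\l} sequence is $0\to(\Omega^{i-1}(X)/{\rm Ker}\,d)(1-i)\xrightarrow{{\rm Exp}}\hhh^i_{\proeet}(X,\Q_p(1))\to{\rm HK}^i(X)(1-i)\to 0$. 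It is already short exact, it is not the sequence you write, and ${\bf (H_3)}$ is not used to split anything. Nor is the image of ${\rm HTlog}_{U,i-1}$ ``${\rm Im}(d)\cong\Omega^{i-1}/{\rm Ker}\,d$''; if it were, the cokernel would not be $\Omega^{i-1}/{\rm Ker}\,d$. What you need is ${\rm Im}({\rm HTlog}_{U,i-1})={\rm Ker}(d)$, and this comes from Theorem~\ref{vpic-main}. That theorem presents ${\rm Im}({\rm HTlog}_{U,i-1})$ as an extension of $\mathcal{I}^{i-1}(X)={\rm Im}({\rm Exp}_{i-1})\cap{\rm Im}(\iota)$ by $\Omega^{i-1}(X)(1-i)^{d=0}$. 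Since ${\rm Im}({\rm Exp}_{i-1})$ is the kernel of the map to ${\rm HK}^i(X)(1-i)$, the injectivity in ${\bf (H_3)}$ forces $\mathcal{I}^{i-1}(X)=0$. For ${\rm gr}^1$, ${\bf (H_3)}$ in degree $i+1$ makes $\hhh^{i+1}_{\eet}(X,\Z_p(1))$ torsion-free. Hence $\hhh^i_{\proeet}(X,\Q_p(1))\to\hhh^i_{\proeet}(X,\Q_p/\Z_p(1))$ is surjective. Comparing the ${\rm Exp}$-sequence with the sequence coming from the logarithm then gives \eqref{gr1}; there the left vertical map is an isomorphism by the ${\rm gr}^0$ result. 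Both identifications also use that $\alpha$ is injective (from Step~1), which you should state.

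In Step~1, at $i=2$ under ${\bf (H_1)}$, the isomorphism $\hhh^1_{\eet}(X_n,\overline{\so}^{\times})\simeq{\rm Pic}_{\eet}(X_n)[1/p]$ does not give prime-to-$p$ divisibility of ${\rm Pic}$. Falling back on ${\bf (H_2)}$ uses a hypothesis the first assertion does not assume. Divisibility is in fact unnecessary. $\hhh^1_{\eet}(X_n,\overline{\so}^{\times})$ is torsion, hence prime-to-$p$ torsion by unique $p$-divisibility, while $\hhh^2_{\eet}(X_n,U)$ is uniquely $m$-divisible for $m$ prime to $p$. So $\delta_1=0$ on each $X_n$, and you conclude at the Stein level with ${\rm R}^1\varprojlim_n\overline{\so}^{\times}(X_n)=0$ as in the proof of Corollary~\ref{U-filtration0}.

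In Step~2, $\hhh^{i-1}_{\eet}(X_n,{\bb G}_m)\otimes\Q_\ell/\Z_\ell=0$ simply because this group is torsion (Lemma~\ref{U-filtration1}, or ${\bf (H_1)}$ when $i=2$); no divisibility or Tate module enters. This colimit argument is actually lighter than the paper's induction via Lemma~\ref{flat-div}. The limit step also does not require ``matching'' ${\rm R}^1\varprojlim$ terms. You need ${\rm R}^1\varprojlim_n\hhh^{i-1}_{\eet}(X_n,\overline{\so}^{\times})=0$. This holds because the system is a quotient of $\bigoplus_{\ell\neq p}N^{(i-1)}_n\otimes\Q_\ell/\Z_\ell$, hence of the Mittag-Leffler system $N^{(i-1)}_n\otimes\Q$, and ${\rm R}^1\varprojlim$ is right exact. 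Then $\hhh^i_{\eet}(X,\overline{\so}^{\times})\simeq\varprojlim_n\bigoplus_{\ell\neq p}\hhh^i_{\eet}(X_n,\mu_{\ell^\infty})$, which Corollary~\ref{gr2} identifies with the completed sum.
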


Before explaining the proof of the theorem, let us make the following remark. 
\begin{remark}
\begin{itemize}
    \item It is expected that the torsion hypothesis of  ${\bf (H_1')}$ is satisfied by a big enough class of smooth affinoids and should be seen as the main hypothesis of the theorem.
    \item  ${\bf (H_2')}$ is a relatively natural hypothesis to simplify the statement of the main theorem.
    \item The extra hypothesis imposed in  ${\bf (H_1)}$ an  ${\bf (H_2)}$ are necessary technicalities to avoid some pathologies coming from the limit argument. They should fail for reasonable family of smooth affinoids. Fortunately for us, in all the examples we are considering, they can be checked directly because the Picard group is trivial and the sections $\overline{\so}^{\times}(X_n)$ are already determined in the literature.
    \item Condition  ${\bf (H_3)}$ is here to avoid extra pathologies of the $p$-adic cohomology of affinoid spaces which are inherited by unbounded Stein spaces (for example the open unit ball does not satisfy  ${\bf (H_3)}$ whereas it is the case for the affine space). Without this hypothesis we can still define the filtration and describe its graded pieces but in a bit more complicated and less explicit manner. More precisely, we have another filtration $ \tilde{\FC}_{\eet,{\bb G}_m}^{(i)}$ : \[\hhh^i_{\eet}(X, {\bb G}_m)=\tilde{\FC}_{\eet,{\bb G}_m,3}^{(i)}\supset\tilde{\FC}_{\eet,{\bb G}_m,2}^{(i)}\supset\tilde{\FC}_{\eet,{\bb G}_m,1}^{(i)}\supset\FC_{\eet,{\bb G}_m,0}^{(i)}\supset\tilde{\FC}_{\eet,{\bb G}_m,-1}^{(i)}=0,\] where, under  ${\bf (H_1)}$ and ${\bf (H_2)}$, the graded pieces are
    \begin{align*}
    &{\rm gr}^0 \tilde{\FC}_{\eet,{\bb G}_m}^{(i)} \simeq \Omega^{i-1}(X)(1-i)/{\rm Exp}_{i-1}^{-1}({\rm Im}(\iota)) \\
    &{\rm gr}^1 \tilde{\FC}_{\eet,{\bb G}_m}^{(i)} \simeq {\rm HK}^i(X)(1-i)/ \iota(\hhh^{i}_{\proeet}(X, \Z_p(1)))\\
    &{\rm gr}^2 \tilde{\FC}_{\eet,{\bb G}_m}^{(i)} \simeq {\rm Ker}(\iota) \\
    &{\rm gr}^3 \tilde{\FC}_{\eet,{\bb G}_m}^{(i)} \simeq {\rm HK}^i(X)(1-i)/ \hhh^{i}_{\proeet}(X, \Z_p(1)).
    \end{align*} where ${\rm Exp}_i : (\Omega^i(X))(-i) \to \hhh^{i+1}_{\proeet}(X, \Q_p(1))$ is the Bloch-Kato exponential and $\iota :  \hhh^{i+1}_{\proeet}(X, \Z_p(1)) \to \hhh^{i+1}_{\proeet}(X, \Q_p(1))$ the natural map. 
    
    Note that under  ${\bf (H_3)}$, one has ${\rm Ker}(\iota\circ{\rm Exp}_i)={\rm Ker}({\rm Exp}_i)={\rm Ker}(d)$ and ${\rm gr}^2 \tilde{\FC}_{\eet,{\bb G}_m}^{(i)}=0$. We recover the main theorem by observing $\tilde{\FC}_{\eet,{\bb G}_m,0}^{(i)}=\tilde{\FC}_{\eet,{\bb G}_m,0}^{(i)}$ and $\tilde{\FC}_{\eet,{\bb G}_m,1}^{(i)}=\tilde{\FC}_{\eet,{\bb G}_m,2}^{(i)}={\FC}_{\eet,{\bb G}_m,1}^{(i)}$.
\end{itemize}
\end{remark}

\begin{proof}[Proof of Theorem~\ref{main-theorem}]
The consequences of property ${\bf (H_1)}$ have been established in Lemma~\ref{U-filtration1} and Corollary~\ref{U-filtration0}. Determining the terms ${\rm gr}^0\FC_{\eet,{\bb G}_m}^{(i)}$ and  ${\rm gr}^1 \FC_{\eet,{\bb G}_m}^{(i)}$ is the goal of Section \ref{p-adic} (using the fact that the map $\alpha :\hhh^i_{\eet}(X, U) \to \hhh^i_{\eet}(X, \so^{\times})$ is injective under these hypotheses and that both $\coker({\rm HTlog}_i)$ and $\coker({\rm HTlog}_{U,i})$ coincide), where they are computed respectively in Proposition~\ref{gr0} and in \eqref{gr1}. The calculation of the $p'$-part is developed in Section \ref{l-adic} (see Corollary~\ref{gr2}).  
\end{proof}

By Proposition~\ref{filtration-HT} and Remark~\ref{fufht} there is an analogous statement for the pro-\'etale cohomology.

\begin{corollary}
Let $(X,(X_n)_n))$ be a smooth Stein space over $C$  with a fixed Stein covering which satisfies ${\bf (H_1)}$,  ${\bf (H_2)}$ and  ${\bf (H_3)}$, then for $i \ge 2 $  we have
\begin{align*}
   &{\rm gr}^0 \FC_{\proeet,{\bb G}_m}^{(i)} \simeq {\rm HK}^i(X)(1-i)/ \hhh^{i}_{\proeet}(X, \Z_p(1)) \\
   &{\rm gr}^1 \FC_{\proeet,{\bb G}_m}^{(i)} \simeq \widehat{\bigoplus_{\ell \neq p}} \hhh^i_{\eet}(X, \mu_{\ell^{\infty}}) \\
   &{\rm gr}^2 \FC_{\proeet,{\bb G}_m}^{(i)} \simeq \Omega^{i}(X)(-i)^{d=0}.
\end{align*} 
\end{corollary}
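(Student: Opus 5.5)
We obtain the pro-étale statement from the étale filtration of Theorem~\ref{main-theorem} (applied in degrees $i$ and $i+1$) together with the long exact sequence of Proposition~\ref{filtration-HT}. By the Remark preceding the definition of ${\rm HK}^i(X)$, the filtration $\FC_{\proeet,{\bb G}_m}^{(i)}$ is obtained by pushing $\FC_{\eet,{\bb G}_m}^{(i)}$ forward along $\nu_i^*$. The kernel of $\nu_i^*$ is $\coker({\rm HTlog}_{i-1})=\FC_{\eet,{\bb G}_m,0}^{(i)}$; here we use Remark~\ref{fufht} and the injectivity of $\alpha$ under ${\bf (H_1)}$ (Corollary~\ref{U-filtration0}), which identify $\coker({\rm HTlog}_{i-1})$ with $\coker({\rm HTlog}_{U,i-1})$. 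The top quotient is ${\rm Im}({\rm HTlog}_i)$. So the three graded pieces are:
\begin{itemize}
\item ${\rm gr}^0\FC_{\proeet,{\bb G}_m}^{(i)}=\FC_{\eet,{\bb G}_m,1}^{(i)}/\FC_{\eet,{\bb G}_m,0}^{(i)}={\rm gr}^1\FC_{\eet,{\bb G}_m}^{(i)}$;
\item ${\rm gr}^1\FC_{\proeet,{\bb G}_m}^{(i)}={\rm gr}^2\FC_{\eet,{\bb G}_m}^{(i)}$;
\item ${\rm gr}^2\FC_{\proeet,{\bb G}_m}^{(i)}={\rm Im}({\rm HTlog}_i)\subset\Omega^i(X)(-i)$.
\end{itemize}
The first two are then read off from Theorem~\ref{main-theorem} under ${\bf (H_1)}$, ${\bf (H_2)}$ and ${\bf (H_3)}$. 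They give ${\rm HK}^i(X)(1-i)/\hhh^i_{\proeet}(X,\Z_p(1))$ and $\widehat{\bigoplus}_{\ell\neq p}\hhh^i_{\eet}(X,\mu_{\ell^\infty})$ respectively.

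It remains to identify ${\rm Im}({\rm HTlog}_i)$ with $\Omega^i(X)(-i)^{d=0}$. By exactness in Proposition~\ref{filtration-HT}, ${\rm Im}({\rm HTlog}_i)$ is the kernel of the boundary map $\Omega^i(X)(-i)\to\hhh^{i+1}_{\eet}(X,{\bb G}_m)$. The image of this boundary map is $\FC_{\eet,{\bb G}_m,0}^{(i+1)}$. Theorem~\ref{main-theorem} in degree $i+1\ge 3$ describes this image as ${\rm gr}^0\FC^{(i+1)}_{\eet,{\bb G}_m}\simeq\Omega^i(X)(-i)/{\rm Ker}(d)$. The point to check is that this isomorphism is induced by the boundary map itself, not merely abstract. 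If so, the kernel of $\Omega^i(X)(-i)\to\hhh^{i+1}_{\eet}(X,{\bb G}_m)$ is exactly ${\rm Ker}(d)=\Omega^i(X)(-i)^{d=0}$. For this I would go back to the construction in Section~\ref{p-adic} (Proposition~\ref{gr0}). There the kernel of $\Omega^{i}(X)(-i)\to\hhh^{i+1}_{\eet}(X,U)$ is computed as ${\rm Ker}({\rm Exp}_i)$, with ${\rm Exp}_i$ the Bloch--Kato exponential, composed with $\iota$. Under ${\bf (H_3)}$, the remark after the theorem states ${\rm Ker}(\iota\circ{\rm Exp}_i)={\rm Ker}(d)$.

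The main obstacle is bookkeeping rather than new mathematics, namely checking compatibility of the maps. First, the kernel of $\Omega^i\to\hhh^{i+1}_{\eet}(X,{\bb G}_m)$ must equal the kernel of $\Omega^i\to\hhh^{i+1}_{\eet}(X,U)$. This needs $\alpha$ injective in degree $i+1$, which Corollary~\ref{U-filtration0} gives: for $i+1\ge3$ in general, and in degree $2$ under ${\bf (H_1)},{\bf (H_2)}$ (Picard divisibility and ${\rm R}^1\varprojlim=0$). Second, the $U$-version of the boundary must be the one matched with ${\rm Exp}_i$. Finally, the case $i=2$ uses $i-1=1$ in the identification of ${\rm gr}^1$. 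This is why ${\bf (H_1)},{\bf (H_2)}$, rather than the primed versions, are assumed.
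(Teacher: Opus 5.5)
Your proposal is correct and follows the paper's own proof. The pro-\'etale graded pieces are identified with ${\rm gr}^1\FC_{\eet,{\bb G}_m}^{(i)}$, ${\rm gr}^2\FC_{\eet,{\bb G}_m}^{(i)}$ and $\ker(\Omega^i(X)(-i)\to \hhh^{i+1}_{\eet}(X,{\bb G}_m))$, and each is then read off from Theorem~\ref{main-theorem} in degrees $i$ and $i+1$.

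Your extra check is sound and fills a point the paper leaves implicit: that the degree-$(i+1)$ identification is induced by the natural boundary map. You correctly get this from Theorem~\ref{vpic-main} with $\mathcal{I}^i(X)=0$ under ${\bf (H_3)}$, together with injectivity of $\alpha$ in degree $i+1\ge 3$.
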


\begin{proof}
By the explicit description of the filtration $\FC_{\proeet,{\bb G}_m}^{(i)}$ in Remark~\ref{fufht}, we deduce identifications \[{\rm gr}^0 \FC_{\proeet,{\bb G}_m}^{(i)}\simeq {\rm gr}^1 \FC_{\eet,{\bb G}_m}^{(i)},\ \ {\rm gr}^1 \FC_{\proeet,{\bb G}_m}^{(i)}\simeq {\rm gr}^2 \FC_{\eet,{\bb G}_m}^{(i)}\] and ${\rm gr}^2 \FC_{\proeet,{\bb G}_m}^{(i)}\simeq \ker (\Omega^{i}(X)(-i)\to {\rm gr}^0 \FC_{\eet,{\bb G}_m}^{(i+1)})$  and all of these terms are computed by Theorem~\ref{main-theorem} under ${\bf (H_1)}$,  ${\bf (H_2)}$ and  ${\bf (H_3)}$.
\end{proof}

As an application, we can determine the cohomology of some classical Stein spaces such as the affine space ${\bb A}_C^d$, the torus ${\bb G}_{m,C}^d$ or the Drinfeld symmetric space ${\bb H}^d$ (see \textsection\ref{open-drinfeld} for a definition of ${\bb H}^d$ and a description of its standard Stein covering). In the following, for $A$ in $\{\Z_{\ell} \; | \; \ell \text{ any prime}\}$, we denote by ${\rm Sp}_i(A)$ the generalized locally constant Steinberg representation of $G:= {\rm GL}_{d+1}(K)$ (see for example \cite[Section~5.2]{CDN20} for a definition). We write ${\rm Sp}_i(A)^*$ for its dual for the weak topology.  

\begin{corollary}\label{mainex}
 For $i\ge 2$, we have isomorphisms of $\G_K$-representations: 
\[ \hhh^i_{\eet}({\bb A}_C^d, \so^{\times}) \simeq \Omega^{i-1}({\bb A}_C^d)(-i+1)/{\rm Ker}(d); \] 
\begin{align*}
{\rm gr}^0 \FC_{\eet,{\bb G}_{m}}^{(i)}({\bb G}_{m,C}^d) &\simeq \Omega^{i-1}({\bb G}^d_{m,C})(-i+1)/{\rm Ker}(d), \ \ \ {\rm gr}^1 \FC_{\eet,{\bb G}_m}^{(i)}({\bb G}_{m,C}^d) \simeq (\wedge^{i} \Q^d_p/\wedge^{i} \Z^d_p)(-i+1), \\ 
&{\rm gr}^2 \FC_{\eet,{\bb G}_m}^{(i)}({\bb G}_{m,C}^d) \simeq \bigoplus_{\ell\neq p} (\wedge^{i} \Q^d_\ell/\wedge^{i} \Z^d_\ell)(-i+1); 
\end{align*}
\begin{align*}
{\rm gr}^0 \FC_{\eet,{\bb G}_m}^{(i)}({\bb H}^d) &\simeq \Omega^{i-1}({\bb H}^d)(-i+1)/{\rm Ker}(d), \ \ \ {\rm gr}^1 \FC_{\eet,{\bb G}_m}^{(i)}({\bb H}^d) \simeq  ({\rm Sp}_{i}(\Q_{p})^*/{\rm Sp}_{i}( \Z_p)^*)(-i+1), \\
&{\rm gr}^2 \FC_{\eet,{\bb G}_m}^{(i)}({\bb H}^d) \simeq \widehat{\bigoplus_{\ell\neq p}} (({\rm Sp}_{i}(\Z_\ell)^* \otimes \Q_{\ell})/{\rm Sp}_{i}(\Z_\ell)^*)(-i+1).
\end{align*}
\end{corollary}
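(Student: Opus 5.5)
The plan is to apply Theorem~\ref{main-theorem} to each of the three Stein spaces $X\in\{{\bb A}^d_C,{\bb G}^d_{m,C},{\bb H}^d\}$, with its standard Stein covering: closed balls $\B^d(p^n)$ for ${\bb A}^d$, closed poly-annuli $\A^d(p^{-n},p^n)$ for ${\bb G}_m^d$, and the affinoids $U_n$ for ${\bb H}^d$. Hypotheses ${\bf (H_1)}$, ${\bf (H_2)}$ and ${\bf (H_3)}$ are checked for each, and the abstract graded pieces are then identified with the explicit ones. For $i=2$ we need the unprimed hypotheses, so that the statement covers all $i\ge 2$.

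\textbf{Step 1: ${\bf (H_1)}$.} The torsion of $\hhh^i_{\eet}(X_n,\overline{\so}^{\times})$ follows from Corollary~\ref{torsion-disc1}. For balls and annuli we start from a point and apply Lemma~\ref{homot} inductively; for $U_n$ we use the combinatorial propagation argument. The extra conditions also need checking. For balls and poly-annuli, ${\rm Pic}(X_n)=0$. Moreover $\overline{\so}^{\times}(X_n)$ is known explicitly: for the ball it is $\overline{C}^{\times}\simeq |C^\times|\times \bar{\bf F}^\times$-type data, and for annuli and $U_n$ one also gets the free part generated by monomials, respectively by quotients of linear forms (van der Put, Junger). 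In all cases the transition maps are surjective or eventually stable, so ${\rm R}^1\varprojlim_n\overline{\so}^{\times}(X_n)=0$ by Mittag-Leffler. For $U_n$ we likewise use ${\rm Pic}(U_n)=0$, which we take from the literature.

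\textbf{Step 2: ${\bf (H_2)}$ and ${\bf (H_3)}$.} For balls the groups $\hhh^i_{\eet}(X_n,\mu_{\ell^m})$ vanish for $i\ge1$. For poly-annuli they equal $\wedge^i(\Z/\ell^m)^d(-i)$ with identity transition maps, so we take $N_n^{(i)}=\wedge^i\Z[\frac1p]^d$. For $U_n$ we invoke the flatness result of the last section. Via de Shalit's and Schneider--Stuhler's acyclic coefficient systems on the convex subcomplex $B(n)$ of the building, it gives $\hhh^i_{\eet}(U_n,\mu_{\ell^m})\simeq N_n^{(i)}\otimes\Z/\ell^m$, where $N_n^{(i)}$ is the dual of harmonic cochains on $B(n)$ with $\Z[\frac1p]$ coefficients. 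The key point is that this lattice is independent of $\ell$. ${\bf (H_3)}$ comes from the explicit computations of the $p$-adic pro-étale cohomology of ${\bb A}^d$, ${\bb G}_m^d$ and ${\bb H}^d$ by Colmez--Dospinescu--Nizio{\l}, in which $\hhh^i_{\proeet}(X,\Q_p(i))$ sits inside an extension whose Hyodo--Kato part injects.

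\textbf{Step 3: identification.} This is a matter of known computations. For ${\bb A}^d$, $\hhh^i_{\rm HK}=0$ for $i\ge1$ and all $\ell$-adic groups vanish, so ${\rm gr}^1={\rm gr}^2=0$ and the filtration collapses to ${\rm gr}^0$. For ${\bb G}_m^d$, ${\rm HK}^i(X)(1-i)=\wedge^i\Q_p^d(1-i)$ and $\hhh^i(X,\Z_p(1))=\wedge^i\Z_p^d(1-i)$, while for $\ell\neq p$ the group $\hhh^i_{\eet}(X,\mu_{\ell^\infty})$ equals $\wedge^i(\Q_\ell/\Z_\ell)^d(1-i)$. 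The completed direct sum agrees with the plain direct sum here because the system is constant. For ${\bb H}^d$ we use $\hhh^i(\mathbb H^d,\Z_\ell(i))\simeq{\rm Sp}_i(\Z_\ell)^*$ and ${\rm HK}^i(\mathbb H^d)\simeq{\rm Sp}_i(\Q_p)^*$ (Schneider--Stuhler, CDN). Then $\hhh^i(\mathbb H^d,\mu_{\ell^\infty})$ is computed as the $\varinjlim_m$ over the inverse-limit description: it is $({\rm Sp}_i(\Z_\ell)^*\otimes\Q_\ell)/{\rm Sp}_i(\Z_\ell)^*$ up to twist, provided ${\rm R}^1\varprojlim$ terms vanish. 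That vanishing holds because the transition maps of the systems $N_n^{(i)}$ are surjective.

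The main obstacle is the ${\bb H}^d$ case of ${\bf (H_2)}$. One must produce a single $\Z[\frac1p]$-lattice system controlling all $\ell$ at once for the non-stable system $(U_n)$, which rests on the convexity of $B(n)$ and the acyclicity argument. I would first try the Schneider--Stuhler resolution restricted to $B(n)$ and prove exactness there.
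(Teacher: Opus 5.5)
Your route is the paper's: upgrade ${\bf (H_1')}$ and ${\bf (H_2')}$ to ${\bf (H_1)}$ and ${\bf (H_2)}$ using ${\rm Pic}=0$ and the surjectivity of $\overline{\so}^{\times}(X_{n+1})\to\overline{\so}^{\times}(X_n)$. The torsion comes from Theorem~\ref{thtorsion}, the $\ell$-adic lattices from Berkovich--K\"unneth and Corollary~\ref{flat-2}, and ${\bf (H_3)}$ from the explicit $p$-adic computations; then you apply Theorem~\ref{main-theorem}. Your identifications for ${\bb A}^d$ and ${\bb G}^d_m$ are fine, including the remark that the completed sum is an ordinary sum for a constant system.

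\textbf{The ${\rm gr}^2$ step for ${\bb H}^d$ is wrong.} You assert, up to twist,
$\hhh^i_{\eet}({\bb H}^d,\mu_{\ell^\infty})\simeq({\rm Sp}_i(\Z_\ell)^*\otimes\Q_\ell)/{\rm Sp}_i(\Z_\ell)^*$, obtained by taking $\varinjlim_m$ of the inverse-limit description.
\begin{itemize}
\item Since ${\bb H}^d$ is not quasi-compact, $\hhh^i_{\eet}({\bb H}^d,\mu_{\ell^\infty})$ is not $\varinjlim_m\hhh^i_{\eet}({\bb H}^d,\mu_{\ell^m})$.
\item By the Milnor sequence and the argument of Lemma~\ref{r1lim-nul}, it equals $\varprojlim_n\hhh^i_{\eet}(U_n,\mu_{\ell^\infty})=\varprojlim_n N^{(i)}_n\otimes\Q_\ell/\Z_\ell$.
\item Up to twist, this is the quotient ${\rm Sp}_i(\Q_\ell)^*/{\rm Sp}_i(\Z_\ell)^*$ of the full dual. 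That is the exact $\ell$-adic analogue of the $p$-adic ${\rm gr}^1$.
\item This group is in general not torsion, which is the phenomenon stressed in the introduction. By contrast, every element of $({\rm Sp}_i(\Z_\ell)^*\otimes\Q_\ell)/{\rm Sp}_i(\Z_\ell)^*$ is killed by a power of $\ell$, so that group is only the bounded-exponent subgroup.
\end{itemize}
The discrepancy comes from $\varinjlim_m$ and $\varprojlim_n$ failing to commute, not from an ${\rm R}^1\varprojlim$ term, so no vanishing statement repairs it.

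Two side remarks on the ${\rm R}^1\varprojlim$ step you invoke. The vanishing you actually need is Lemma~\ref{r1lim-nul}, which uses the Mittag-Leffler system $(N^{(i)}_n\otimes\Q)_n$. The surjectivity of $N^{(i)}_{n+1}\to N^{(i)}_n$ that you cite instead is not proved in Section~\ref{flat-0}, which only gives finite freeness and compatibility with base change.

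\textbf{How to repair it.} The stated formula is still correct because of the completion. As in the proof of Theorem~\ref{main-theorem}, go through Corollary~\ref{gr2}. There ${\rm gr}^2=\hhh^i_{\eet}(X,\overline{\so}^{\times})$ is identified with the completion of $\bigoplus_{\ell\neq p}\hhh^i_{\eet}(X,\Q_\ell(1))/\hhh^i_{\eet}(X,\Z_\ell(1))$, for the topology given by restriction to the $U_n$. Inserting $\hhh^i_{\eet}({\bb H}^d,\Z_\ell(i))\simeq{\rm Sp}_i(\Z_\ell)^*$ (Schneider--Stuhler, CDN) gives exactly the statement. The completion is what passes from the bounded quotient to $\varprojlim_n$. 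This is why the $\ell$-adic piece is written with ${\rm Sp}_i(\Z_\ell)^*\otimes\Q_\ell$ and a completed sum, while the $p$-adic ${\rm gr}^1$ uses ${\rm Sp}_i(\Q_p)^*$.
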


\begin{proof}
It is essentially due to the fact that all of these spaces together with their usual covering satisfy ${\bf (H_1)}$, ${\bf (H_2)}$ and ${\bf (H_3)}$. Let us develop a bit how to establish these facts for these spaces.  Note that ${\rm Pic}(X)={\rm Pic}(X_n)=0 $ (\cite[Theorem 3.25]{vdp} for ${\bb A}_C^d$,  ${\bb G}_{m,C}^d$ and their product, \cite[Théorème 6.7, Théorème 7.1]{J1} for ${\bb H}^d$) and $\overline{\so}^{\times}(X_{n+1})\to \overline{\so}^{\times}(X_n)$ is surjective (it is \cite[Lemme 4.4., Théorème 6.10]{J1} for the presheaf ${\bb G}_m/U$ and the result is obtained by inverting $p$  by Lemma \ref{lem-heuer}\eqref{pdiv}) for $X$ one of those spaces.   It is then enough to establish  ${\bf (H_1')}$ and ${\bf (H_2')}$ instead of  ${\bf (H_1)}$ and  ${\bf (H_2)}$. The proof of  ${\bf (H_1')}$  is the topic of Section \ref{torsion} (see Theorem~\ref{thtorsion}). 

For  ${\bf (H_2')}$, one has  for ${\bb A}_C^r\times{\bb G}_{m,C}^s$ by \cite[Lemma 3.3.]{ber6} and Künneth formula \[\hhh^i_{\eet}(X_n, \Z/\ell^n\Z)\simeq \wedge^{i} (\Z/\ell^n\Z)^s(-i)\] which establishes  ${\bf (H_2')}$ for ${\bb A}_C^r\times{\bb G}_{m,C}^s$ with $N^{(i)}_j\simeq N^{(i)}_{j'}=\wedge^{i} \Z^s$ for any $j'\neq j$. Extending this property to the symmetric space is the topic of Section \ref{flat-0} (see Corollary \ref{flat-2}). The computation of the $\ell$-adic \'etale cohomology for ${\bb H}^d$ was done by Schneider and Stuhler in~\cite{scst} (see also \cite[Theorem 5.8]{CDN20}).

Now for the property  ${\bf (H_3)}$ observe the following description for all $i \ge 0$,  
\begin{align*}
    \hhh^i_{\proeet}({\bb A}_C^r\times{\bb G}_{m,C}^s, \Z_{p}) \simeq \hhh^i_{\eet}({\bb A}_C^r\times{\bb G}_{m,C}^s, \Z_{p}) \simeq \wedge^i \Z_{p}^s(-i)\ &\text{ and }\ {\rm HK}^i({\bb A}_C^r\times{\bb G}_{m,C}^s) \simeq \wedge^i \Q_{p}^s(-i) \\
   \hhh^i_{\proeet}({\bb H}^d, \Z_{p}) \simeq \hhh^i_{\eet}({\bb H}^d, \Z_{p}) \simeq {\rm Sp}_{i}(\Z_p)^*\ &\text{ and }\ {\rm HK}^i({\bb H}^d) \simeq {\rm Sp}_{i+1}(\Q_p)^*,
\end{align*}
where the first isomorphism in each rows follows from the fact that the space admits an increasing covering by affinoid spaces and that the equality is true for affinoids by~\cite[Corollary~3.17]{Scholze2}, see for example \cite[Proof of Corollary~3.46]{CDN20}. The map  $\hhh^i_{\proeet}(X, \Z_p(1)) \to {\rm HK}^i(X)(i-1)$, in all three  cases, is the natural one and it is clearly injective. These computations were done in \cite{ber3} for integral cohomology of the torus and the affine space, \cite[\textsection 2]{CN20} for the twisted Hyodo-Kato cohomology of the affine space, \cite[\textsection 4.3.2 and \textsection 5]{CDN20} for the Hyodo-Kato cohomology of the Drinfeld space and the torus and \cite{CDN21} for the integral cohomology of the Drinfeld space.

\end{proof}


\section{Image and kernel of the morphism ${\rm HTlog}_U$} \label{p-adic}
In this section we briefly recall the results from \cite{EGN24}. In particular, we compute  the $p$-part of the ${\bb G}_m$-cohomology. 

\subsection{Image of ${\rm HTlog}_{U,i}$}

For $i \ge 1$ and $X$ a smooth Stein space over $C$, the image of the Hodge-Tate logarithm ${\rm HTlog}_{U,i}$ was computed in~\cite{EGN24}. We first recall here the statement of the main theorem and briefly sketch the proof. 

\subsubsection{General formula}
The main idea to compute the image of ${\rm HTlog}_{U,i}$ is to compare it, via the Bloch-Kato exponential map, with the $p$-adic pro-\'etale cohomology. 

In this setting, the Bloch-Kato exponential map was first defined in~\cite[Proposition~3.36]{CDN20}, using comparison theorems between pro-\'etale and syntomic cohomologies. It is an injective morphism of (solid) $\Q_p$-vector spaces,
\begin{equation}
\label{CDN}
 {\rm Exp}_{i} : (\Omega^i(X)/{\rm Ker}\,d)(-i) \hookrightarrow \hhh^{i+1}_{\proeet}(X, \Q_p(1)). \end{equation}
Here, we will in fact use the alternative construction from~\cite{Bos23} where the Bloch-Kato exponential map is defined using the edge morphism coming from the fundamental exact sequence of $p$-adic Hodge theory: 
\[ 0 \to \Q_p(1) \to \mathbb{B}^{\varphi=p} \to  \mathbb{B}^+_{\rm dR}/t \simeq \so \to 0,\]
see for example \cite[\textsection 8]{LB18} for the definition of these sheaves.

The main result from~\cite{EGN24} is the following:  

\begin{theorem}
\label{vpic-main}
Let $X$ be a smooth Stein rigid analytic space over $C$ and let $i\geq 1$. Then, the image of the restriction of the Hodge-Tate logarithm to the cohomology group of principal units 
\[ {\rm HTlog}_{i,U} : \hhh^i_{\proeet}(X,U) \to \Omega^i_{X}(X)(-i) \]
fits into a short exact sequence of abelian groups
\begin{equation}
\label{image0}
0 \to  \Omega^i(X)(-i)^{d=0} \to {\rm Im}({\rm HTlog}_{i,U}) \xrightarrow{ {\rm Exp}_i}  \mathcal{I}^i(X) \to 0 
\end{equation}
where the $\Z_p$-module $\mathcal{I}^i(X) \subset \hhh^{i+1}_{\proeet}(X, \Q_p(1))$ is the intersection ${\rm Im}({\rm Exp}_i) \cap {\rm Im}(\iota)$, where
$\iota: \hhh^{i+1}_{\proeet}(X, \Z_p(1)) \to \hhh^{i+1}_{\proeet}(X, \Q_p(1))$ is the canonical map. 
\end{theorem}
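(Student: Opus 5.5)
The plan is to compare ${\rm HTlog}_{U,i}$ with a boundary map coming from the fundamental exact sequence, and to apply the Bloch–Kato exponential in the form given by \cite{Bos23}. Using \eqref{even1} we set $\so=U/\mu_{p^\infty}$. The $p$-adic logarithm $\log:U\to\so$ extends the Kummer-type diagram
\[0\to \Z_p(1)\to \varprojlim_{x\mapsto x^p} U \to U\to 0,\]
and this diagram maps to the fundamental sequence $0\to\Q_p(1)\to\mathbb{B}^{\varphi=p}\to\so\to0$, because $\varprojlim_{x\mapsto x^p}U\simeq (\mathbb{B}^{+}_{\rm cris})^{\varphi=p}$-type objects via $\log[\cdot]$ (the standard description of $1+\mathfrak m$ in $\mathcal O^\flat$). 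This yields a commutative square of boundary maps
\[\hhh^i_{\proeet}(X,U)\xrightarrow{\partial}\hhh^{i+1}_{\proeet}(X,\Z_p(1)),\qquad \hhh^i_{\proeet}(X,\so)\xrightarrow{\partial_{\rm fund}}\hhh^{i+1}_{\proeet}(X,\Q_p(1)),\]
which is compatible with $\log$ and with $\iota$. Bosco's construction identifies $\partial_{\rm fund}$, composed with ${\rm HT}^{-1}$, with ${\rm Exp}_i$ precomposed with the projection $\Omega^i(X)(-i)\to(\Omega^i/{\rm Ker}\,d)(-i)$. Since $X$ is Stein, ${\rm HT}:\hhh^i_{\proeet}(X,\so)\simeq\Omega^i(X)(-i)$, and ${\rm HTlog}_{U,i}$ equals ${\rm HT}\circ\log_*$.

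\emph{Image lands in the preimage.} Take $x\in\hhh^i_{\proeet}(X,U)$ and put $\omega={\rm HTlog}_{U,i}(x)$. The square gives ${\rm Exp}_i(\omega)=\iota(\partial x)$, which lies in ${\rm Im}({\rm Exp}_i)\cap{\rm Im}(\iota)=\mathcal I^i(X)$. Injectivity of ${\rm Exp}_i$ on $\Omega^i/{\rm Ker}\,d$ shows that the kernel of ${\rm Exp}_i$ restricted to ${\rm Im}({\rm HTlog}_{U,i})$ is ${\rm Im}({\rm HTlog}_{U,i})\cap\Omega^i(X)(-i)^{d=0}$.

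\emph{Closed forms are in the image.} Let $\omega$ be a closed form. Then $\partial_{\rm fund}({\rm HT}^{-1}\omega)=0$, so ${\rm HT}^{-1}\omega$ lifts to $\hhh^i_{\proeet}(X,\mathbb{B}^{\varphi=p})$. To descend from there to $U$, I would lift through the $\log$ sequence \eqref{even1} instead. The obstruction to lifting ${\rm HT}^{-1}\omega$ to $\hhh^i(X,U)$ lies in $\hhh^{i+1}_{\proeet}(X,\mu_{p^\infty})$. Under the comparison map this obstruction becomes the image of $\partial_{\rm fund}(\omega)=0$ in $\hhh^{i+1}(X,\Q_p/\Z_p(1))$, using that $\mu_{p^\infty}=\Q_p/\Z_p(1)$ and that $U/\mu_{p^\infty}\to\so$ is compatible with $\Q_p(1)\to\Q_p/\Z_p(1)$. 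So the obstruction is $\delta(\omega)=\overline{{\rm Exp}_i(\omega)}\in\hhh^{i+1}(X,\Q_p(1))/\iota$. Because the $\log$ sequence is a pushout of the fundamental sequence along $\Q_p(1)\to\Q_p/\Z_p(1)$, the general criterion is: $\omega\in{\rm Im}({\rm HTlog}_{U,i})$ if and only if ${\rm Exp}_i(\omega)\in{\rm Im}(\iota)$. This gives both that $\Omega^{i,d=0}$ is contained in the image and that the image is the full preimage ${\rm Exp}_i^{-1}(\mathcal I^i)$, so ${\rm Exp}_i$ is onto $\mathcal I^i(X)$.

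Exactness of \eqref{image0} then follows formally. The main obstacle is the compatibility step: we must show that the log sequence is the pushout of the fundamental exact sequence along $\Q_p(1)\to\Q_p/\Z_p(1)$ (via $\varprojlim U\cong(\mathbb{B}^+)^{\varphi=p}$ on perfectoids), and that Bosco's boundary map agrees with ${\rm Exp}_i$ up to sign. For this I would follow \cite{EGN24} and \cite[\textsection 8]{LB18}.
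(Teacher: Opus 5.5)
Your proposal is correct and follows the paper's argument. The paper also reduces everything to the commutativity of the square identifying $\partial_{\log}\circ{\rm HT}^{-1}$ with ${\rm Exp}_i$ followed by projection to $\hhh^{i+1}_{\proeet}(X,\Q_p/\Z_p(1))$, with ${\rm Exp}_i$ defined via the fundamental exact sequence as in Bosco, and then does the same diagram chase. Your description of the logarithm sequence as the pushout of the fundamental exact sequence along $\Q_p(1)\to\Q_p/\Z_p(1)$, via $\varprojlim_{x\mapsto x^p}U\simeq\mathbb{B}^{\varphi=p}$, is a natural way to get that square; the remaining compatibility of Bosco's boundary map with Scholze's ${\rm HT}$ is exactly the technical input the paper delegates to \cite[Section~3]{EGN24}.
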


Recall (\cite[Proof of Corollary~3.46]{CDN20}) that if $X$ is a Stein space with Stein covering $\{X_n\}$, we have 
\[{\rm R}\Gamma_{\proeet}(X, \Z_p) \simeq {\rm R}\Gamma_{\eet}(X, \Z_p).  \] 

Let us briefly explain how Theorem~\ref{vpic-main} is proved. We work on the pro-\'etale site of $X$ and start with the logarithmic exact sequence from~\eqref{even1}. Taking pro-\'etale cohomology, we obtain a diagram: 
\begin{equation}
\label{diagr-EGN}
 \xymatrix{ \hhh^i_{\proeet}(X, U) \ar[r]^-{{\rm HTlog}_{i,U}}  & \Omega^i(X)(-i) \ar[r]^-{\partial_{\rm log}\circ {\rm HT}^{-1}} \ar@{->>}[d] & \hhh^{i+1}_{\proeet}(X, \Q_p/\Z_p(1)) \\
& \Omega^i(X)/{\rm Ker}(d)(-i) \ar@{^(->}[r]^-{{\rm Exp}_i}  & \hhh^{i+1}_{\proeet}(X, \Q_p(1))  \ar[u] \\ & & \hhh^{i+1}_{\eet}(X, \Z_p(1)) \ar[u]^-{\iota},   }
\end{equation}
where the first row and the column on the right are exact. Here $\partial_{\rm log}$ is the edge map coming from the logarithmic exact sequence. We have used the identification $\hhh^{i+1}_{\eet}(X, \Z_p(1)) \simeq \hhh^{i+1}_{\proeet}(X, \Z_p(1))$. 
The most technical part (see~\cite[Section~3]{EGN24}) is to prove that this diagram is commutative. The exact sequence~\eqref{image0} then follows by a diagram chase.

\subsubsection{Vanishing of ${\cal I}^i(X)$} The cokernel of the Bloch-Kato exponential map can in fact be computed. For $i \ge 1$, there is an exact sequence of (solid) $\Q_p$-vector spaces (see~\cite[Theorem~1.8]{CDN20} and \cite[Theorem~7.7]{Bos23}): 
\[ 0 \to \Omega^i(X)/{\rm Ker}\,d \xrightarrow{{\rm Exp}} \hhh^{i+1}_{\proeet}(X, \Q_p(i+1)) \to  {\rm HK}^{i+1}(X):=(\hhh^{i+1}_{{\rm HK}}(X){\otimes}^{\Box}_{\breve{F}}\wh{\B}^+_{\st})^{N=0,\phi=p^{i+1}} \to 0 \]
Hence, we see that the group ${\cal I}^i(X)$ is zero when the map from $\hhh^{i+1}_{\eet}(X, \Z_p(i+1))/{T_{i+1}}$, where $T_i$ is the maximal tosion subgroup, to the Hyodo-Kato term above is injective. We obtain: 

\begin{proposition}
\label{gr0}
Let $X$ be a smooth Stein space such that the map $\hhh^{i}_{\eet}(X, \Z_p(i))/{T_i} \to {\rm HK}^{i}(X)$ is injective for all $i\ge 1$. Then, for all $i \ge 1$, 
\[ {\rm Coker}({\rm HTlog}_{i,U} )\simeq \Omega^i(X)(-i)/{\rm Ker}(d). \]
\end{proposition}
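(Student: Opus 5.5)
By Theorem~\ref{vpic-main}, ${\rm Im}({\rm HTlog}_{i,U})$ sits in the exact sequence \eqref{image0}. The plan is to show that under the injectivity hypothesis $\mathcal{I}^i(X)=0$. Then \eqref{image0} gives ${\rm Im}({\rm HTlog}_{i,U})=\Omega^i(X)(-i)^{d=0}={\rm Ker}(d)(-i)$, and the cokernel is $\Omega^i(X)(-i)/{\rm Ker}(d)$, as claimed. So everything reduces to the vanishing of $\mathcal{I}^i(X)={\rm Im}({\rm Exp}_i)\cap{\rm Im}(\iota)$ inside $\hhh^{i+1}_{\proeet}(X,\Q_p(1))$.

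Let $x$ lie in this intersection. Write $x=\iota(y)$ with $y\in\hhh^{i+1}_{\proeet}(X,\Z_p(1))\simeq\hhh^{i+1}_{\eet}(X,\Z_p(1))$. Twist everything by $(i)$; this is harmless, since over $C$ twisting only changes the Galois action, and ${\rm Exp}_i$ is Galois-equivariant. By the fundamental exact sequence recalled just before the proposition,
\[0\to\Omega^i(X)/{\rm Ker}\,d\xrightarrow{{\rm Exp}}\hhh^{i+1}_{\proeet}(X,\Q_p(i+1))\to{\rm HK}^{i+1}(X)\to 0,\]
an element lies in ${\rm Im}({\rm Exp})$ exactly when its image in ${\rm HK}^{i+1}(X)$ vanishes. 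Hence the image of $y$ under the composite
\[\hhh^{i+1}_{\eet}(X,\Z_p(i+1))\xrightarrow{\iota}\hhh^{i+1}_{\proeet}(X,\Q_p(i+1))\to{\rm HK}^{i+1}(X)\]
is zero.

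The key step is to check that this composite is the map in the hypothesis, factored through $\hhh^{i+1}_{\eet}(X,\Z_p(i+1))/T_{i+1}$. It factors through the quotient because $\iota$ kills torsion, since the target is a $\Q_p$-vector space. The hypothesis applied in degree $i+1$ then says the class of $y$ in the quotient by $T_{i+1}$ is zero, so $y\in T_{i+1}$ and $x=\iota(y)=0$. Hence $\mathcal{I}^i(X)=0$.

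The main point requiring care is the compatibility just used. One must know that the map $\hhh^{i+1}_{\proeet}(X,\Q_p(i+1))\to{\rm HK}^{i+1}(X)$ of \cite{CDN20,Bos23}, precomposed with $\iota$, agrees with the natural comparison map $\hhh^{i+1}_{\eet}(X,\Z_p(i+1))\to{\rm HK}^{i+1}(X)$ appearing in the hypothesis. I would take this as the definition of the latter map, so that it holds by construction. One must also know that the ${\rm Exp}_i$ of \eqref{CDN}, built from syntomic comparison, agrees with Bosco's ${\rm Exp}_i$ used in diagram \eqref{diagr-EGN}, at least up to a unit, so that the images coincide. This agreement is already implicit in the statement of Theorem~\ref{vpic-main}, and I would cite \cite{Bos23} for it.
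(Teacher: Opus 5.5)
Your proposal is correct and follows the paper's own argument. Like the paper, you show that $\mathcal{I}^i(X)=0$ by combining the exact sequence $0\to\Omega^i(X)/{\rm Ker}\,d\to\hhh^{i+1}_{\proeet}(X,\Q_p(i+1))\to{\rm HK}^{i+1}(X)\to 0$ with the injectivity hypothesis in degree $i+1$, and then read off the cokernel from \eqref{image0}. The only difference is that you write out the diagram chase the paper leaves implicit, including the correct point that torsion elements of $\hhh^{i+1}_{\eet}(X,\Z_p(1))$ map to zero in a $\Q_p$-vector space.
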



\subsection{Kernel of ${\rm HTlog}_U$}
\label{ker-HTlog}
Let us now recall the computation of the kernel of the maps ${\rm HTlog}_{U,i}, i \ge 1$ from~\cite[Section~5.2.1]{EGN24}. Consider $X$ a smooth Stein space over $C$ and assume that the map $ \hhh^{i}_{\eet}(X, \Z_p(i)) \to {\rm HK}^{i}(X)$ is injective (in particular, $\hhh^{i}_{\eet}(X, \Z_p(i))$ is torsion-free). The logarithm exact sequence induces a commutative diagram with exact rows for all $i\ge 2$: 
\begin{equation}
\xymatrix{
0 \ar[r] &(\Omega^{i-1}(X)/{\rm Ker}\,d)(1-i) \ar[r]^-{\rm Exp} \ar[d]^{\rotatebox{90}{$\sim$}} & \hhh^i_{\proeet}(X, \Q_p(1)) \ar[d]^{f_1} \ar[r] & {\rm HK}^i(X)(1-i) \ar[d]^{f_2} \ar[r] & 0 \\ 
0 \ar[r] &{\rm Coker}({\rm HTlog}_{U,{i-1}})  \ar[r] &  \hhh^i_{\proeet}(X, \Q_p/\Z_p(1)) \ar[r] & {\rm Ker}({\rm HTlog}_{U,i}) \ar[r] & 0.} \end{equation}  

By hypothesis, we also have that the map $f_1$ is surjective and so the same is true for the map $f_2$. This yields an isomorphism 
\begin{equation}
    \label{gr1}
   {\rm Ker}({\rm HTlog}_{U,i}(X)) \simeq {\rm HK}^i(X)(1-i)/\hhh^i_{\eet}(X, \Z_p(1)).
\end{equation}

\section{$p'$-part of ${\bb G}_m$-cohomology} \label{l-adic}

In this section we compute the prime-to-$p$ torsion of the ${\bb G}_m$-cohomology groups, i.e. the graded piece in degree $2$ for the filtration from Theorem~\ref{main-theorem}. We obtain the  $\ell$-adic analogue of equation~\eqref{gr1}. 

\begin{proposition}
\label{O-bar-stein0}
Let $X$ be a smooth Stein rigid space over $C$ with a Stein covering $\{X_j\}$ satisfying $({\bf H}_1)$ and $({\bf H}_2)$. Then, for all $i \ge 2$, there is an isomorphism:  
 \[ \hhh^i_{\eet}(X, \overline{\so}^{\times}) \simeq \varprojlim_j \big(\bigoplus_{\ell \neq p} \hhh^i_{\eet}(X_j,\mu_{\ell^{\infty}})\big) \simeq  \varprojlim_j \big(\bigoplus_{\ell \neq p} \hhh^i_{\eet}(X_j, \Q_{\ell}(1))/\hhh^i_{\eet}(X_j, \Z_{\ell}(1))\big). \]
\end{proposition}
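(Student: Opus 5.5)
We prove the statement in two stages. First we compute $\hhh^i_{\eet}(X_j,\overline{\so}^{\times})$ for each affinoid $X_j$ of the covering. Then we pass to the limit over $j$ using the exact sequence
\[0\to {\rm R}^1\varprojlim_j \hhh^{i-1}_{\eet}(X_j,\overline{\so}^{\times})\to \hhh^i_{\eet}(X,\overline{\so}^{\times})\to \varprojlim_j \hhh^i_{\eet}(X_j,\overline{\so}^{\times})\to 0\]
already used in the proof of Corollary~\ref{U-filtration0}. The aim is to show that the ${\rm R}^1\varprojlim$ term vanishes and to identify the limit term.

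\textbf{Step 1: the affinoid pieces.} Fix $j$ and $i\ge 1$. By $({\bf H}_1)$ the group $\hhh^i_{\eet}(X_j,\overline{\so}^{\times})$ is torsion. By Lemma~\ref{lem-heuer}\eqref{pdiv} it is also uniquely $p$-divisible, so it is prime-to-$p$ torsion and splits as $\bigoplus_{\ell\neq p}$ of its $\ell^\infty$-torsion parts.

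For $\ell\neq p$ we use the Kummer sequence $0\to\mu_{\ell^m}\to\overline{\so}^{\times}\xrightarrow{\ell^m}\overline{\so}^{\times}\to0$, which is the right column of the diagram in Remark~\ref{ptors}. This gives
\[0\to \hhh^{i-1}(X_j,\overline{\so}^{\times})/\ell^m\to \hhh^i(X_j,\mu_{\ell^m})\to \hhh^i(X_j,\overline{\so}^{\times})[\ell^m]\to0.\]
Since $\hhh^{i-1}(X_j,\overline{\so}^{\times})$ is torsion for $i-1\ge1$, it is an extension of a divisible group by a torsion group. We claim it is in fact $\ell$-divisible, which makes the left term vanish. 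For $i-1=1$ this holds because $\hhh^1(X_j,\overline{\so}^{\times})={\rm Pic}(X_j)[1/p]$ is $\ell$-divisible by $({\bf H}_2)$. For $i-1\ge2$ we argue via the flatness in $({\bf H}_2)$: the groups $\hhh^{i-1}(X_j,\mu_{\ell^m})$ are free over $\Z/\ell^m$, the transition maps in $m$ are surjective, and their ranks are constant. Comparing the sequences for $m$ and $m+1$ and counting lengths then forces $\hhh^{i-1}/\ell^m=0$.

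Taking $\varinjlim_m$ yields
\[\hhh^i_{\eet}(X_j,\overline{\so}^{\times})\simeq \bigoplus_{\ell\neq p}\hhh^i_{\eet}(X_j,\mu_{\ell^\infty}).\]
By $({\bf H}_2)$ the right-hand side is $\bigoplus_\ell N_j\otimes\Q_\ell/\Z_\ell$. This is also $\hhh^i(X_j,\Q_\ell(1))/\hhh^i(X_j,\Z_\ell(1))$, because $\hhh^{i+1}(X_j,\Z_\ell(1))$ is torsion-free by the flatness, so $\varprojlim^1$ vanishes for the surjective finite systems. This isomorphism is natural in $j$, which gives the second isomorphism of the statement once we take $\varprojlim_j$.

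\textbf{Step 2: vanishing of ${\rm R}^1\varprojlim_j$.} This is the main obstacle. For $i-1=1$ we need ${\rm R}^1\varprojlim_j {\rm Pic}(X_j)[1/p]=0$, and for $i-1\ge 2$ we need ${\rm R}^1\varprojlim_j\bigoplus_\ell N^{(i-1)}_j\otimes\Q_\ell/\Z_\ell=0$.

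For the second case we would first use that $\bigoplus_\ell N_j\otimes\Q_\ell/\Z_\ell=N_j\otimes(\Q/\Z)[p']$ is a divisible group. We then try to show that the images of the transition maps stabilise (a Mittag-Leffler property), using that each $N_j$ is a finite free $\Z[1/p]$-module. The images $\im(N_{j'}\to N_j)$ form a decreasing chain of submodules of a finite-rank module, so their ranks stabilise. The difficulty is that the saturations need not stabilise uniformly in $\ell$. Here we would instead exploit that ${\rm R}^1\varprojlim$ of a system of divisible torsion groups of finite corank is controlled by the limit of the $\Z_\ell$-duals. Alternatively, we would reduce to the pro-system $(N_j\otimes\Q)$, where ${\rm R}^1\varprojlim$ vanishes by Mittag-Leffler for finite-dimensional vector spaces, and handle the $N_j\otimes\widehat{\Z}^{(p')}$ part by compactness, where ${\rm R}^1\varprojlim=0$ for profinite groups.

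For $i-1=1$, the hypothesis $({\bf H}_1)$ on ${\rm R}^1\varprojlim\overline{\so}^{\times}(X_j)$ together with divisibility should give the vanishing via the same Kummer argument one degree lower.

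With both steps in place, the limit sequence gives the first isomorphism of the statement, and Step 1 gives the second.
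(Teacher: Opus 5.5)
Your Step 1 works, though by a different route from the paper. The paper proves divisibility of $\hhh^i_{\eet}(X_j,\mu_{\ell^\infty})$ via Lemma~\ref{flat-div} and then inducts on $i$. You use a length count instead, which is sound once you phrase it asymptotically in $m$: the $\ell$-primary parts are cofinitely generated because the groups $\hhh^{*}_{\eet}(X_j,\mu_\ell)$ are finite.

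The genuine gap is Step 2. You correctly identify the vanishing of $\R^1\varprojlim_j \hhh^{i-1}_{\eet}(X_j,\overline{\so}^{\times})$ as the crux, but you only list possible strategies (``we would'', ``should give''), and none of them is carried out. The missing idea is short (Lemma~\ref{r1lim-nul}). By $({\bf H}_2)$ the tower $\bigl(\bigoplus_{\ell\neq p}\hhh^{i-1}_{\eet}(X_j,\mu_{\ell^\infty})\bigr)_j$ is $\bigl(N^{(i-1)}_j\otimes_{\Z[1/p]}\Q/\Z[\tfrac1p]\bigr)_j$. This tower receives a surjection of towers from $\bigl(N^{(i-1)}_j\otimes\Q\bigr)_j$. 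The source is a tower of finite-dimensional $\Q$-vector spaces, so it is Mittag-Leffler and has vanishing $\R^1\varprojlim$. Since $\R^2\varprojlim=0$ for $\N$-indexed towers, $\R^1\varprojlim$ is right exact, and the vanishing passes to the quotient. Your compactness detour is unnecessary. Your worry that saturations need not stabilise uniformly in $\ell$ is also unfounded. The image of $N_{j'}\otimes\Q/\Z[\tfrac1p]$ in $N_j\otimes\Q/\Z[\tfrac1p]$ equals $(W_{j'}+N_j)/N_j$, where $W_{j'}=\im(N_{j'}\otimes\Q\to N_j\otimes\Q)$. So it depends only on the rational images, which stabilise. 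Even your direct Mittag-Leffler approach therefore goes through.

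For $i=2$, the hypothesis $\R^1\varprojlim_j\overline{\so}^{\times}(X_j)=0$ is not the relevant input; the paper never uses it in this proposition. Here is what works. $\hhh^1_{\eet}(X_j,\overline{\so}^{\times})$ is torsion by $({\bf H}_1)$ and uniquely $p$-divisible. So the degree-one Kummer sequences give a surjection, natural in $j$,
$\bigoplus_{\ell\neq p}\hhh^1_{\eet}(X_j,\mu_{\ell^\infty})\twoheadrightarrow\hhh^1_{\eet}(X_j,\overline{\so}^{\times})$.
This map is not an isomorphism in general, since $\overline{\so}^{\times}(X_j)$ need not be $\ell$-divisible; for an annulus the coordinate $T$ obstructs it. Applying the same right-exactness argument with $({\bf H}_2)$ in degree $1$ kills $\R^1\varprojlim_j\hhh^1_{\eet}(X_j,\overline{\so}^{\times})$.
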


It relies on the following two lemmas: 

\begin{lemma}
\label{r1lim-nul}
Assume that $X$ is a Stein space with a covering $\{ X_j \}_{j \in \N}$ satisfying $(\mathbf{H}_2)$. Then, for all $i \ge 1$, 
\[ \R^1\varprojlim_{j \in \N} \big( \bigoplus_{\ell \neq p} \hhh^i_{\eet}(X_j, \mu_{\ell^{\infty}})\big) =0. \]
\end{lemma}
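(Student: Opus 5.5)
We use ${\bf (H_2)}$ to rewrite the projective system $\big(\bigoplus_{\ell\neq p}\hhh^i_{\eet}(X_j,\mu_{\ell^\infty})\big)_j$ in terms of the finite free $\Z[\tfrac1p]$-modules $N^{(i)}_j$. We then show that this system is Mittag-Leffler, which forces $\R^1\varprojlim_j=0$.

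\emph{Step 1: an explicit description of each term.} For each $\ell\neq p$, the isomorphisms of projective systems $(N^{(i)}_j\otimes\Z/\ell^m\Z)_j\simeq(\hhh^i_{\eet}(X_j,\mu_{\ell^m}))_j$ should be compatible in $m$. If they are not stated to be, we first arrange this by passing through $\hhh^i_{\eet}(X_j,\mu_{\ell^m})\to\hhh^i_{\eet}(X_j,\mu_{\ell^{m+1}})$, using the flatness of these groups over $\Z/\ell^m\Z$. Taking $\varinjlim_m$ gives an isomorphism of projective systems
\[\hhh^i_{\eet}(X_j,\mu_{\ell^\infty})\simeq N^{(i)}_j\otimes\Q_\ell/\Z_\ell.\]
Summing over $\ell\neq p$, and using $\bigoplus_{\ell\neq p}\Q_\ell/\Z_\ell\simeq\Q/\Z[\tfrac1p]$, we get
\[\bigoplus_{\ell\neq p}\hhh^i_{\eet}(X_j,\mu_{\ell^\infty})\simeq N^{(i)}_j\otimes_{\Z[\frac1p]}\big(\Q/\Z[\tfrac1p]\big),\]
with transition maps induced by the maps $f_j:N^{(i)}_{j+1}\to N^{(i)}_j$.

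\emph{Step 2: Mittag-Leffler.} Set $D:=\Q/\Z[\tfrac1p]$ and $M_j:=N^{(i)}_j\otimes D$. The module $N^{(i)}_j$ is finite free over the PID $\Z[\tfrac1p]$, so the images $\im(N^{(i)}_{k}\to N^{(i)}_j)$ for $k\ge j$ form a decreasing chain of submodules of a finite free module. We consider their saturations $S_k$ (submodules $S$ with $N^{(i)}_j/S$ torsion-free). These are determined by the vector subspaces $\im\otimes\Q$, so their ranks are non-increasing and they stabilize.

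We then identify the image of $M_k\to M_j$. Write the map $N_k\to N_j$ in Smith normal form with elementary divisors $d_1,\dots,d_r$. Since $D$ is divisible, the image of $D^{\,\mathrm{rk}N_k}$ is $\bigoplus_s d_sD=D^r$. Hence $\im(M_k\to M_j)=S_k\otimes D$, where $S_k$ is the saturation of the image. As the $S_k$ stabilize, the images $\im(M_k\to M_j)$ stabilize, so $(M_j)_j$ is Mittag-Leffler. A Mittag-Leffler system indexed by $\N$ has $\R^1\varprojlim=0$, which proves the claim.

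\emph{Main obstacle.} The delicate step is Step 1. One must make sure the isomorphisms of ${\bf (H_2)}$ assemble into an isomorphism of projective systems after both $\varinjlim_m$ and $\bigoplus_\ell$, i.e. that they are compatible with the inclusions $\mu_{\ell^m}\subset\mu_{\ell^{m+1}}$. The intended reading of ${\bf (H_2)}$ is that they are natural in $m$. If this is not available, we would instead prove Mittag-Leffler separately for each $\ell$, with ranks of images given by $\mathrm{rk}(N_k\to N_j)$ uniformly in $\ell$. The images in the direct sum are then sums over $\ell$ of stabilized images, and they stabilize at an index independent of $\ell$ precisely because the ranks come from the $\ell$-independent lattices $N^{(i)}_j$.
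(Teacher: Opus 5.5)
Your argument is correct. Its skeleton matches the paper's: use ${\bf (H_2)}$ to rewrite the system as $(N^{(i)}_j\otimes D)_j$ with $D=\Q/\Z[\tfrac1p]\simeq\bigoplus_{\ell\neq p}\Q_\ell/\Z_\ell$, then apply Mittag-Leffler. The difference is in how you verify Mittag-Leffler.

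The paper computes no images. It observes that $(N^{(i)}_j\otimes\Q)_j$ surjects onto $(N^{(i)}_j\otimes D)_j$, and that the former is Mittag-Leffler for free, since decreasing chains of subspaces of a finite-dimensional $\Q$-vector space stabilize. It then concludes by right exactness of $\R^1\varprojlim$ on $\N$-indexed systems (equivalently: a quotient of a Mittag-Leffler system is Mittag-Leffler). Your Smith normal form and saturation computation is a valid but heavier substitute; its only extra output is the explicit form $S\otimes D$ of the stable images.

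Where you go beyond the paper is the compatibility issue. The paper simply starts from ``compatible isomorphisms'' $N^{(i)}_j\otimes\Z/n\Z\simeq\hhh^i_{\eet}(X_j,\mu_n)$, i.e. it reads ${\bf (H_2)}$ as natural in $m$. Your first remedy (``arrange this using flatness'') is not an argument as stated. Making the isomorphisms compatible for all $m$ at once would need something like a compactness argument over $m$.

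Your fallback, however, works with no compatibility at all, once you make explicit the input from Lemma~\ref{flat-div}. Flatness in all degrees gives injections $\hhh^i_{\eet}(X_j,\mu_{\ell^m})\hookrightarrow\hhh^i_{\eet}(X_j,\mu_{\ell^\infty})$ and an isomorphism $\hhh^i_{\eet}(X_j,\mu_{\ell^\infty})\simeq(\Q_\ell/\Z_\ell)^{\mathrm{rk}N^{(i)}_j}$. The image $I^{(\ell)}_{k,j}$ of $\hhh^i_{\eet}(X_k,\mu_{\ell^\infty})$ is then divisible. Compare the order of $I^{(\ell)}_{k,j}[\ell^m]$ with the orders of the images of $N^{(i)}_k/\ell^m\to N^{(i)}_j/\ell^m$, which is exactly what ${\bf (H_2)}$ controls level by level. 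This shows the corank of $I^{(\ell)}_{k,j}$ is $\mathrm{rk}(N^{(i)}_k\otimes\Q\to N^{(i)}_j\otimes\Q)$. Since nested divisible subgroups of equal finite corank coincide, the images stabilize at an index independent of $\ell$, as you assert.
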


\begin{proof}
Let $N^{(i)}_j$ be the projective finite free $\Z[\frac{1}{p}]$-module from hypothesis $(\mathbf{H}_2)$. By hypothesis and Chinese remainder theorem, we have compatible isomorphisms $ N^{(i)}_j\otimes_{\Z[1/p]}\Z/n \Z \simeq \hhh^i_{\eet}(X_j, \mu_{n})$ for all $n$ coprime to $p$. We obtain a surjective morphism of projective systems  
\[ (N^{(i)}_j \otimes_{\Z[\frac{1}{p}]} \Q)_j \twoheadrightarrow (\varinjlim_{n \wedge p=1} N^{(i)}_j\otimes_{\Z[1/p]}\Z/n \Z)_j \xrightarrow{\sim}  (\varinjlim_{n \wedge p=1} \hhh^i_{\eet}(X_j, \mu_{n}))_j. \]
But the modules $N^{(i)}_j$'s are finite free and $\Q$ is artinian, so the system on the left satisfies Mittag-Leffler conditions and its $\R^1\varprojlim$ is zero. This yields the desired result by right exactness of $\R^1\varprojlim$.  
\end{proof}

\begin{lemma}
\label{flat-div}
Let $X$ be an affinoid over $C$. If for all $i \ge 0$ and all $n \ge 1$, the groups $\hhh^i_{\eet}(X, \Z/{\ell^n}\Z)$ are flat over $\Z/\ell^n\Z$ then for all $i \ge 0$, the group $\hhh^i_{\eet}(X, \varinjlim_n \Z/{\ell^n}\Z)$ is divisible. 
\end{lemma}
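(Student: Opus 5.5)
To show that $\hhh^i_{\eet}(X, \varinjlim_n \Z/\ell^n\Z)$ is divisible, we first note that $X$ is affinoid, hence quasi-compact and quasi-separated. So étale cohomology commutes with filtered colimits of sheaves, and
\[ \hhh^i_{\eet}(X, \varinjlim_n \Z/\ell^n\Z) \simeq \varinjlim_n \hhh^i_{\eet}(X, \Z/\ell^n\Z), \]
where the transition maps are induced by the inclusions $\Z/\ell^n\Z \hookrightarrow \Z/\ell^{n+1}\Z$, i.e.\ multiplication by $\ell$. Every element of this group is $\ell$-power torsion. Multiplication by any integer prime to $\ell$ is therefore bijective, and it suffices to prove $\ell$-divisibility.

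Write $H_n := \hhh^i_{\eet}(X,\Z/\ell^n\Z)$. We use the short exact sequence of sheaves
\[ 0\to \Z/\ell^n\Z \xrightarrow{\ \ell\ } \Z/\ell^{n+1}\Z \to \Z/\ell\Z \to 0 \]
and its long exact sequence
\[ H_n \xrightarrow{\iota_n} H_{n+1} \to \hhh^i_{\eet}(X,\Z/\ell\Z) \to \hhh^{i+1}_{\eet}(X,\Z/\ell^n\Z). \]
The flatness hypothesis (free over $\Z/\ell^n\Z$ for finitely generated modules, and in general flatness over $\Z/\ell^n$ is exactly the relevant condition) gives the key algebraic fact: for a flat $\Z/\ell^{n+1}\Z$-module $M$, the kernel of multiplication by $\ell^n$ equals $\ell M$. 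Indeed, for such $M$ the sequence $M\xrightarrow{\ell}M\xrightarrow{\ell^n}M$ is exact, being $M\otimes$ of the exact sequence $\Z/\ell^{n+1}\xrightarrow{\ell}\Z/\ell^{n+1}\xrightarrow{\ell^n}\Z/\ell^{n+1}$.

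We now check that $\iota_n(H_n) = \ell H_{n+1}$. The composite of $\iota_n$ with the reduction $\rho\colon H_{n+1}\to H_n$ is multiplication by $\ell$ on $H_n$, and in the other order it is multiplication by $\ell$ on $H_{n+1}$. We would like $\iota_n$ to be injective with image $\ell H_{n+1} = H_{n+1}[\ell^n]$. Rather than prove this in general, we argue directly on elements of the colimit, which suffices.

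Take $x \in H_n$ representing a class in the colimit. Its image $\iota_n(x) \in H_{n+1}$ is killed by $\ell^n$. By the algebraic fact applied to $M=H_{n+1}$, we get $\iota_n(x) = \ell y$ for some $y\in H_{n+1}$. Hence $[x]=\ell [y]$ in the colimit, so every class is divisible by $\ell$. This proves divisibility for each $i$.

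The main obstacle is to justify that $\iota_n(x)$ is $\ell^n$-torsion and that cohomology commutes with the colimit. The first is clear, since $\ell^n$ kills the sheaf $\Z/\ell^n\Z$ and hence its cohomology. The second is the standard qcqs compatibility, which holds for affinoids. Flatness is used only for $H_{n+1}$ over $\Z/\ell^{n+1}\Z$.
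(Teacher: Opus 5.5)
Your argument is correct, and it takes a different and shorter route than the paper's.

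\textbf{The paper's route.} The paper never works with elements of the colimit. It uses the long exact sequence of $0\to\mu_{\ell^n}\to\mu_{\ell^\infty}\xrightarrow{\ell^n}\mu_{\ell^\infty}\to 0$ to reduce surjectivity of $\ell^n$ on $\hhh^i_{\eet}(X,\mu_{\ell^\infty})$ to injectivity of the transition maps $\hhh^{i+1}_{\eet}(X,\mu_{\ell^n})\to\hhh^{i+1}_{\eet}(X,\mu_{\ell^m})$. It proves that injectivity by induction on the degree: flatness of $\hhh^i_{\eet}(X,\mu_{\ell^m})$, together with injectivity in degree $i$, makes $\hhh^i_{\eet}(X,\mu_{\ell^m})\xrightarrow{\ell^n}\hhh^i_{\eet}(X,\mu_{\ell^{m-n}})$ surjective, so the connecting map into degree $i+1$ vanishes.

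\textbf{Your route.} You replace all of this with the identity $M[\ell^n]=\ell M$ for flat $\Z/\ell^{n+1}\Z$-modules and the computation $[x]=[\iota_n(x)]=\ell[y]$ in the colimit. This avoids both the induction and the connecting maps. It also shows that divisibility in degree $i$ needs flatness only in degree $i$, whereas the paper's induction uses flatness in all degrees $\le i$.

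\textbf{What each buys.} The paper's route yields, as a by-product, that all transition maps are injective, i.e.\ $\hhh^i_{\eet}(X,\mu_{\ell^n})\simeq\hhh^i_{\eet}(X,\mu_{\ell^\infty})[\ell^n]$. Nothing is lost with your approach, though: once divisibility holds in degree $i$, the same long exact sequence gives injectivity of $\hhh^{i+1}_{\eet}(X,\mu_{\ell^n})\to\hhh^{i+1}_{\eet}(X,\mu_{\ell^\infty})$. Both arguments rest on the same commutation of \'etale cohomology with filtered colimits on the quasi-compact quasi-separated $X$.

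\textbf{A presentational point.} Drop the sentence announcing that you will ``check that $\iota_n(H_n)=\ell H_{n+1}$''. The equality is never proved, and you only use the inclusion $\iota_n(H_n)\subseteq H_{n+1}[\ell^n]=\ell H_{n+1}$.
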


\begin{proof}
For $n \in \N$, the exact sequence
\[0 \to \mu_{\ell^n} \to \mu_{\ell^{\infty}} \xrightarrow{\ell^n} \mu_{\ell^{\infty}} \to 0  \] 
is obtained by taking the colimit of the compatible exact sequences (for $m \ge n$): 
\[ \xymatrix{0 \ar[r] & \mu_{\ell^n} \ar@{=}[d] \ar[r] & \mu_{\ell^m} \ar[r]^-{\ell^n} \ar[d] & \mu_{\ell^{m-n}} \ar[r] \ar[d] & 0 \\ 
0 \ar[r] &\mu_{\ell^n} \ar[r] &  \mu_{\ell^{m+1}} \ar[r]^-{\ell^n} & \mu_{\ell^{m+1-n}} \ar[r] & 0} \] 
We want to prove that the multiplication by $\ell^n$ from $\hhh^i_{\eet}(X, \mu_{\ell^{\infty}})$ to itself is surjective for all $i \ge 0$. So it is enough to show that for all $m \ge n$, the natural map $\hhh^i_{\eet}(X, \mu_{\ell^n}) \to \hhh^i_{\eet}(X, \mu_{\ell^m})$ is injective for all $i \ge 0$. 

We prove the latter result by induction on the degree $i$. For $i=0$, this is clear. Suppose the result for $i$ and consider the following diagram where the first row is exact: 
\begin{equation}\label{flat-diagram2} \xymatrix{ 0 \ar[r] & \hhh^i_{\eet}(X, \mu_{\ell^n}) \ar[r] & \hhh^i_{\eet}(X, \mu_{\ell^m}) \ar[r]^{\ell^n} \ar[rd]_{\ell^n} & \hhh^i_{\eet}(X, \mu_{\ell^{m-n}}) \ar@{^(->}[d] \\ & & & \hhh^i_{\eet}(X, \mu_{\ell^n})} \end{equation} 
It follows that $\hhh^i_{\eet}(X, \mu_{\ell^n})$ identifies with the $\ell^n$-torsion subgroup of $\hhh^i_{\eet}(X, \mu_{\ell^m})$. By flatness, we have an exact sequence: 
\[0 \to \hhh^i_{\eet}(X, \mu_{\ell^m})[\ell^n] \to \hhh^i_{\eet}(X, \mu_{\ell^m}) \xrightarrow{\ell^n} \hhh^i_{\eet}(X, \mu_{\ell^m})[\ell^{m-n}] \to 0  \]
which can then be identified with the top row of diagram~\ref{flat-diagram2}. It follows that the boundary map vanishes and that $\hhh^{i+1}_{\eet}(X, \mu_{\ell^n}) \to \hhh^{i+1}_{\eet}(X, \mu_{\ell^m})$ is injective, as wanted.  
\end{proof}

\begin{proof}[Proof of Theorem \ref{O-bar-stein0}]
The second isomorphism in the statement follows from the fact that the groups $\hhh^i_{\eet}(X_j, \mu_{\ell^{\infty}})$ are divisible by Lemma~\ref{flat-div}: in that case, the $\Z_{\ell}$-modules $\hhh^i_{\eet}(X_j, \Z_{\ell})= \varprojlim_n \hhh^i_{\eet}(X_j, \mu_{\ell^n})$ are finite free and we have
\[ \hhh^i_{\eet}(X_j, \mu_{\ell^{\infty}}) \simeq \hhh^i_{\eet}(X_j, (\Q_{\ell}/\Z_{\ell})(1)) \simeq  \hhh^i_{\eet}(X_j, \Q_{\ell}(1))/\hhh^i_{\eet}(X_j,\Z_{\ell}(1)). \]


In order to prove the first isomorphism, let us fix an affinoid $X_j$ and show that for all $i \ge 1$ and for $n$ prime to $p$, the left term in the Kummer exact sequence: 
\begin{equation*}
 0 \to \hhh^{i-1}_{\eet}(X_j, \overline{\so}^{\times})/(n \cdot \hhh^{i-1}_{\eet}(X_j, \overline{\so}^{\times})) \to \hhh^i_{\eet}(X_j, \mu_n) \to \hhh^i_{\eet}(X_j, \overline{\so}^{\times})[n] \to 0   
\end{equation*}
vanishes, i.e. $\hhh^{i}_{\eet}(X_j, \overline{\so}^{\times})$ is $\ell$-divisible for any $i \ge 1$ and any prime $\ell \neq p$.  

We prove the result by induction on $i$. By assumption ${\rm Pic}_{\eet}(X_j)$ is $\ell$-divisible for $\ell \neq p$ and the exponential exact sequence induces an isomorphism
\[{\rm Pic}(X_j)[\frac{1}{p}] \xrightarrow{\sim} \hhh^1(X_j, \overline{\so}^{\times}),\]
so $\hhh^1(X_j, \overline{\so}^{\times})$ is $n$-divisible for any $n$ prime to $p$. 

Let $i \ge 2$ and assume that $\hhh^{i-1}_{\eet}(X_j, \overline{\so}^{\times})$ is $\ell$-divisible for any $\ell \neq p$. We know that $\hhh^i_{\eet}(X_j, \overline{\so}^{\times})$ is prime-to-$p$-torsion. 
By induction hypothesis, we have that the group $\hhh^i_{\eet}(X_j, \mu_n)$ is isomorphic to $\hhh^i_{\eet}(X_j, \overline{\so}^{\times})[n]$. By hypothesis, the group $\varinjlim_n \hhh^i_{\eet}(X_j, \mu_{\ell^n}) \simeq \hhh^i_{\eet}(X_j, \mu_{\ell^{\infty}})$ is $\ell$-divisible. This concludes the proof of the induction.

By Chinese Remainder Theorem, we obtain that (by using that $X_j$ is quasi-compact),
\[\hhh^i_{\eet}(X_j, \overline{\so}^{\times}) \simeq \bigoplus_{\ell \neq p} \hhh^i_{\eet}(X_j,\mu_{\ell^{\infty}}). \] 
It then follows from Lemma~\ref{r1lim-nul} that $\R^1\varprojlim_j \hhh^i_{\eet}(X_j, \overline{\so}^{\times})$ is zero and we obtain the desired isomorphism. 
\end{proof}


The terms $\varprojlim_j \big(\bigoplus_{\ell \neq p} \hhh^i_{\eet}(X_j, \mu_{\ell^{\infty}})\big)$ can be expressed as the completion of a direct sum for a well-chosen topology. Indeed, let us first endow $\hhh^i_{\eet}(U,\overline{\so}^{\times})$ with the discrete topology for any $U$ affinoid\footnote{As explained in the proof of Proposition~\ref{O-bar-stein0}, if the $\overline{\so}^{\times}$-cohomology $U$ is torsion in degree $i$, then $\hhh^i_{\eet}(U, \overline{\so}^{\times})$ is a quotient of $\varinjlim_n \hhh^i_{\eet}(U, \mu_n)$, where each $\mu_n$-cohomology group is finite so naturally discrete.}. For a general space $X$, consider the weakest topology such that for any \'etale affinoid $U \to X$ the restriction $\hhh^i_{\eet}(X,\overline{\so}^{\times}) \to \hhh^i_{\eet}(U,\overline{\so}^{\times})$ is continuous. 

\begin{lemma}\label{topology1}
Suppose that $X$ has an increasing covering by affinoids $X_j$'s. Then, the above topology is equivalent to the weakest topology making the restriction maps $\hhh^i_{\eet}(X,\overline{\so}^{\times}) \to \hhh^i_{\eet}(X_j,\overline{\so}^{\times})$ continuous for any $j$. 
\end{lemma}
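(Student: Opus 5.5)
We compare the two topologies on $\hhh^i_{\eet}(X,\overline{\so}^{\times})$. Call $\tau_{\rm et}$ the weakest topology making all restrictions to étale affinoids $U\to X$ continuous, and $\tau_{\rm cov}$ the one defined only using the $X_j$. Each $X_j\hookrightarrow X$ is an (étale) affinoid, so $\tau_{\rm cov}\subset\tau_{\rm et}$ is immediate. For the converse, it suffices to check that for every étale affinoid $f:U\to X$, the restriction map $\hhh^i_{\eet}(X,\overline{\so}^{\times})\to \hhh^i_{\eet}(U,\overline{\so}^{\times})$ is continuous for $\tau_{\rm cov}$. Since the target is discrete, this means showing that the fibres of this map are open, or equivalently that its kernel is open for $\tau_{\rm cov}$, the map being a group homomorphism.

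The key step is a factorization. Because $U$ is affinoid, hence quasi-compact, and $f$ is étale, the image $f(U)$ is a quasi-compact subset of $X$. The $X_j$ form an increasing admissible covering of the Stein space $X$, so $f(U)\subset X_j$ for some $j$. I would justify this using the Stein condition $X_j\Subset X_{j+1}$, under which the interiors of the $X_{j+1}$ already cover $X$, and then use quasi-compactness of $f(U)$, if necessary enlarging $j$ by one. Then $f$ factors as $U\to X_j\hookrightarrow X$, so the restriction to $U$ factors as
\[\hhh^i_{\eet}(X,\overline{\so}^{\times})\to \hhh^i_{\eet}(X_j,\overline{\so}^{\times})\to \hhh^i_{\eet}(U,\overline{\so}^{\times}).\]
The first map is continuous for $\tau_{\rm cov}$ by definition. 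The second map goes between discrete groups, since both $X_j$ and $U$ are affinoid and affinoid cohomology groups carry the discrete topology, so it is automatically continuous. The composite is therefore $\tau_{\rm cov}$-continuous, which gives $\tau_{\rm et}\subset\tau_{\rm cov}$.

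The only potential obstacle is the factorization claim, namely that a quasi-compact étale image lands in a single $X_j$. The subtlety is that admissibility of the covering together with quasi-compactness only gives $f(U)\subset X_{j_1}\cup\dots\cup X_{j_r}$, which equals the largest of these sets because the covering is increasing. I would apply this to the pullback covering $\{f^{-1}(X_j)\}$ of $U$: it is an admissible covering of the quasi-compact space $U$, so it has a finite subcover, and monotonicity then gives a single $j$ with $f^{-1}(X_j)=U$. This argument avoids any point-set discussion.
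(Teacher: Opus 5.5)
Your proposal is correct and follows the paper's argument. You pull the increasing covering back to the quasi-compact affinoid $U$, get $U\times_X X_j\simeq U$ for $j$ large, factor the restriction through $\hhh^i_{\eet}(X_j,\overline{\so}^{\times})$, and use that maps between the discrete groups of affinoids are continuous. You also spell out the easy inclusion (each $X_j$ is itself an étale affinoid over $X$), which the paper leaves implicit; of your two justifications for the factorization, keep the final pullback-covering one.
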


\begin{proof}
Let $U \to X$ an \'etale morphism with $U$ affinoid. The $U \times_X X_j$ 's form an analytic covering of the quasi-compact $U$. For $j$ big enough, the natural map $U \times_X X_j \to U$ is an isomorphism and $U \to X$ naturally factors through $U \times_X X_j$ and $X_j$. We have a commutative diagram 
\[ \xymatrix{\hhh^i_{\eet}(X, \overline{\so}^{\times}) \ar[r] \ar[d] & \hhh^i_{\eet}(X_j, \overline{\so}^{\times}) \ar[ld] \\  \hhh^i_{\eet}(U, \overline{\so}^{\times})&} \]
where the oblique map is continuous.  
\end{proof}

\begin{corollary}\label{gr2}
Under the hypothesis of Proposition~\ref{O-bar-stein0}, $\hhh^i_{\eet}(X, \overline{\so}^{\times})$ identifies with the completion of 
\[ \bigoplus_{\ell \neq p} \hhh^i_{\eet}(X, \Q_{\ell}(1))/\hhh^i_{\eet}(X, \Z_{\ell}(1))\] 
for the topology defined above. 
\end{corollary}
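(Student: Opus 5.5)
The plan is to start from Proposition~\ref{O-bar-stein0}, which gives
\[ \hhh^i_{\eet}(X, \overline{\so}^{\times}) \simeq \varprojlim_j \big(\bigoplus_{\ell \neq p} \hhh^i_{\eet}(X_j, \Q_{\ell}(1))/\hhh^i_{\eet}(X_j, \Z_{\ell}(1))\big), \]
and to identify this inverse limit with the completion of $\bigoplus_{\ell\neq p}\hhh^i_{\eet}(X, \Q_\ell(1))/\hhh^i_{\eet}(X,\Z_\ell(1))$. Each $\hhh^i_{\eet}(X_j,\overline{\so}^{\times})$ is discrete. By Lemma~\ref{topology1}, the topology on $\hhh^i_{\eet}(X,\overline{\so}^{\times})$ is the one induced by the maps to the $\hhh^i_{\eet}(X_j,\overline{\so}^{\times})$. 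Proposition~\ref{O-bar-stein0} identifies $\hhh^i_{\eet}(X,\overline{\so}^{\times})$ with the inverse limit of discrete groups, so it carries exactly the inverse limit topology and is complete. It therefore suffices to produce a natural map from $D:=\bigoplus_{\ell\neq p}\hhh^i_{\eet}(X, \Q_{\ell}(1))/\hhh^i_{\eet}(X, \Z_{\ell}(1))$ into $\varprojlim_j D_j$, with $D_j$ the analogous sum for $X_j$, and to show two things: the induced topology on $D$ is the one defined above, and the image of $D$ is dense.

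\textbf{Step 1: the $\ell$-adic cohomology of $X$.} Fix $\ell\ne p$. Under $({\bf H}_2)$ the system $(\hhh^i_{\eet}(X_j,\Z_\ell(1)))_j\simeq (N^{(i)}_j\otimes\Z_\ell)_j$ consists of finite free modules, and $\hhh^{i-1}$ is likewise finite. Hence $\R^1\varprojlim_j$ vanishes in the relevant degree, and
\[ \hhh^i_{\eet}(X,\Z_\ell(1))=\varprojlim_j \hhh^i_{\eet}(X_j,\Z_\ell(1)). \]
I would check this using the Milnor sequence together with the fact that a $\varprojlim$ of finite groups has no $\R^1$. Similarly $\hhh^i_{\eet}(X,\Q_\ell(1))$ is the limit after inverting $\ell$. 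I expect the following description, and I would try to verify it: $\hhh^i_{\eet}(X,\Z_\ell(1))=\varprojlim_j(N_j\otimes\Z_\ell)$, and its quotient $\hhh^i(X,\Q_\ell)/\hhh^i(X,\Z_\ell)$ maps to $\varprojlim_j \hhh^i(X_j,\Q_\ell)/\hhh^i(X_j,\Z_\ell)$. This map is injective because the $\hhh^i(X_j,\Z_\ell)$ are saturated. It has dense image for the limit topology because each $D_j$-component is discrete and any compatible element can be approximated at a finite level by lifting through the finite-rank $\Q_\ell$-spaces. This gives the map $D\to\varprojlim_j D_j$ prime by prime, and taking direct sums gives it on all of $D$.

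\textbf{Step 2: density of the direct sum.} An element of $\varprojlim_j D_j$ is a compatible family $(x_j)$ with $x_j\in D_j$. Each $x_j$ involves only finitely many primes, although the set of primes may grow with $j$. Given an open neighbourhood, which is determined by a single level $j_0$, I would project $x_{j_0}$ to its finitely many $\ell$-components. For each such $\ell$, I would then lift the component to $\hhh^i(X,\Q_\ell(1))/\hhh^i(X,\Z_\ell(1))$ using surjectivity of $\varprojlim_j$ onto level $j_0$ up to the image. This surjectivity holds because the transition maps of $(N_j\otimes\Q)_j$ satisfy Mittag-Leffler, as in Lemma~\ref{r1lim-nul}. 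The resulting finite sum lies in $D$ and agrees with $x$ at level $j_0$, which proves density. Since the target is complete and Hausdorff, it is the completion of $D$, provided the subspace topology on $D$ agrees with the restriction topology of Lemma~\ref{topology1}. That agreement holds by construction.

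\textbf{Main obstacle.} The delicate point is the surjectivity and lifting in Step 2, namely showing that the image of $\varprojlim_j$ in each finite level contains the relevant components. Mittag-Leffler only guarantees that the image is the stable image, not the whole of $D_{j_0}$. I would handle this by replacing $D_{j_0}$ by the stable image of $D_j\to D_{j_0}$ for $j\gg j_0$. The limit topology only sees these stable images, because $\varprojlim$ factors through them, so density only needs to be checked against the stable images. Lifting elements of the stable images is exactly what the Mittag-Leffler property, obtained from the surjection $(N_j\otimes\Q)_j\twoheadrightarrow (D_j)_j$ in Lemma~\ref{r1lim-nul}, provides.
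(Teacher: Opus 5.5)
Your proposal is correct and is essentially the paper's argument: the paper writes the completion as $\varprojlim_j D/N_j\simeq\varprojlim_j {\rm im}(\phi_j)\simeq\varprojlim_j D_j$ via Lemma~\ref{topology1} and Proposition~\ref{O-bar-stein0}, and its first identification tacitly uses exactly the density of the direct sum at each finite level that you verify explicitly.

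Your Mittag-Leffler detour can be shortened. Reading $\hhh^i_{\eet}(X,\Q_\ell(1))$ as $\varprojlim_j\hhh^i_{\eet}(X_j,\Q_\ell(1))$, as you do, each map $\hhh^i_{\eet}(X,\Q_\ell(1))/\hhh^i_{\eet}(X,\Z_\ell(1))\to\varprojlim_j D_{j,\ell}$ is bijective, where $D_{j,\ell}$ is the $\ell$-component of $D_j$. This holds because $\R^1\varprojlim_j$ vanishes on the finite free $\Z_\ell$-modules $\hhh^i_{\eet}(X_j,\Z_\ell(1))$; the reason is compactness, not finiteness of the groups. Since the $\ell$-components of a compatible family are themselves compatible, truncating to the finitely many primes occurring at level $j_0$ then gives density at once.
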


\begin{proof}
By definition and Lemma~\ref{topology1}, the groups
\[N_j= {\rm ker}(\phi_j: \hhh^i(X, \overline{\so}^{\times}) \to \hhh^i(X_j, \overline{\so}^{\times}))  \]
defines a fundamental system of neighborhood of $0$ for the above topology. The completion is then 
\[   \varprojlim_j \big(\bigoplus_{\ell \neq p} \hhh^i_{\eet}(X,\mu_{\ell^{\infty}})\big) /N_j \simeq \varprojlim_j {\rm im}(\phi_j) \simeq \varprojlim_j \bigoplus_{\ell \neq p} \hhh^i_{\eet}(X_j, \Q_{\ell}(1))/\hhh^i_{\eet}(X_j, \Z_{\ell}(1)).  \] 
\end{proof}


\section{On torsion of $\overline{\so}^{\times}$-cohomology}\label{torsion}

Consider the class $\CC$ of smooth affinoid spaces with torsion $\overline{\so}^{\times}$-cohomology in degrees higher than $2$. Our expectation is that it is a sufficiently large class inside the category of affinoids. To develop this intuition we show some stability properties of $\CC$ and provide  some examples in this section. 

Let us write $(U_n)_n$ for the affinoids open in the usual Stein covering of the Drinfeld symmetric space (see Section~\ref{open-drinfeld} for a definition of the latter), the main theorem of this section reads as follows:
\begin{theorem}\label{thtorsion}
$\CC$ contains the opens $U_n$ and products of closed discs and closed annuli.
\end{theorem}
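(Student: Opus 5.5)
The plan has two stages. First I prove that $\CC$ is stable under taking products with a closed disc $\B(r)$ or a closed annulus $\A(r,s)$; this is the homotopy statement announced in the introduction as Lemma \ref{homot}. Starting from the point $\Spa(C)$, whose higher $\overline{\so}^{\times}$-cohomology vanishes because $C$ is algebraically closed, induction then gives all products of closed discs and closed annuli. Second, for the opens $U_n$ of the Drinfeld space, I use a combinatorial covering argument.

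\emph{Homotopy step.} Let $X\in\CC$ and $Y=X\times\B(r)$ or $X\times\A(r,s)$. By Lemma~\ref{lem-heuer}\eqref{pdiv}, $\hhh^i_{\eet}(Y,\overline{\so}^{\times})$ is uniquely $p$-divisible. So it suffices to compare it, through the Kummer-type diagram of Remark \ref{ptors} with $n=m$ prime to $p$, with the groups $\hhh^i_{\eet}(Y,\mu_m)$. I would try to show that the projection $\pi:Y\to X$ induces an isomorphism
\[\pi^*:\hhh^i_{\eet}(X,\overline{\so}^{\times})\oplus (\text{annulus contribution})\xrightarrow{\sim}\hhh^i_{\eet}(Y,\overline{\so}^{\times}),\]
where the annulus contribution is $\hhh^{i-1}_{\eet}(X,\overline{\so}^{\times})\cdot[T]$, with $T$ the coordinate.

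I would argue via a model-theoretic description: $\overline{\so}^{\times}$ is the sheaf of ``invertible functions on the special fibre modulo constants''. On a disc, $\overline{\so}^{\times}(X\times\B)$ equals $\overline{\so}^{\times}(X)$, since a unit $\sum a_kT^k$ with $|a_0|>|a_k|$ is $a_0$ times a principal unit. On an annulus one picks up the extra factor $T^{\Z[1/p]}$. If this identification holds \'etale-locally on $X$, it gives $\R\pi_*\overline{\so}^{\times}\simeq \overline{\so}^{\times}_X$, respectively $\overline{\so}^{\times}_X\oplus \Z[1/p]$, at least after verifying that $\R^q\pi_*\overline{\so}^{\times}$ is torsion for $q\ge1$. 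That torsion property should follow from proper/smooth base change for $\mu_m$ on the relative disc (relative $\ell$-adic cohomology of a disc or annulus is $\Z/m$, respectively $\Z/m\oplus\Z/m(-1)$ in degree $1$, as in \cite[Lemma 3.3.]{ber6}), combined with the Kummer sequence and the $\log$/$\exp$ sequences \eqref{even1}.

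The Leray spectral sequence for $\pi$ then exhibits $\hhh^i_{\eet}(Y,\overline{\so}^{\times})$ as an iterated extension of groups $\hhh^a_{\eet}(X,\R^b\pi_*\overline{\so}^{\times})$. Each of these is torsion for $a+b=i\ge2$: the terms with $b=0$ and $a\geq 2$ are torsion by hypothesis; the terms with $a=1,b=1$ reduce to $\hhh^1_{\eet}(X,\Z[1/p])$, which vanishes for a normal space; the terms with $a=0,b\ge 2$ and $a=0,b=1$ are torsion pushforwards; and the term $\hhh^{i-1}(X,\Z[1/p])$ is torsion. Extensions of torsion groups are torsion, so $Y\in\CC$.

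\emph{Drinfeld step.} Each $U_n$ is the preimage, under the reduction map, of the combinatorial ball $B(n)$ in the Bruhat--Tits building. It has a finite covering by affinoids $U_\sigma$ indexed by the simplices $\sigma$ of $B(n)$, and the finite intersections of these are again of the form $U_\tau$. Each $U_\sigma$ is a complement of finitely many residue-disc tubes in a product of discs and annuli. Up to a finite \'etale or Laurent domain modification, it is a product of annuli and discs, or at least a space built from them by the homotopy step applied to ``discs minus finitely many open discs''. I would handle that last case by the same Leray argument, with $\R^0\pi_*\overline{\so}^{\times}$ now a finite free $\Z[1/p]$-module. The \v{C}ech-to-derived spectral sequence of this finite covering then computes $\hhh^i_{\eet}(U_n,\overline{\so}^{\times})$ from the groups $\hhh^b_{\eet}(U_\tau,\overline{\so}^{\times})$. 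For $b\ge2$ these are torsion. The row $b=0$ is a finite complex of finite free $\Z[1/p]$-modules (the Junger description of $\overline{\so}^{\times}(U_\tau)$ \cite{J1}, as in the proof of Corollary \ref{mainex}). The row $b=1$ involves $\mathrm{Pic}[1/p]=0$. Hence the total degree $i\ge2$ contributions are torsion except possibly for the \v{C}ech cohomology of the $b=0$ row in degrees $\ge2$. That row is the cochain complex of a coefficient system on the finite convex complex $B(n)$, and the \v{C}ech cohomology in question is its cohomology. By the acyclicity of coefficient systems for convex subcomplexes (the harmonic cochain arguments of \cite{ds}, \cite{scst}), this cohomology vanishes, or at least is torsion, in degrees $\ge2$.

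The main obstacle is this last acyclicity: showing that the higher cohomology of the coefficient system $\sigma\mapsto\overline{\so}^{\times}(U_\sigma)$ on $B(n)$ vanishes. This is where convexity of $B(n)$ is genuinely needed. I would first try to identify this system with the degree-$0$ piece of the hyperplane-arrangement coefficient system and import the Schneider--Stuhler acyclicity.
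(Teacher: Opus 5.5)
Your architecture for products of discs and annuli is the paper's: the Leray spectral sequence for the projection $\pi$, the identification $\pi_*\overline{\so}^{\times}\simeq\overline{\so}^{\times}$ (respectively $\overline{\so}^{\times}\times\Z[\tfrac1p]$) from the description of units, and torsion of $\hhh^i_{\eet}(X,\Z[\tfrac1p])$ via $\hhh^i_{\eet}(X,\Q)=0$. The decisive input, however, is missing. You never show that $\R^q\pi_*\overline{\so}^{\times}$ vanishes, or is even torsion, for $q\ge1$, and the mechanism you suggest cannot do it. Comparison with $\mu_m$ through the Kummer sequence only controls $\mathcal{F}[m]$ and $\mathcal{F}/m$ for $\mathcal{F}=\R^q\pi_*\overline{\so}^{\times}$. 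Since $\overline{\so}^{\times}$ is uniquely $p$-divisible, a uniquely divisible part (a $\Q$-vector space) would be invisible to every Kummer sequence. The $\log$/$\exp$ sequences only trade $\overline{\so}^{\times}$ for $\so^{\times}[\tfrac1p]$, whose higher cohomology is exactly the unknown. Torsion is what makes the Kummer comparison useful, not a consequence of it. The paper instead computes stalks. By Heuer's result that $\overline{\so}^{\times}$-cohomology commutes with cofiltered limits of spaces, $(\R^q\pi_*\overline{\so}^{\times})_x\simeq\hhh^q_{\eet}(\A(r,s)\times\Spa(L,L^+),\overline{\so}^{\times})$. After replacing $L^+$ by $L^\circ$, this vanishes by the $\exp$ sequence, acyclicity of $\so$, and the vanishing of $\hhh^q_{\eet}(-,\so^{\times})[\tfrac1p]$ for $q\ge1$ on a disc or annulus over $L$ (van der Put for $q=1$, de Jong--van der Put for $q\ge2$). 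Once $\R^q\pi_*\overline{\so}^{\times}=0$, the extra term for the annulus is $\hhh^i_{\eet}(X,\Z[\tfrac1p])$, not $\hhh^{i-1}_{\eet}(X,\overline{\so}^{\times})\cdot[T]$, because the class of $T$ lives in $\hhh^0$. Your Leray bookkeeping is also internally inconsistent: a contribution $\hhh^{a}_{\eet}(X,\Z[\tfrac1p])$ from the row $b=1$ would force $\R^1\pi_*\overline{\so}^{\times}\simeq\Z[\tfrac1p]$, which is not torsion.

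The Drinfeld step does not close either. The sets $U_\sigma=\tau^{-1}({\rm St}(\sigma))$ are open tubes, not affinoids, and $\bigcup_{s\in B(n)_0}U_s=\mathring{U}_{n+1}$ rather than $U_n$. Lemma \ref{u-n} compares these two only for $\Z/\ell^k\Z$ and de Rham coefficients, and on non-quasi-compact pieces torsion does not follow from an affinoid exhaustion, since limits of torsion groups need not be torsion. More seriously, by \eqref{decompoH} the pieces contain factors $C_{e_i-1}$. For $e_i\ge3$ these are higher-dimensional complements of all rational hyperplane tubes: neither products of discs and annuli nor ``discs minus discs''. Your homotopy step does not reach them, and proving torsion for them is essentially the theorem itself for a lower-dimensional Drinfeld space. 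Beyond this, the vanishing of ${\rm Pic}(U_\tau)[\tfrac1p]$ is asserted without proof, and the acyclicity in degrees $\ge2$ of $\sigma\mapsto\overline{\so}^{\times}(U_\sigma)$, which you flag yourself, is left open. The paper needs none of this. It writes $U_n=\bigcap_{H\in\mathcal{H}_n}H(|\varpi|^n)^c$. By Schneider--Stuhler, every finite union of the $H(|\varpi|^n)^c$ fibres over some ${\bb P}^t$ with closed-disc fibres above a cover of ${\bb P}^t$ by products of discs and annuli, so the induced cover and all its intersections consist of products of discs and annuli. It then runs the Mayer--Vietoris bootstrap of Lemma \ref{torsion-cov-dr-lem1}: first from intersections to unions (Lemma \ref{torsion-cov-dr-lem2}), then from unions back to the intersection $U_n$. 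Convexity of $B(n)$ and coefficient-system acyclicity play no role in the torsion statement; the paper uses them only for the flatness hypothesis $({\bf H}_2')$.
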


\subsection{Products of closed discs and closed annuli}

We begin by proving some kind of homotopy invariance of the class $\CC$.
More precisely, the goal of this subsection is to show the following lemma: 

\begin{lemma}\label{homot}
Let $\B(r)$ (respectively $\A(r,s)$) be a closed disc (respectively annulus) of dimension $1$ over $C$. If $X\in\CC$ then $X\times \B(r)\in \CC$ and $X\times \A(r,s)\in \CC$.
\end{lemma}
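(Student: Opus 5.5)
Our plan is to reduce the torsion statement for $Y=X\times \B(r)$ (resp. $X\times\A(r,s)$) to a comparison of $\overline{\so}^{\times}$-cohomology of $Y$ with that of $X$, using the relative dimension one structure. Let $\pi:Y\to X$ be the projection. We compute $\R\pi_*\overline{\so}^{\times}$ on $X_{\eet}$ and show that $\R^q\pi_*\overline{\so}^{\times}$ is torsion for $q\ge 1$ and that $\pi_*\overline{\so}^{\times}$ is an extension of $\overline{\so}^{\times}_X$ by a simple sheaf. Here the simple sheaf is $0$ for the disc, and a locally constant sheaf $\Z[\tfrac1p]$ generated by the class of the coordinate $T$ for the annulus. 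The Leray spectral sequence then gives the result, as we explain below.

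\emph{Step 1: sections.} For an étale affinoid $V\to X$ (which may be taken connected, smooth and reduced), we use the description of units on $V\times\B(r)$ and $V\times\A(r,s)$. A unit of $\so(V)\langle T/r\rangle$ is $f=a_0+\sum_{k\ge1} a_kT^k$ with $a_0$ a unit and $\|a_k r^k\|<|a_0|$ in the spectral norm. Hence $f/a_0\in U$, at least after passing to a finite étale/admissible refinement where the spectral norm is controlled. On the annulus, a unit has a dominant monomial $a_kT^k$ on each connected piece. This yields, modulo principal units and after inverting $p$ as in Lemma~\ref{lem-heuer}\eqref{pdiv},
\[\pi_*\overline{\so}^{\times}_Y\simeq \overline{\so}^{\times}_X \quad\text{(disc)}, \qquad 0\to\overline{\so}^{\times}_X\to\pi_*\overline{\so}^{\times}_Y\to \underline{\Z[\tfrac1p]}\to0\quad\text{(annulus)}.\]
The same argument applies on the pro-étale site, and by Lemma~\ref{lem-heuer}(2) we may move freely between the two sites.

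\emph{Step 2: higher direct images.} For $q\ge1$, we use the Kummer sequences of Remark~\ref{ptors} restricted to $\overline{\so}^{\times}$: $0\to\mu_m\to\overline{\so}^{\times}\xrightarrow{m}\overline{\so}^{\times}\to0$ for $m$ prime to $p$, together with unique $p$-divisibility. On stalks at geometric points $\bar x$ of $X$, $(\R^q\pi_*\overline{\so}^{\times})_{\bar x}=\varinjlim \hhh^q(V\times \B(r),\overline{\so}^{\times})$, computed through the limit of the fibres over henselian neighbourhoods. Relative $\ell$-adic homotopy invariance for $\ell\ne p$ (discs are $\ell$-acyclic, annuli have $\hhh^1=\Z/\ell(-1)$) describes $\R\pi_*\mu_m$. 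Combining this with the rationalised exponential sequence, whose coherent part is controlled because relative coherent cohomology of $\B(r)$ and $\A(r,s)$ vanishes in degree $\ge1$, we aim to show that $\R^q\pi_*\overline{\so}^{\times}$ is torsion for $q\ge1$. Concretely, it should be $0$ for $q\ge1$ on the disc, and for the annulus $\R^1\pi_*\overline{\so}^{\times}$ should be a quotient of $\R^1\pi_*\mu_{\infty'}$-type torsion. The point is that $\R^q\pi_*\overline{\so}^{\times}\otimes\Q$ is a colimit of $\R^q\pi_*(\so^\times[1/p])\otimes\Q$, i.e. of Picard/Brauer-type terms of a relative curve that are killed after inverting integers.

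\emph{Step 3: Leray.} In the spectral sequence $\hhh^a(X,\R^b\pi_*\overline{\so}^{\times})\Rightarrow \hhh^{a+b}(Y,\overline{\so}^{\times})$, the terms with $b\ge1$ are cohomology of torsion sheaves on quasi-compact $X$, hence torsion. The row $b=0$ gives $\hhh^a(X,\overline{\so}^{\times})$, which is torsion for $a\ge2$ since $X\in\CC$. In the annulus case there is also $\hhh^a(X,\Z[\tfrac1p])$. This last group is not torsion in general, so this is where care is needed. We would instead split the extension by the section $T\mapsto$ class of $T$, which is a global unit on $Y$, so that $\pi_*\overline{\so}^{\times}_Y\simeq\overline{\so}^{\times}_X\oplus\Z[\tfrac1p]$. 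Then $\hhh^a(X,\Z[\tfrac1p])=\hhh^a(X,\Z)[\tfrac1p]$ vanishes rationally for $a\ge1$, because $\hhh^a(X,\Q)=0$ for $a\ge1$ (Galois cohomology of $\Q$ is trivial, so étale $\Q$-cohomology agrees with analytic/locally constant cohomology of the affinoid, which is acyclic in positive degrees). The extensions in the abutment are then between torsion groups, so they are torsion.

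The main obstacle is Step 2: rigorously establishing that $\R^q\pi_*\overline{\so}^{\times}$ is torsion, in particular controlling stalks, which involve henselian local rings rather than affinoids. We would first try to avoid stalks altogether by working with a Čech/limit argument over étale affinoids $V\to X$, using the explicit unit description of Step 1 uniformly in $V$.
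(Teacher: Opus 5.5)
Your architecture is the paper's: a Leray spectral sequence for $\pi$, then $\pi_*\overline{\so}^{\times}\simeq\overline{\so}^{\times}_X$ (resp.\ $\overline{\so}^{\times}_X\times\Z[\tfrac1p]$) from the description of units and unique $p$-divisibility, then torsion of $\hhh^a(X,\Z[\tfrac1p])$ deduced from $\hhh^a(X,\Q)=0$ as in Lemma~\ref{torsion-Z}. But Step~2, which you flag yourself, is a genuine gap, and the tools you propose cannot close it. The Kummer sequences only give $0\to\R^{q-1}\pi_*\overline{\so}^{\times}/m\to\R^q\pi_*\mu_m\to\R^q\pi_*\overline{\so}^{\times}[m]\to0$ for $m$ prime to $p$. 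A uniquely divisible, non-torsion part of $\R^q\pi_*\overline{\so}^{\times}$ is invisible to all of these sequences, so no amount of $\ell$-adic homotopy invariance can show that the sheaf is torsion. The exponential sequence only rewrites $\R^q\pi_*\overline{\so}^{\times}\simeq\R^q\pi_*\so^{\times}[\tfrac1p]$ for $q\ge1$ (since $\R^q\pi_*\so=0$). That leaves you needing ${\bb G}_m$-cohomology of $V\times\A(r,s)$ to be locally torsion, which is the relative form of the statement itself. A \v{C}ech or limit argument over \'etale affinoids $V$ built on the unit description has no input beyond degree $0$.

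The missing idea is to compute the stalks honestly and show that they vanish. At a geometric point $\bar x=\Spa(L,L^+)$ of $X$, the paper uses Heuer's continuity result \cite[Proposition~3.2]{Heu25}: $\overline{\so}^{\times}$-cohomology commutes with cofiltered limits $Y\sim\varprojlim_s Y_s$ of smooth rigid spaces. This gives $(\R^q\pi_*\overline{\so}^{\times})_{\bar x}\simeq\hhh^q_{\eet}(Z\times\Spa(L,L^{+}),\overline{\so}^{\times})=\hhh^q_{\eet}(Z\times\Spa(L,L^{\circ}),\overline{\so}^{\times})$ for $Z\in\{\B(r),\A(r,s)\}$. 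This is an honest one-dimensional affinoid over a complete algebraically closed field, so it is not a henselian-local object. On it, the exponential sequence together with $\hhh^q(\so)=0$, ${\rm Pic}=0$ \cite{vdp} and $\hhh^q({\bb G}_m)=0$ for $q\ge2$ \cite[Corollary~2.5.10]{dJ-vdP} gives $\R^q\pi_*\overline{\so}^{\times}=0$ for all $q\ge1$, for the disc and the annulus alike. So your expectation of a nonzero torsion $\R^1\pi_*$ for the annulus is off: $\hhh^1$ of the annulus with $\mu_m$ coefficients is spanned by the Kummer class of the unit $T$. Leray then collapses to Lemma~\ref{induction}.

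Separately, in Step~3 the splitting of $\pi_*\overline{\so}^{\times}$ is irrelevant to torsion-ness; what matters is $\hhh^a(X,\Q)=0$. Your justification is right as a comparison with the cohomology of the constant sheaf $\Q$ on the Berkovich space. However, that cohomology does not vanish for every affinoid. For example, the complement of an open disc in a Tate curve retracts onto a circle and carries a non-torsion $\Z$-torsor, so your initial worry was justified. It does vanish for contractible Berkovich spaces such as products of discs and annuli, which is all that Corollary~\ref{torsion-disc1} uses. The paper's Lemma~\ref{torsion-Z} asserts this vanishing for all affinoids, so the same caveat applies there.
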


It is almost immediate to check that $\CC$ contains geometric points $\Spa(C,\so_C)$, and by a straightforward induction,  one gets

\begin{corollary}\label{torsion-disc1}
$\CC$ contains products of closed discs and closed annuli on $C$.
\end{corollary}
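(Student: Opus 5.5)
The statement is Corollary~\ref{torsion-disc1}, and the plan is an induction on the number of factors, using Lemma~\ref{homot} as the only real input. Let $X=Y_1\times\cdots\times Y_k$, where each $Y_j$ is a closed disc $\B(r_j)$ or a closed annulus $\A(r_j,s_j)$ of dimension $1$ over $C$. Set $X_0=\Spa(C,\so_C)$ and $X_j=X_{j-1}\times Y_j$. Then $X_k=X$, and each $X_j$ is a smooth affinoid, being a product of smooth affinoids. Granting Lemma~\ref{homot}, the inductive step $X_{j-1}\in\CC\Rightarrow X_j\in\CC$ is exactly the statement of that lemma. So everything reduces to the base case $X_0\in\CC$.

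For the base case, the étale site of $\Spa(C,\so_C)$ is equivalent to the étale site of $\Spec C$. Since $C$ is algebraically closed, every étale cover splits and the étale topos is that of a point, so $\Gamma$ is exact. Hence $\hhh^i_{\eet}(\Spa(C,\so_C),\mathcal{G})=0$ for every abelian sheaf $\mathcal{G}$ and every $i\ge1$, and in particular for $\overline{\so}^{\times}$. The groups in degrees $\ge 2$ therefore vanish and are torsion, so $X_0\in\CC$.

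The only point needing care is that the class $\CC$ is stated for smooth affinoids over $C$ and that Lemma~\ref{homot} applies to each intermediate product. One should check that $X_{j-1}$ is a smooth affinoid and that the product with $\B(r)$ or $\A(r,s)$ is again one. This is standard, since the fibre product of smooth affinoids over $C$ is the affinoid of the completed tensor product and smoothness is stable under products. There is no genuine obstacle beyond Lemma~\ref{homot} itself, whose proof, presumably via a Künneth/homotopy argument for $\overline{\so}^{\times}$ relating $X\times\B(r)$ to $X$, carries all the weight.
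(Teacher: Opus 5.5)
Your proposal is correct and is essentially the paper's argument. The paper likewise notes that $\Spa(C,\so_C)\in\CC$ because its étale cohomology is that of a point, and then obtains the corollary by a straightforward induction on the number of factors using Lemma~\ref{homot}. The real content lies in Lemma~\ref{homot}, which the paper derives from Lemmas~\ref{induction} and~\ref{torsion-Z}.
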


\begin{remark}\label{torsion-disc2}
Note that for those products of closed discs and closed annuli on $C$,  $\overline{\so}^{\times}$-cohomology is torsion even in degree higher than $1$ by vanishing of their Picard group and by Lemma~\ref{lem-heuer}\eqref{pdiv}.
\end{remark}

The proof of Lemma~\ref{homot} relies on these two intermediate results:  

\begin{lemma}
\label{induction}
Let $\B(r)$ (respectively $\A(r,s)$) be a closed disc (respectively annulus) of dimension one over $C$. Then, for any $X/C$ smooth affinoid and $i \ge 1$, we have 
\begin{align*}
 &\hhh^i_{\eet}(X \times \B(r), \overline{\so}^{\times}) \simeq  \hhh^i_{\eet}(X, \overline{\so}^{\times}), \\
 &\hhh^i_{\eet}(X \times \A(r,s), \overline{\so}^{\times}) \simeq \hhh^i_{\eet}(X, \overline{\so}^{\times}) \times \hhh^i_{\eet}(X, \Z[\frac{1}{p}]). 
 \end{align*}
\end{lemma}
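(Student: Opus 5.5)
Our plan is to compute $\hhh^i_{\eet}(Y,\overline{\so}^{\times})$ for $Y=X\times\B(r)$ or $X\times\A(r,s)$ through the exponential sequence of Lemma~\ref{lem-heuer}, namely $1\to\so\xrightarrow{\exp}\so^{\times}[\tfrac1p]\to\overline{\so}^{\times}\to1$. We combine it with the vanishing of coherent cohomology on the affinoid $Y$ (so $\hhh^i_{\eet}(Y,\so)=0$ for $i\ge1$) and with a projection formula for the morphism $\pi:Y\to X$. Taking cohomology of the exponential sequence gives $\hhh^i_{\eet}(Y,\overline{\so}^{\times})\simeq\hhh^i_{\eet}(Y,{\bb G}_m)[\tfrac1p]$ for $i\ge2$. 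For $i=1$ we get the quotient of ${\rm Pic}(Y)[\tfrac1p]$ by nothing, i.e. an isomorphism again, since $\hhh^1(Y,\so)=0$. The same holds on $X$. Hence it suffices to show that $\R\pi_*{\bb G}_m[\tfrac1p]$ on $X_{\eet}$ equals ${\bb G}_m[\tfrac1p]\oplus(\text{coherent-type terms})$ up to pieces that die in cohomology. Rather than working with ${\bb G}_m$ directly, we work with $\overline{\so}^{\times}$ and aim at
\[\R\pi_*\overline{\so}^{\times}_Y\simeq\overline{\so}^{\times}_X\quad(\text{disc}),\qquad \R\pi_*\overline{\so}^{\times}_Y\simeq\overline{\so}^{\times}_X\oplus\Z[\tfrac1p]_X\quad(\text{annulus}).\]
The second formula, applied after $\hhh^i_{\eet}(X,-)$, gives exactly the statement.

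First step: degree $0$. For an affinoid étale $V\to X$ (we may take $V$ connected, or even work stalkwise at geometric points, where $\so_{X,\bar x}$ is henselian), we determine the units of $\so(V)\langle T/r\rangle$, respectively of the Laurent algebra for the annulus, modulo principal units and after inverting $p$. The classical description of invertible power series should give the following. For the disc, every unit is a unit of $\so(V)$ times a principal unit (after a $p$-power, i.e. modulo $U$ up to the $p$-divisibility of Lemma~\ref{lem-heuer}\eqref{pdiv}). For the annulus, it is a unit of $V$ times $T^k$ times a principal unit, where the constant $c$ with $|c|=r$ or similar normalizations are absorbed by $C^{\times}$. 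We then use $\overline{\so}^{\times}$ and invert $p$, which turns $\Z$ into $\Z[\tfrac1p]$. This gives $\pi_*\overline{\so}^{\times}_Y=\overline{\so}^{\times}_X$, respectively $\overline{\so}^{\times}_X\oplus\Z[\tfrac1p]$ via $\mathrm{ord}_T$.

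Second step: the higher direct images. We must show $\R^q\pi_*\overline{\so}^{\times}_Y=0$ for $q\ge1$. Stalkwise at a geometric point $\bar x$ of $X$, this is a colimit of $\hhh^q(V\times\B(r),\overline{\so}^{\times})$ over étale neighborhoods $V$. Using the exponential sequence and vanishing of coherent cohomology, it reduces to $\hhh^q(V\times\B(r),{\bb G}_m)[\tfrac1p]$. For $q=1$, we have a Picard group of a relative disc/annulus over a henselian-local base, which should vanish (Gerritzen, van der Put \cite{vdp}). For $q\ge2$, we use the Kummer sequence for $n$ prime to $p$ together with the fact that $\overline{\so}^{\times}$ is uniquely $p$-divisible. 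Relative $\ell$-adic cohomology of the disc is trivial and that of the annulus is $\Z/n(-1)$ in degree $1$ (Berkovich \cite{ber6}, via proper/smooth base change for these fibrations). This shows that the prime-to-$p$ torsion in $\R^q\pi_*$ comes only from degree $0$ and $1$ Kummer contributions already accounted for. The issue is that this gives only torsion-control, not vanishing, for the non-torsion part of $\R^q\pi_*\overline{\so}^{\times}$.

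The main obstacle is therefore the second step for non-torsion classes in degree $q\ge2$. I would first attempt to bypass it by passing to the pro-étale site (allowed by Lemma~\ref{lem-heuer}(2), $\R\nu_*\overline{\so}^{\times}=\overline{\so}^{\times}$). There I would use Heuer's description of $\overline{\so}^{\times}$ on perfectoid covers: on an affinoid perfectoid $\tilde V$ covering $V$, the relative disc/annulus becomes perfectoid after adjoining $T^{1/p^\infty}$. The cohomology of $\overline{\so}^{\times}$ can then be computed through $\overline{\so}^{\times}=\so^{\times}/U$ and the tilt, where it is governed by the special fiber, i.e. by $\bar{\bf F}$-algebraic units on ${\bb A}^1$ or ${\bb G}_m$ over the reduction. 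The algebraic homotopy invariance of units and vanishing of higher ${\bb G}_m$-cohomology mod torsion on ${\bb A}^1_{\rm red}$, ${\bb G}_{m,\rm red}$ over a strictly henselian base then give the required vanishing, with the $\Z[\tfrac1p]$ arising from the degree of $T$.
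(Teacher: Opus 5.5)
Your overall architecture is the paper's. You use the Leray spectral sequence for $\pi$, together with the identification $\pi_*\overline{\so}^{\times}\simeq\overline{\so}^{\times}$ (resp.\ $\overline{\so}^{\times}\times\Z[\frac1p]$) coming from units of the relative disc/annulus. The paper takes that description from \cite[Lemme 4.4]{J1}, whereas you only assert it. The third ingredient is the vanishing of ${\cal F}_q=\R^q\pi_*\overline{\so}^{\times}$ for $q\ge1$.

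That last step carries all the content, and your proposal does not prove it, as you yourself note. Your Kummer argument, with base change for torsion coefficients, shows at most that ${\cal F}_q$ is uniquely $n$-divisible for every $n$ prime to $p$. Combined with Lemma~\ref{lem-heuer}\eqref{pdiv}, this makes ${\cal F}_q$ a sheaf of $\Q$-vector spaces for $q\ge1$. Nothing in that argument can exclude nonzero such sections. The vanishing of ${\rm Pic}$ of a relative disc over a ``henselian local base'' is likewise only asserted. The perfectoid/tilt/special-fibre paragraph never states a concrete vanishing result, let alone proves one, so it does not close the gap.

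The missing idea is to compute the stalk on the actual fibre over a geometric point, which is a one-dimensional space over an algebraically closed field. This uses Heuer's approximation result \cite[Proposition~3.2]{Heu25}: if $Y\sim\varprojlim_s Y_s$, then $\hhh^*_{\eet}(Y,\overline{\so}^{\times})\simeq\varinjlim_s\hhh^*_{\eet}(Y_s,\overline{\so}^{\times})$. Apply it to the system $V\times\A(r,s)$, where $V$ runs over étale neighbourhoods of a geometric point $x=\Spa(L,L^+)$ of $X$. This gives $({\cal F}_q)_x\simeq\hhh^q_{\eet}(\A(r,s)\times\Spa(L,L^+),\overline{\so}^{\times})$, which may be computed on $\A(r,s)\times\Spa(L,L^{\circ})$.

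Such continuity holds for $\overline{\so}^{\times}$. It already fails in degree $0$ for ${\bb G}_m$, whose sections on the limit are units of a completion of the colimit. This is why your reduction to $\hhh^q(V\times\B(r),{\bb G}_m)[\frac1p]$ and a colimit over $V$ never actually lands on a curve over a field.

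On the one-dimensional affinoid over the algebraically closed complete field $L$, the exponential sequence finishes the argument:
\begin{itemize}
\item $\hhh^q(\so)=0$ for $q\ge1$;
\item ${\rm Pic}[\frac1p]=0$ by \cite{vdp};
\item $\hhh^q({\bb G}_m)=0$ for $q\ge2$ by \cite[Corollary~2.5.10]{dJ-vdP}.
\end{itemize}
So there is no ``non-torsion part'' to control: the higher $\overline{\so}^{\times}$-cohomology of the fibre vanishes outright, and ${\cal F}_q=0$ for $q\ge1$.
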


\begin{lemma}
\label{torsion-Z}
Let $X/C$ be an affinoid, then for all $i \ge 1$, the cohomology groups $\hhh^{i}_{\eet}(X, \Q) $ are trivial whereas the  groups $\hhh^{i}_{\eet}(X, \Z[\frac{1}{p}]) $ are torsion. 
\end{lemma}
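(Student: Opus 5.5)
The statement to prove is Lemma~\ref{torsion-Z}: for an affinoid $X/C$ and $i\ge 1$, $\hhh^i_{\eet}(X,\Q)=0$ and $\hhh^i_{\eet}(X,\Z[\tfrac1p])$ is torsion. The plan is to get the rational vanishing from a Galois-cohomology computation, and then deduce the torsion statement from the Bockstein-type sequence for $\Z[\tfrac1p]\subset\Q$.

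\textbf{Rational vanishing.} Work on the étale site of the affinoid $X$. The étale cohomology of a constant sheaf $\underline{A}$ on a quasi-compact quasi-separated adic space is computed by Čech/hypercover methods. It commutes with filtered colimits of sheaves, and $\Q=\varinjlim_n \tfrac1n\Z$.

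So it suffices to show that the class of any $\Z$-valued cocycle becomes zero after inverting integers. The cleanest route is local. For every geometric point $\bar x$, the stalk computation is Galois cohomology of the completed residue field, $\hhh^i(\mathcal G_{k(\bar x)},\Q)$. This vanishes for $i\ge1$ because profinite groups have torsion cohomology in positive degrees with uniquely divisible discrete coefficients. Hence $\R^i\epsilon_*\Q=0$ for $i\ge1$, where $\epsilon$ is the projection from the étale site to the analytic site of $X$.

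We are then reduced to $\hhh^i_{\rm an}(X,\Q)$ on the analytic site. This is cohomology of a constant sheaf on a spectral space, hence equal to cohomology on the associated Berkovich space $X^{\rm Berk}$. That space is a compact contractible-up-to-homotopy-type, locally contractible space with the homotopy type of a finite CW complex (Berkovich's results on skeleta of smooth affinoids; more generally Hrushovski–Loeser).

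This last point is the main obstacle. Positive-degree $\Q$-cohomology need not vanish for general affinoids: an annulus has $\hhh^1=0$ analytically, but a Tate curve-like affinoid piece could have nontrivial loops. I therefore expect the intended argument to avoid topology. Instead, I would use the exact sequence $0\to\Z\to\Q\to\Q/\Z\to0$ together with the fact that the image of $\hhh^i(X,\Q)\to\hhh^i(X,\Q/\Z)$ lands in the divisible part.

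\textbf{Torsion of $\Z[\tfrac1p]$-cohomology.} Granting $\hhh^i_{\eet}(X,\Q)=0$ for $i\ge1$, consider the long exact sequence of $0\to\Z[\tfrac1p]\to\Q\to\Q/\Z[\tfrac1p]\to0$. It gives
\[
\hhh^{i-1}_{\eet}(X,\Q/\Z[\tfrac1p])\twoheadrightarrow \hhh^i_{\eet}(X,\Z[\tfrac1p])
\]
for $i\ge2$. For $i=1$, the group $\hhh^1_{\eet}(X,\Z[\tfrac1p])$ is the cokernel of $\hhh^0(X,\Q)\to\hhh^0(X,\Q/\Z[\tfrac1p])$, which is torsion since it is a quotient of a torsion group. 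In every case $\hhh^i$ is a quotient of the cohomology of the torsion sheaf $\Q/\Z[\tfrac1p]=\bigoplus_{\ell\neq p}\Q_\ell/\Z_\ell$, whose cohomology on quasi-compact $X$ is the colimit of the groups $\hhh^{i-1}(X,\Z/n)$ and hence torsion.

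So the real content is the vanishing with $\Q$-coefficients. Concretely, I would aim to show that $\hhh^i_{\eet}(X,\Z)$ is torsion for $i\ge1$, and then take the colimit over multiplication by $n$ using quasi-compactness. For $i=1$, $\hhh^1_{\eet}(X,\Z)$ classifies $\Z$-torsors, and I would argue it injects into a group killed after a finite étale cover. For higher $i$, I would reduce to stalks via the Leray spectral sequence for $\epsilon$, plus finiteness of cohomological dimension of the analytic site of $X$.
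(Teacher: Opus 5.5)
Your first step is essentially the reduction the paper makes via de Jong--van der Put. The stalks of $\R^q\epsilon_*\Q$ are Galois cohomology groups of residue fields with $\Q$-coefficients, hence zero for $q\ge1$. This gives $\hhh^i_{\eet}(X,\Q)\simeq\hhh^i_{\rm an}(X,\Q)=\hhh^i(|X|,\Q)$, with $|X|$ the Berkovich space. Your deduction of the $\Z[\frac1p]$-statement from $0\to\Z[\frac1p]\to\Q\to\bigoplus_{\ell\neq p}\Q_\ell/\Z_\ell\to0$ is also the paper's.

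The gap is that you never show $\hhh^i(|X|,\Q)=0$, and none of your fallbacks can supply it:
\begin{itemize}
\item Divisibility of the image of $\hhh^i(X,\Q)$ in $\hhh^i(X,\Q/\Z)$ says nothing about vanishing.
\item The Leray spectral sequence for $\epsilon$ with $\Z$-coefficients just returns the same topological term. For instance, $\hhh^1_{\eet}(X,\Z)=\hhh^1_{\rm an}(X,\Z)$, since $\Hom_{\rm cont}(\mathcal{G}_x,\Z)=0$ for every residue Galois group $\mathcal{G}_x$.
\item It is false that $\Z$-torsors become trivial or torsion after a finite \'etale cover; see the example below.
\end{itemize}

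In fact the obstruction you spotted is real, and the paper's ``by the same argument'' passes over precisely this term. Let $E_q={\bb G}_m^{\rm an}/q^{\Z}$ be a Tate curve. Let $X\subset E_q$ be the complement of a small open disc $D$ around the origin; equivalently, $X$ is the preimage of a large closed disc under a Weierstrass coordinate, so it is a smooth affinoid.

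The uniformization ${\bb G}_m^{\rm an}\to E_q$ is an analytically locally trivial $\Z$-torsor. Over $X$ its total space is ${\bb G}_m^{\rm an}\setminus\bigcup_{m}q^mD'$, where $D'$ is the lift of $D$ around $1$, and this space is connected. A section of its pushout along $\Z\xrightarrow{n}\Q$ or $\Z\xrightarrow{n}\Z[\frac1p]$ would be a locally constant, hence constant, function on this connected space that shifts by $n$ under $z\mapsto qz$. That is impossible for $n\neq0$.

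Hence $\hhh^1_{\eet}(X,\Q)\neq0$ and $\hhh^1_{\eet}(X,\Z[\frac1p])$ has elements of infinite order, so the lemma fails for arbitrary affinoids. What your first step does prove is the statement for affinoids whose Berkovich space is $\Q$-acyclic (e.g.\ contractible), such as products of closed discs and annuli. There $\hhh^p(|X|,\Q)=0$ for $p\ge1$ and the rest of your argument goes through unchanged.

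That restricted case is the only one used for Corollary~\ref{torsion-disc1} and Theorem~\ref{thtorsion}. However, Lemma~\ref{homot} for a general $X\in\CC$ needs the same extra hypothesis.
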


\begin{proof}[Proof of Lemma~\ref{induction}]
The proof for the closed disc is given in~\cite[Lemma~5.29]{EGN24}, we treat the case of the annulus in a similar way. To simplify the notations, in this proof we set $A:= \mathbf{A}(r,s)$. We denote by $\pi$ the natural projection $X \times A \to X$ and for $q>0$, we set ${\cal F}_q:=\R^q \pi_*\overline{\so}^{\times}$. It suffices to show that $\pi_*\overline{\so}^{\times} \simeq \overline{\so}^{\times} \times \Z[1/p]$ and that ${\cal F}_q =0$ for $q >0$. Indeed, using the Leray spectral sequence: 
\[ E_2^{p,q}=\hhh_{\eet}^p(X, \R^q\pi_*\overline{\so}^{\times}) \Rightarrow \hhh^{p+q}_{\eet}(X \times A, \overline{\so}^{\times}) \]  
this implies the result of the lemma.

The fact that  ${\cal F}_q$ is zero for $q >0$ can be checked on the stalks. Let $x := \Spa(L, L^+)$ be a geometric point of $X$. We claim that 
\[ ({\cal F}_q)_x \simeq \hhh^q_{\eet}(A \times \Spa(L, L^+) , \overline{\so}^{\times}). \]
The proof is the same as in~\cite[Lemma~5.29]{EGN24}: the argument can be reduced to checking that if $(Y_s)_s$ is a cofiltered system of smooth rigid spaces over $C$ and $Y\sim \varprojlim Y_s$ then $\hhh^*_{\eet}(Y, \overline{\so}^{\times}) \simeq \varinjlim_s \hhh^*_{\eet}(Y_s,\overline{\so}^{\times})$ and this was proved in~\cite[Proposition~3.2]{Heu25}.   

Moreover, we have that $\hhh^q_{\eet}(A \times \Spa(L, L^+) , \overline{\so}^{\times})$ is equal to  $\hhh^q_{\eet}(A \times \Spa(L, L^{\circ}) , \overline{\so}^{\times})$. Consider the exponential short exact sequence on the \'etale site of $A \times \Spa(L, L^{\circ})$: 
\[ 1 \to \so \xrightarrow{\rm exp} \so^{\times}[\frac{1}{p}] \to \overline{\so}^{\times} \to 1. \]
Since $\hhh^q_{\eet}(A \times \Spa(L, L^{\circ}) , \so)$ and $\hhh^q_{\eet}(A \times \Spa(L, L^{\circ}) , \so^{\times})[\frac{1}{p}]$ are trivial for $q \ge 1$ (the first one because $A \times \Spa(L, L^{\circ})$ is affinoid, the second one by \cite[Corollary 2.5.10]{dJ-vdP} for $q\ge 2$ and \cite[(3.25)]{vdp}), we obtain that $\hhh^q_{\eet}(A \times \Spa(L, L^{\circ}) , \overline{\so}^{\times})$ is zero, as wanted. 

Let us now prove that $\pi_*\overline{\so}^{\times} \simeq \overline{\so}^{\times} \times \Z[1/p]$. Choose a coordinate $T$ for the annulus $A$ and $Z\to X$ an étale cover. Then by \cite[Lemme 4.4.]{J1}, we have 
\begin{equation}
\label{o-bar-annulus}
{\so}^{\times}(Z \times A)/U(Z \times A) = {\so}^{\times}(Z )/U(Z) \times T^{\Z}.
\end{equation}
By Lemma~\ref{lem-heuer}\eqref{pdiv} ($Z$ and $Z\times A$ are quasi-compact), \[\hhh_{\eet}^0(Z, \pi_*\overline{\so}^{\times})=\hhh_{\eet}^0(Z \times \A(r,s), \overline{\so}^{\times}) =({\so}^{\times}(Z \times \A(r,s))/U(Z \times \A(r,s)))[1/p] = \overline{\so}^{\times}(Z) \times T^{\Z[1/p]},\]
as wanted. 
\end{proof}

\begin{proof}[Proof of Lemma~\ref{torsion-Z}]
Let $\pi : X \to {\rm Sp}(C)$ be the structural map. By \cite[Lemma~3.4.6]{dJ-vdP}, we know that ${\rm R}\pi_*\Q$ is overconvergent and by \cite[Corollary~3.3.2]{dJ-vdP}, we see that the stalks of ${\rm R}\pi_*\Q$ are torsion, hence they are zero (as they are also $\Q$-modules). This proves that the sheaf ${\rm R}\pi_*\Q$ is zero. Since we also have that $\hhh_{\eet}^i({\rm Sp}(C), \pi_*\Q)$ is zero in strictly positive degree (by the same argument), we obtain that $\hhh^{i}_{\eet}(X, \Q) $ vanishes for $i >0$. 

For $\Z[\frac{1}{p}]$-coefficients, consider the exact sequence: 
\[ 0 \to \Z[\frac{1}{p}] \to \Q \to \bigoplus_{\ell \neq p} (\Q_{\ell}/ \Z_{\ell}) \to 0, \]
where the sum is taken over prime numbers $\ell$ different from $p$.
Since $\hhh^i_{\eet}(X, \Q)$ vanishes for $i \ge 1$ by the previous discussion, we obtain an isomorphism (using that $X$ is quasi-compact),
\[ \hhh^{i}_{\eet}(X, \Z[\frac{1}{p}]) \simeq \hhh^{i-1}_{\eet}(X,\bigoplus_{\ell \neq p} \Q_{\ell}/ \Z_{\ell}) \simeq \bigoplus_{\ell \neq p} \limi_n \hhh^{i-1}_{\eet}(X, \Z/ {\ell}^n\Z). \] 
Since each term $\hhh^{i-1}_{\eet}(X, \Z/ {\ell}^n\Z)$ is torsion, this concludes the proof.
\end{proof}

\subsection{Opens of the Drinfeld symmetric space}\label{open-drinfeld}
 Now we explain how to propagate this result to affinoids for which the geometry is deeply related to products of closed discs and closed annuli, like for example the opens $(U_n)_n$ of the Drinfeld symmetric space. In particular, this will finish the proof of Theorem~\ref{thtorsion}. Let us first recall how these affinoids $U_n$ are defined and introduce some notations.

Let us denote by $\mathbb{H}^d_K$ the Drinfeld symmetric space of dimension $d$ over $K$, i.e. the rigid analytic space defined by 
\[\mathbb{H}_K^d=\mathbb{P}_K^d \setminus \bigcup_{H \in {\cal H}} H, \] 
where ${\cal H}$ is the set of $K$-rational hyperplanes in ${\bb P}_K^d$ and ${\bb P}_K^d$ is the rigid analytic projective space of dimension $d$ over $K$. 
We also write $\mathbb{H}^d:=\mathbb{H}_K^d \times_K C$ for its base change to $C$.  

Recall that ${\cal H}$ can be identified with the set 
\[ \{{\rm Ker}(l_a), \; a \in K^{d+1} \setminus \{ 0 \}\} \simeq {\bb P}^d(K) \]
where $l_a$ is the linear form $b \in C^{d+1} \mapsto \sum_{0 \le i \le d} a_i b_i$. If $a$ is unimodular (i.e., ${\rm max}_{0 \le i \le d}( | a_i |)=1$) then we define $l_a^{(n)}$ as the map from $(\so_C/\varpi^n)^{d+1}$ to $\so_C/\varpi^n$ induced by $l_a$. We set 
\[{\cal H}_n:=\{{\rm Ker}(l^{(n)}_a), \; a \in K^{d+1} \setminus \{ 0 \} \text{ unimodular}\} \simeq {\bb P}^d(\so_K/ \varpi^n) \] 
such that ${\cal H} = \varprojlim_n {\cal H}_n$. For $\varepsilon >0$ and $H={\rm Ker}(l_{a_H})$ in ${\cal H}$ we denote by $H(\varepsilon)$ the  open tube of radius $\varepsilon$ around $H$, that is 
\[
H(\varepsilon) = \{ z \in {\bb P}^d(C), \; |l_{a_H}(z)|< \varepsilon |z_i| \quad \forall i\}. \] 
We write $H(\varepsilon)^c$ for the complement in the rigid space ${\bb P}_{K}^d$. The goal of this section is to prove:   

\begin{proposition}
\label{torsion-cov-dr}
For $n \in \N$, let $U_{n,K}:=\bigcap_{H \in {\cal H}_n} H(| \varpi |^n)^c$ and $U_n:=U_{n,K} \times_K C$. Then $\hhh^i_{\proeet}(U_n, \overline{\so}^{\times})$ is torsion for all $i \ge 1$.
\end{proposition}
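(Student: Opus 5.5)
We plan to use the combinatorial structure of $U_n$ relative to the Bruhat--Tits building, and reduce to the case of products of closed discs and annuli (Corollary~\ref{torsion-disc1}) by a Čech/Mayer--Vietoris argument. Since $\R\nu_*\overline{\so}^{\times}=\overline{\so}^{\times}$ (Lemma~\ref{lem-heuer}), pro-étale and étale $\overline{\so}^{\times}$-cohomology agree, so it suffices to work on the étale site. Torsion abelian groups are stable under finite extensions, subquotients and finite direct sums. Hence a spectral sequence with finitely many nonzero terms, each torsion, has torsion abutment. The strategy is therefore to cover $U_n$ by finitely many affinoids whose finite intersections are all torsion for $\overline{\so}^{\times}$-cohomology in degrees $\ge 1$.

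\textbf{Step 1: the covering.} Recall the reduction map $\tau:\mathbb{H}^d\to|\mathcal{BT}|$ to the geometric realization of the building, and the fact that $U_n=\tau^{-1}(|B(n)|)$, where $B(n)$ is the combinatorial ball of radius $n$ around the standard vertex. This ball is a finite subcomplex. For each simplex $\sigma\in B(n)$, we take $U_\sigma:=\tau^{-1}(\overline{\mathrm{star}}\cap|B(n)|)$, or, more simply, $U_s:=\tau^{-1}(|\sigma|)$ for maximal simplices $\sigma$ of $B(n)$. This gives a finite covering of $U_n$ by affinoids whose finite intersections are again of the form $\tau^{-1}(|\sigma'|)$ for faces $\sigma'$. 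Thus the Čech-to-derived spectral sequence $E_1^{a,b}=\prod_{\sigma_0<\dots}\hhh^b_{\eet}(\tau^{-1}(|\sigma_{0\dots a}|),\overline{\so}^{\times})\Rightarrow\hhh^{a+b}_{\eet}(U_n,\overline{\so}^{\times})$ has finitely many columns.

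\textbf{Step 2: local pieces.} Each $\tau^{-1}(|\sigma|)$ (or $\tau^{-1}$ of the closed simplex, or of a vertex) is, after a $\mathrm{GL}_{d+1}(\so_K)$-translation, the complement in a closed ball or a product of annuli of finitely many residue discs (open tubes around $\mathbb{F}_q$-rational hyperplanes). I would like to identify it with a space of the type covered by Lemma~\ref{homot}. This identification is the main obstacle, since $\tau^{-1}(\text{vertex})$ is $\mathbb{P}^d$ minus open tubes around all $\mathbb{F}_q$-rational hyperplanes, which is not literally a product of discs and annuli when $d\ge2$.

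To handle it, I would try a second induction. I would remove the tubes one hyperplane at a time and use the Mayer--Vietoris / excision sequence for the closed cover of the smaller space $Y\setminus H(\varepsilon)$ and $\overline{H(\varepsilon')}$-collar. Here the collar $\{ \varepsilon'\le|l_H|\le\varepsilon\}$ is, locally, $Y_H\times\A(r,s)$ with $Y_H$ a lower-dimensional space of the same type. Lemma~\ref{homot}, combined with the induction on $d$ and on the number of removed hyperplanes, then gives torsion. Alternatively, I would use that for $q\ge1$ the groups $\hhh^q_{\eet}(\cdot,\overline{\so}^{\times})$ of such pieces can be computed from Lemma~\ref{induction}'s style fibration along a linear projection: $\overline{\so}^{\times}$-units of fibres are known by \cite[Lemme 4.4]{J1}, giving $\pi_*\overline{\so}^{\times}=\overline{\so}^{\times}\times\Z[1/p]^{r}$ and vanishing higher direct images. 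Torsion would then follow from Lemma~\ref{torsion-Z} and induction on dimension.

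\textbf{Step 3: conclusion.} With all $E_1^{a,b}$ for $b\ge1$ torsion, it remains to handle the row $b=0$, i.e.\ the Čech cohomology of the presheaf $\overline{\so}^{\times}$ on the cover. By \cite[Théorème 6.10]{J1}, $\overline{\so}^{\times}(\tau^{-1}(|\sigma|))$ is described combinatorially as $\overline{\so}^{\times}$-constants times a free $\Z[1/p]$-module generated by the ratios $l_H/l_{H'}$. This identifies the row $b=0$ with a Čech complex computing the $\Z[1/p]$-cohomology of a coefficient system on the finite complex $B(n)$. Tensored with $\Q$, this complex is acyclic in positive degrees by the convexity (hence contractibility) of $B(n)$. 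Since the coefficients are finitely generated, its positive-degree cohomology is torsion. All $E_2$ terms in total degree $\ge1$ are then torsion, and so is $\hhh^i(U_n,\overline{\so}^{\times})$.
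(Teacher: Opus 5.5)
There is a genuine gap, in fact two, and they sit in the two steps that carry the content.

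\textbf{Step 2 is not carried out.} Every member of your cover, and every intersection, is a piece $\tau^{-1}(|\sigma|)$ for a closed face $\sigma$ of $B(n)$. None of these is a product of closed discs and annuli. Already for $d=1$, a vertex gives $\mathbb{P}^1$ minus $q+1$ residue discs and a closed edge gives $\mathbb{P}^1$ minus $2q$ of them. So Corollary~\ref{torsion-disc1} and Lemma~\ref{homot} do not apply. For $d\ge 2$, by \eqref{decompoH} the stratum of $\tau^{-1}(|\sigma|)$ over $\mathring{\sigma'}$ is $A_{k'}\times C_{\sigma'}$, and this changes from face to face. Hence $\tau^{-1}(|\sigma|)$ is neither a product nor, via a linear projection, a family with fibres of constant type. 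Both remedies you sketch rely on statements that are not available:
\begin{itemize}
\item The collar around a tube $H(\varepsilon)$ is only locally of the form $Y_H\times\A(r,s)$, and it is further cut by the tubes of the other hyperplanes meeting $H$. Mayer--Vietoris needs its global cohomology.
\item The fibration route needs a relative version of \cite[Lemme 4.4]{J1} for families of discs minus residue discs whose centres move with the base point. Lemma~\ref{induction} only treats honest products $X\times\B(r)$ and $X\times\A(r,s)$.
\end{itemize}

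\textbf{Step 3 rests on an invalid inference.} Contractibility of the nerve gives acyclicity only for a \emph{constant} coefficient system. Consider the cover $\{|z|\le 1\},\{|z|\ge 1\}$ of $\mathbb{P}^1$: its nerve is contractible and its coefficient groups have exactly the shape you describe (constants, times $z^{\Z[1/p]}$ on the intersection). Yet its degree-$0$ row gives a \v{C}ech $\hhh^1$ equal to $\Z[1/p]=\hhh^1(\mathbb{P}^1,\overline{\so}^{\times})$. The row $b=0$ is precisely where non-torsion classes live. Acyclicity of $\sigma\mapsto\big(\overline{\so}^{\times}(\tau^{-1}(|\sigma|))/\mathrm{constants}\big)\otimes\Q$, whose rank varies with $\sigma$, would be a Schneider--Stuhler type theorem. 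Compare the work in Section~\ref{flat-0}, which is done there for the open-star cover, not your closed-simplex cover.

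\textbf{How the paper avoids both problems.} It never cuts along the building.
\begin{itemize}
\item $U_n=\bigcap_{H\in\mathcal{H}_n}H(|\varpi|^n)^c$ is a finite intersection of closed polydiscs.
\item Each finite union of the $H(|\varpi|^n)^c$ is, by \cite{scst}, a closed-ball fibration over some $\mathbb{P}^t$, trivialised over $t+1$ pieces whose intersections are products of discs and annuli.
\item Lemma~\ref{torsion-cov-dr-lem1} then passes from these intersections to unions and back to $U_n$.
\end{itemize}
Since a union of $|J|$ pieces is only needed in degrees $k\ge |J|$, no $\hhh^0$-term ever enters the Mayer--Vietoris sequences; this is what replaces your Step 3. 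Also, $\tau^{-1}(s_0)=\bigcap_{H\in\mathcal{H}_1}H(1)^c$ has the same shape, so the same argument removes the obstacle you flagged in Step 2.
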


The reasoning to prove the above proposition is similar to the one from~\cite[Section~5.2]{J1}. Let us first prove the following general result: 
 
\begin{lemma}
\label{torsion-cov-dr-lem1}
 Let $X$ be a rigid analytic space and let ${\cal U}=\{ U_i, i\in I\}$ be a family of open sets in $X$. Assume that $\hhh^{\bullet}(-)$ is a cohomology theory that satisfies Mayer-Vietoris exact sequence. Then,  
 \begin{enumerate}
     \item If for all $k \ge 1$ and all finite subset $J \subset I$ the groups $\hhh^k( \bigcap_{j \in J} U_j)$ are torsion, then for all $k \ge |J|$, the cohomology groups of finite unions $\hhh^k(\bigcup_{j \in J} U_j)$ are torsion.
     \item If for all $k \ge |J|$ and all finite subset $J \subset I$ the groups $\hhh^k( \bigcup_{j \in J} U_j)$ are torsion, then the cohomology groups of all the finite intersections $\hhh^k(\bigcap_{j \in J} U_j)$ are torsion.
 \end{enumerate}    
\end{lemma}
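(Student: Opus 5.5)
Both parts follow from an induction on $|J|$ driven by the Mayer--Vietoris sequence
\[\cdots\to \hhh^{k-1}(A\cap B)\to \hhh^k(A\cup B)\to \hhh^k(A)\oplus\hhh^k(B)\to \hhh^k(A\cap B)\to \hhh^{k+1}(A\cup B)\to\cdots\]
together with one elementary fact. In an exact sequence $M'\to M\to M''$ of abelian groups, if $M'$ and $M''$ are torsion then so is $M$. Indeed, for $x\in M$ choose $m$ with $m x\mapsto 0$ in $M''$; then $mx$ lies in the image of the torsion group $M'$, so some multiple of $x$ vanishes.

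\textbf{Part (1).} I induct on $|J|$, proving the stronger statement: for any family satisfying the hypothesis and any finite $J$, $\hhh^k(\bigcup_{j\in J}U_j)$ is torsion for all $k\ge |J|$. The case $|J|=1$ is the hypothesis. For $J=J'\sqcup\{j_0\}$, set $A=\bigcup_{J'}U_j$ and $B=U_{j_0}$, so that $A\cap B=\bigcup_{j\in J'}(U_j\cap U_{j_0})$. The family $\{U_j\cap U_{j_0}\}_{j\in J'}$ again has all finite intersections equal to finite intersections of the original family, hence with torsion cohomology in degrees $\ge1$. The induction hypothesis (size $|J'|$) then gives that $\hhh^{k-1}(A\cap B)$ is torsion for $k-1\ge |J'|$, i.e.\ $k\ge |J|$. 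Likewise $\hhh^k(A)$ is torsion for $k\ge|J'|$, and $\hhh^k(B)$ for $k\ge1$. The exact segment $\hhh^{k-1}(A\cap B)\to\hhh^k(A\cup B)\to\hhh^k(A)\oplus\hhh^k(B)$ and the fact above give the claim for $k\ge |J|$.

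\textbf{Part (2).} This is the dual induction. Here I read the conclusion as torsion in degrees $k\ge1$, with the hypothesis on unions holding for every family of the same shape, in particular for subfamilies. For $|J|=1$ the hypothesis gives $k\ge1$ directly. For $J=J'\sqcup\{j_0\}$, set $A=\bigcap_{J'}U_j$, $B=U_{j_0}$ and $A\cap B=\bigcap_J U_j$, and use the segment
\[\hhh^k(A)\oplus\hhh^k(B)\to \hhh^k(A\cap B)\to \hhh^{k+1}(A\cup B).\]
The left term is torsion by induction. The right term needs $A\cup B=\bigcap_{j\in J'}(U_j\cup U_{j_0})$, an intersection of $|J'|$ sets from the family $\{U_j\cup U_{j_0}\}$. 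Finite unions in this family are unions in the original family of size one more, so the shifted hypothesis transfers to it, and induction on $|J'|$ gives torsion in degree $k+1$.

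\textbf{Main obstacle.} The degree bookkeeping is the step I expect to be delicate. Each step of part (2) trades a union of $m$ sets, torsion from degree $m$, against a degree shift of one, so it must be checked that the bounds match exactly. If they do not close for small $k$, I would prove the finer statement that the intersection of $m$ sets is torsion in degrees $\ge1$, given that unions of $r$ of them are torsion in degrees $\ge r$ for all $r\le m$. I would also verify that exactness at the middle term is all that is used, and that the distributivity identities ($A\cap B$ as a union in part (1), $A\cup B$ as an intersection in part (2)) hold for admissible opens.
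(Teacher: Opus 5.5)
Your part (1) is the paper's argument: the same induction, with the auxiliary family $U_j\cap U_{j_0}$ and the same Mayer--Vietoris segment. Your bound $k\ge|J|$ is the right one.

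Part (2), however, does not close as written. The auxiliary family $W_j=U_j\cup U_{j_0}$, $j\in J'$, satisfies only a \emph{shifted} hypothesis. A union of $r$ of the $W_j$ is a union of $r+1$ of the $U_j$, so it is known to be torsion only in degrees $\ge r+1$, not in degrees $\ge r$. For example, $\hhh^1(U_j\cup U_{j_0})$ is not controlled. Hence statement (2) for $|J'|$ sets, which is your induction hypothesis, does not apply to $\{W_j\}$. In any case you need a shifted conclusion: that $\hhh^{k+1}(\bigcap_{J'}W_j)$ is torsion for $k+1\ge 2$. Your fallback ``finer statement'' is just (2) for a family of $m$ sets, so it does not repair this.

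The repair is to prove, by induction on $m$ and simultaneously for all $s\ge 0$, the statement $P(m,s)$: if every union of $r$ members is torsion in degrees $\ge r+s$, then the intersection of all $m$ members is torsion in degrees $\ge 1+s$. Your segment then works for $k\ge 1+s$:
\begin{itemize}
\item $\hhh^k(A)$ is handled by $P(m-1,s)$;
\item $\hhh^k(B)$ is handled by the case $r=1$;
\item $\hhh^{k+1}(A\cup B)$ is handled by $P(m-1,s+1)$ applied to $\{W_j\}$.
\end{itemize}

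The paper avoids the shift entirely by intersecting with the last set instead of uniting with it. Set $V_i=U_i\cap U_n$ for $i<n$, so that $\bigcap_{i\le n}U_i=\bigcap_{i<n}V_i$. For $J\subset\{1,\dots,n-1\}$, write $U^J=\bigcup_{j\in J}U_j$; then $\bigcup_{j\in J}V_j=U^J\cap U_n$, and the segment
\[\hhh^k(U^J)\oplus\hhh^k(U_n)\to\hhh^k(U^J\cap U_n)\to\hhh^{k+1}(U^J\cup U_n)\]
shows that this union is torsion in degrees $k\ge|J|$, because $U^J\cup U_n$ is a union of $|J|+1$ of the original sets. Thus $\{V_i\}$ satisfies exactly the same unshifted hypothesis as $\{U_i\}$, with one fewer member, and the plain induction on $n$ closes.

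Your dual route, once the shift parameter is built in, is also valid. The paper's choice simply keeps the hypothesis invariant under the reduction step.
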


\begin{proof}

We can assume $I=\{ 1, \dots, n\}$. Both points will be proved by induction on $n$. Let us first prove (1). If $n=1$, this is trivial. Take $n \ge 2$ and let us assume that the result of (1) is true for $I=\{ 1, \dots , n-1\}$. It suffices to prove that the cohomology of $\bigcup_{i=1}^n U_i$ is torsion in all degree $\ge 1$. By hypothesis, we have an exact sequence: 
\[ \hhh^{k-1}(U^{(n-1)} \cap U_n) \to \hhh^k(\bigcup_{i=1}^n U_i) \to \hhh^k(U^{(n-1)}) \oplus \hhh^k(U_n), \] 
where $U^{(n-1)}= \bigcup_{i=1}^{n-1} U_i$. Since we have $U^{(n-1)} \cap U_n= \bigcup_{i=1}^{n-1} V_i$ with $V_i:= U_i \cap U_n$, its cohomology groups are torsion (by induction hypothesis). The same is true for $\hhh^k(U^{(n-1)})$ and $\hhh^k(U_n)$. This proves that $\hhh^k(\bigcup_{i=1}^n U_i)$ is torsion, as wanted. 

To prove the second point, we can assume $I=\{ 1, \dots, n\}$ and we are reduced to prove that the cohomology of $\bigcap_{i=1}^n U_i$ is torsion in all degree $\ge 1$. Setting $V_i:= U_i \cap U_n$, we see that it is enough to show that the cohomology of $\bigcap_{i=1}^{n-1} V_i$ is torsion. By induction, it suffices to show that $\hhh^k(\bigcup_{j \in J} V_j)$ is torsion for any finite subset $J \subset \{ 1, \dots , n-1\}$ and $k\ge |J|$. Let us write $U^J:=\bigcup_{j \in J} U_j$ and $V^J:=\bigcup_{j \in J} V_j=U^J\cap U_n$. We have an exact sequence:  
\[\hhh^k(U^{J}) \oplus \hhh^k(U_n) \to \hhh^k(V^J) \to \hhh^{k+1}(U^J \cup U_n). \] But by hypothesis, we have that the groups $\hhh^k(U^{J})$, $\hhh^k(U_n)$ and $\hhh^{k+1}(U^J \cup U_n)$ are all torsion. We deduce that $\hhh^k(V^J)$ is torsion.
\end{proof}

Proposition~\ref{torsion-cov-dr} follows then from the second point of Lemma~\ref{torsion-cov-dr-lem1} combined with the following result:  

\begin{lemma} \label{torsion-cov-dr-lem2}
Let ${\cal A}_n$ be a subset of ${\cal H}_n$. Then for $k \ge 1$, the groups $\hhh_{\eet}^k( \bigcup_{H \in {\cal A}_n} H(| \varpi |^n)^c, \overline{\so}^{\times})$ are torsion.  
\end{lemma}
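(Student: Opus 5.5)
The plan is to follow the strategy of \cite[Section~5.2]{J1}: reduce the complement of a finite union of tubes to an explicit product of a projective-type piece with closed discs and annuli, and then invoke Corollary~\ref{torsion-disc1} together with Lemma~\ref{induction}.

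\textbf{Step 1: identify the geometry of the union.} Write $V:=\bigcup_{H\in{\cal A}_n} H(|\varpi|^n)^c$. By the Mayer--Vietoris induction in Lemma~\ref{torsion-cov-dr-lem1}(1), it suffices to prove that every finite intersection $\bigcap_{H\in J} H(|\varpi|^n)^c$, for $J\subset{\cal A}_n$, has torsion $\overline{\so}^{\times}$-cohomology in degrees $k\ge 1$.

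\textbf{Step 2: reduce to a finite set of hyperplanes modulo $\varpi^n$.} Each such intersection depends only on the reductions $l_{a_H}^{(n)}$. After a change of coordinates by ${\rm GL}_{d+1}(\so_K)$, it becomes the complement in ${\bb P}^d$ of tubes around a finite set of hyperplanes whose reductions modulo $\varpi^n$ are given. For a set of hyperplanes that is linearly independent modulo $\varpi$, one may take them to be coordinate hyperplanes $z_0,\dots,z_r$. The intersection is then the locus where $|z_i|\ge|\varpi|^n\max_j|z_j|$ for $i\le r$. This is a product of closed annuli $\A(|\varpi|^n,1)$ (in the ratios $z_i/z_0$) with a closed unit ball, hence covered by Corollary~\ref{torsion-disc1}.

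In general the linear forms are dependent modulo $\varpi^n$. For this case I would follow \cite{J1} and use a further finite affinoid covering. Decompose according to which coordinate achieves $\max_j|z_j|$, and according to the valuations of the $l_a$ up to $|\varpi|^n$. Each piece should be the complement, in a closed polydisc, of finitely many tubes around affine hyperplanes. These can be handled inductively on the dimension by fibering over one coordinate: for instance, the complement of tubes around $T=c_1,\dots,c_m$ in a closed disc is a disc minus open discs. Mayer--Vietoris again propagates torsionness from the pieces to their unions and intersections.

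\textbf{Step 3: handle the one-dimensional base case.} The base case is a closed disc with finitely many open discs removed. It is covered by closed annuli and a closed disc whose pairwise intersections are annuli, so Lemma~\ref{induction} and Lemma~\ref{torsion-Z} apply directly. Relative versions, where the removed discs are fibered over $X$ with constant centers in $\so_C$, reduce via Lemma~\ref{homot} to $X\times(\text{such a curve})$.

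\textbf{Main obstacle.} The hard part is Step 2 in the dependent case: producing a covering by products of discs and annuli, or relative versions over a smaller-dimensional piece, whose finite intersections are of the same type. This is needed so that Lemma~\ref{torsion-cov-dr-lem1}(1) and Lemma~\ref{homot} can be run inductively. I would first try induction on $d$ by projecting away from a point and fibering over ${\bb P}^{d-1}$-type pieces, as in \cite[§5.2]{J1}.
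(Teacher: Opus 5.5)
Your Step~1 applies Lemma~\ref{torsion-cov-dr-lem1}(1) to the cover of the union by the balls $H(|\varpi|^n)^c$ themselves. That sends you to their finite intersections $\bigcap_{H\in J}H(|\varpi|^n)^c$, and this is a genuine gap. These intersections are complements of finite unions of tubes, so they are affinoids of exactly the type of $U_n$; for ${\cal A}_n=J={\cal H}_n$ you get $U_n$ itself. Their torsion is the content of Proposition~\ref{torsion-cov-dr}, which the present lemma is meant to feed through Lemma~\ref{torsion-cov-dr-lem1}(2). You have therefore reduced the lemma to the harder statement it serves, and your Step~2, which you flag as the main obstacle, is that statement, left unproved.

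The coverings you hope for also do not exist in general:
\begin{itemize}
\item For $d=2$ and the hyperplanes $z_0,z_1$, the intersection (with $z_0=1$) is $\{|z_1|,|z_2|\le|\varpi|^{-n},\ |z_1|\ge|\varpi|^n\max(1,|z_2|)\}$. The conditions couple the coordinates, so it is not a product of annuli with a ball.
\item In dimension one, a closed disc minus two open discs lying in distinct residue classes admits no covering by closed discs and annuli at all. A member containing the Gauss point would have to avoid three directions there (the two holes and $\infty$). But the complement of a closed disc or annulus containing the Gauss point is at most two open discs, each inside a single direction.
\end{itemize}

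The missing idea is that the union is the geometrically simple side, so you should cover it directly. The union $\bigcup_{H\in{\cal A}_n}H(|\varpi|^n)^c$ is the complement of $\bigcap_{H\in{\cal A}_n}H(|\varpi|^n)$, which is a tube around a linear subspace. Projecting away from that subspace gives a fibration $f$ over some ${\bb P}^t$, $t\le d$, with closed-ball fibres. By \cite[\S1]{scst} (see also \cite[\S2.2]{J1}), it is trivial over the standard cover $V_0,\dots,V_t$ of ${\bb P}^t$: $f^{-1}(V_i)\simeq V_i\times\B^{d-t}(|\varpi|^{\beta_i})$. All finite intersections of the $f^{-1}(V_i)$ are products of closed discs and annuli, so Corollary~\ref{torsion-disc1} and Remark~\ref{torsion-disc2} give torsion in degrees $\ge1$. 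Lemma~\ref{torsion-cov-dr-lem1}(1), applied to these $t+1$ pieces, then concludes. This is the paper's proof. The hard affinoids like $U_n$ are reached only afterwards, through Lemma~\ref{torsion-cov-dr-lem1}(2), never by an explicit covering.

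In passing, Lemma~\ref{torsion-cov-dr-lem1}(1) only yields torsion in degrees $k$ at least the number of pieces, and degree $1$ genuinely fails. For $d=1$ and ${\cal A}_n$ the classes of $z_0,z_1$, the union is ${\bb P}^1$, with $\hhh^1_{\eet}({\bb P}^1,\overline{\so}^{\times})\simeq{\rm Pic}({\bb P}^1)[\tfrac1p]=\Z[\tfrac1p]$. Degrees $k\ge|{\cal A}_n|$ are all that Lemma~\ref{torsion-cov-dr-lem1}(2) requires.
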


\begin{proof}
This will also follow from Lemma~\ref{torsion-cov-dr-lem1}. We fix ${\cal A}_n$ a subset of  ${\cal H}_n$ and write $X({\cal A}_n):= \bigcup_{H \in {\cal A}_n} H(| \varpi |^n)^c$. From~\cite[Section~1]{scst} (see also \cite[Section~2.2]{J1}), we know that there exist $t \le d$, a fibration $f: X({\cal A}_n) \to {\bb P}^t$ and an admissible covering ${\cal V}=\{ V_i \}_{i=0, \dots, t}$ of ${\bb P}^t$ such that $f^{-1}(V_i) \simeq V_i \times {\B}^{d-t}(| \varpi|^{\beta_i})$ for some integer $\beta_i$ and such that $V_i$ is a  product of closed discs and annuli. This gives an admissible covering $f^*({\cal V})=\{ f^{-1}(V_i)\}_i$ of $X({\cal A}_n)$ such that the intersections of the $f^{-1}(V_i)$ on $f^*({\cal V})$ are product of closed discs and annuli. Hence, from Corollary~\ref{torsion-disc1} and Remark~\ref{torsion-disc2} we know that the groups $\hhh^k_{\eet}( \bigcap_{j \in J} f^{-1}(V_j), \overline{\so}^{\times})$'s for $J \subset \{ 0, \dots, t \}$ are torsion for all $k \ge 1$. Using the first point of Lemma~\ref{torsion-cov-dr-lem1}, we thus obtain that $\hhh^k_{\eet}(X({\cal A}_n), \overline{\so}^{\times})$ is torsion for all degree $k \ge 1$.           
\end{proof}

\section{Flatness results for $\Z/\ell^n\Z$-cohomology}\label{flat-0}
The goal of this section is to prove that the affinoids appearing in the standard Stein covering of the Drinfeld space satisfy the hypothesis (${\bf H}_2'$) from Theorem~\ref{main-theorem}. 
\subsection{Preliminaries} \label{flat-prelim}
In this section, we recall a few facts about the Bruhat-Tits building of the group ${\rm PGL}_{d+1}(K)$ and its relation with Drinfeld's space.  


\subsubsection{The simplicial complex $\BC\TC$}

 Let us denote $\BC\TC= \bigcup_{k \ge 0} \BC\TC_k$ the Bruhat-Tits building associated to the group ${\rm PGL}_{d+1}(K)$, where $\BC\TC_k$ is the set of simplexes of dimension $k$ in $\BC\TC$.
A $(k+1)$-tuple of vertices 
$\sigma=\{s_0,\cdots, s_k\}\subset \BC\TC $ is a $k$-simplex in $\BC\TC_k$ if and only if, up to permuting the $s_i$'s, we can find, for any $i$, lattices $M_i$ of $K^{d+1}$ with $s_i=[M_i]$ such that \[M_{-1}=\varpi M_k\subsetneq M_0\subsetneq M_1\subsetneq\cdots\subsetneq M_k.\]

If $s$ is a vertex and $\sigma$ a simplex, we define
\[{\rm St}(s)=\bigcup_{\tau \ni s} \mathring{\tau} \quad \text{ and } \quad {\rm St}(\sigma) = \bigcap_{t \in \sigma} {\rm St}(t), \]
\[\bar{\rm St}(s)=\bigcup_{\tau \ni s} {\tau} \quad \text{ and } \quad \bar{\rm St}(\sigma) = \bigcap_{t \in \sigma} \bar{\rm St}(t). \]
Note that we have the relation: 
\begin{equation}
    \label{sigma-rond}
    \mathring{\sigma} = {\rm St}(\sigma) \setminus \bigcup_{\tau \supset \sigma} {\rm St}(\tau).  
\end{equation}
We can endow $\mathcal{BT}_0$ with a combinatorial distance $\delta$ given by the length of minimal paths in the graph $\mathcal{BT}_1$. Fix $s_0=[\so_K^{d+1}]$ and define the "closed and open balls" for $\delta$ for $n\ge 1$: 
\[ B(n)= \bigcup_{\sigma : \delta(s_0, s) \le n-1} \bar{\rm St}(s) \quad \text{ and } \quad \mathring{B}(n)=\bigcup_{s : \delta(s_0, s) \le n-1} {\rm St}(s),  \] 
set also $B(0)=\mathring{B}(0)=s_0$.

\subsubsection{Orbit action of $G$}\label{orbitG}
Denote by $G$ the group $\mathrm{GL}_{d+1}(K)$. It acts transitively on $\BC\TC_0$ and the standard vertex $s_0=[\so_K^{d+1}]$ has stabilizer group $K^*{\rm GL}_{d+1}(\so_K)=: K^*\Gamma^{(0)}_{s_0} $.  This group $\Gamma^{(0)}_{s_0}$ admits a filtration $(\Gamma^{(n)}_{s_0})_{n\ge 1}$ by congruence subgroups $\Gamma^{(0)}_{s_0}:=\{ g \in {\rm GL}_{d+1}(\so_K)\; | \; g = {\rm Id} \pmod{\varpi^n} \}$. For any other vertex $s=gs_0$ we can also define $\Gamma^{(n)}_{s}=g\Gamma^{(n)}_{s_0}g^{-1}$ which is independent of the element $g$. For a general simplex $\sigma=\{s_0,\cdots, s_k\}$, we define $\Gamma^{(n)}_{\sigma}$ as the subgroup generated by $\Gamma^{(n)}_{s_1} \cup \Gamma^{(n)}_{s_2} \cup \dots \cup \Gamma^{(n)}_{s_r}$. Usually we will work with a family of congruence subgroups $\Gamma^{(n)}_{\sigma}$  for the same integer $n$ that we will drop in the previous notation.

The topological realization of $\BC\TC$ will be denoted by $|\BC\TC|$. The $k$-simplexes $\sigma$ can be seen as compact spaces $|\sigma|$ in the topological realization such that we have $|\BC\TC|=\bigcup_{\sigma\in \BC\TC} |\sigma |$. 
The interior of $| \sigma|$ will be written $\mathring{\sigma}=\sigma \backslash \bigcup_{\sigma'\subsetneq\sigma}\sigma'$.


For a simplex $\sigma=(M_0, \dots, M_k)$ as above, setting 
 $\bar{M}_i=M_i/ M_{i-1},$
we obtain a flag $0\subsetneq\bar{M}_0\subsetneq \bar{M}_1\subsetneq\cdots\subsetneq \bar{M}_k\simeq {\bf F}^{d+1}$. We define 
 $d_i={\rm dim}_{\bf F} (\bar{M}_i)-1$ et $e_i=d_{i+1}-d_{i}.$
 We will then say the simplex $\sigma$ is of type $(e_0, e_1,\cdots, e_k)$.
 
 Let us consider a basis $(\bar{f}_0,\cdots, \bar{f}_d)$ adapted  to the flag, i.e. such that $\bar{M}_i=\left\langle \bar{f}_0,\cdots , \bar{f}_{d_i}\right\rangle$ for any $i$. For any lift $(f_0,\cdots ,f_d)$ of $(\bar{f}_0,\cdots ,\bar{f}_d)$ in $M_0$, we have
\begin{equation} \label{def-N} M_i=\left\langle f_0,\cdots, f_{d_i }, \varpi f_{d_i+1},\cdots, \varpi f_d \right\rangle=N_0\oplus \cdots \oplus N_i\oplus \varpi (N_{i+1} \oplus \cdots\oplus N_k), \end{equation}
  where 
  $$N_i=\left\langle f_{d_{i-1}+1},\cdots, f_{d_{i}} \right\rangle$$ with $d_{-1}=-1$. 
  
\subsubsection{The reduction map $\tau$}
The Bruhat-Tits building $\mathcal{BT}$ allows us to define different open sets of ${\bb H}^d_K$ (and of ${\bb H}^d$ its base change to $C$). More precisely, consider the map 
$\tau: {\bb H}^d(C)\to |\BC\TC|$ (see \cite[§I.4]{boca} and \cite[§2.1]{wa} for more details).  
Let $\sigma\in\BC\TC_{k}$ be a simplex of type $(e_0, e_1,\cdots, e_k)$ and define:
\begin{align*} 
&{\bb H}_{\sigma}^d:=\tau^{-1} (\sigma), \;\; {\bb H}_{\mathring{\sigma}}^d:=\tau^{-1} (\mathring{\sigma}),  \\
&U_{n}:= \tau^{-1}(B(n)), \;\; \mathring{U}_{n}:=\tau^{-1}(\mathring{B}(n)),\\
&U_{\sigma}:= \tau^{-1}({\rm St}(\sigma)).
\end{align*}
Here, $U_n$ is affinoid, $\mathring{U}_n$ is Stein, if $s$ is a vertex then $U_s=\mathring{U}_{1}$ is the tube of an irreducible component of the special fiber and ${\bb H}_{\mathring{\sigma}}^d$ is the smooth locus of $U_{\sigma}= \bigcap_{s \in \sigma} U_s$ (by~\eqref{sigma-rond}). Moreover, we have $U_n \subset \mathring{U}_{n+1} \subset U_{n+1}$ and the $U_n$'s define a Stein covering for ${\bb H}^d$. Note that the $U_n$'s defined above coincide with the ones define in Section~\ref{open-drinfeld}.

\begin{lemma}\label{u-n}\cite[Proposition~8.2]{J1}
Let $\ell$ be a prime number different from $p$. The maps
\begin{align*}
    & \hhh^i_{\dr}(\mathring{U}_{n+1}) \to \hhh^i_{\dr}({U}_{n}^{\dagger}) \\
    & \hhh^i_{\eet}(\mathring{U}_{n+1}, \Z/{\ell}^k\Z) \to \hhh^i_{\eet}({U}_{n},\Z/{\ell}^k\Z)
\end{align*}
are isomorphisms for any $i\ge 0$. 
\end{lemma}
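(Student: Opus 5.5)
We compare the restriction maps through the combinatorics of the reduction map $\tau$ and the local pieces ${\bb H}^d_{\sigma}$, following the strategy of \cite[Proposition~8.2]{J1}. The plan is to treat de Rham and $\ell$-adic étale cohomology in parallel. Both satisfy Mayer--Vietoris for admissible (resp. étale) coverings and are computed, on the tubes $U_\sigma$ and their overconvergent/closed analogues, by a Čech spectral sequence for the covering $\{U_s\}_{s}$ indexed by vertices.

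Step one: write $\mathring{U}_{n+1}=\tau^{-1}(\mathring{B}(n+1))=\bigcup_{\delta(s_0,s)\le n} U_s$, which is an admissible open covering. Similarly, $U_n^{\dagger}$ (resp. $U_n$) is covered by the closed (resp. overconvergent) pieces $\tau^{-1}(\bar{\rm St}(s)\cap B(n))$ for $\delta(s_0,s)\le n-1$, together with the boundary pieces. Both coverings have the same nerve, namely the simplices $\sigma$ whose vertices lie at distance $\le n$ from $s_0$, suitably truncated. Taking the Čech spectral sequences $E_1^{a,b}=\prod_{\sigma}\hhh^b(U_\sigma)\Rightarrow \hhh^{a+b}$ on both sides reduces the claim to two things: the restriction maps on each term $\hhh^b(U_\sigma)\to\hhh^b(\tau^{-1}(\bar{\rm St}(\sigma)\cap B(n)))$ are isomorphisms, and the nerves match.

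Step two is the local comparison. Each $U_\sigma$ is, by the explicit description via the lattices $N_i$ of \eqref{def-N}, a product of an open polydisc-type piece with the complement of tubes around ${\bf F}$-rational hyperplanes in products of projective spaces of dimensions $e_i$. The intersection with $B(n)$ is the corresponding closed (affinoid) or dagger version. For such "generalized annuli/hyperplane complements" we know that de Rham and $\ell$-adic cohomology depend only on the arrangement: via the fibrations to ${\bb P}^t$ with disc fibres from \cite{scst} and homotopy invariance (closed versus open discs/annuli have the same de Rham cohomology in the dagger setting, and the same $\Z/\ell^k$-cohomology by \cite{ber6}), the restriction maps are isomorphisms. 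One then concludes via the five lemma on the $E_1$ pages.

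The main obstacle is the boundary. Vertices at distance exactly $n$ contribute to $\mathring{U}_{n+1}$ only through ${\rm St}(s)\cap \mathring{B}(n+1)$, and one must check that these open pieces and the corresponding pieces of $U_n$ (tubes of simplices of $B(n)$ meeting the sphere) have matching cohomology. I would first try to show directly that for each such simplex $\sigma$ the inclusion $\tau^{-1}(\bar{\rm St}(\sigma)\cap B(n))\subset \tau^{-1}({\rm St}(\sigma)\cap\mathring{B}(n+1))$ is, up to a product with a disc, a deformation of an annulus into a closed sub-annulus, so that homotopy invariance applies. This uses convexity of $B(n)$ to ensure the intersection is again of the above local form.
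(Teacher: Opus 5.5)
Your proposal has a genuine gap, and it starts with the covering in Step one. The pieces you attach to $U_n$ do not give a morphism of \v{C}ech spectral sequences. A closed star contains the neighbouring vertices and an open star does not, so $\tau^{-1}(\bar{\rm St}(s)\cap B(n))\not\subset U_s=\tau^{-1}({\rm St}(s))$. Hence there is no termwise restriction map $\hhh^b(U_\sigma)\to \hhh^b(\tau^{-1}(\bar{\rm St}(\sigma)\cap B(n)))$. The nerves also differ: the index sets are vertices at distance $\le n$ versus $\le n-1$, and closed stars of vertices at distance $2$ meet.

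Already for $d=1$, $n=1$, the piece attached to $s_0$ is all of $U_1=\tau^{-1}(\bar{\rm St}(s_0))$. This set is not contained in $U_{s_0}$, and its $\hhh^1$ has rank $q^2+q-1$ ($q=|{\bf F}|$), against rank $q$ for $\hhh^1(U_{s_0})$. This part can be repaired: use $\tau^{-1}(D(s)\cap B(n))$ for $\delta(s_0,s)\le n$, where $D(s)$ is the closed dual cell of $s$ in the barycentric subdivision. Since $B(n)$ is convex (Proposition~\ref{B-convex}), it is the full subcomplex on its vertices. Both nerves are then $B(n)$, and $D(\sigma)\subset{\rm St}(\sigma)$.

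Everything then rests on the local comparison for simplices $\sigma$ with a vertex at distance $n$, and your proposed mechanism fails when $d\ge 2$. Cutting ${\rm St}(\sigma)$ down to $B(n)$ removes the open simplices $\mathring{\tau}$, $\tau\supset\sigma$, that have a vertex at distance $n+1$. These are the tubes of smaller strata of the special fibre. They come from refining the flag of $\sigma$ inside its graded pieces, so they live in the arrangement factors $C_{e_i-1}$ of \eqref{decompoH}, not in the annulus factor $A_k$. (Note also that only ${\bb H}^d_{\mathring{\sigma}}$, not $U_\sigma$, is a product.)

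For instance, take $d=2$, $n=1$, and a boundary vertex $s$ such that $s_0$ corresponds to a rational point $Q_0$ of the local plane at $s$. Then $\tau^{-1}({\rm St}(s)\cap B(1))$ is ${\bb P}^2$ minus the full residue tubes of the $q^2$ rational lines not through $Q_0$, and minus only thin tubes around the $q+1$ lines through $Q_0$. By contrast, $U_s$ is ${\bb P}^2$ minus thin tubes around all $q^2+q+1$ rational lines. This is not a disc times an annulus retracting onto a sub-annulus. Convexity of $B(n)$ gives fullness and cone-shaped links, not a product structure; your picture of shrinking discs is right only for $d=1$.

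Proving that such a piece has the cohomology of $U_\sigma$ means comparing complements of tubes of different radii around the same rational hyperplanes. That is exactly the principle ``cohomology depends only on the arrangement'' which you assert in Step two, and it is the real content of the lemma. Your one-line justification does not establish it. The fibrations of \cite{scst} over some ${\bb P}^t$ with ball fibres exist for \emph{unions} of single-tube complements. One must then pass to intersections by a Mayer--Vietoris induction as in Lemma~\ref{torsion-cov-dr-lem1}, with ``isomorphism'' in place of ``torsion'' and the five lemma.

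Once that principle is proved, it applies directly to $U_n$ and $\mathring{U}_{n+1}$. These are complements of open tubes of radius $|\varpi|^n$, resp.\ closed tubes of radius $|\varpi|^{n+1}$, around the same finite family of rational hyperplanes with the same intersection pattern. This generalized-arrangement framework of \cite{J1} is what the cited result rests on, and it makes the \v{C}ech comparison over the building unnecessary.
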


Consider the open
\[C_{r}:= \{x=(x_1,\cdots, x_r)\in \B^r|\, \forall a\in \OC^{r+1}_K\backslash \varpi \OC^{r+1}_K, \  1=|\left\langle (1, x),a \right\rangle| \},\]
\[A_{k}=\{y=(y_0,\cdots, y_{k-1})\in \B^k|\,  1>|y_{k-1}|>\cdots>|y_0|>|\varpi|\}.\]
 For $\sigma$ of type $(e_0, \dots, e_k)$, we have morphisms 
$$ {\bb H}_{\mathring{\sigma}}^d \rightarrow C_{e_i -1}, \,\, 
  [z_0,\cdots, z_d]  \mapsto  (\frac{ z_{d_{i}+1} }{ z_{d_{i}} },\frac{ z_{d_{i}+2} }{ z_{d_{i}} },\cdots, \frac{ z_{d_{i+1}-1} }{ z_{d_{i}} }) \text{ and} $$
 $${\bb H}_{\mathring{\sigma}}^d \rightarrow A_{k},\,\, 
  [z_0,\cdots, z_d]  \mapsto  (\frac{ z_{d_{0}} }{ z_{d} },\frac{ z_{d_{1}} }{ z_{d} },\cdots, \frac{ z_{d_{k-1}} }{ z_{d} }).$$
It is proved in \cite[6.4]{ds}  that the above 
    morphisms induce isomorphisms
\begin{align}    \label{decompoH}
 {\bb H}_{\mathring{\sigma}}^d \simeq A_{k} \times \prod_{i=0}^k{C_{e_i-1}}=: A_{k} \times C_{\sigma}.
 \end{align}
 
 \subsubsection{Euclidean properties of $\BC\TC$}
  
  Given a basis $(\bar{f}_0,\cdots, \bar{f}_d)$, the set of simplexes with vertices of the form $(\varpi^{\alpha_0} f_0 \so_K, \dots,\varpi^{\alpha_d} f_d \so_K )$ defines an apartment in $\mathcal{BT}$, i.e. an euclidean space $\R^{d+1}/(\R \cdot (\alpha_0, \dots, \alpha_d))$ such that the vertices corresponds to $\Z^{d+1}/(\Z \cdot (\alpha_0, \dots, \alpha_d))$. This euclidean structure can be glued to the whole building $\mathcal{BT}$. We denote by $d$ the induced distance. The geodesics between two points $x$ and $y$ are given by the lines in an apartment ${\cal A}$ containing $x$ and $y$ (such an apartment exists).
  
  \begin{definition}
  Let $Y$ be a subset in $|\mathcal{BT}|$. We say that $Y$ is geodesically contractible\footnote{This terminology is unfortunate and should be called instead geodesically convex but we decided to follow the pre-existing literature on the topic (see \cite[p184]{ron} or \cite[§2 Corollary 3]{scst2}).} if any geodesics $[x,y]$ is contained in $Y$ for $x,y$ in $Y$. 
  \end{definition}
  
  \begin{proposition}
\label{B-convex}  $B(n)$ is geodesically contractible. 
  \end{proposition}

\begin{proof}
Since any geodesics is contained in an apartment, it suffices to show that $B(n) \cap {\cal A}$ is convex in ${\cal A}$ for any apartment ${\cal A}$. 

Let us fix ${\cal A}$ an apartment and $(f_0, \dots, f_d)$ an associated basis. Set $M:= \so_K^{d+1}$. We would like to characterize the vertices $M_1=(\varpi^{\alpha_0} f_0, \dots, \varpi^{\alpha_d} f_d)$ in $B(n) \cap {\cal A}$.

We write $(m_{i,j})_{i,j}$ for the base change matrix such that $f_i = \sum_{j=0}^d m_{i,j} e_j$ and $(\widetilde{m}_{i,j})_{i,j}$ its inverse. For $a$ an integer, we have 
\[ \varpi^a e_i = \sum_{j=0}^d \varpi^{a- \alpha_j} \widetilde{m}_{i,j} (\varpi^{\alpha_j} f_j), \]
so the set of integers $a$ such that $\varpi^aM$ is contained in $M_1$ is equal to the set of integers $a$ such that 
\[ a \ge {\rm max}_{0 \le i,j \le d} \{ \alpha_j - v(\widetilde{m}_{i,j})\}. \]
Similarly, since we have 
\[\varpi^{\alpha_i} f_i = \sum_{j=0}^d \varpi^{\alpha_i} m_{i,j} e_j, \] 
the set of $b$ such that $M_1$ is contained in $\varpi^bM$ is equal to the set of integers $b$ such that 
\[ b \le {\rm min}_{0 \le i,j \le d} \{ v(m_{i,j}) + {\alpha_i} \}. \] 
By Lemma~\ref{distance} below, we have 
\[ \delta(M,M_1)= {\rm max}_{0 \le i,j \le d} \{ \alpha_j - v(\widetilde{m}_{i,j})\}-{\rm min}_{0 \le i,j \le d} \{ v(m_{i,j})+ {\alpha_i} \}.  \] 

For $(x_i) \in \R^{d+1}/\R \cdot (1, \cdots , 1)$, we define 
\begin{align*} f((x_i)_i)&:= {\rm max}_{0 \le i,j \le d} \{ x_j - v(\widetilde{m}_{i,j})\}-{\rm min}_{0 \le i,j \le d} \{ v(m_{i,j})+ {x_i} \} \\
&=:{\rm max}_{0 \le i,j \le d} \; g_{i,j}(x) - {\rm min}_{0 \le i,j \le d} \; h_{i,j}(x),\end{align*}
with $g_{i,j}$ and $h_{i,j}$ affine. Consider the set 
\[ C:= \{ (x_i) \in \R^{d+1}/\R \cdot (1, \cdots , 1) \; | \; f((x_i)_i) \le n \}. \]
Since $g_{i,j}$ and $-h_{i,j}$ are convex for all $i,j$, then ${\rm max}_{0 \le i,j \le d} \; (g_{i,j})$ and ${\rm max}_{0 \le i,j \le d} \; (-h_{i,j})$ are convex and so is also $f$ (as a sum of convex functions). This proves that $C$ is convex. In particular, $C$ contains $B(n) \cap {\cal A}$.  

We have the following fact: 

\begin{lemma}
For all maximal simplex $\sigma \in {\cal A}$, the function $f|_{\sigma}$ is affine.
\end{lemma}

\begin{proof}
Let $\sigma=(s_0, \dots, s_d)$ be a maximal simplex in ${\cal A}$. Up to permutation of the canonical generating family of $\R^{d+1}/ \R \cdot (1, \dots, 1)$, we can suppose that $s_0$, $s_1$, \dots, $s_d$ have the following description as elements of $\Z^{d+1}/ \Z \cdot (1, \dots, 1)$:
\begin{align*}
    &s_0=(\alpha_0, \dots, \alpha_d) \\
    &s_i=(\alpha_0+1, \dots, \alpha_{i-1}+1, \alpha_i, \dots, \alpha_d).
\end{align*}
Let $x= \sum_{i=0}^d t_i s_i= (\alpha_i + \sum_{k=i}^d t_k)_i$ be an element in $\sigma$ with $\sum_{i=0}^d t_i=1$ and $t_i\ge 0$ for all $i$. Then 
\begin{align*}  g_{i,j}(\alpha) \le & g_{i,j}(x) < g_{i,j}(\alpha)+1 \\ 
 -h_{i,j}(\alpha)+1 < & -h_{i,j}(x) \le -h_{i,j}(\alpha) 
\end{align*}
The $g_{i,j}(\alpha)$'s are in $\Z$, so ${\rm max} \; g_{i,j}(x)$ is reached for $(i_0,j_0)$ such that $ g_{i_0,j_0}(\alpha)$ is maximal. Among those $(i_0, j_0)$, take $(i_{\rm min}, j_{\rm min})$ such that $i_{\rm min}$ is maximal. This yields the maximality of $\sum_{k=i_{\rm min}}^d t_k$, thus the maximality of $g_{i_{\rm min}, j_{\rm min}}(x)$ on the pairs $(i,j)$, and it only depends on $\alpha$ and not on $x$. Similarly, ${\rm max} \; -h_{i,j}(x)$ is reached for a unique $(i_{\rm max}, j_{\rm max})$ which does not depend on $x$, i.e. $f|_{\sigma}= g_{i_{\rm min}, j_{\rm min}}|_{\sigma}-h_{i_{\rm max}, j_{\rm max}}|_{\sigma}$, which is clearly affine. This concludes the proof of the lemma.     
\end{proof}

Let us now consider $\sigma$ a maximal simplex in ${\cal A}$. On vertices of $\sigma$, $f$ admits at most two values $k$, $k+1$ (let us assume $k$ is always reached). If $k <n$, then $\sigma$ is contained in $B(n) \cap {\cal A}$ and so in $C$. When $k >n$, $\sigma$ does not meet $B(n) \cap {\cal A}$ and since $f$ is bigger than $n+1$ by affineness of $f$, $\sigma$ cannot meet $C$ either. Now suppose $k=n$, consider $\tau = \sigma \cap B(n) \cap {\cal A}$. Then $f|_{\tau}$ is equal to the constant $n$. On the complement $\sigma \setminus \tau$, the values of $f$ are strictly bigger than $n$ (again by affineness). In conclusion, $C \cap \sigma= B(n) \cap {\cal A} \cap \sigma$. 

We obtain $B(n) \cap {\cal A}=C$, which is convex. 


\end{proof} 

\begin{lemma}
\label{distance}
Let $M_0$ and $M_1$ be two vertices in $\mathcal{BT}$. Then 
\[\delta(M_0, M_1)= {\rm min} \{ a-b \; | \; \varpi^a M_0 \subset M_1 \subset \varpi^b M_0 \}. \] 
\end{lemma}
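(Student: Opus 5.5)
We want to show that the combinatorial distance $\delta(M_0,M_1)$, i.e.\ the length of a shortest edge path in $\BC\TC_1$, equals $D(M_0,M_1):=\min\{a-b \mid \varpi^a M_0\subset M_1\subset \varpi^b M_0\}$. The quantity $D$ is independent of the choice of lattice representatives, since rescaling $M_1$ by $\varpi^c$ shifts $a$ and $b$ by the same $c$. It is also $G$-invariant, so we may take $M_0=\so_K^{d+1}$. By the elementary divisor theorem, after choosing a suitable $\so_K$-basis $(e_0,\dots,e_d)$ of $M_0$ and rescaling $M_1$, we may write $M_1=\bigoplus_j \varpi^{\beta_j}\so_K e_j$ with $0=\beta_0\le \beta_1\le\dots\le\beta_d$. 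Then $\varpi^aM_0\subset M_1$ iff $a\ge \beta_d$, and $M_1\subset\varpi^bM_0$ iff $b\le \beta_0=0$. Hence $D=\beta_d-\beta_0$.

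\textbf{Step 1: $\delta\le D$.} For $k=0,\dots,\beta_d$, set $L_k=\bigoplus_j \varpi^{\min(\beta_j,k)}\so_K e_j$. Then $L_0=M_0$, $L_{\beta_d}=M_1$, and $\varpi L_k\subset L_{k+1}\subset L_k$. So consecutive classes $[L_k],[L_{k+1}]$ are equal or adjacent in $\BC\TC_1$: the chain $\varpi L_k\subsetneq L_{k+1}\subsetneq L_k$, after reindexing, is exactly the edge condition recalled in \S\ref{flat-prelim}. This gives a path of length at most $\beta_d$.

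\textbf{Step 2: $\delta\ge D$, via a triangle inequality.} First show that adjacent vertices satisfy $D\le 1$. If $[N]$ and $[N']$ are adjacent, representatives can be chosen with $\varpi N\subset N'\subset N$, so one may take $a=1$, $b=0$.

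Next show that $D$ satisfies the triangle inequality. Given $\varpi^{a}M\subset M'\subset\varpi^{b}M$ and $\varpi^{a'}M'\subset M''\subset\varpi^{b'}M'$, we get
\[\varpi^{a+a'}M\subset\varpi^{a'}M'\subset M''\subset \varpi^{b'}M'\subset\varpi^{b+b'}M,\]
so $D(M,M'')\le D(M,M')+D(M',M'')$.

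Applying the triangle inequality along any edge path from $[M_0]$ to $[M_1]$ shows that $D(M_0,M_1)$ is at most the path length. Hence $D\le\delta$, and combined with Step 1 this gives equality.

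\textbf{Main obstacle.} The only delicate point is the adjacency step: one must verify that the paper's description of simplices (chains $\varpi M_k\subsetneq M_0\subsetneq\cdots\subsetneq M_k$) specializes, for an edge, to the existence of representatives with $\varpi N\subset N'\subset N$. This is immediate with $k=1$, $M_0=N'$, $M_1=N$. The rest of the argument is bookkeeping with elementary divisors.
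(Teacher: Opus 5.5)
Your proof is correct and follows essentially the same route as the paper: your path $L_k$ is exactly the paper's path $\varpi^k M_0+M_1$ written in elementary-divisor coordinates (so the reduction to $M_0=\so_K^{d+1}$ and to Smith normal form is harmless but not needed), and your triangle-inequality argument is the paper's induction $\varpi^s M_s\subset M_0\subset M_s$ along a path, repackaged. The one detail you leave implicit is that $D$ is symmetric, since $\varpi^aM\subset M'\subset\varpi^bM$ gives $\varpi^{-b}M'\subset M\subset\varpi^{-a}M'$; this means the orientation in which each edge of the path is traversed does not matter.
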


\begin{proof}
Suppose $\varpi^a M_0 \subset M_1 \subset \varpi^b M_0$ and consider the sequence $(\varpi^s M_0+M_1)_{b \le s \le a}$. These modules define a path in the graph $\mathcal{BT}_1$ of length $a-b \ge \delta(M_0, M_1)$. 

Consider $(M_0, \dots , M_r)$ a path in $\mathcal{BT}_1$ such that for all $0\le i \le r-1$, we have $\varpi M_{i+1} \subset M_i \subset M_{i+1} $. By an immediate induction, for all $0 \le s \le r$, 
\[ \varpi^sM_s \subset M_0 \subset M_s. \]
Setting $s=r$ yields the other inequality. 
\end{proof}

\begin{corollary}\label{convcomp}
Any geodesically contractible simplicial subcomplex of $\mathcal{BT}$ is a union of compact geodesically contractible simplicial subcomplexes. 
\end{corollary}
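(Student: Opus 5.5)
Given a geodesically contractible simplicial subcomplex $Y\subset\mathcal{BT}$, I will exhibit it as the increasing union of the subcomplexes $Y_n:=Y\cap B(n)$, $n\ge 0$. Two things must be checked: each $Y_n$ is a compact geodesically contractible simplicial subcomplex, and $Y=\bigcup_n Y_n$.

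For convexity, take $x,y\in Y_n$. The geodesic $[x,y]$ lies in $Y$ because $Y$ is geodesically contractible. It also lies in $B(n)$ by Proposition~\ref{B-convex}. Hence $[x,y]\subset Y\cap B(n)=Y_n$.

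For the subcomplex property, note that $B(n)$ is by definition a union of closed simplices $\bar{\rm St}(s)$, so it is a simplicial subcomplex. The intersection of two subcomplexes is again a subcomplex: a simplex lying in both has all of its faces in both. Compactness follows because $B(n)$ contains only finitely many simplices. Indeed, the set of vertices $s$ with $\delta(s_0,s)\le n$ is finite, since $\mathcal{BT}$ is locally finite (each vertex has finitely many neighbours, indexed by flags in ${\bf F}^{d+1}$ with ${\bf F}$ finite). Each $\bar{\rm St}(s)$ is a finite union of simplices, so $|B(n)|$ is a finite union of compact $|\sigma|$. Therefore $Y_n$ is compact.

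For exhaustion, any simplex $\sigma$ of $Y$ has vertices at finite combinatorial distance from $s_0$, because the graph $\mathcal{BT}_1$ is connected. Taking $n$ larger than all these distances, $\sigma\subset\bar{\rm St}(s)\subset B(n)$ for any vertex $s$ of $\sigma$, so $\sigma\subset Y_n$. This gives $Y=\bigcup_n Y_n$.

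The main point needing care is the finiteness used for compactness. Here I would rely on $K$ having finite residue field. The paper only assumes ${\bf F}$ perfect, but the setting of Drinfeld's space (finiteness of ${\cal H}_n\simeq{\bb P}^d(\so_K/\varpi^n)$) implicitly uses local finiteness. If ${\bf F}$ is infinite, one instead interprets "compact" as "finite-diameter / bounded", and the same argument goes through with the balls $B(n)$.
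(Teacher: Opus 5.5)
Your proof is correct and follows the paper's argument: the paper's proof also takes the family $(Y\cap B(n))_n$, relying on Proposition~\ref{B-convex} and on the fact that an intersection of geodesically contractible sets is geodesically contractible. Your extra checks simply make explicit what the paper's one-line proof leaves implicit: the subcomplex property, compactness via local finiteness of $\mathcal{BT}$ for a finite residue field, exhaustion by connectedness of $\mathcal{BT}_1$, and the fact that the union is increasing, which is what the later $\varinjlim_i Y_i$ argument actually uses.
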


\begin{proof}
For any $Y \subset \mathcal{BT}$, $(Y \cap B(n))_n$ gives the desired family.
\end{proof}

\subsection{Coefficient systems}

In this subsection, we would like to understand the $\Z/{\ell^n}\Z$-cohomology (for $\ell \neq p$) of the affinoids $U_{\sigma}$ defined above and study the \v{C}ech cohomology theory associated to this covering. We endow $\bigcup_{k\ge 1} \mathcal{BT}_k$ with the order given by the inclusion and we see it as a category.

\begin{definition}
Let ${\cal C}$ be a category. 
\begin{enumerate}
\item An homological (respectively cohomological) ${\cal C}$-coefficient system on $\mathcal{BT}$ is a covariant (respectively contravariant) functor from $\bigcup_{k\ge 1} \mathcal{BT}_k$ to the category of $\Lambda$-modules, i.e. it is the data, for any simplex $\sigma$, of a module $M_{\sigma}$ together with compatible maps $M_{\tau} \to M_{\sigma}$ (respectively $M_{\sigma} \to M_{\tau}$) if $\tau \subset \sigma$. 

\item A ${\cal C}$-coefficient system $(M_{\sigma})_{\sigma}$ is $G$-equivariant if for any $g$ in $G$, there exists a natural transformation $T_g$ from $(M_{\sigma})_{\sigma}$ to $(M_{g \cdot \sigma})_{\sigma}$ such that 
\[T_{gh}(\tau)=T_h(g \cdot \tau) \circ T_g(\tau) \] 
\[T_{\rm Id}(\tau)={\rm Id}_{M_\tau} \] 
for any $g,h$ in $G$ and any $\tau$ in $ \mathcal{BT}$. 

\item Assume that ${\cal C}$ is an abelian category. If $M:=(M_{\sigma})_{\sigma}$ is an homological (respectively cohomological) ${\cal C}$-coefficient system and $Y \subset \mathcal{BT}$, we define a complex $C_Y^{\bullet}(M)$ with terms given by 
\[ C_Y^{k}(M)= \bigoplus_{\sigma \in Y_k} M_{\sigma} \quad \text{ (respectively } C_Y^{k}(M)= \prod_{\sigma \in Y_k} M_{\sigma})  \]
where $Y_k:=Y \cap \mathcal{BT}_k$, and the differential maps are given by the alternating sums of the maps $M_{\tau} \to M_{\sigma}$ (respectively $M_{\sigma} \to M_{\tau}$) for $\tau$ a face of codimension $1$ in $\sigma$. Moreover, we say that $M$ is $Y$-acyclic if the complex $C_Y^{\bullet}(M)$ is acyclic. 

\item Let $\Lambda$ be a ring. If ${\cal C}$ is one of the categories $\Lambda-{\rm mod}$ or $\Lambda-{\rm grad}$ and if $M$ is an homological (respectively cohomological) ${\cal C}$-coefficient system then the functor ${\rm Hom}_{\Lambda}(M, \Lambda):= \sigma \mapsto {\rm Hom}_{\Lambda}(M_{\sigma}, \Lambda)$ is a cohmological (respectively homological) ${\cal C}$-coefficient system and this operation preserves $Y$-acyclic objects. 
\end{enumerate}
\end{definition}


Let $H$ be a locally profinite group, we say that a $\Lambda[H]$-module $M$ is smooth if the $H$-stabilizer of any element $m\in M$ is open.

\begin{example}
\begin{enumerate}
\item The map $\hhh^*_{\rm dR}: \sigma \mapsto \hhh^*_{\rm dR}(U_{\sigma})$ (respectively $\hhh^*_{\eet, n}: \sigma \mapsto \hhh^*_{\eet}(U_{\sigma}, \Z/\ell^n\Z)$) endowed with the natural restriction maps defines a cohomological $C$- (respectively $\Z/\ell^n\Z$-) graded coefficient system.


\item 
Let fix $n \ge 1$ an integer. If $V$ is a smooth $\Lambda[G]$-module, we can associate to $V$ the homological coefficient system $V^{(n),\bullet}: \sigma \mapsto V^{\Gamma^{(n)}_{\sigma}}$, where $\Gamma^{(n)}_{\sigma}$ is the congruence subgroup of $G$ introduced in Section~\ref{orbitG}.  %
\end{enumerate}
\end{example}



We will in fact construct a third type of examples of graded $\Lambda$-coefficient systems $A^*_{\Lambda}(\bullet)\simeq A^*_{\Z}(\bullet)\otimes \Lambda$ for any ring $\Lambda$ and these objects will allow us to relate the two examples mentioned above. We will give their construction in the next subsection and establish the following three results which are the main steps of the argument presented in this section.

\begin{lemma}
\label{comp}
For any $p$-torsionfree ring $\Lambda$ and any simplex $\sigma\in\BC\TC$, the $\Lambda$-module $A_{\Lambda}^k(\sigma)$ is finite free and there are isomorphisms of graded coefficient systems:
\[ A_C^*(\bullet) \simeq \hhh^*_{\dr} \quad \text{and} \quad A_{\Z/\ell^n\Z}^*(\bullet)\simeq \hhh^*_{\eet,n}.\] 
\end{lemma}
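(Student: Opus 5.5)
The coefficient system $A^*_\Lambda(\bullet)$ has not yet been defined at this point, so the plan has to fix a construction first. I will follow the Schneider--Stuhler / de Shalit / Iovita--Spiess approach. For each simplex $\sigma$, let $\mathcal{H}_\sigma$ be the finite set of hyperplane classes relevant to $U_\sigma$, namely $\mathcal{H}_n$ modulo the congruence level determined by $\Gamma_\sigma$. I define $A^*_\Z(\sigma)$ as the graded Orlik--Solomon-type algebra generated in degree $1$ by symbols $[H]$, $H\in\mathcal H_\sigma$. The relations are the usual Orlik--Solomon relations among the $[H_0]-[H_i]$ coming from linear dependencies of the reductions of the linear forms $l_{a}$. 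Then I set $A^*_\Lambda:=A^*_\Z\otimes\Lambda$, with the restriction maps induced by $\mathcal H_\tau\to\mathcal H_\sigma$ for $\tau\subset\sigma$.

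\textbf{Freeness.} Orlik--Solomon algebras of a finite arrangement over any field are free over $\Z$ with rank independent of the characteristic, because they have an explicit nbc-basis. The arrangement here is defined over the residue field, or over $\so_K/\varpi^n$ as in Schneider--Stuhler, so $A^k_\Z(\sigma)$ is finite free and freeness is preserved by $\otimes\Lambda$.

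\textbf{Comparison maps.} I send $[H_0]-[H_1]$ to the class of $d\log(l_{a_{H_0}}/l_{a_{H_1}})$ in $\hhh^1_{\dr}(U_\sigma)$. On the étale side I send it to the Kummer class $\partial(l_{a_{H_0}}/l_{a_{H_1}})\in\hhh^1_{\eet}(U_\sigma,\mu_{\ell^n})$, untwisted by a fixed choice of $\mu_{\ell^n}\simeq\Z/\ell^n\Z$ over $C$. Cup product extends these to the whole exterior algebra. The Orlik--Solomon relations hold in both targets because the corresponding symbols vanish: for a dependency of forms, the standard identity among the logarithmic forms, and its Kummer analogue, holds already on the generic torus. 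These maps are evidently compatible with restrictions, so they give morphisms of graded coefficient systems.

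\textbf{Bijectivity.} This is the main obstacle. I first cover $U_\sigma$ by the strata $\mathbb H^d_{\mathring\tau}$, $\tau\supset\sigma$, using \eqref{sigma-rond}, and use the product decomposition \eqref{decompoH} $\mathbb H^d_{\mathring\tau}\simeq A_k\times C_\tau$. The pieces $A_k$ are products of annuli up to homotopy, and each $C_{e}$ is the tube of the complement of all $\mathbf F$-rational hyperplanes in $\mathbb P^e$. For the complement of all rational hyperplanes over the residue field, the $\ell$-adic cohomology ($\ell\ne p$) and the de Rham cohomology of the tube are both the Orlik--Solomon algebra: this is the Brieskorn--Orlik--Solomon theorem, via tame specialization and Berkovich's comparison with the special fiber. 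For annuli I use Lemma~\ref{induction}-type Künneth computations, as in the proof of Corollary~\ref{mainex} via \cite{ber6}.

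I then glue by Mayer--Vietoris, or equivalently by the spectral sequence of the closed stratification. The strategy is to show that the spectral sequences on both sides are matched with the corresponding combinatorial resolution of $A^*(\sigma)$. As an alternative, I would follow \cite[Section~1]{scst} directly: $U_\sigma$ is, up to a homotopy equivalence of cohomology, a complement of finitely many tubes of hyperplanes of $\mathcal H_\sigma$, and Schneider--Stuhler's axiomatic argument shows that any cohomology theory with homotopy invariance for discs, a Künneth formula, and a purity/Gysin sequence computes the Orlik--Solomon algebra of such a complement. Both $\hhh_{\dr}$ (overconvergent, equal to the usual one on these spaces by Lemma~\ref{u-n}) and $\hhh_{\eet}(-,\Z/\ell^n\Z)$ with $\ell\ne p$ satisfy those axioms, the latter with flat coefficients over $\Z/\ell^n$. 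Checking the Gysin/purity input integrally mod $\ell^n$, rather than only rationally, is the step I expect to require the most care.
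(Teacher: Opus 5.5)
\textbf{There is a genuine gap: the isomorphism is not established for simplices of positive dimension.}

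Several parts of your proposal are sound. The freeness argument works: the nbc (broken-circuit) basis of an Orlik--Solomon algebra exists over $\Z$ and survives $\otimes\Lambda$. The vertex case via the Schneider--Stuhler axiomatic argument is also fine. Both agree with Theorem~\ref{flat-7} and the first step of Proposition~\ref{flat-comp}. Two smaller corrections:
\begin{itemize}
\item For $\sigma=(M_0,\dots,M_k)$ the relations must be imposed layer by layer, i.e.\ dependence in each $M_i/M_{i+1}$ separately. There is no single ``reduction'' of the $l_a$, and $\mathcal H_n$ plays no role here.
\item These dependencies hold only modulo a lattice, not on the generic torus. To get the relations in the targets, you must replace a form by a truly dependent one. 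This changes it by a principal unit, whose $d\log$ and Kummer classes vanish on $U_\sigma$.
\end{itemize}

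The gap is the bijectivity for a simplex $\sigma$ of positive dimension, which you leave as a strategy.

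Your first route does not work as stated. The strata ${\bb H}^d_{\mathring\tau}$, $\tau\supseteq\sigma$, are not an open cover of $U_\sigma$. By \eqref{sigma-rond}, ${\bb H}^d_{\mathring\sigma}=U_\sigma\setminus\bigcup_{\tau\supsetneq\sigma}U_\tau$ is the complement of opens. So Mayer--Vietoris does not apply. A closed-stratification spectral sequence would need purity/Gysin for these tubes, which you neither state nor prove. The promised matching of such spectral sequences with a combinatorial resolution of $A^*(\sigma)$ is then entirely missing.

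Your second route asserts, without proof, that $U_\sigma$ is cohomologically a complement of finitely many tubes to which Schneider--Stuhler applies. Their setting, tubes of one fixed radius relative to one lattice, is up to the action of $G$ exactly the vertex situation. Producing such a presentation for a higher simplex is the real content, and it needs a comparison theorem you have not supplied.

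The missing idea is that no gluing is needed. The restriction $\hhh^*(U_\sigma)\to\hhh^*({\bb H}^d_{\mathring\sigma})$ is already an isomorphism. This holds by Grosse-Kl\"onne for de Rham and by Zheng for $\Z/\ell^n\Z$-coefficients, using the semistable model: $U_\sigma$ is the tube of a closed stratum and ${\bb H}^d_{\mathring\sigma}$ the tube of its smooth locus. The proof then goes as follows.
\begin{itemize}
\item The decomposition \eqref{decompoH} and K\"unneth reduce $\hhh^*(U_\sigma)$ to $\hhh^*(A_k)$, an exterior algebra on $k$ logarithmic classes, tensored with the $\hhh^*(C_{e_i-1})$.
\item Each $C_{e_i-1}$ is the tube over a vertex of a lower-dimensional Drinfeld space, so the vertex case applies to it.
\item One checks that the kernel of $E\to\hhh^*(U_\sigma)$, generated by the ideals $I_{C_{e_i-1}}$, equals $I(\sigma)$. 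This uses the lattices $N_i$ of \eqref{def-N} and the identifications $N_i/\varpi N_i\simeq M_i/M_{i-1}$, which match the dependent families on both sides.
\end{itemize}
Without the comparison theorem and this identification of ideals, your proposal proves the isomorphism only for vertices.
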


\begin{lemma}
\label{flat-4}
For $\Lambda$ a $\Z[1/p]$-algebra, there exists a smooth $\Lambda[G]$-module $V_k$ such that the coefficient system $A_{\Lambda}^k(\bullet)\simeq{\rm Hom}_{\Lambda}(V^{(1),\bullet}_k, \Lambda)$.  
\end{lemma}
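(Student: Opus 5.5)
The plan is to realize $A^k_\Lambda(\bullet)$ as the dual of the fixed vectors of a single \emph{infinite-level} object. Recall the shape of the construction to come. For each simplex $\sigma$, $A^*_\Lambda(\sigma)$ is the Orlik--Solomon type algebra attached to the finite hyperplane arrangement ${\cal H}/\Gamma^{(1)}_\sigma$ that governs $U_\sigma$. It is generated in degree $k$ by symbols $\omega_{H_0,\dots,H_k}$, which are the classes of $d\log(l_{a_{H_1}}/l_{a_{H_0}})\wedge\cdots\wedge d\log(l_{a_{H_k}}/l_{a_{H_0}})$ with $H_j\in{\cal H}/\Gamma^{(1)}_\sigma$. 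These symbols are subject to the usual relations: alternation, vanishing when two entries coincide, the cocycle (boundary) relation $\sum_j(-1)^j\omega_{H_0,\dots,\widehat{H_j},\dots,H_{k+1}}=0$, and vanishing when the $H_j$ are linearly dependent.

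Guided by Schneider--Stuhler, I define $V_k$ as follows. Start from the smooth $\Lambda[G]$-module $C^\infty({\cal H}^{k+1},\Lambda)$ of locally constant functions on ${\cal H}^{k+1}\simeq({\bb P}^d(K))^{k+1}$, with its natural $G$-action. Then take its quotient by the $G$-stable submodule $R_k$ spanned by the functions expressing the relations above in a ``smeared'' form. For example, for the cocycle relation one uses the alternating sum, over the deleted entry, of pull-backs along the $k+2$ face projections ${\cal H}^{k+2}\to{\cal H}^{k+1}$. Smoothness of $V_k$ is inherited from $C^\infty({\cal H}^{k+1},\Lambda)$.

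The key steps, in order, are the following.

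\begin{enumerate}
\item Exactness of invariants. The group $\Gamma^{(1)}_\sigma$ is pro-$p$ and $\Lambda$ is a $\Z[1/p]$-algebra, so averaging over finite quotients shows that $(-)^{\Gamma^{(1)}_\sigma}$ is exact on smooth $\Lambda[G]$-modules. Hence $V_k^{\Gamma^{(1)}_\sigma}=C^\infty({\cal H}^{k+1},\Lambda)^{\Gamma^{(1)}_\sigma}/R_k^{\Gamma^{(1)}_\sigma}$. The same averaging produces trace maps $V^{\Gamma_\tau}\to V^{\Gamma_\sigma}$ for $\Gamma_\tau\subset\Gamma_\sigma$, that is, for $\tau\subset\sigma$. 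These traces are the transition maps making $V_k^{(1),\bullet}$ a homological system, and this is where the hypothesis that $\Lambda$ is a $\Z[1/p]$-algebra enters.
\item Finite-level description. $C^\infty({\cal H}^{k+1},\Lambda)^{\Gamma^{(1)}_\sigma}$ is spanned by indicator functions of products of $\Gamma^{(1)}_\sigma$-orbits. Since $\Gamma^{(1)}_\sigma$ is generated by the $\Gamma^{(1)}_s$, $s\in\sigma$, these orbits should be exactly the tubes indexing ${\cal H}/\Gamma^{(1)}_\sigma$, i.e.\ the hyperplane classes (mod $\varpi$ relative to the lattices $M_i$) that cut out $U_\sigma$. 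This uses the description of $\sigma$ through the flag and the decomposition \eqref{def-N}. The invariants are therefore the free $\Lambda$-module on $({\cal H}/\Gamma^{(1)}_\sigma)^{k+1}$, and $R_k^{\Gamma^{(1)}_\sigma}$ is spanned by the averaged relations.
\item Duality. Taking ${\rm Hom}_\Lambda(-,\Lambda)$ turns the quotient into the submodule of functions on $({\cal H}/\Gamma_\sigma)^{k+1}$ annihilating the relations. I then match this with $A^k_\Lambda(\sigma)$ by the pairing $\langle\omega_{H_0,\dots,H_k},\mathbf 1_{H_0\times\cdots\times H_k}\rangle$. Finite freeness from Lemma~\ref{comp} ensures that no ${\rm Ext}$ terms spoil the double dual.
\item Compatibility. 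For $\tau\subset\sigma$, the dual of the trace $V^{\Gamma_\tau}\to V^{\Gamma_\sigma}$ must correspond to the restriction $A^k(\sigma)\to A^k(\tau)$. A class of ${\cal H}/\Gamma_\sigma$ is the union of the classes of ${\cal H}/\Gamma_\tau$ lying in it, so the pull-back of the symbol is the sum of the finer symbols, and this sum is exactly the dual of the trace. Naturality in $g\in G$ is clear, since everything is defined by transport of structure.
\end{enumerate}

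The main obstacle is step 2 together with the relations, namely showing that the $\Gamma^{(1)}_\sigma$-averaged relations span \emph{exactly} the finite-level Orlik--Solomon relations. The delicate relation is the dependence relation, because linear dependence of classes mod $\Gamma_\sigma$ is not obviously the image of dependence at infinite level. My first attempt would be to work in an apartment containing $\sigma$ with an adapted basis $(f_i)$. I would then use the product decomposition \eqref{decompoH} to reduce to the case of a vertex, where the classes are hyperplanes of ${\bb P}^d({\bf F})$ and the statement becomes the classical Orlik--Solomon presentation over a finite field, lifted via the reduction map ${\cal H}\to{\cal H}_1$.
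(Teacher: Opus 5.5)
There is a genuine gap, and at its root is a variance error: you put the representation on the wrong side of the duality. A smooth module whose $\Gamma^{(1)}_\sigma$-invariants are \emph{dual} to $A^k_\Lambda(\sigma)$ sits naturally \emph{inside} $C^\infty({\cal H}^{k+1},\Lambda)$, as functions on $(k+1)$-tuples of hyperplanes that annihilate the relations. This is the paper's choice. Its $V_k={\cal L}_k(\Lambda)$ is the span of de Shalit's locally constant functions $l_\sigma(a_0,\dots,a_k)={\rm sgn}(i_\sigma(a_0),\dots,i_\sigma(a_k))$ with $\dim\sigma=k$, and the pairing with $A^k_\Lambda(\tau)$ is evaluation. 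Your own smeared cocycle relation, taken literally, is the map $\sum_j(-1)^jp_j^*\colon C^\infty({\cal H}^{k+1})\to C^\infty({\cal H}^{k+2})$, which cuts out a submodule (its kernel), not a quotient.

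A $G$-stable quotient of $C^\infty({\cal H}^{k+1},\Lambda)$ already fails for $k=0$. There $A^0_\Lambda(\sigma)=\Lambda$ with identity transition maps, and ${\cal L}_0(\Lambda)$ is the trivial representation (every $l_s$ is the constant $1$). Imposing $\omega_{H_0}=\omega_{H_1}$ $G$-equivariantly on $C^\infty({\bb P}^d(K),\Lambda)$ amounts to giving a $G$-invariant $\Lambda$-valued measure $\mu$ on ${\bb P}^d(K)$. Translating residue polydiscs and scaling by ${\rm diag}(\varpi,\dots,\varpi,1)$ gives $\mu(\so_K^d)=q^d\mu(\so_K^d)$, so $\mu=0$ for torsion-free $\Lambda$.

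The same confusion runs through your steps 3 and 4. If step 2 held, $V_k^{\Gamma_\sigma}$ would be a presentation of $A^k_\Lambda(\sigma)$ itself, so ${\rm Hom}_\Lambda(V_k^{\Gamma_\sigma},\Lambda)$ would be $A^k_\Lambda(\sigma)^*$. Your pairing $\langle\omega_{H_0,\dots,H_k},\mathbf 1_{H_0\times\cdots\times H_k}\rangle$ does not descend to the quotients: against $\mathbf 1_{H_0\times H_1}$ it gives $1$ on $\omega_{H_0,H_1}$ but $0$ on $-\omega_{H_1,H_0}$. Restriction also goes the other way from what you write: it is $A^k_\Lambda(\tau)\to A^k_\Lambda(\sigma)$ for $\tau\subset\sigma$, because $U_\sigma\subset U_\tau$ (equivalently $I(\tau)\subset I(\sigma)$ by Lemma~\ref{flat-6}), and it sends a fine class to the coarse class containing it. Your ``sum over finer symbols'' is a transfer. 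The transition maps of $V_k^{(1),\bullet}$ must be the inclusions $V_k^{\Gamma_\sigma}\subset V_k^{\Gamma_\tau}$, not traces.

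Step 2 is also false as stated. $\Gamma^{(1)}_\sigma$ acts diagonally on ${\cal H}^{k+1}$, so invariant functions need only be constant on diagonal orbits, not on products of orbits. Take $k=1$ and a vertex $s$. For each $m\ge 1$, the pairs $(H_0,H_1)$ whose images in ${\cal H}_m$ agree but whose images in ${\cal H}_{m+1}$ differ form a compact open $\Gamma^{(1)}_s$-stable set. So $C^\infty({\cal H}^2,\Lambda)^{\Gamma^{(1)}_s}$ has infinite rank, while all these pairs give $\omega_{H_0,H_1}=0$ in $A^1_\Lambda(s)$. Nothing in your $R_1$ kills them: the smeared dependence relation is vacuous, since no nonzero locally constant function is supported on the diagonal. (The ``pull-backs'' defining $R_k$ also go in the wrong direction for producing relations, so $R_k$ is not actually defined.)

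More fundamentally, the proposal has no mechanism for computing the invariants, and that computation is the whole content of the lemma. The paper takes it from de Shalit (Theorem~\ref{dual-A}). Over $C$, ${\cal L}_k(C)^{{\rm Stab}(\tau)}$ has basis the $l_{\sigma_t}$, with $t$ running over $\tau$-special chains. By the triangularity \eqref{flat-eq1}, this basis is dual to the broken-circuit basis of Theorem~\ref{flat-7}. One then intersects with ${\cal L}_k(\Z[1/p])$ and base-changes to any $\Z[1/p]$-algebra via Corollary~\ref{comtensinv}. Your step 1 (exactness of pro-$p$ invariants when $p$ is invertible) is exactly what that last step needs, but it does not replace the computation.
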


\begin{lemma}
\label{flat-5}
Let $n\ge 1$, $A$ be a $\Z[1/p]$-algebra and a principal ideal domain and $\Lambda \in \{{\rm Frac}(A), A/(a), A \}$ with  $a\in A$. Consider $V$ a smooth $A[G]$-module  generated by $V^{\Gamma^{(n)}_{s_0}}$ and denote by $V_{\Lambda}$ its base change to $\Lambda$. If $Y$ is geodesically contractible, then  $V_{\Lambda}^{(n),\bullet}$ is $Y$-acyclic.  
\end{lemma}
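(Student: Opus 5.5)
We prove Lemma~\ref{flat-5} by reducing to the field case and then to the integral case. The two inputs are the acyclicity theory of Schneider--Stuhler for smooth representations on the building, and the convexity provided by Proposition~\ref{B-convex} and Corollary~\ref{convcomp}.

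\textbf{Compact case over a field.} Suppose first that $Y$ is a compact geodesically contractible subcomplex and $\Lambda$ is a field in which $p$ is invertible. Since $V_\Lambda$ is generated by $V_\Lambda^{\Gamma^{(n)}_{s_0}}$, the homological coefficient system $\sigma\mapsto V_\Lambda^{\Gamma^{(n)}_\sigma}$ is of the type studied in \cite{scst2}. Their argument shows $Y$-acyclicity by induction on the number of simplices of $Y$. The induction removes a suitable extremal simplex and uses convexity to guarantee that the link of the removed simplex inside $Y$ is contractible. The local step is the following identity: for a simplex $\sigma$ and faces $\tau\subset\sigma$,
\[V^{\Gamma_\sigma}=\sum_{s\in\sigma} V^{\Gamma_s} \quad\text{(invariants of the group generated by the } \Gamma_s\text{)}\]
holds exactly, and the intersections behave well. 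This relies on exactness of taking $\Gamma^{(n)}_\sigma$-invariants, which holds because these groups are pro-$p$ and $p\in\Lambda^\times$. We therefore adapt the proof of \cite[\S2]{scst2}. The ingredients are the idempotents $e_\sigma$ attached to $\Gamma^{(n)}_\sigma$, which exist since $p$ is invertible, and the commutation relations $e_\sigma e_\tau=e_{\sigma\cup\tau}$ for $\sigma\cup\tau$ a simplex. With these, the chain complex $C^\bullet_Y(V^{(n),\bullet}_\Lambda)$ is identified with a direct summand of a complex computing the simplicial homology of $Y$ with constant coefficients. That homology is trivial in positive degrees because $Y$ is convex, hence contractible.

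\textbf{Non-compact $Y$.} By Corollary~\ref{convcomp}, $Y=\bigcup_m Y\cap B(m)$ is a filtered union of compact geodesically contractible subcomplexes. The complex $C^\bullet_Y$ is the filtered colimit of the complexes $C^\bullet_{Y\cap B(m)}$, and filtered colimits are exact, so acyclicity passes to $Y$.

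\textbf{Integral cases.} The idempotent argument works for any $\Z[1/p]$-algebra $\Lambda$ in place of a field, since it only uses $p\in\Lambda^\times$ and exactness of invariants under pro-$p$ groups. It therefore applies uniformly to $A$, $\mathrm{Frac}(A)$ and $A/(a)$, and no separate torsion-freeness argument is needed. One point must be checked: base change commutes with invariants, i.e. $(V\otimes\Lambda)^{\Gamma}=V^{\Gamma}\otimes\Lambda$. This holds because $e_\Gamma$ is an idempotent defined over $\Z[1/p]$. Likewise, the generation hypothesis on $V$ is preserved under base change.

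\textbf{Expected obstacle.} The main difficulty is the combinatorial core of the compact case. One must show that, for $Y$ convex, the complex built from the idempotents $e_\sigma$ retracts onto the constant-coefficient complex. In \cite{scst2} this is done for $Y$ the full building, or for the fixed point sets that arise there. The most delicate part is verifying that their retraction and shelling argument uses only geodesic convexity of $Y$. We would first try to follow the proof of \cite[\S2 Corollary 3]{scst2} line by line, inserting Proposition~\ref{B-convex} wherever they invoke convexity of balls.
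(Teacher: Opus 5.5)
Your route differs from the paper's, and its central step has a genuine gap. The step that carries all the content is the claim that, for compact convex $Y$, the complex $C^\bullet_Y(V^{(n),\bullet}_\Lambda)$ is a direct summand, as a complex, of the constant-coefficient complex $\bigoplus_{\sigma\in Y_\bullet}V_\Lambda$. You assert this without proof, and the ingredients you list cannot prove it.

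The inclusions $V^{\Gamma_\sigma}\subset V$ do give a chain map into the constant complex. The evident retraction $(v_\sigma)_\sigma\mapsto(e_\sigma v_\sigma)_\sigma$, however, is not a chain map: for $v$ on $\sigma$ and a codimension-one face $\tau$, one composite puts $\pm e_\tau v$ on $\tau$ and the other puts $\pm e_\sigma v$. You give no other retraction.

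More seriously, the only relation you use, $e_\sigma e_\tau=e_{\sigma\cup\tau}$ for $\sigma\cup\tau$ a simplex, is purely local. It also holds for the following toy system on $\Lambda$: take an arbitrary vertex set $S$, put $e_x=1$ for $x\in S$, $e_x=0$ otherwise, and $e_\sigma=\prod_{x\in\sigma}e_x$. Your complex then becomes the simplicial chain complex of the full subcomplex of $Y$ spanned by $S$. This has $H_1\neq0$ if, for instance, $Y$ is the star of a vertex in an apartment for ${\rm PGL}_3$ and $S$ is the link of that vertex.

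So some global property of the groups $\Gamma^{(n)}_s$ along geodesics is indispensable, and you neither state nor verify one. An example is a relation $e_xe_ze_y=e_xe_y$ for $z$ in the convex hull of $x$ and $y$, which is the kind of axiom Meyer--Solleveld use to make an idempotent approach work.

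Two smaller points. Your ``local identity'' is misstated: invariants under the group generated by the $\Gamma_s$, $s\in\sigma$, are $\bigcap_{s\in\sigma}V^{\Gamma_s}$, not $\sum_s V^{\Gamma_s}$. Also, the argument of \cite{scst2} that the paper adapts does not use idempotents or shellings, so following it line by line will not produce your retraction.

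The paper brings convexity in only through constant coefficients, after first reducing to a universal module.
\begin{itemize}
\item For $V=\mathcal{C}_c(G/\Gamma^{(n)}_{s_0})$, with $T=G/\Gamma^{(n)}_{s_0}$, it resolves each $\mathcal{C}_c(T_\sigma)$ by the \v{C}ech nerve of $T\to T_\sigma$.
\item Acyclicity of the rows of the resulting double complex is reduced, via $T^m_{(\sigma)}\subset T^m$, to a direct sum over $t\in T^m$ of constant-coefficient complexes on $\mathcal{BT}^{(t)}\cap Y$. This is geodesically contractible by \cite[\S1, Lemma]{scst2} and convexity of $Y$.
\item A general $V$ is then resolved, over an algebraically closed field, by direct sums of $\mathcal{C}_c(G/\Gamma^{(n)}_{s_0})$. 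This is where the hypothesis that $V$ is generated by $V^{\Gamma^{(n)}_{s_0}}$ is used; in your sketch it plays no actual role.
\item The remaining cases follow by faithfully flat descent to ${\rm Frac}(A)$ and $A/\mathfrak p$, d\'evissage to $A/\mathfrak p^k$, the Chinese remainder theorem for $A/(a)$, and a torsion-plus-finite-generation argument for $A$ itself.
\end{itemize}
Your passage from compact to non-compact $Y$ matches the paper's. If the idempotent argument were completed with a genuine global relation of the above type, it would treat all $\Z[1/p]$-algebras uniformly and bypass that d\'evissage, but as written the core step is missing.
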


The proofs of Lemmas \ref{comp} and \ref{flat-4} will occupy  next section (see Proposition~\ref{flat-comp},  Theorem~\ref{flat-7} and Theorem~\ref{dual-A}) whereas Lemma \ref{flat-5} will be established in the Section \ref{acyclic1}. 
\begin{corollary}\label{h0}
For $\Lambda$ a $\Z[1/p]$-algebra and $Y$  geodesically contractible,  $A^*_{\Lambda}(\bullet)$ is $Y$-acyclic. Moreover, when $Y$ is compact, the degree zero cohomology is finite free and satisfies \begin{equation}\label{h0_2}
\hhh^0(C^{\bullet}_Y(A^*_{\Lambda}(\bullet)))\simeq \hhh^0(C^{\bullet}_Y(A^*_{\Z[1/p]}(\bullet)))\otimes \Lambda.
\end{equation}
\end{corollary}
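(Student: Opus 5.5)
The plan is to combine Lemma~\ref{flat-4} with Lemma~\ref{flat-5}, and then use the duality statement from the definition of coefficient systems. Fix $k$.

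\emph{Acyclicity.} By Lemma~\ref{flat-4}, the cohomological system $A^k_{\Lambda}(\bullet)$ is isomorphic to ${\rm Hom}_{\Lambda}(V_k^{(1),\bullet},\Lambda)$ for a smooth $\Lambda[G]$-module $V_k$. By point (4) of the definition of coefficient systems, dualizing preserves $Y$-acyclicity, so it suffices to show that the homological system $V_k^{(1),\bullet}$ is $Y$-acyclic. This is Lemma~\ref{flat-5} with $n=1$, but two of its hypotheses have to be checked.
\begin{itemize}
\item[(a)] $V_k$ must be the base change of a smooth module over a principal ideal domain $A$, with $\Lambda\in\{{\rm Frac}(A),A/(a),A\}$. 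I would take $A=\Z[1/p]$ and arrange $V_k=V_{k,\Z[1/p]}\otimes\Lambda$, which should follow from the construction (the paper says $A^*_\Lambda\simeq A^*_{\Z}\otimes\Lambda$).
\item[(b)] $V_{k,\Z[1/p]}$ must be generated by its $\Gamma^{(1)}_{s_0}$-invariants. This should also come from the construction, since $V_k$ is built from hyperplane-arrangement data that is $G$-translated from the standard vertex.
\end{itemize}
With (a) and (b), Lemma~\ref{flat-5} covers $\Lambda=\Z[1/p]$, $\Q$ and $\Z/m$ for $m$ prime to $p$.

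For a general $\Z[1/p]$-algebra $\Lambda$, I would argue over $A=\Z[1/p]$ first. The homological complex $C^\bullet_Y(V^{(1),\bullet}_{k,\Z[1/p]})$ is a direct sum, and its terms $V_k^{\Gamma_\sigma}$ are finite free, because their duals $A^k(\sigma)$ are finite free by Lemma~\ref{comp}. So it is an acyclic complex of free (hence projective) $\Z[1/p]$-modules. Such a complex, being bounded in the direction relevant to the augmentation, is split exact. Therefore it stays acyclic after tensoring with any $\Lambda$, and its termwise $\Lambda$-dual stays acyclic. I expect (b), or this base change step, to be the main obstacle; the fallback is to apply Lemma~\ref{flat-5} only for the specific $\Lambda$ needed later.

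\emph{Degree zero when $Y$ is compact.} Then $Y$ has finitely many simplices, so $C^\bullet_Y(A^k_{\Z[1/p]})$ is a finite complex of finite free $\Z[1/p]$-modules that is acyclic in positive degrees. Write $H^0$ for its degree-zero cohomology.
\begin{itemize}
\item $H^0$ is the kernel of $C^0\to C^1$. It is a submodule of a finite free module over the PID $\Z[1/p]$, hence finite free.
\item The truncated complex $0\to H^0\to C^0\to C^1\to\cdots$ is an exact finite complex of projectives, hence split. Tensoring with $\Lambda$ keeps it exact.
\item By Lemma~\ref{comp}, $C^\bullet_Y(A^k_{\Z[1/p]})\otimes\Lambda= C^\bullet_Y(A^k_\Lambda)$, since $A_\Lambda=A_{\Z[1/p]}\otimes\Lambda$ and the products are finite.
\end{itemize}
Together these give $H^0(C^\bullet_Y(A^k_\Lambda))\simeq H^0\otimes\Lambda$, which is \eqref{h0_2}, and they reprove the acyclicity for arbitrary $\Lambda$ in the compact case. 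The noncompact case reduces to the compact one via Corollary~\ref{convcomp} and a limit argument if needed.
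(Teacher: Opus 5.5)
Your strategy is the paper's. You get $Y$-acyclicity of $A^k_{\Z[1/p]}(\bullet)$ from Lemmas~\ref{flat-4} and~\ref{flat-5}, together with the dualization of point (4) of the definition of coefficient systems. For compact $Y$, $\hhh^0$ is then finite free as a submodule of a finite free module over the PID $\Z[1/p]$, and you base change. Your degree-zero argument is correct: you split the exact bounded complex $0\to\hhh^0\to C^0\to\cdots\to C^d\to 0$ of finite free $\Z[1/p]$-modules and tensor with $\Lambda$. This is exactly what the paper's appeal to the universal coefficient theorem amounts to.

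Note that the paper performs this base change only for compact $Y$. For general $Y$ it just invokes Lemmas~\ref{flat-4} and~\ref{flat-5}, which directly cover $\Z[1/p]$, $C$ and $\Z/\ell^n$, the cases used later.

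Your worry (b) is harmless. The paper notes ${\rm Stab}(l_\sigma)\supset{\rm Stab}(\sigma)$, and ${\rm Stab}(\sigma)\supset\Gamma^{(1)}_s$ for every vertex $s$ of $\sigma$, since $\Gamma^{(1)}_s$ acts trivially on $M_s/\varpi M_s$. Hence each $l_\sigma$ is a $G$-translate of a $\Gamma^{(1)}_{s_0}$-invariant vector.

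The one step that fails as written is your homological base change. A bounded complex of free modules that is exact in positive degrees need not be split exact. For $C_\bullet=C^\bullet_Y(V^{(1),\bullet}_k)$ over $\Z[1/p]$, splitting holds iff $\hhh_0(C_\bullet)$ is projective. Moreover, by K\"unneth, $\hhh_1(C_\bullet\otimes\Lambda)\simeq\mathrm{Tor}_1^{\Z[1/p]}(\hhh_0(C_\bullet),\Lambda)$. The complex $0\to\Z[1/p]\xrightarrow{\ell}\Z[1/p]\to 0$ with $\ell\neq p$, tensored with $\Z/\ell$, shows this can be nonzero.

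The missing input is that $\hhh_0(C_\bullet)$ is torsion-free. You can extract it from Lemma~\ref{flat-5} with $\Lambda=\Z/m$, $m$ prime to $p$, combined with Corollary~\ref{comtensinv}: this gives $0=\hhh_1(C_\bullet\otimes\Z/m)\simeq\hhh_0(C_\bullet)[m]$. For compact $Y$ this makes $\hhh_0(C_\bullet)$ finite free, so your splitting claim becomes true. It is also what justifies the dualization of point (4) here, since $\hhh^1(\mathrm{Hom}(C_\bullet,\Lambda))\simeq\mathrm{Ext}^1_{\Z[1/p]}(\hhh_0(C_\bullet),\Lambda)$. Alternatively, just drop this step: your compact-case argument already gives acyclicity for every $\Lambda$ when $Y$ is compact.

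Finally, the ``limit argument if needed'' for non-compact $Y$ is not automatic. One has $C^\bullet_Y=\varprojlim_i C^\bullet_{Y_i}$ with surjective transition maps, so $\hhh^1(C^\bullet_Y(A^k_\Lambda))$ contains ${\rm R}^1\varprojlim_i\hhh^0(C^\bullet_{Y_i}(A^k_\Lambda))$. This term vanishes by Mittag--Leffler when $\Lambda$ is a field or finite, but needs a separate argument for general $\Lambda$.
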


\begin{proof}
By Lemmas \ref{flat-4} and \ref{flat-5}, we know that $A^*_{\Z[1/p]}(\bullet)$ is $Y$-acyclic and its degree zero cohomology is finite free over $\Z[1/p]$ when $Y$ is compact, as a submodule of a finite free $\Z[1/p]$-module (Lemma \ref{comp}).   Moreover we have in this case \[C^{\bullet}_Y(A^*_{\Lambda}(\bullet))=C^{\bullet}_Y(A^*_{\Z[1/p]}(\bullet))\otimes \Lambda\] for any ring $\Lambda$ (we have used that each $A_{\Z[1/p]}^k(\sigma)$ is finite free over $\Z[1/p]$) and we deduce for all $i\ge 1$, by  universal coefficient theorem (\cite[Theorem 5.5.3]{span}), \[\hhh^i(C^{\bullet}_Y(A^*_{\Lambda}(\bullet)))\simeq \hhh^i(C^{\bullet}_Y(A^*_{\Z[1/p]}(\bullet)))\otimes \Lambda,\] which is also finite free. 
\end{proof}

\begin{corollary}\label{flat-2}
 For $Y$  geodesically contractible, we have for all $i \ge 0$,  
 \begin{align*}
 & \hhh^i_{\rm dR}(\bigcup_{s\in Y_0} U_s) \simeq {\rm Ker}\big( \prod_{s \in Y_0} \hhh^i_{\rm dR}(U_{s}) \to \prod_{a \in Y_1} \hhh^i_{\rm dR}(U_{a})\big)  \\
 \text{(resp.} \quad  & \hhh^i_{\eet}(\bigcup_{s\in Y_0} U_s, \Z/\ell^n \Z) \simeq {\rm Ker}\big( \prod_{s \in Y_0} \hhh^i_{\eet}(U_{s}, \Z/\ell^n \Z) \to \prod_{a \in Y_1} \hhh^i_{\eet}(U_{a}, \Z/\ell^n \Z)\big) \text{ ),}
 \end{align*}
 and those modules are finite free over their respective base  ring when $Y$ is compact.
 
Moreover, the space $\mathbb{H}^d$ together with the covering given by the affinoids $U_n$ defined in Proposition~\ref{torsion-cov-dr} satisfies hypothesis $(\mathbf{H}_2)$ from Theorem~\ref{main-theorem}.
\end{corollary}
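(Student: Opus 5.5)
The plan is a Čech/Mayer–Vietoris descent along the covering $\{U_s\}_{s\in Y_0}$ of $U_Y:=\bigcup_{s\in Y_0}U_s$, in which Corollary~\ref{h0} kills all higher rows. Let $\hhh^*$ denote either $\hhh^*_{\rm dR}$ or $\hhh^*_{\eet}(-,\Z/\ell^n\Z)$.

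\emph{Step 1 (nerve).} First I would identify the nerve of the covering. By construction $U_\sigma=\tau^{-1}({\rm St}(\sigma))=\bigcap_{s\in\sigma}U_s$. A finite intersection $\bigcap_{s\in S}U_s$ is nonempty exactly when $S$ spans a simplex of $\mathcal{BT}$, because the open stars of the vertices in $S$ meet only in that case. When $S\subset Y_0$ spans a simplex, that simplex lies in $Y$ since $Y$ is a (convex) subcomplex. So the Čech nerve of $\{U_s\}_{s\in Y_0}$ is the simplicial complex $Y$.

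\emph{Step 2 (descent spectral sequence).} The covering is admissible, and both theories satisfy descent for it: étale cohomology by the Čech-to-derived spectral sequence, and de Rham via the de Rham complex, whose terms are coherent and hence acyclic on the quasi-Stein $U_\sigma$. This gives the Čech-to-cohomology spectral sequence
\[E_1^{k,i}=\prod_{\sigma\in Y_k}\hhh^i(U_\sigma)=C^k_Y(\hhh^i)\Rightarrow \hhh^{k+i}\big(U_Y\big),\]
where we order the vertices to use alternating cochains. Its $E_2$ page is $E_2^{k,i}=\hhh^k(C^\bullet_Y(\hhh^i))$. Lemma~\ref{comp} identifies $\hhh^i$ with $A^i_\Lambda(\bullet)$ for $\Lambda=C$, resp. $\Lambda=\Z/\ell^n\Z$. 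Both rings are $\Z[1/p]$-algebras, so Corollary~\ref{h0} gives $E_2^{k,i}=0$ for $k\ge1$. The spectral sequence therefore degenerates, and
\[\hhh^i(U_Y)\simeq E_2^{0,i}=\ker\Big(\prod_{s\in Y_0}\hhh^i(U_s)\to\prod_{a\in Y_1}\hhh^i(U_a)\Big).\]
For $Y$ infinite I would double-check convergence, for instance by writing $Y=\bigcup_m(Y\cap B(m))$ as in Corollary~\ref{convcomp} and passing to the limit. Since every $Y\cap B(m)$ has vanishing higher $E_2$ terms and all terms are products, ${\rm R}^1\varprojlim$ of the $E_2^{0,i}$ vanishes, because these are kernels of surjections onto zero higher cohomology. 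Finite freeness for compact $Y$ then follows directly from the second assertion of Corollary~\ref{h0}.

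\emph{Step 3 ($({\bf H}_2)$).} For $\mathbb H^d$, I would take $Y=B(n+1)$, which is geodesically contractible by Proposition~\ref{B-convex}. Then $\bigcup_{s\in Y_0}U_s=\tau^{-1}(\mathring B(n+1))$ up to the boundary stars; the cleaner route is to use $\mathring U_{n+1}$ together with Lemma~\ref{u-n}, which gives $\hhh^i_{\eet}(U_n,\Z/\ell^m\Z)\simeq\hhh^i_{\eet}(\mathring U_{n+1},\Z/\ell^m\Z)$, and $\mathring U_{n+1}$ is the union of the $U_s$ for $\delta(s_0,s)\le n$. Set $N^{(i)}_n:=\hhh^0(C^\bullet_{Y}(A^i_{\Z[1/p]}))$ twisted by $(i)$ to account for $\mu_{\ell^m}$ versus $\Z/\ell^m\Z$; this is finite free over $\Z[1/p]$. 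By \eqref{h0_2} we get $N^{(i)}_n\otimes\Z/\ell^m\Z\simeq\hhh^i_{\eet}(U_n,\mu_{\ell^m})$. The restriction maps $C^\bullet_{B(n+2)}\to C^\bullet_{B(n+1)}$ make these isomorphisms compatible in $n$, because they are built functorially from the coefficient system $A^*_{\Z[1/p]}$. The ${\rm Pic}$-divisibility part of $({\bf H}_2)$ holds since ${\rm Pic}(U_n)=0$ (\cite{J1}).

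The main obstacle is Step 2's matching of the Čech nerve with $Y$ together with the precise identification of $U_n$ versus the union of stars. Everything else is formal once Corollary~\ref{h0} is available.
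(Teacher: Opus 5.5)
This is the paper's proof: Lemma~\ref{comp} identifies the $E_1$-page of the \v{C}ech spectral sequence for the cover $\{U_s\}_{s\in Y_0}$ with $C^\bullet_Y(A^*_\Lambda(\bullet))$, and Corollary~\ref{h0} makes it degenerate. Your limit detour is unnecessary because the spectral sequence is first-quadrant and so converges for infinite $Y$, and your Step~1 remark that convexity makes $Y$ a full subcomplex, so that the nerve is $Y$, is a detail the paper leaves implicit. Property $(\mathbf{H}_2)$ then follows from Lemma~\ref{u-n}, \eqref{h0_2} and functoriality in $Y$.

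One index fix: $B(n)_0=\{s:\delta(s_0,s)\le n\}$, so $\bigcup_{s\in B(n)_0}U_s=\mathring U_{n+1}$ and the correct choice is $Y=B(n)$, as in the paper. Your choice $Y=B(n+1)$ gives $\mathring U_{n+2}$, whose cohomology is that of $U_{n+1}$ rather than $U_n$.
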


\begin{proof}
By Lemma \ref{comp}, $C_Y^{\bullet}(\hhh^*_{\rm dR})\simeq C^{\bullet}_Y(A^*_{C}(\bullet))$ and $C_Y^{\bullet}(\hhh^*_{\eet, n})\simeq C^{\bullet}_Y(A^*_{\Z/\ell^n \Z}(\bullet))$.  As the differentials of both complexes are compatible with the gradation, we obtain a double complex whose associated spectral sequence is the \v{C}ech spectral sequence corresponding to the covering of $\bigcup_{s\in Y_0} U_s$ given by the $U_s$'s. Lemma~\ref{h0} implies that  this spectral sequence degenerates and the first statement of the corollary follows. 

Applying this to $Y=B(n)$, we obtain the result concerning the space $\mathbb{H}^d$ by Lemma \ref{u-n}, Equation \ref{h0}\eqref{h0_2} and the functoriality in $Y$ of $C^{\bullet}_Y(A^*_{\Lambda}(\bullet))$.
\end{proof}

When $\Lambda=C$ and $Y= \mathcal{BT}$, Lemmas \ref{flat-4} and \ref{flat-5} have been proved by de Shalit\footnote{He actually calculates the de Rahm cohomology of ${\bb H}^d_K$ over $K$ and relates it to the coefficient system $A^*_{K}(\bullet)$. Base changing everything to $C$ has no incidence on his arguments.} in~\cite{ds}. Our strategy follows closely his arguments. In order to ensure that his results stay true for more general coefficients, we re-write below the proofs in details.  

\subsection{Construction of coefficient systems and broken circuits}\label{brok}
In this subsection, we construct auxiliary coefficient systems $A_{\Lambda}^k(\bullet)$, $\widetilde{A}_{\Lambda}^k(\bullet)$,  coming from the theory of hyperplane arrangements and whose description is purely combinatorial. We will see in Proposition~\ref{flat-comp} that the systems $\hhh^*_{\eet,n}$ and $\hhh^*_{\rm dR}$ are actually isomorphic to one of those for a good choice of $\Lambda$.  The general case could have been established by copying the arguments from Sections 2 and 3 of \cite{ds}, but we have rather chosen to first deduce the case $\Lambda= \Z$ from the case $\Lambda=C$ proved in {\em loc. cit.}. The result in the general case then follows relatively easily.

We denote by $\widetilde{E}$ the free exterior algebra $\bigwedge^* \Lambda[{\cal A}]$, where ${\cal A}$ is a subset of ${\cal H}$ (most of the time ${\cal A}= {\cal H}$). Then $\widetilde{E}=\bigoplus_r \widetilde{E}^r$ is a $\Lambda$-graded algebra endowed with the wedge product $\wedge$. If $s$ is a sequence of elements of $\mathcal{A}$, we write $e_s$ for the associated vector in $\widetilde{E}$. For $s$, $s'$ two sequences, we denote by $s \cdot s'$ their concatenation so that we have $e_{s} \wedge e_{s'}= e_{s \cdot s'}$ and $s^{(i)}$ is the sequence with $r-1$ terms obtained from $s:=(s_1, \cdots, s_k)$ by removing the $i$-th term (with the convention $e_{\emptyset}=1$).

Note that when ${\cal A}= {\cal H}$, then the group $G$ acts on $\widetilde{E}$ via its action on ${\cal H}$.

We recall the following facts: 
\begin{itemize}
    \item The algebra $\widetilde{E}$ is free with generators $e_{s}:= e_{s_1} \wedge \cdots \wedge e_{s_r}$ for any $s:=(s_1, \cdots, s_r)$ in ${\cal A}^r$. 
    \item It admits a differential $d:\widetilde{E} \to \widetilde{E} $ of degree $-1$ defined by $d(e_{s})= \sum_{i=1}^k (-1)^i e_{s^{(i)}}$.
    \item The complex $\widetilde{E} \to \widetilde{E} \to \widetilde{E}$ concentrated in degrees $0,1,2$ has trivial cohomology in degree $1$ (\cite[(2.6)]{ds}). 
\end{itemize}

Let us write $E:= \widetilde{E}^{d=0}= d(\widetilde{E})$. Note that $E$ is generated as a graded ring by the elements $a-b= d(a \wedge b)$ of degree $1$. Moreover, for any  $a$ in ${\cal A}$, the differential $d:\widetilde{E} \to {E}$ admits a section $x \mapsto a \wedge x$ (\cite[(2.5)]{ds}).  

\begin{definition}\label{i-sigma}
Let $\sigma:=(M_0, \cdots, M_k)$ be a simplex. We define $I(\sigma)$ to be the graded ideal in $\widetilde{E}$ generated by the elements $d(e_{s})$ where $s=(s_j)_{1 \le j \le k}$ is in $\bigcup_{k \ge 0} ({\cal A} \cap (M_i \setminus M_{i+1}))^r$ for some $i$ and such that its components $(s_j)_{1 \le j \le k}$ are linearly dependent in $M_i/M_{i+1}$. We set 
$\widetilde{A}_{\Lambda}(\sigma) := \widetilde{E}/I(\sigma)$ and write $A_{\Lambda}(\sigma)$ for its projection onto $E$, i.e. 
\[A_{\Lambda}(\sigma) = (E + I(\sigma))/I(\sigma) = E/(E \cap I(\sigma)).\]
\end{definition}

\begin{remark}\label{a-fini}
Note that for a fixed simplex $\sigma:=(M_0, \cdots, M_k)$, two subsets ${\cal A}$ and ${\cal A}'$ of ${\cal H}$ give rise to the same algebras $\widetilde{A}_{\Lambda}(\sigma)$ and ${A}_{\Lambda}(\sigma)$ when the projections of ${\cal A} \cap M_i$ and ${\cal A}' \cap M_i$ onto $M_i/M_{i+1}$ coincide for all $i$. In particular, when we work over a simplex $\sigma$, we can assume  ${\cal A}$ to be finite and this shows that  $\widetilde{A}_{\Lambda}(\sigma)$ and ${A}_{\Lambda}(\sigma)$ are  finitely generated over $\Lambda$.
\end{remark}

We endow the algebras $A_{\Lambda}(\sigma)$ and $\widetilde{A}_{\Lambda}(\sigma)$ with the graded structure coming from the ones of $E$ and $\widetilde{E}$. We write $A^k_{\Lambda}(\sigma)$ and $\widetilde{A}^k_{\Lambda}(\sigma)$ for the corresponding $k$-th graded pieces.  
A direct computation gives: 
\begin{lemma}\cite[Proposition~2.2]{ds}
\label{flat-6}
If $\sigma:=(M_0, \cdots, M_k)$ is a simplex, then $I(\sigma)=I(M_0) + \cdots+ I(M_k)$. 
\end{lemma}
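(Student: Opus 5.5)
The statement to prove is Lemma~\ref{flat-6}: for a simplex $\sigma=(M_0,\dots,M_k)$ we have $I(\sigma)=I(M_0)+\cdots+I(M_k)$. Here $I(M_i)$ is the ideal attached to the vertex $M_i$, viewed as a $0$-simplex.

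I would prove the two inclusions separately. Throughout I work with the standard presentation (\ref{def-N}): $M_i=N_0\oplus\cdots\oplus N_i\oplus\varpi(N_{i+1}\oplus\cdots\oplus N_k)$, with indices understood cyclically via $M_{-1}=\varpi M_k$. This identifies every quotient $M_i/M_{i+1}$ and every reduction $M_j/\varpi M_j$ in terms of the $\bar N$'s.

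\emph{Inclusion $I(M_j)\subset I(\sigma)$.} A generator of $I(M_j)$ is $d(e_s)$, where the $s_t$ are hyperplanes, identified with primitive vectors in $M_j\setminus\varpi M_j$, whose images in $M_j/\varpi M_j$ are linearly dependent.
\begin{enumerate}
\item The flag $\bar M_\bullet$ induces a filtration on $M_j/\varpi M_j$ whose graded pieces are the quotients $M_i/M_{i+1}$, taken in cyclic order starting at $j$.
\item Each $s_t$ has a well-defined \emph{level}: the unique $i$ with $s_t\in M_i\setminus M_{i+1}$ (after normalizing $s_t$ up to a power of $\varpi$).
\item Given a linearly dependent family in the filtered space, look at the deepest level $i$ occurring with a nontrivial coefficient in the dependence relation. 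The elements of that level are then dependent in $M_i/M_{i+1}$.
\item Hence some subsequence $s'\subset s$, consisting of the elements of one level, gives a generator $d(e_{s'})$ of $I(\sigma)$.
\item Finally use the derivation identity $d(e_{s'}\wedge e_{s''})=d(e_{s'})\wedge e_{s''}\pm e_{s'}\wedge d(e_{s''})$, together with the fact that $e_{s'}=a\wedge d(e_{s'})$ up to sign for a suitable $a\in s'$. This shows that $d(e_s)$ lies in the ideal generated by $d(e_{s'})$.
\end{enumerate}
This last step is the standard broken-circuit argument: if a subset of $s$ is dependent, then $d(e_s)$ lies in the ideal generated by that subset's generator.

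\emph{Inclusion $I(\sigma)\subset\sum_j I(M_j)$.} Let $s$ be a family in $M_i\setminus M_{i+1}$ that is dependent in $M_i/M_{i+1}$. I expect this family is already dependent in $M_i/\varpi M_i$, because $M_i/M_{i+1}$ is a quotient of $M_i/\varpi M_i$. Wait — dependence in a quotient does not imply dependence upstairs. So I would instead choose the vertex $M_{i+1}$ and rescale each $s_t$ by $\varpi^{-1}$ if needed, so that it is primitive in $M_{i+1}$. The goal is for $M_i/M_{i+1}$ to appear as the \emph{sub}space $\varpi^{-1}\ldots$ of $M_{i+1}/\varpi M_{i+1}$; concretely, the image of $M_i$ in $M_{i+1}/\varpi M_{i+1}$ inside the appropriate rescaled lattice. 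Since the hyperplane depends only on the line, rescaling is harmless. Dependence in a subspace gives dependence in the ambient space, so $d(e_s)\in I(M_{i+1})$ (or $I(M_i)$, depending on indexing conventions).

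\emph{Main obstacle.} I expect the hardest part to be this bookkeeping of levels and rescalings, i.e.\ checking that each graded piece embeds as a subspace of some vertex reduction. I would handle it by working in the adapted basis $(f_0,\dots,f_d)$, where everything becomes coordinate-wise and explicit.
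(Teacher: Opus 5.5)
Your two-inclusion strategy is the right one. It is the ``direct computation'' behind \cite[Proposition~2.2]{ds}, which the paper only cites rather than proving. The inclusion $I(\sigma)\subset\sum_j I(M_j)$ holds because each graded piece of $\sigma$ sits as a subspace in the reduction of an adjacent vertex lattice. The inclusion $I(M_j)\subset I(\sigma)$ holds because a family dependent at $M_j$ contains a single-level subfamily that is $\sigma$-dependent; your step 5 (graded Leibniz rule plus $e_{s'}=-e_a\wedge d(e_{s'})$ for $a$ the first entry of $s'$) then finishes correctly. However, step 3 is wrong as stated: you must take the \emph{shallowest} level, not the deepest. Write the induced filtration as $M_j/\varpi M_j=F^0\supset F^1\supset\cdots\supset F^{k+1}=0$ and the relation as $\sum_t c_t\bar v_t=0$. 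Let $m$ be the smallest index such that some $\bar v_t$ with $c_t\neq 0$ lies in $F^m\setminus F^{m+1}$. Then every term with $c_t\neq 0$ lies in $F^m$, and reducing modulo $F^{m+1}$ gives a nontrivial relation among the level-$m$ vectors in $F^m/F^{m+1}$. For the deepest level this reduction is impossible, since the shallower terms are not in that filtration step. Concretely, take $d=1$ and $M_0=\so_K^2\supsetneq M_1=\so_K e_1\oplus\varpi\so_K e_2\supsetneq\varpi M_0$. The family $(e_1+e_2,e_2,e_1)$ is dependent in $M_0/\varpi M_0$. Its deepest-level part $\{e_1\}$ is independent in $M_1/\varpi M_0$, whereas its top-level part $\{e_1+e_2,e_2\}$ is dependent in $M_0/M_1$.

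In the other inclusion, work with the decreasing convention $M_0\supsetneq\cdots\supsetneq M_k\supsetneq\varpi M_0$, which Definition~\ref{i-sigma} presupposes. There the rescaling goes the other way. For $v_t\in M_i\setminus M_{i+1}$ with $i<k$, the primitive representative in $M_{i+1}$ is $\varpi v_t$, not $\varpi^{-1}v_t$. Its reduction lies in the subspace $\varpi M_i/\varpi M_{i+1}\cong M_i/M_{i+1}$ of $M_{i+1}/\varpi M_{i+1}$. Equivalently, use the representative $\varpi^{-1}M_{i+1}\supset M_i$ of the same vertex and leave the $v_t$ unchanged. For $i=k$, use the vertex $M_0$ and the inclusion $M_k/\varpi M_0\subset M_0/\varpi M_0$. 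With these two corrections your proof is complete.
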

It follows from the lemma that $(A_{\Lambda}(\sigma))_{\sigma}$ and $(\widetilde{A}_{\Lambda}(\sigma))_{\sigma}$ both define $\Lambda$-graded coefficient systems with graded pieces  $(A^k_{\Lambda}(\sigma))_{\sigma}$ and $(\widetilde{A}^k_{\Lambda}(\sigma))_{\sigma}$.
Moreover, if ${\cal A}={\cal H}$, it is easy to check that $(A_{\Lambda}(\sigma))_{\sigma}$ is $G$-equivariant. 

\begin{proposition}\label{flat-comp}
There are isomorphisms of graded coefficient systems:
\[ A_C(\bullet) \simeq \hhh^*_{\dr} \quad \text{and} \quad A_{\Z/\ell^n\Z}(\bullet)\simeq \hhh^*_{\eet,n} .\]
\end{proposition}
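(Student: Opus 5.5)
Both isomorphisms will be built simplex by simplex from the product decomposition \eqref{decompoH} and the known cohomology of the elementary pieces. I will then check compatibility with restriction maps using the explicit logarithmic classes attached to hyperplanes. The de Rham case over $C$ is de Shalit's theorem \cite{ds}, base-changed from $K$ to $C$. So the real work is the étale case with $\Lambda=\Z/\ell^n\Z$, and the plan is to imitate de Shalit's construction with Kummer classes instead of logarithmic differentials.

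\textbf{Step 1: the map $\widetilde{E}\to \hhh^*_{\eet}$.} For $H_1,H_2\in{\cal H}$, the function $l_{a_{H_1}}/l_{a_{H_2}}$ is invertible on ${\bb H}^d$. Its Kummer class $\kappa(l_{a_{H_1}}/l_{a_{H_2}})\in \hhh^1_{\eet}(U_\sigma,\mu_{\ell^n})$ is the analogue of $d\log(l_{H_1}/l_{H_2})$. After a fixed trivialization $\mu_{\ell^n}\simeq\Z/\ell^n\Z$ over $C$, send the degree one generator $a-b=d(a\wedge b)$ of $E$ to this class. Cup product is graded-commutative and $\kappa$ is additive on products, so this extends to a morphism of graded rings $E\to \hhh^*_{\eet}(U_\sigma,\Z/\ell^n\Z)$. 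It is compatible with restriction along $\tau\subset\sigma$, hence defines a morphism of coefficient systems.

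\textbf{Step 2: the ideal $I(\sigma)$ is killed.} By Lemma~\ref{flat-6} it suffices to treat the generators attached to a single lattice $M_i$. For such a generator, the hyperplanes involved reduce to linearly dependent forms on $M_i/M_{i+1}$. Via the projections in \eqref{decompoH}, their ratios factor through a lower-dimensional algebraic torus or arrangement complement, of dimension smaller than the number of classes being multiplied. Cup products on that factor in degree above its cohomological dimension vanish, because the relevant piece is the complement of a reduced arrangement over the residue field, smooth of that dimension. This gives a map $A_{\Z/\ell^n\Z}(\sigma)\to \hhh^*_{\eet}(U_\sigma,\Z/\ell^n\Z)$.

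\textbf{Step 3: it is an isomorphism.} On $U_\sigma$, that is on the open star, I will use the Künneth decomposition coming from \eqref{decompoH}. The piece $A_k$ is an open polyannulus with $\hhh^*=\wedge^*(\Z/\ell^n)^k$. Each $C_{e_i-1}$ is the tube of the complement of all ${\bf F}$-rational hyperplanes in ${\bb P}^{e_i-1}_{\bf F}$. Its cohomology, by Berkovich's comparison with the special fiber \cite{ber6} (nearby cycles / tubes of smooth algebraic varieties), equals the $\ell$-adic cohomology of that complement, which is the Orlik--Solomon algebra over $\Z/\ell^n$, free with a broken-circuit basis. Since $U_\sigma$ is not literally ${\bb H}^d_{\mathring\sigma}$, I will instead argue by Mayer--Vietoris over the stratification \eqref{sigma-rond}, or work directly with the semistable model whose components meeting along $\sigma$ cover $U_\sigma$. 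The cleaner option is to compare ranks. Both sides are free of the same rank, since $A_{\Z}$ is free with the broken-circuit basis, the rank agrees with the de Rham case via $C$, and étale cohomology is free by the Künneth computation. Surjectivity follows because the Kummer classes generate the Orlik--Solomon presentation, and equal ranks then give bijectivity.

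The main obstacle is Step 3: identifying $\hhh^*_{\eet}(U_\sigma,\Z/\ell^n)$ integrally and showing there is no extra torsion. I would first try to reduce to the algebraic Orlik--Solomon theorem with $\ell$-adic coefficients, together with smooth base change and nearby-cycle comparison for tubes. I would control the gluing over the open star by a spectral sequence argument, which degenerates by freeness.
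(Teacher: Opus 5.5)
Your skeleton is close to the paper's: the decomposition \eqref{decompoH}, K\"unneth, an Orlik--Solomon computation for the factors $C_{e_i-1}$, and the explicit cohomology of $A_k$. Handling $C_{e_i-1}$ through good reduction and the algebraic Orlik--Solomon theorem over $\bar{\bf F}$ is a reasonable substitute for the paper's use of the Schneider--Stuhler axiomatization in lower dimension.

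However, the step you call the main obstacle is a genuine gap. The decomposition \eqref{decompoH} describes ${\bb H}^d_{\mathring\sigma}=\tau^{-1}(\mathring\sigma)$, which is the tube of the open stratum of the semistable model. The coefficient system lives instead on $U_\sigma=\tau^{-1}({\rm St}(\sigma))$, the tube of the closed stratum $\bigcap_{s\in\sigma}Y_s$. Everything you compute (freeness, rank, generation by Kummer classes) is a statement about ${\bb H}^d_{\mathring\sigma}$. To transfer it, you need the restriction $\hhh^*(U_\sigma)\to\hhh^*({\bb H}^d_{\mathring\sigma})$ to be an isomorphism. The paper does not derive this; it imports it from \cite[Theorem 2.4]{GK2} in the de Rham case and \cite[Lemma 5.6]{zhe} for $\Z/\ell^n\Z$-coefficients.

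Neither of your fallbacks supplies this comparison:
\begin{itemize}
\item The Mayer--Vietoris argument is not carried out. Moreover, \eqref{sigma-rond} exhibits $\mathring\sigma$ as a complement inside ${\rm St}(\sigma)$, not a union, so what is really needed is a nearby-cycles or excision computation on the semistable model.
\item The rank comparison is circular. ``\'Etale cohomology is free by the K\"unneth computation'' holds only for ${\bb H}^d_{\mathring\sigma}$. Nothing in your sketch relates $\Z/\ell^n\Z$-ranks on $U_\sigma$ to de Rham dimensions.
\item Surjectivity (``Kummer classes generate'') needs the same comparison.
\item The issue already arises for a vertex, since $C_d={\bb H}^d_{\mathring s}$ rather than $U_s$. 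The paper's vertex case applies the Schneider--Stuhler axioms directly to $U_s$.
\end{itemize}
Once the comparison is granted in both settings, your surjectivity-plus-equal-rank argument does go through.

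A secondary point concerns Step 2. There the hyperplanes are only linearly dependent modulo $M_{i+1}$, so the ratios $l_{a_j}/l_{a_1}$ factor through a lower-dimensional arrangement complement only up to principal units. This is repairable: Kummer classes of principal units vanish for $\ell\neq p$, and one must check that the correction term coming from $M_{i+1}$ is dominated on all of $U_\sigma$, not just on ${\bb H}^d_{\mathring\sigma}$. But it has to be said.

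The paper avoids this step entirely. From $\hhh^*(U_\sigma)\simeq\bigotimes_X A_X$ it reads off the kernel of $E\to\hhh^*(U_\sigma)$ as the ideal generated by the $I_X$. It then matches that ideal with $I(\sigma)$ combinatorially, through the identifications $N_i/\varpi N_i\simeq M_i/M_{i-1}$ coming from \eqref{def-N}.
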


\begin{proof}
For $\Lambda=C$ (resp. $\Lambda=\Z/\ell^n\Z$) set $\hhh^*:=\hhh^*_{\dr}$ (resp. $\hhh^*:=\hhh^*_{\eet,n}$).  
Let us first assume that $\sigma=s$ is a vertex. The input of the arguments of Orlik-Solomon \cite{orlsol} has been axiomatised in this case by Schneider and Stuhler in \cite[\textsection 2]{scst} and they are satisfied by $\hhh^*$ in both cases. In particular, their spectral argument directly gives an isomorphism 
\begin{equation}\label{Asommet} A_{\Lambda}(s) \xrightarrow{\sim} \hhh^*(U_s). \end{equation}
 
Let now $\sigma:=(M_0, \dots, M_k)$ be any simplex. 
Consider the decomposition~\eqref{decompoH}, it induces a restriction map \[\hhh^*(U_{\sigma}) \to \hhh^*(C_{e_0-1} \times \cdots \times C_{e_k-1} \times A_k) \] which is an isomorphism by \cite[Theorem 2.4]{GK2} in the de Rham case and by \cite[Lemma 5.6]{zhe} in the \'etale case (${\bb H}^d$ has a semi-stable integral model and $U_\sigma$ and ${\bb H}^d_{\mathring{\sigma}}$ are related to components of the special  fiber and their smooth locus  by \eqref{sigma-rond}). 

The key idea is to observe that each component $C_{e_i-1}$ is itself the tube over a vertex of a certain Drinfeld symmetric space but of lower dimension, namely $e_i-1$, where we could apply the previous discussion. Moreover, $A_k$ is an annulus for which we can explicitly compute the cohomology. Let us develop this strategy.  

If $X$ is one of the spaces $C_{e_i-1}$, $A_k$, then $X$ is an open of ${\bb P}^{{\rm dim}(X)}$. Define $\widetilde{E}_X=\bigwedge^* \Lambda[{\cal A}_X]$ where ${\cal A}_X$ is ${\bb P}^{{\rm dim}(X)}(K)$ when $X$ is one of the $C_{e_i-1}$ and ${\cal A}_{A_k}=\{ {\rm ker}(y_i), 0\le i \le k \}$, where the $y_i$'s are the coordinates of $A_k$. It admits a differential $d$ defined similarly to the one of $E$ and we set $E_X=(\widetilde{E}_X)^{d=0}$. Note that $\widetilde{E} \simeq \bigotimes_X \widetilde{E}_X$ as graded $\Lambda$-algebras. We have a natural map $E_X \to \hhh^*(X) $ with image generated by the logarithmic classes. Let us write $A_X:=E_X/I_X$ this quotient where $I_{C_{e_i-1}}$ is the ideal from Definition \ref{i-sigma} for the corresponding vertex and $I_{A_k}=0$.

Using K\"unneth formula and isomorphism \eqref{Asommet} in lower dimension, we have graded isomorphisms: 
\begin{align*}
    \hhh^*(U_{\sigma}) & \simeq \hhh^*(C_{e_0-1}) \otimes \cdots \otimes \hhh^*(C_{e_k-1}) \otimes \hhh^*(A_k) \\
    & \simeq A_{C_{e_0-1}} \otimes \cdots \otimes A_{C_{e_k-1}} \otimes A_{A_k}.
    \end{align*}
This proves that the natural map $E \to \hhh^*(U_{\sigma})$ is surjective with kernel equal to the graded ideal generated by the $I_X$'s. It remains to prove that this ideal is equal to $I(\sigma)$. 

Let us consider the modules $N_0, \dots, N_k$ from \eqref{def-N}. By construction, we have an identification $\iota_i : N_i/\varpi N_i \xrightarrow{\sim} M_i/M_{i-1}$ and maps $\Z[{\bb P}(M_i/M_{i-1})]^0 \to \so^*(U_{\sigma})$ and $\Z[{\bb P}(N_i/\varpi N_{i})]^0 \to \so^*(C_{e_i-1})$ which fit into a commutative diagram: 
\[\xymatrix{\Z[{\bb P}(M_i/M_{i-1})]^0 \ar[r] & \so^*(U_{\sigma}) \ar[d]\ar[r] & \hhh^1(U_{\sigma}) \ar[d]\\
\Z[{\bb P}(N_i/\varpi N_{i})]^0 \ar[r] \ar[u]_{\rotatebox{90}{$\sim$}}^{\iota_i} & \so^*(C_{e_i-1}) \ar[r] & \hhh^1(C_{e_i-1}) } \]
By Definition~\ref{i-sigma}, $I(\sigma)$ admits a system of generators in bijection with 
\begin{align*}
    \bigcup_{0 \le i \le k} \{ \text{ linearly dependent families of } {\bb P}(M_i/M_{i-1}) \} \\
    \xrightarrow[\iota_i]{\sim} \bigcup_{0 \le i \le k} \{ \text{ linearly dependent families of } {\bb P}(N_i/\varpi N_{i}) \}.  
    \end{align*} 
Similarly, each term in the latter union is in bijection with a system of generators of the ideal $I_{C_{e_i-1}}$. We obtain  
 that $I(\sigma)$ is generated by the $I_X$'s, as wanted. 

\end{proof}
 
By construction, $\widetilde{A}_{\Lambda}(\sigma)$ are generated by the $e_s$'s.
The broken circuit theorem (Theorem~\ref{flat-7}) states that we can extract a basis from it by considering elements $e_s$ such that $s$ satisfies some maximality condition for a certain order on ${\cal A}$. To define it properly, we will need to assume ${\cal A}$ to be finite, which is possible by Remark \ref{a-fini}. 

Let $\sigma= (M_0, \cdots, M_k)$ be a simplex and let us fix a total order $>$ on ${\cal A}_i:= {\cal A} \cap (M_i \setminus M_{i+1})$. Consider a chain $s=(a_0 > a_1 > \cdots > a_n)$ in ${\cal A}_i^n$ for a certain $i$. For any $1 \le j \le n$, we can write $s$ as $a_0 \cdot a_1 \cdot \cdots \cdot a_{j-1} \cdot s_j$ with $s_j$ a sequence in ${\cal A}^{n+1-j}$. We say that $s$ is ($\sigma$-)special if for all $ 0 \le j \le n$, we have $a_j={\rm max}( {\cal A} \cap (M_{i+1} + \langle s_j \rangle) )$ where $\langle s_j \rangle$ denotes the span over $\so_K$ of the terms of the sequence $s_j$. More generally, if $s=s_0 \cdot s_1 \cdots s_{r-1} \cdot s_r$ with $s_i$ in $\bigcup_{k \ge 0} {\cal A}^k_i$ for all $i$ then we say that $s$ is special if $s_i$ is special for any $0 \le i \le r$. We will also say, by abuse of terminology, that $e_s$ is special whenever $s$ is special. 

\begin{remark}
If we consider another presentation $(M_i, \cdots , M_r, M_0, \cdots, M_{i-1})$ of the simplex $\sigma$, then for any special element $s=s_0 \cdot s_1 \cdots s_{r-1} \cdot s_r$, we have $e_{s_0 \cdot s_1 \cdots s_{r-1} \cdot s_r} =(-1)^{i}e_{s_i \cdot s_{i+1} \cdots s_{r} \cdot s_0 \cdots \cdot s_{i-1}}$ and we see that up to a sign, the two definitions of being special coincide.   
\end{remark}

\begin{theorem}
\label{flat-7}
The special elements form a $\Lambda$-basis of $\widetilde{A}_{\Lambda}(\sigma)$. Moreover, $A_{\Lambda}(\sigma)$ admits as a basis the family of elements $e_{a \cdot s}$ with $s$ special and $a= {\rm max}({\cal A} \cap (M_0 \setminus M_1) )$.  
\end{theorem}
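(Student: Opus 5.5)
The plan is to follow the strategy announced at the start of \S\ref{brok}: first deduce the integral statement from the case $\Lambda=C$ proved by de Shalit, then pass to an arbitrary $\Lambda$ by base change. The first step is purely combinatorial and works over any ring. By Lemma~\ref{flat-6}, $I(\sigma)=I(M_0)+\cdots+I(M_k)$, and $\widetilde{E}$ is an exterior algebra on the finite set ${\cal A}$ (Remark~\ref{a-fini}) partitioned as ${\cal A}=\bigsqcup_i {\cal A}_i$. Hence $\widetilde{A}_\Lambda(\sigma)$ is a tensor product over $i$ of the quotients of $\bigwedge^*\Lambda[{\cal A}_i]$ by the dependency relations in $M_i/M_{i+1}$; this is essentially the local-system version of the Künneth decomposition used in Proposition~\ref{flat-comp}. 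It therefore suffices to treat one block, i.e.\ an Orlik–Solomon-type algebra of a finite arrangement in the ${\bf F}$-vector space $M_i/M_{i+1}$.

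For a single block I will prove spanning over $\Z$ by the standard broken-circuit straightening. Take any $e_s$ with $s$ an independent chain that is not special, and let $j$ be the first index with $a_j\neq \max({\cal A}\cap(M_{i+1}+\langle s_j\rangle))=:b$. Then $(b)\cdot s_j$ is dependent, so $d(e_{b\cdot s_j})\in I$. Expanding this relation rewrites $e_{s_j}$ as a signed sum of terms $e_{b\cdot s_j^{(t)}}$. Each such term is strictly larger than $s$ in the lexicographic order on decreasing chains, so induction on that order, which is finite, shows that special monomials span. Monomials on dependent families already lie in $I$, since $e_{s}=\pm e_{a}\wedge d(e_{s})$ up to terms in $I$, using the section $x\mapsto a\wedge x$.

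Linear independence is the main obstacle. Over $\Z$ I will count. The number of special chains equals the number of nbc-sets, which by the classical Orlik–Solomon theorem equals $\dim_C \widetilde{A}_C(\sigma)$; de Shalit establishes this through \cite[Sections~2,3]{ds} and Proposition~\ref{flat-comp}. Since $\widetilde{A}_\Z(\sigma)$ is finitely generated (Remark~\ref{a-fini}), is spanned by $N$ special elements, and satisfies $\widetilde{A}_\Z(\sigma)\otimes C=\widetilde{A}_C(\sigma)$ of dimension $N$, its rank is $N$. A spanning set of $N$ elements in a module of rank $N$ is a basis exactly when there is no torsion. To see this, note that a $\Z$-linear relation among the spanning elements becomes a relation over $C$, where those elements are independent, so the coefficients vanish. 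Thus $\widetilde{A}_\Z(\sigma)$ is free on the special elements. For general $\Lambda$, $I(\sigma)$ is generated by the same integral elements, so $\widetilde{A}_\Lambda(\sigma)=\widetilde{A}_\Z(\sigma)\otimes\Lambda$ and the first claim follows.

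For $A_\Lambda(\sigma)$, fix $a=\max{\cal A}_0$. The map $x\mapsto a\wedge x$ is a section of $d$, and $d$ preserves $I(\sigma)$ because the generators $d(e_s)$ satisfy $d^2=0$ and $d$ is a derivation. Hence $\widetilde{A}_\Lambda(\sigma)=a\wedge\widetilde{A}_\Lambda(\sigma)\oplus\ker d$, and $A_\Lambda(\sigma)=d(\widetilde{A}_\Lambda(\sigma))\simeq a\wedge\widetilde{A}_\Lambda(\sigma)$. I then check that $a\wedge e_s$, for $s$ special, equals $\pm e_{a\cdot s}$ when $a\notin s$ and vanishes or lies in $I$ otherwise. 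This identifies the basis $\{e_{a\cdot s}\}$ via $d$ and the section, with independence again obtained by reduction from the $\Z$ case.
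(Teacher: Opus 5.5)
Your proposal is correct and follows essentially the same route as the paper. You reduce to $\Lambda=\Z$ by base change, obtain independence by sending a $\Z$-linear relation into $\widetilde{A}_C(\sigma)$ where de Shalit's result applies, prove spanning by straightening with the relations $d(e_{b\cdot s_j})$, and pass to $A_\Lambda(\sigma)$ via the section $x\mapsto e_a\wedge x$. The blockwise tensor decomposition and the lexicographic induction are cosmetic variations. The rank count is superfluous, since the relation argument alone gives independence, and in the last step independence is immediate once you note that $a\cdot s$ is itself special whenever $s$ is special and $a\notin s$.
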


\begin{proof}
Using the section $E \to \widetilde{E}, x \mapsto e_a \wedge x$ of the differential, one can easily see that the result for $A_{\Lambda}(\sigma)$ follows from the one for $\widetilde{A}_{\Lambda}(\sigma)$. In \cite{ds}, the claim is proved when $\Lambda=C$ and it suffices to prove the case $\Lambda = \Z$ since $\widetilde{A}_{\Lambda}(\sigma) =\widetilde{A}_{\Z}(\sigma) \otimes \Lambda$. The fact that the family $\{ e_s \}_{s \text{ special}}$ is free over $\Z$ follows from the inclusion $\widetilde{A}_{\Z}(\sigma) \subset \widetilde{A}_{C}(\sigma)$. We need to prove that it is a generating family. To do that, we follow the arguments in the proof of \cite[Theorem~2.5]{ds}. 

We want to prove that any element $e_s$ with $s$ in $\bigcup_{k \ge 0} {\cal A}^k_i$ can be written as a linear combination of special elements $e_t$ with $M_{i+1}+\langle t \rangle \subset M_{i+1}+\langle s \rangle  $. We proceed by induction on the length of the sequence $s$. 

Let $s=a_0 \cdot s'$ a sequence of length $n+1$. By induction, $e_{s'}$ can be written as a linear combination of special elements. Choose $e_{s''}$ one of these elements. We obtain a decomposition of $e_s$ containing the term $e_{a_0 \cdot s''}$. Consider $a_0':= {\rm max}({\cal A}_i \cap (M_i+\langle  a_0\cdot s''\rangle))$. Then $d(e_{a_0'\cdot a_0\cdot s''})$ is in $\widetilde{A}_{\Z}(\sigma)$ (the elements are linearly dependent) and we can express $e_{a_0\cdot s''}$ as a combination (over $\Z$) of terms of the form $e_{a_0'\cdot a_0\cdot s''^{(j)}}$ and $e_{a_0's''}$. We use again the induction to write $e_{a_0\cdot s''^{(j)}}$ and $e_{s''}$ as a linear combination of special elements and we choose $e_{s'''}$ one of these special elements. So we have a decomposition of $e_s$ with one of the terms being $e_{a_0'\cdot s'''}$. We need to prove that $e_{a_0'\cdot s'''}$ is special and that $M_{i+1}+\langle a_0'\cdot s'''\rangle$ is  contained in $M_{i+1}+\langle s\rangle$. 

By construction $M_{i+1}+\langle a_0'\cdot s'''\rangle$ is either contained in $M_{i+1}+\langle a_0'\cdot a_0\cdot s''^{(j)}\rangle$ or in $M_{i+1}+\langle a_0'\cdot s''\rangle $, and both of those are contained inside $M_{i+1}+\langle a_0\cdot s''\rangle $ which in turn is contained inside $M_{i+1}+\langle s\rangle $. Since $a_0$ is maximal in $M_{i+1}+\langle a_0\cdot s''\rangle $, it is maximal in all the subsets containing it and in particular it is maximal in $M_{i+1}+\langle a_0'\cdot s'''\rangle $. It follows that $e_{a_0'\cdot s'''}$ is special.           
\end{proof}

\begin{corollary} \label{flat-free}
For all $i \ge 0$ and all $\sigma$ in $\mathcal{BT}$, the modules $\hhh^i_{\dr}(U_{\sigma})$ and $\hhh^i_{\eet}(U_{\sigma}, \Z/{\ell}^n\Z)$ are finite free.  
\end{corollary}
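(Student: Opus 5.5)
The plan is to combine Proposition~\ref{flat-comp} with Theorem~\ref{flat-7}. By Proposition~\ref{flat-comp}, for every simplex $\sigma$ we have graded isomorphisms $\hhh^*_{\dr}(U_\sigma)\simeq A_C(\sigma)$ and $\hhh^*_{\eet}(U_\sigma,\Z/\ell^n\Z)\simeq A_{\Z/\ell^n\Z}(\sigma)$. Taking the degree-$i$ pieces, it therefore suffices to show that $A^i_\Lambda(\sigma)$ is finite free over $\Lambda$ for $\Lambda\in\{C,\Z/\ell^n\Z\}$.

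For freeness, Theorem~\ref{flat-7} gives an explicit $\Lambda$-basis of $A_\Lambda(\sigma)$: the elements $e_{a\cdot s}$ with $s$ special and $a=\max(\mathcal{A}\cap(M_0\setminus M_1))$. Each such element is homogeneous of degree equal to the length of $s$, since $e_{a\cdot s}$ is the image of $d(e_a\wedge e_s)$, which lies in $E$ and has the degree of $e_s$. Hence the basis splits according to degree, and its degree-$i$ elements form a $\Lambda$-basis of $A^i_\Lambda(\sigma)$. This gives freeness of each graded piece.

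For finiteness, we use Remark~\ref{a-fini}: for a fixed $\sigma$ we may replace $\mathcal{A}$ by a finite subset (one representative for each point of the projections onto the finite sets ${\bb P}(M_i/M_{i+1})$) without changing $A_\Lambda(\sigma)$. There are then only finitely many special sequences, because they are strictly decreasing chains in a finite totally ordered set. So the basis is finite.

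The only point needing care is that Theorem~\ref{flat-7} is stated for a general coefficient ring, and its proof reduces to $\Lambda=\Z$ via $\widetilde{A}_\Lambda(\sigma)=\widetilde{A}_\Z(\sigma)\otimes\Lambda$. Both $C$ and $\Z/\ell^n\Z$ are covered by this base change, since $\widetilde{E}\otimes\Lambda$ is the exterior algebra over $\Lambda$ and the ideal $I(\sigma)$ is generated by elements defined over $\Z$. One also has to check that the passage from $\widetilde{A}$ to $A$ via the section $x\mapsto e_a\wedge x$ respects the grading. I do not expect any further obstacle: the corollary is a direct bookkeeping consequence of the two cited results.
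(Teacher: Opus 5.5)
Your proposal is correct and takes essentially the same approach as the paper, which states the corollary without a separate proof as an immediate consequence of Proposition~\ref{flat-comp}, the basis of Theorem~\ref{flat-7}, and the finiteness from Remark~\ref{a-fini}. Your remarks on homogeneity (the basis element $d(e_{a\cdot s})$ lies in degree equal to the length of $s$) and on finitely many special chains only spell out bookkeeping the paper leaves implicit.
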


We will now exhibit linear forms on algebras $A^k_{\Lambda}(\tau)$. For $\sigma$ a simplex, we consider the function $i_{\sigma} : {\cal A} \to [0,k]$ defined by the following property: for $a$ in ${\cal A}$, $i_{\sigma}(a)$ is the unique integer $i$ such that $a$ meets $M_i \setminus M_{i+1}$. We extend the signature operator to all maps of subsets of the integers $f: [0, n] \to [0, m]$ by setting ${\rm sgn}(f)=0$ if $f$ is not bijective. We define 
\[l_{\sigma}: s=(a_0, \cdots , a_m) \mapsto {\rm sgn}(i_{\sigma}(a_0), \cdots, i_{\sigma}(a_m)). \] 
The usual properties satisfied by the signature operator ensure that if $p$ is a permutation of $[0,m]$, then $l_{\sigma}(p\cdot s)= l_{\sigma}(s){\rm sgn}(p)$. We extend the maps $l_{\sigma}$'s to $\widetilde{E}$ and $E$ by setting $l_{\sigma}(e_s):=l_{\sigma}(s)$. Moreover, for any simplex $\tau$ contiguous to $\sigma$, we claim that the map $l_{\sigma}$ vanishes on $E^{k+1} + I^{k+1}(\tau)$, in particular, it factors through $A^k_{\Lambda}(\tau)$. In the case where $\Lambda=C$, this claim was proved in \cite[Lemma 2.4]{ds}. But the generators of $E^{k+1} + I^{k+1}(\tau)$ are $\Z$-linear combinations of elements $e_s$ when $\Lambda=\Z$ whose image by $l_{\sigma}$ in $\{-1,0,1\}$ does not depend on the ring $\Lambda$. This proves the case $\Lambda= \Z$. The general case follows by scalar extension. 

These linear forms are closely related to the basis of $A^k_{\Lambda}(\tau)$ given by Theorem~\ref{flat-7}. More precisely, given a $\tau$-special chain $t$, we can construct, following the proof of \cite[Theorem~2.5]{ds} (see formula (2.29) in loc. cit.), a simplex $\sigma_t$ contiguous to $\tau$ such that
\begin{equation}
    \label{flat-eq1}
l_{\sigma_t}(e_t)=1 \quad \text{ and } \quad l_{\sigma_t}(e_s)=0 \text{ if } s>t. 
\end{equation}
These relations only depend on the combinatorial properties of $\tau$ and $\sigma_t$ and thus, they are satisfied for any ring $\Lambda$ as long as they are true for a non-torsion ring (here, the field $C$). Applying Theorem~\ref{flat-7}, we get for any ring $\Lambda$: 

\begin{lemma}\cite[Corollary~2.7]{ds}
\label{flat-8}
The family $l_{\sigma_t}$  defines a basis of $A^k_{\Lambda}(\tau)^*$, where $t$ runs over the $\tau$-special elements of $S$. 
\end{lemma}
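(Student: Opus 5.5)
The strategy is to pair the family of linear forms $\{l_{\sigma_t}\}$ against the basis of $A^k_{\Lambda}(\tau)$ from Theorem~\ref{flat-7}, and show the resulting square matrix is unitriangular over $\Z$. Unitriangularity then gives invertibility over any $\Lambda$.

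First I fix the setup. By Remark~\ref{a-fini} I may assume ${\cal A}$ is finite. Theorem~\ref{flat-7} says $A^k_{\Lambda}(\tau)$ is finite free with basis $\{e_{a\cdot t}\}$, where $a={\rm max}({\cal A}\cap(M_0\setminus M_1))$ and $t$ runs over the $\tau$-special chains of the appropriate length. Hence $A^k_{\Lambda}(\tau)^*={\rm Hom}_{\Lambda}(A^k_{\Lambda}(\tau),\Lambda)$ is free of the same finite rank, with the dual basis.

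The forms $l_{\sigma_t}$ are well-defined elements of this dual. Indeed, as shown just before the statement, each $l_{\sigma}$ vanishes on $E^{k+1}+I^{k+1}(\tau)$ for $\sigma$ contiguous to $\tau$, so it factors through $A^k_{\Lambda}(\tau)$. There are exactly as many forms $l_{\sigma_t}$ as basis vectors, since both are indexed by the $\tau$-special $t$.

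Next I order the special chains by the total order $>$ used in \eqref{flat-eq1}, extended lexicographically to chains. I form the square matrix $P=(l_{\sigma_t}(e_s))_{t,s}$, indexed by special elements (with the convention that $e_s$ denotes the corresponding basis vector $e_{a\cdot s}$ of $A^k$). The entries lie in $\{-1,0,1\}$ and do not depend on $\Lambda$, because $l_\sigma(e_s)$ is a signature. By \eqref{flat-eq1}, $P$ has $1$ on the diagonal and $0$ at every entry with $s>t$. So $P$ is triangular with unit diagonal, and therefore $\det P=1$ in $\Z$, hence in any $\Lambda$.

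It follows that $P$ is invertible over $\Lambda$. The forms $l_{\sigma_t}$ are exactly the images of the dual basis under the transformation with matrix $P$, so they also form a basis of $A^k_{\Lambda}(\tau)^*$.

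The main obstacle is the compatibility of indexing. The relations \eqref{flat-eq1} are stated for the $\tau$-special chains $t$ (elements of $\widetilde{A}$), while the basis of $A^k$ is $\{e_{a\cdot t}\}$. I must check that the evaluations $l_{\sigma_t}(e_{a\cdot s})$ still satisfy the unitriangularity. I would handle this by using the section $x\mapsto e_a\wedge x$ of $d$, as in the proof of Theorem~\ref{flat-7}, together with de Shalit's construction (2.29). There $\sigma_t$ is built so that $a$ and the entries of $t$ lie in distinct strata $M_i\setminus M_{i+1}$ of $\sigma_t$.

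I also need the ordering of special chains to be a genuine total order that makes ``$s>t$'' meaningful. Since these checks are purely combinatorial, they transfer verbatim from the case $\Lambda=C$ in \cite{ds}.
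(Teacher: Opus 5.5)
Your proof is correct and follows the paper's route: the paper notes that the relations \eqref{flat-eq1} are purely combinatorial, with signature values in $\{-1,0,1\}$, so they hold over any $\Lambda$, and combines them with the basis of Theorem~\ref{flat-7} to get a unitriangular pairing, exactly as you do. The indexing point you flag ($e_t$ versus $e_{a\cdot t}$, and the need for a total order on special chains) is also left to de Shalit's construction (2.29) in the paper, so deferring it there matches the source.
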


\begin{remark}
In fact in the original proof of de Shalit in \cite{ds}, the relations~\eqref{flat-eq1} are used in the proof of Theorem~\ref{flat-7} to obtain that the elements $e_{a \cdot s}$ are independent. Note that this does not create any circular argument in our proof: as we have accepted as true and are using all the results from~\cite{ds}, our argument is not contingent on any interdependence between the results of de Shalit. 
\end{remark}

From now on we assume ${\cal A}= {\cal H}$. Let us define ${\cal L}_r(\Lambda)$ as the subspace of ${\rm Hom}(E^r, \Lambda)$ linearly generated by the forms $l_{\sigma}$ with $\sigma$ of dimension $r$ (note that they satisfy relations coming from harmonicity conditions, see \cite[\textsection3.1]{ds}). Then ${\cal L}_r(\Lambda)$ is a smooth $\Lambda[G]$-module: this is because by definition of the action, ${\rm Stab}(l_{\sigma})$ contains ${\rm Stab}(\sigma)$ which is open.  

We will now prove the following theorem which is the crucial point of this section: 

\begin{theorem}\label{dual-A}
Let $\tau$ be a simplex of dimension $r$ and assume that $\Lambda$ does not have $p$-torsion. Then, we have a decomposition
\[ {\cal L}_k(\Lambda)^{{\rm Stab}(\tau)} = \bigoplus \Lambda \cdot l_{\sigma} \]
where the sum is taken over the $l_{\sigma}$'s such that the family $\{ l_{\sigma} \}$ forms a dual basis of $A^k_{\Lambda}(\tau)$. In particular,  $A^k_{\Lambda}(\bullet)\simeq \Hom({\cal L}_k(\Lambda)^{(1),\bullet}, \Lambda)$.    
\end{theorem}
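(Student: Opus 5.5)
The plan follows de Shalit's argument for $\Lambda=C$ \cite[\S3]{ds}, checking at each step that only $\Z[1/p]$-linear combinatorics are used. Throughout, $k$ is the degree and the simplices $\sigma$ giving the forms $l_\sigma$ have dimension $k$.

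\textbf{Step 1: the candidate decomposition.} Lemma~\ref{flat-8} gives $\tau$-special chains $t$ and simplices $\sigma_t$ contiguous to $\tau$ such that the forms $l_{\sigma_t}$ form a $\Lambda$-basis of $A^k_\Lambda(\tau)^*$. By the discussion preceding the lemma, every $l_\sigma$ with $\sigma$ contiguous to $\tau$ factors through $A^k_\Lambda(\tau)$. The goal is therefore to show that the restriction map
\[{\cal L}_k(\Lambda)^{{\rm Stab}(\tau)}\to A^k_\Lambda(\tau)^*\]
is an isomorphism, with ${\rm Stab}(\tau)$ read as the congruence subgroup $\Gamma^{(1)}_\tau$. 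This is the group appearing in the last assertion through ${\cal L}_k(\Lambda)^{(1),\bullet}$.

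\textbf{Step 2: injectivity.} I will show that a form in ${\cal L}_k(\Lambda)$ is determined by its values on the $e_s$ with $s$ having all entries in ${\cal A}_\tau$. Here ${\cal A}_\tau$ is a finite set of representatives of ${\cal H}$ modulo the equivalence ``same projections in each $M_i/M_{i+1}$'' from Remark~\ref{a-fini}. The mechanism is this: the function $l_\sigma(e_s)$ depends only on the indices $i_\sigma(a_j)$. For $\sigma$ contiguous to $\tau$ these indices are constant on $\Gamma^{(1)}_\tau$-orbits of hyperplanes, and those orbits are exactly the classes of Remark~\ref{a-fini}. So for a $\Gamma^{(1)}_\tau$-invariant form $l$, the value $l(e_s)$ depends only on the classes of the entries of $s$. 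This gives a factorization through $A^k_\Lambda(\tau)^*$. The real work is to check that the invariance survives for a general combination $\sum c_\sigma l_\sigma$, where the $\sigma$ need not be contiguous to $\tau$.

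\textbf{Step 3: surjectivity, and the main obstacle.} Since the $l_{\sigma_t}$ lie in ${\cal L}_k(\Lambda)$, surjectivity reduces to showing that $l_{\sigma_t}$ is $\Gamma^{(1)}_\tau$-invariant, and this follows because $\Gamma^{(1)}_\tau$ fixes every simplex contiguous to $\tau$. The main obstacle is the step hidden in Step 2: a general invariant form must be rewritten as a $\Lambda$-combination of the $l_\sigma$ with $\sigma$ contiguous to $\tau$.

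My first approach is averaging. Take an invariant form $l=\sum c_\sigma l_\sigma$, with finitely many $\sigma$. Then $l=[\Gamma:\Gamma']^{-1}\sum_{g\in\Gamma/\Gamma'} g\, l$, where $\Gamma'\subset\Gamma^{(1)}_\tau$ is a normal open subgroup fixing all the $\sigma$ involved. The index $[\Gamma:\Gamma']$ is a power of $p$, since $\Gamma^{(1)}_\tau$ is pro-$p$. It is therefore invertible in $\Lambda$ whenever $\Lambda$ is a $\Z[1/p]$-algebra. The hypothesis ``no $p$-torsion'' is what allows comparison with $\Lambda\otimes\Z[1/p]$, and this step is where I expect difficulty. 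Next I will use the harmonicity relations of \cite[\S3.1]{ds}, which are integral, to project each averaged orbit sum onto forms $l_\sigma$ with $\sigma$ contiguous to $\tau$. Combined with Step 2, this yields the decomposition of the statement.

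\textbf{Step 4: the coefficient-system statement.} Set $V_k={\cal L}_k(\Lambda)$, a smooth module. Dualizing the isomorphism ${\cal L}_k(\Lambda)^{\Gamma^{(1)}_\tau}\simeq A^k_\Lambda(\tau)^*$, which is finite free by Theorem~\ref{flat-7}, gives $A^k_\Lambda(\tau)\simeq\Hom(V_k^{\Gamma^{(1)}_\tau},\Lambda)$. It remains to check that these isomorphisms are compatible with the face maps. For $\tau\subset\tau'$ this follows because restriction of forms along $A(\tau)\to A(\tau')$ corresponds to the inclusion of invariants $V_k^{\Gamma_{\tau'}}\subset V_k^{\Gamma_\tau}$.
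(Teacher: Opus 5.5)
There is a genuine gap: the decisive inclusion ${\cal L}_k(\Lambda)^{\Gamma^{(1)}_\tau}\subset\bigoplus_t\Lambda\, l_{\sigma_t}$ is never proved.

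The mechanism of your Step~2 fails for a general invariant form $l$. Invariance only gives $l(e_{gs})=l(e_s)$ for the \emph{diagonal} action of $g$ on the whole tuple $s$. It does not give that $l(e_s)$ depends on the classes of the entries separately. Even if it did, that dependence would not make $l$ vanish on $E^k\cap I(\tau)$, which is what factoring through $A^k_\Lambda(\tau)$ means. So your ``restriction map'' ${\cal L}_k(\Lambda)^{\Gamma^{(1)}_\tau}\to A^k_\Lambda(\tau)^*$ is not even defined until the theorem is known.

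In Step~3, averaging is essentially vacuous, since $l$ is already invariant. It only rewrites $l$ as a combination of orbit sums $\sum_{g\in\Gamma^{(1)}_\tau/{\rm Stab}(\sigma)}l_{g\sigma}$, with $\sigma$ arbitrary. You would then need each such orbit sum to lie in the span of the $l_{\sigma'}$ with $\sigma'$ contiguous to $\tau$. ``Projecting with the harmonicity relations'' is not an argument for this; it restates the full content of de~Shalit's \cite[Theorem~5.4]{ds}, which you have not reproduced.

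The paper does not reprove that statement. It imports the case $\Lambda=C$ and descends in three steps.
\begin{enumerate}
\item Since ${\cal L}_k(\Z[1/p])\subset{\cal L}_k(C)$ equivariantly, an invariant $l$ over $\Z[1/p]$ equals $\sum_t c_t l_{\sigma_t}$ with $c_t\in C$. The unitriangular relations \eqref{flat-eq1}, evaluated on the $e_t$ in decreasing order, force $c_t\in\Z[1/p]$.
\item For an arbitrary $\Z[1/p]$-algebra one applies Corollary~\ref{comtensinv}, i.e.\ exactness of invariants of the pro-$p$ group. This is where your averaging idea actually belongs.
\item For $p$-torsion-free $\Lambda$ one embeds $\Lambda\hookrightarrow\Lambda[1/p]$ and uses the same triangularity to see that the coefficients lie in $\Lambda$. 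This also settles the integrality issue you flagged, since averaging alone only yields coefficients in $\Lambda[1/p]$.
\end{enumerate}

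Your verification that the $l_{\sigma_t}$ are $\Gamma^{(1)}_\tau$-invariant, and your Step~4 (compatibility with face maps), are fine.
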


Before proving the theorem, let us recall the following standard result: 
\begin{lemma} \label{exinv}
Let $H$ be a pro-$p$-group and $\Lambda$ a $\Z[\frac{1}{p}]$-algebra. Then $M \mapsto M^{H}$ defines an exact functor from the category of smooth $\Lambda[H]$-modules to the category of $\Lambda$-modules. 
\end{lemma}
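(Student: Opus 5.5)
The proof shows that $(-)^H$ is left exact (always true), that it is right exact by averaging, and checks that smoothness makes the averaging well defined.

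\emph{Left exactness.} The functor $M\mapsto M^H=\Hom_{\Lambda[H]}(\Lambda,M)$ is always left exact, so only surjectivity of $M^H\to N^H$ for a surjection $M\twoheadrightarrow N$ of smooth $\Lambda[H]$-modules needs proof.

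\emph{Averaging over a finite quotient.} Let $\pi:M\to N$ be surjective and $n\in N^H$. Choose any lift $m\in M$ with $\pi(m)=n$. By smoothness the stabilizer of $m$ is open in $H$. It therefore contains an open normal subgroup $H'\subset H$, since a profinite group has a basis of open normal subgroups. Thus $m\in M^{H'}$, and $H/H'$ is a finite $p$-group, say of order $p^r$. Since $\Lambda$ is a $\Z[\tfrac1p]$-algebra, $p^r$ is invertible in $\Lambda$, and we may form the average
\[ m' = \frac{1}{p^r}\sum_{g\in H/H'} g\cdot m. \]
This is well defined because $m$ is $H'$-invariant, and it lies in $M^H$. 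By equivariance of $\pi$ and $H$-invariance of $n$,
\[ \pi(m') = \frac{1}{p^r}\sum_{g\in H/H'} g\cdot n = n. \]
Hence $M^H\to N^H$ is surjective, which gives right exactness.

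\emph{Remaining checks.} The category of smooth $\Lambda[H]$-modules is abelian, with kernels and images computed in $\Lambda$-modules: a subquotient of a smooth module is smooth, since stabilizers only grow. So exactness can indeed be tested on short exact sequences as above. There is no real obstacle; the only point requiring care is the passage from an open stabilizer to an open normal subgroup of $p$-power index, which is what makes the averaging both well defined and divisible in $\Lambda$.
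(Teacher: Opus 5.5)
Your proof is correct and complete. Left exactness is formal, and surjectivity on invariants follows by averaging a lift over the finite $p$-group $H/H'$, which is legitimate because $p$ is invertible in $\Lambda$. The paper does not prove this lemma; it quotes it as a standard result before Corollary~\ref{comtensinv}, and your averaging argument is the standard proof being alluded to.
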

\begin{corollary}\label{comtensinv}
Let $A \to B$ be a morphism of $\Z[\frac{1}{p}]$-algebras and $H$ be a pro-$p$-group. For any smooth $A[H]$-module $M$, the natural map 
\[M^H \otimes_A B \to (M \otimes_A B)^H \]
is an isomorphism. 
\end{corollary}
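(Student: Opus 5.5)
The plan is to deduce Corollary~\ref{comtensinv} from Lemma~\ref{exinv} by presenting the invariants as a kernel. Then invariants commute with tensor products once we know that taking invariants is exact and that tensoring preserves the relevant exact sequence.

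First, I reduce to the case of an open normal subgroup of $H$ acting trivially on the relevant pieces. Since $M$ is smooth, $M=\varinjlim_{H'} M^{H'}$ over open normal subgroups $H'\subset H$. Both sides of the map commute with filtered colimits: $M^H=\varinjlim_{H'}(M^{H'})^{H/H'}$, and $\otimes_A B$ commutes with colimits. Moreover $M\otimes_A B$ is smooth with $(M\otimes_A B)=\varinjlim_{H'} M^{H'}\otimes_A B$. For $H'\subset H$ open normal, the quotient $H/H'$ is a finite $p$-group $Q$.

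Next I treat the case of a finite $p$-group $Q$ acting on an $A$-module $N$ (namely $N=M^{H'}$). Since $|Q|$ is invertible in $A$, the averaging operator $e=\frac{1}{|Q|}\sum_{q\in Q} q$ is an $A$-linear idempotent on $N$ with image $N^Q$, so $N=N^Q\oplus (1-e)N$ as $A$-modules. Tensoring with $B$, the operator $e\otimes 1$ is again the averaging idempotent on $N\otimes_A B$. Its image is both $(N\otimes_A B)^Q$ and $N^Q\otimes_A B$, because direct summands are preserved by tensor product. Hence $N^Q\otimes_A B\simeq (N\otimes_A B)^Q$, naturally in $N$.

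Finally, I pass to the colimit over $H'$. One must check that $(M\otimes_AB)^H=\varinjlim_{H'}\big((M^{H'}\otimes_A B)\big)^{H/H'}$, and that the transition maps $M^{H'}\otimes_A B\to M^{H''}\otimes_AB$ are compatible. Injectivity of these maps is not actually needed: an element of $(M\otimes_AB)^H$ comes from some $M^{H'}\otimes_A B$. Applying the idempotent $e_{H/H'}$ there, compatibly with the one at larger levels (the averaging idempotents are compatible since $e_{H/H''}$ restricted to $H'$-invariants equals $e_{H/H'}$ after further averaging), lands it in $M^H\otimes_A B$. The only delicate point is this bookkeeping of compatibility between averaging operators at different levels. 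It can equivalently be phrased as Lemma~\ref{exinv} applied to the exact sequence $0\to M^H\to M\to (1-e)$-part, which I expect to be routine.
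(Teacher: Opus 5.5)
Your argument is correct, but it takes a different route from the paper.

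\textbf{The paper's route.} It never writes down an averaging operator. It checks the claim directly for the permutation modules $A[H/N]$ with $N$ open normal; no hypothesis on $p$ is needed there. Using smoothness, it chooses a presentation $M_2\to M_1\to M\to 0$ by direct sums of such modules. It then compares the rows $M_2^H\otimes_A B\to M_1^H\otimes_A B\to M^H\otimes_A B\to 0$ and $(M_2\otimes_A B)^H\to (M_1\otimes_A B)^H\to (M\otimes_A B)^H\to 0$. Both are exact by Lemma~\ref{exinv} (applied over $A$ and over $B$) together with right exactness of $\otimes_A B$, so the induced map on cokernels is an isomorphism.

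\textbf{Your route.} You use the averaging (Reynolds) operator directly; this is exactly the mechanism behind Lemma~\ref{exinv}. It gives a stronger conclusion: $M^H$ is an $A$-linear direct summand of $M$, so the map is split injective and the statement holds for any $A$-module $B$.

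\textbf{Simplification.} Your colimit bookkeeping can be removed entirely. For $m\in M$ fixed by an open normal $H'$, the element $e_H(m)=[H:H']^{-1}\sum_{h\in H/H'}hm$ does not depend on the choice of $H'$. This defines an $A$-linear idempotent on all of $M$ with image $M^H$. Moreover, $e_H\otimes 1$ is the averaging idempotent of the smooth module $M\otimes_A B$, whose image is $(M\otimes_A B)^H$. This also disposes of the one slightly loose point in your last step, namely injectivity of $M^H\otimes_A B\to (M\otimes_A B)^H$ after passing to the colimit. That injectivity does follow from your finite-level split injections and exactness of filtered colimits, but you did not say so.

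\textbf{Trade-off.} The paper's route is more formal: it uses only exactness of invariants, their compatibility with direct sums, and a computation on the generators $A[H/N]$. Yours is more elementary and self-contained, and yields the splitting.
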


\begin{proof}
First note that the result is true for $M=A[H/N]$ where $N \subset H$ is an open normal subgroup. 

For a general $M$, we can find a resolution 
\[ M_2 \to M_1 \to M \to 0 \]
with $M_1, M_2$ direct sum of modules of the form $A[H/N]$. The lemma follows then from the following commutative diagram with exact rows by Lemma~\ref{exinv}: 
\[\xymatrix{M_2^H \otimes_A B \ar[r] \ar[d]_{\sim} & M_1^H \otimes_A B \ar[r] \ar[d]_{\sim} & M^H \otimes_A B \ar[r] \ar[d] & 0\\
(M_2 \otimes_A B)^H \ar[r]  & (M_1 \otimes_A B)^H \ar[r]  & (M \otimes_A B)^H \ar[r] & 0 
} \] 
\end{proof}

\begin{proof}[Proof of Theorem \ref{dual-A}]
From \cite[Theorem~5.4]{ds}, we know that the theorem is true for $\Lambda=C$. Since taking the ${\rm Stab}$-invariant is left exact, we have 
\[ {\cal L}_{r}(\Z[\frac{1}{p}])^{{\rm Stab}(\tau)} = {\cal L}_{r}(C)^{{\rm Stab}(\tau)} \cap {\cal L}_{k}(\Z[\frac{1}{p}]) =\bigoplus \Z[\frac{1}{p}] \cdot l_{\sigma}.\] 
Let us now assume that $\Lambda$ is a $\Z[\frac{1}{p}]$-algebra. It follows from the Corollary \ref{comtensinv} that 
\[{\cal L}_r(\Lambda)^{{\rm Stab}(\tau)} \simeq {\cal L}_r(\Z[\frac{1}{p}])^{{\rm Stab}(\tau)} \otimes_{\Z[\frac{1}{p}]} \Lambda \simeq \bigoplus \Lambda \cdot l_{\sigma},\]
as wanted.


Assume now that $\Lambda$ does not have $p$-torsion, it injects into  $\Lambda[\frac{1}{p}]$ for which we have by the previous discussion        \[ {\cal L}_{r}(\Lambda[\frac{1}{p}])^{{\rm Stab}(\tau)} = \bigoplus \Lambda[\frac{1}{p}] \cdot l_{\sigma} \] and we can conclude \[{\cal L}_{r}(\Lambda)^{{\rm Stab}(\tau)} = {\cal L}_{r}(\Lambda[\frac{1}{p}])^{{\rm Stab}(\tau)} \cap {\cal L}_{r} (\Lambda) =\bigoplus \Lambda \cdot l_{\sigma}.\]
\end{proof}

\subsection{Acyclicity of coefficient systems coming from smooth representations}\label{acyclic1}
In this section we fix $A$ a principal ideal domain, $\Lambda \in \{A, {\rm Frac}(A), A/(a), a \in A \}$, $Y$ a geodesically contractible subset of $\mathcal{BT}$ and $V$ a smooth $A[G]$-module generated by its $\Gamma^{(n)}_{s_0}$-invariant elements for some $n\ge 1$. Our next goal is to prove Lemma~\ref{flat-5} for such  $\Lambda$, $Y$ and $V$. Recall that our goal is to prove Lemma~\ref{flat-5}, i.e. we want to show that $V_{\Lambda}^{(n), \bullet}$ is $Y$-acyclic. The case where $\Lambda$ is an algebraically closed field with characteristic $0$ has been established in~\cite{scst2}, we follow here their strategy. 

Let us first consider the  smooth $A[G]$-module ${\cal C}_c(G/\Gamma^{(n)}_{s_0})$ of functions $G/\Gamma^{(n)}_{s_0} \to A$ with compact support. We prove the following lemma:

\begin{lemma}\label{acyclic-1}
If $V:= {\cal C}_c(G/\Gamma^{(n)}_{s_0})$ then $V_{\Lambda}^{(n),\bullet}$ is $Y$-acyclic. 
\end{lemma}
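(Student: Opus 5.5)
We need to prove that for $V=\mathcal{C}_c(G/\Gamma^{(n)}_{s_0})$ the homological coefficient system $\sigma\mapsto V_\Lambda^{\Gamma^{(n)}_\sigma}$ is $Y$-acyclic. The plan follows Schneider--Stuhler: identify the chain complex with a combinatorial chain complex of a contractible subcomplex, and then use that contractible complexes have trivial reduced homology with any coefficients.

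First observe that $V_\Lambda=\mathcal{C}_c(G/\Gamma^{(n)}_{s_0},\Lambda)$, since these are compactly supported locally constant functions. For a simplex $\sigma$, the invariants $V_\Lambda^{\Gamma_\sigma}$ form the free $\Lambda$-module on the characteristic functions of the $\Gamma_\sigma$-orbits in $G/\Gamma^{(n)}_{s_0}$. Each such orbit is finite, because $\Gamma_\sigma$ is compact and $\Gamma^{(n)}_{s_0}$ is open.

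Thus $C^\bullet_Y(V_\Lambda^{(n),\bullet})$ is the direct sum over $\sigma\in Y_k$ of $\Lambda[\Gamma_\sigma\backslash G/\Gamma^{(n)}_{s_0}]$. The transition maps for $\tau\subset\sigma$ send a $\Gamma_\tau$-orbit to the sum of the characteristic functions of the orbits it contains, that is, to $1_{\Gamma_\tau g}$ viewed as $\Gamma_\sigma$-invariant. Here we use $\Gamma_\tau\subset\Gamma_\sigma$, since $\Gamma_\sigma$ is generated by the groups of its vertices.

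Using $G$-equivariance, I will rewrite this as follows. For a fixed coset $g\Gamma^{(n)}_{s_0}$, consider the family of simplices $\sigma$ together with the $\Gamma_\sigma$-orbit of $g$. Equivalently, after translating by $g^{-1}$, this is a complex of pairs $(\sigma,\Gamma_\sigma g)$. The key step is to realize $C^\bullet_Y(V_\Lambda)$ as the simplicial chain complex of a simplicial complex, following \cite{scst2}. Concretely, Schneider--Stuhler show that $\bigoplus_\sigma \mathcal{C}_c(\Gamma_\sigma\backslash G)$ computes the homology of the quotient building $\Gamma\backslash\mathcal{BT}$ over the relevant congruence structure. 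More precisely, $\mathcal{C}_c(G/\Gamma^{(n)})^{\Gamma_\sigma}$ is dual to counting simplices of a "thickened" complex $\mathcal{BT}^{(n)}$ with vertices $G/\Gamma^{(n)}$, whose fibres over $\sigma$ are $\Gamma_\sigma\backslash G/\Gamma^{(n)}$.

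I would instead use the direct combinatorial criterion from \cite{scst2}. For each $g$, the set of $\sigma\in Y$ that contain a given orbit is again a geodesically contractible subcomplex, because $\Gamma_\sigma$ fixes a neighbourhood built from $\sigma$ and the condition is a "distance $\le n$" condition. The complex then splits as a direct sum over points of contractible subcomplexes of $Y$. By Corollary~\ref{convcomp} we may reduce to compact $Y$, since homology commutes with filtered unions. A compact geodesically convex subcomplex is contractible via the geodesic retraction onto a point. Therefore its augmented simplicial chain complex with $\Lambda$-coefficients is acyclic for any $\Lambda$, because the chain complex is free over $\mathbb{Z}$ and is simply tensored with $\Lambda$. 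This is exactly why no restriction on $\Lambda$ (field, PID, or quotient) is needed here, unlike in the passage to general $V$.

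The main obstacle is making the decomposition into contractible pieces precise: one must show that, for each $\Gamma^{(n)}$-coset, the set of simplices $\sigma\in Y$ whose group $\Gamma_\sigma$ relates to it in the required way forms a convex subcomplex. This needs the description of $\Gamma^{(n)}_\sigma$ as the elements acting trivially modulo $\varpi^n$ on all lattices in $\sigma$, and convexity of the fixed-point set of $\Gamma^{(n)}_{s}$, which is the ball of radius $\sim n$ around $s$ (Proposition~\ref{B-convex}). The intersection of convex sets with $Y$ is convex, and that gives the required contractibility.
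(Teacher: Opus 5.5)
There is a genuine gap at your central step. You assert that $C^\bullet_Y(V_\Lambda^{(n),\bullet})=\bigoplus_{\sigma\in Y_k}\Lambda[\Gamma^{(n)}_\sigma\backslash T]$, with $T=G/\Gamma^{(n)}_{s_0}$, splits as a direct sum over points of $T$. It does not.

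Its basis consists of pairs $(\sigma,O)$ with $O$ a $\Gamma^{(n)}_\sigma$-orbit. Since $\Gamma^{(n)}_\tau\subset\Gamma^{(n)}_\sigma$ for $\tau\subset\sigma$, the transition map is the inclusion $V^{\Gamma_\sigma}\hookrightarrow V^{\Gamma_\tau}$, $1_O\mapsto\sum_{O'\subset O}1_{O'}$. You wrote it in the opposite direction, and $1_{\Gamma_\tau g}$ is not $\Gamma_\sigma$-invariant. So the boundary breaks an orbit into smaller orbits and links different points of $T$. If you index by $t\in T$ and attach $(\sigma,\Gamma_\sigma t)$, each orbit is counted $|O|$ times. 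What you actually obtain is the constant system $\sigma\mapsto\mathcal{C}_c(T)$, not the invariants.

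The missing idea is the paper's device for passing from invariants to systems that do split. Each $\mathcal{C}_c(T_\sigma)$, with $T_\sigma=\Gamma^{(n)}_\sigma\backslash T$, is resolved by the \v{C}ech nerve $\mathcal{C}_c(T)\to\mathcal{C}_c(T\times_{T_\sigma}T)\to\cdots$ of $T\to T_\sigma$. The columns of the resulting double complex are exact. It therefore suffices to treat the rows $\sigma\mapsto\mathcal{C}_c(T^m_{(\sigma)})$, where $T^m_{(\sigma)}$ is now a \emph{subset} of the fixed set $T^m$.

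Even at that stage, the sets $\{\sigma : t\in T^m_{(\sigma)}\}$ are closed under passing to larger simplices. They are not subcomplexes, so they have no chain complex of their own. The paper compares each row with the constant system $\mathcal{C}_c(T^m)$, which is acyclic since $Y$ is contractible, via a short exact sequence. It is the complementary system that splits as $\bigoplus_{t\in T^m}$ of the chain complexes of $\mathcal{BT}^{(t)}\cap Y$, where $\mathcal{BT}^{(t)}=\{\sigma : t\notin T^m_{(\sigma)}\}$.

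The geometric input is then the geodesic contractibility of $\mathcal{BT}^{(t)}$, taken from \cite[\S1, Lemma]{scst2}. It concerns the condition $t_j\in\Gamma^{(n)}_\sigma t_i$, i.e.\ the double-coset condition $g_j\in\Gamma^{(n)}_\sigma g_i\Gamma^{(n)}_{s_0}$, which is not a fixed-point condition. It does not follow from fixed-point sets of $\Gamma^{(n)}_s$ being balls. Nor does it follow from Proposition~\ref{B-convex}, which only says the combinatorial ball $B(n)$ around $s_0$ is convex; the paper uses that to take $Y=B(n)$, not here.

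Your closing steps are fine: the reduction to compact $Y$ via Corollary~\ref{convcomp}, and acyclicity of chain complexes of contractible complexes over any $\Lambda$. They become usable only once the \v{C}ech resolution and the correct convex subcomplexes $\mathcal{BT}^{(t)}$ are in place.
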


\begin{proof}
We set $T:=G/\Gamma^{(n)}_{s_0}$ and define $T_{\sigma}:= \Gamma^{(n)}_{\sigma} \backslash T$. We consider the ${\cal C}$-coefficient system $\sigma \mapsto {\cal C}_c(T_{(\sigma)}^{\bullet})$, where ${\cal C}$ is the category of simplicial $\Lambda$-modules and  
$T_{(\sigma)}^{\bullet}$ is the simplicial set
\[ \xymatrix{ \cdots  \ar@<4.5pt>[r] \ar@<1.5pt>[r] \ar@<-1.5pt>[r] \ar@<-4.5pt>[r]& T \times_{T_{\sigma}} T \times_{T_{\sigma}} T \ar@<3pt>[r] \ar[r] \ar@<-3pt>[r] & T \times_{T_{\sigma}} T \ar@<1.5pt>[r] \ar@<-1.5pt>[r] & T  } \]
We write $K(T^{\bullet}_{(\sigma)})$ for the associated Kan complex. When $\sigma$ varies, one gets  a coefficient system of complexes $K(T^{\bullet}_{(\bullet)}):=(K(T^{\bullet}_{(\sigma)}))_\sigma$ and we are then interested in the complex $C_Y^{\bullet}(K(T^{\bullet}_{(\bullet)}))$:  
\[\small{ \xymatrix{ 0 \ar[r] & \bigoplus_{\sigma \in Y_d} {\cal C}_c(T) \ar[r] \ar[d] & \cdots \ar[r] & \bigoplus_{\sigma \in Y_1} {\cal C}_c(T) \ar[r] \ar[d]  & \bigoplus_{\sigma \in Y_0} {\cal C}_c(T) \ar[d] \\ 
0 \ar[r] & \bigoplus_{\sigma \in Y_d} {\cal C}_c(T \times_{T_{\sigma}} T) \ar[r] \ar[d] & \cdots \ar[r] & \bigoplus_{\sigma \in Y_1} {\cal C}_c(T \times_{T_{\sigma}} T) \ar[r] \ar[d]  & \bigoplus_{\sigma \in Y_0} {\cal C}_c(T \times_{T_{\sigma}} T) \ar[d] \\
0 \ar[r] & \bigoplus_{\sigma \in Y_d} {\cal C}_c(T \times_{T_{\sigma}} T\times_{T_{\sigma}} T) \ar[r] \ar[d] & \cdots \ar[r] & \bigoplus_{\sigma \in Y_1} {\cal C}_c(T \times_{T_{\sigma}} T\times_{T_{\sigma}} T) \ar[r] \ar[d]  & \bigoplus_{\sigma \in Y_0} {\cal C}_c(T \times_{T_{\sigma}} T\times_{T_{\sigma}} T). \ar[d] \\
& \vdots & & \vdots &\vdots } }\] 
By construction, the columns are acyclic and we want to prove that the first row is acyclic as well. To do that, it suffices to establish the acyclicity of  all the other rows. 

We see the sets $T^m_{(\sigma)}$ from the $m$-th row as subsets of $T^m$. For any element $t$ in $T^m$, we define $\mathcal{BT}^{(t)}$ as the set of all simplexes $\sigma$ in $\mathcal{BT}$ such that $t$ is not in $T^m_{(\sigma)}$. We will use this set below to construct an intermediate complex.

Let us consider the morphism of simplicial sets $T^m_{(\sigma)} \to T^m$, where the term on the right is the constant simplicial set. We have the following exact sequence of complexes:
\[0 \to K(T^m_{\bullet}) \to K(T^m) \to K(T^m \setminus T^m_{\bullet}) \to 0.  \]
Since $T^m$ is constant, the associated complex is acyclic. Hence, to prove that $K(T^m_{\bullet})$ is acyclic, it suffices to check $K(T^m \setminus T^m_{\bullet})$ is so. As $\bigcup_{\sigma \in Y_k} T^m \setminus T^m_{\sigma} = \bigcup_{t \in T^m} \mathcal{BT}^{(t)}_k \cap Y $, the $k$-th term of this complex is given by 
\begin{equation}
    \label{sum-btt}
 \bigoplus_{\sigma \in Y_k} \bigoplus_{t \in T^m \setminus T^m_{\sigma}} \Lambda = \bigoplus_{t \in T^m} \bigoplus_{\sigma \in \mathcal{BT}_k^{(t)}\cap Y} \Lambda.
\end{equation}
One can easily check that 
the differential sends $\bigoplus_{\sigma \in \mathcal{BT}_k^{(t)}\cap Y} \Lambda$ to $\bigoplus_{\sigma \in \mathcal{BT}_{k-1}^{(t)}\cap Y} \Lambda$, so the formulas~\eqref{sum-btt} give a decomposition of the complex as a direct sum indexed by the elements of $T^m$. Since $Y$ and $\mathcal{BT}^{(t)}$ ar geodesically contractible (by hypothesis and \cite[\textsection 1, Lemma]{scst2}), so is $\mathcal{BT}^{(t)} \cap Y$. We thus obtain that each direct factor of $K(T^m \setminus T^m_{\bullet})$ is acyclic, and the acyclicity of $K(T^m \setminus T^m_{\bullet})$  itself follows. 
\end{proof}

We are now able to deduce Lemma \ref{flat-5} from this special case. Take $A$, $\Lambda$, $Y$, $V$ and $n\ge 1$ as before. We would like to prove the acyclicity of the complex $C^{\bullet}_Y(V_{ \Lambda}^{(n),\bullet})$ for $V_{\Lambda}:= V \otimes_A \Lambda$ where $\Lambda \in \{{\rm Frac}(A), A, A/(a), a \in A\}$. Note that by Corollary~\ref{comtensinv} one has the identity $C_{Y_i}^{\bullet}(V_{\Lambda}^{(n),\bullet})\simeq C_{Y_i}^{\bullet}(V^{(n),\bullet})\otimes\Lambda$. 

Write $Y =\varinjlim_i Y_i$ with $Y_i$ compact and geodesically contractible (see Corollary \ref{convcomp}). Then $C_Y^{\bullet}(V_{\Lambda}^{(n),\bullet}) = \varinjlim_i C_{Y_i}^{\bullet}(V_{\Lambda}^{(n),\bullet})$ and it suffices to show that the $C_{Y_i}^{\bullet}(V_{\Lambda}^{(n),\bullet})$'s are acyclic.

Suppose then  that $Y$ is  compact. In that case, the terms of the complex $C_Y^{\bullet}(V_{\Lambda}^{(n),\bullet})$ are free $\Lambda$-modules  of finite type. 

Let us first deal with the cases $\Lambda={\rm Frac}(A)$ and $\Lambda =A/\mathfrak{p}$ with $\mathfrak{p}$ a prime ideal of $A$. In those cases, $\Lambda$ is a field, let us write $\overline{\Lambda}$ for its algebraic closure. Then we can find a resolution of $V_{\overline{\Lambda}}$ by direct sums of ${\cal C}_c(G/\Gamma^{(n)}_{s_0})$ and the acyclicity of ${\cal C}_c(G/\Gamma^{(n)}_{s_0})$ follows from Lemma \ref{acyclic-1}. Since $\Lambda \to \overline{\Lambda}$ is faithfully flat, we deduce the acyclicity of $C_Y^{\bullet}(V_{\Lambda}^{(n),\bullet})$ by scalar extension.

Suppose now that $\Lambda$ is $A/\mathfrak{p}^k$. We proceed by induction on $k$. The case $k=1$ was established above. Let $k \ge 2$ and assume that $C^{\bullet}_Y(V_{A/\mathfrak{p}^{k-1}}^{(n),\bullet})$ is acyclic. By considering the long exact sequence associated to: 
\[ 0 \to C^{\bullet}_Y(V_{A/\mathfrak{p}^{k-1}}^{(n),\bullet}) \to C^{\bullet}_Y(V_{A/\mathfrak{p}^{k}}^{(n),\bullet}) \to C^{\bullet}_Y(V_{A/\mathfrak{p}}^{(n),\bullet}) \to 0, \] 
we obtain that the cohomology groups of $C^{\bullet}_Y(V_{A/\mathfrak{p}^{k}}^{(n),\bullet})$ are trivial.

If $a$ is an element of $A$ which is not a unit, we can decompose $(a)$ as a product of powers of prime ideals $\mathfrak{p}_1^{k_1} \cdots \mathfrak{p}_d^{k_d}$ and we know that each one of the complexes $C^{\bullet}_Y(V_{A/\mathfrak{p}_i^{k_i}}^{(n),\bullet})$ is acyclic. By Chinese remainders theorem, we obtain that $C^{\bullet}_Y(V_{A/(a)}^{(n),\bullet})$ is acyclic.  

Now assume that $\Lambda$ is $A$. By acyclicity of the complex $C^{\bullet}_Y(V_{{\rm Frac}(A)}^{(n),\bullet})$, we know that the cohomology groups of $C^{\bullet}_Y(V^{(n),\bullet})$  are  $A$-torsion. Moreover, they are all finitely generated because we have chosen $Y$ to be compact. Thus, we can find an element $a_i\in A$ for which the group $\hhh^i(C^{\bullet}_Y(V^{(n),\bullet}))$ is an $A/a_iA$-module. In particular, any cocycle of $C^i_Y(V^{(n),\bullet})$ in $a_i C^{i}_Y(V^{(n),\bullet})$ is a coboundary. 

For a cocycle $x$, its projection in $C^{i}_Y(V_{A/a_iA}^{(n),\bullet})$ is a coboundary $d_i(y)$. Taking any lift $\tilde{y}$ of $y$ in $C^{i}_Y(V^{(n),\bullet})$, we see that $x-d_i(\tilde{y})$ is again a cocycle in $a_i C^{i}_Y(V^{(n),\bullet})$ by definition of $y$ thus a coboundary by the previous discussion. The same is then true for $x$ and we obtain the desired acyclicity. This concludes the proof of Lemma~\ref{flat-5}.


\bibliographystyle{alpha}
\bibliography{gm}





\end{document}